\colorlet{shadecolor}{blue!15}
\newtheorem{thm}{Theorem}[section]
\newtheorem{cor}[thm]{Corollary}
\newtheorem{lem}[thm]{Lemma}
\newtheorem{prop}[thm]{Proposition}
\newtheorem{rem}[thm]{Remark}
\newtheorem{claim}[thm]{Claim}
\newcommand{\be}[1]{\begin{equation}\label{#1}}
\newcommand{\ee}{\end{equation}}
\numberwithin{equation}{section}
\newcommand{\ba}[1]{\begin{align}\label{#1}}
\newcommand{\ea}{\end{align}}
\numberwithin{equation}{section}
\newcommand{\ben}{\begin{equation*}}
\newcommand{\een}{\end{equation*}}
\numberwithin{equation}{section}
\renewenvironment{proof}[1][\relax]
  {\paragraph{Proof\ifx#1\relax\else~of #1\fi}}%
  {~\hfill$\square$\par\bigskip}
\newcommand{\calA}{\mathcal{A}}
\newcommand{\calB}{\mathcal{B}}
\newcommand{\calC}{\mathcal{C}}
\newcommand{\calD}{\mathcal{D}}
\newcommand{\calE}{\mathcal{E}}
\newcommand{\calG}{\mathcal{G}}
\newcommand{\calH}{\mathcal{H}}
\newcommand{\calI}{\mathcal{I}}
\newcommand{\calJ}{\mathcal{J}}
\newcommand{\calL}{\mathcal{L}}
\newcommand{\frm}{\mathfrak{m}}
\newcommand{\frp}{\mathfrak{p}}
\newcommand{\bbE}{\mathbb{E}}
\newcommand{\bbH}{\mathbb{H}}
\newcommand{\bbN}{\mathbb{N}}
\newcommand{\bbP}{\mathbb{P}}
\newcommand{\bbR}{\mathbb{R}}
\newcommand{\bbT}{\mathbb{T}}
\newcommand{\bbZ}{\mathbb{Z}}
\newcommand{\sfT}{{\sf T}}
\newcommand{\sfB}{{\sf B}}
\newcommand{\sfD}{{\sf D}}
\newcommand{\sfU}{{\sf U}}
\newcommand{\eps}{\epsilon}
\newcommand{\rk}[1]{\bgroup\color{red}%
  \par\medskip\hrule\smallskip%
  \noindent\textbf{#1}%
  \par\smallskip\hrule\medskip\egroup}
\newcommand{\lra}{\leftrightarrow}
\newcommand{\xlra}{\xleftrightarrow}
\newcommand\concel[2]{\ooalign{$\hfil#1\mkern0mu/\hfil$\crcr$#1#2$}}  
\newcommand\nxlra[1]{\mathrel{\mathpalette\concel{\xlra{#1}}}}
\newcommand{\ind}{\mathbf{1}}
\newcommand{\Int}{\mathrm{Int}}
\renewcommand{\int}{\mathrm{in}}
\newcommand{\out}{\mathrm{out}}
\newcommand{\Ann}{\text{Ann}}
\renewcommand{\frp}{\textcolor{red}{\varhexagon\hspace{-2.58mm} +}}
\renewcommand{\frm}{\textcolor{red}{\varhexagon\hspace{-2.58mm} -}}
\newcommand{\sfrp}{\textcolor{red}{\scriptsize\varhexagon\hspace{-1.8mm} +}}
\newcommand{\sfrm}{\textcolor{red}{\scriptsize\varhexagon\hspace{-1.8mm} -}}
\renewcommand\rm{\includegraphics[page=1, scale=0.5]{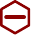}} 
\newcommand\rp{\includegraphics[page=2, scale=0.5]{spins.pdf}} 
\newcommand\bm{\includegraphics[page=3, scale=0.5]{spins.pdf}} 
\newcommand\bp{\includegraphics[page=4, scale=0.5]{spins.pdf}} 
\newcommand\srm{\includegraphics[page=1, scale=0.3]{spins.pdf}} 
\newcommand\srp{\includegraphics[page=2, scale=0.3]{spins.pdf}}
\newcommand\sbm{\includegraphics[page=3, scale=0.3]{spins.pdf}}
\newcommand\sbp{\includegraphics[page=4, scale=0.3]{spins.pdf}}
\newcommand{\La}{\Lambda}
\newcommand{\Ga}{\Gamma}
\newcommand{\Rect}{{\sf Rect}} 
\newcommand{\Cyl}{{\sf Cyl}}   
\newcommand{\Strip}{{\sf Strip}} 
\newcommand{\Par}{{\sf Par}} 
\newcommand{\Bottom}{{\sf Bottom}} 
\newcommand{\Top}{{\sf Top}}
\newcommand{\Left}{{\sf Left}} 
\newcommand{\Right}{{\sf Right}}
\newcommand{\dbp}{\mathrm{dp}}
\newcommand{\dbm}{\mathrm{dm}}
\newcommand{\Circ}{{\sf Circ}}
\newcommand\eqonD{\mathrel{\overset{\makebox[0pt]{\mbox{\normalfont\tiny\sffamily~$\calD$}}}{=}}}
\setlist[itemize]{itemsep=1pt, topsep=4pt}
\setlist[enumerate]{itemsep=1pt, topsep=4pt}
\title{Uniform Lipschitz functions on the triangular lattice have logarithmic variations}
\date{\today}
\author{Alexander Glazman\thanks{D\'epartement de Math\'ematiques, 
Universit\'e de Fribourg, 
23 Chemin du Mus\'ee, 
CH-1700 Fribourg, 
Switzerland.
\url{glazmana@gmail.com}}
~and Ioan Manolescu\thanks{D\'epartement de Math\'ematiques, 
Universit\'e de Fribourg, 
23 Chemin du Mus\'ee, 
CH-1700 Fribourg, 
Switzerland.
\url{ioan.manolescu@unifr.ch}}}
\begin{document}

\maketitle
\begin{abstract}
	Uniform integer-valued Lipschitz functions on a domain of size~$N$ of the triangular lattice 
	are shown to have variations of order~$\sqrt{\log N}$.
	
	The level lines of such functions form a loop~$O(2)$ model on the edges of the hexagonal lattice with edge-weight one. 
	An infinite-volume Gibbs measure for the loop~$O(2)$ model is constructed as a thermodynamic limit and is shown to be unique. 
	It contains only finite loops and has properties indicative of scale-invariance: macroscopic loops appearing at every scale.  
	The existence of the infinite-volume measure carries over to height functions pinned at the origin; the uniqueness of the Gibbs measure does not. 
		
	The proof is based on a representation of the loop~$O(2)$ model via a pair of spin configurations that are shown to satisfy the FKG inequality. 
	We prove RSW-type estimates for a certain connectivity notion in the aforementioned spin model.		
\end{abstract}
\tableofcontents

\section{Introduction}

Height functions occupy a central role in statistical mechanics models on lattices. 
Indeed, the Ising, six-vertex and dimer models are only some of the lattice models involving height function representations. 
The predicted conformal invariance of these models is tightly linked to the convergence of their associated height functions to the Gaussian Free Field (GFF) or variations of it. 
Both statements were proved only in a handful of cases, and remain fascinating conjectures in general.

In this paper we study integer-valued height functions defined on the vertices of the two-dimensional triangular lattice~$\bbT$,
or equivalently on the faces of the hexagonal lattice~$\bbH$.
It is then natural to impose that height functions are Lipschitz, that is functions whose difference between any two adjacent vertices is at most~$1$; see Figure~\ref{fig:lipschitz}.
More specifically, for any finite domain~$\calD$ of~$\bbH$, 
we will consider a uniformly chosen Lipschitz function among those with values~$0$ outside of~$\calD$.
The question of interest is the behaviour of such a function, especially as the domain~$\calD$ increases towards~$\bbH$.

\begin{figure}[!b]
    \centering
        \includegraphics[scale=1.4]{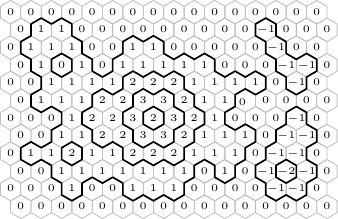}
    \caption{Lipschitz function~--- values at any two adjacent faces differ by at most~$1$. Level lines are shown in bold; they form a loop configuration distributed according to the loop~$O(2)$ measure with edge-weight one.}
    \label{fig:lipschitz}
\end{figure}

Our goal is to show that the variance of the value at the origin of a uniformly chosen Lipschitz function is of order~$\log N$, 
where~$N$ is the radius of the largest ball centred at the origin and contained in~$\calD$. 
This result, termed  {\em delocalisation} (or logarithmic delocalisation to be precise) is in agreement with the conjectural convergence of uniform Lipschitz functions to the GFF.

The essential tool here is a re-interpretation of the uniform Lipschitz functions as the loop~$O(2)$ model,
which in turn is represented as the superposition of two site percolations on~$\bbT$ interacting with each other -- 
below we view these as~$\pm$-spin assignments.
This {\em double-spin representation} is obtained by colouring the loops of the loop~$O(2)$ model in two colours (as was done in~\cite{ChaMac98}), 
then deriving a spin configuration from the families of loops of each colour. 
This may be viewed as the infinite-coupling-limit of the Ashkin--Teller model on the triangular lattice~\cite{HuaDenJacSal13}.

The loop~$O(n)$ model is defined on collections of non-intersecting simple cycles (loops) on a finite domain~$\calD$ of~$\bbH$ and has two real parameters~$n,x>0$. The probability of each configuration is proportional to~$n$ to the number of loops times~$x$ to the number of edges in it. The loop~$O(n)$ model has a rich conjectural phase diagram~\cite{Nie82,BloNie89} that remains mostly open; see \cite{PelSpi19} for an overview of the topic. 

For~$n = 2$, the model is expected to exhibit macroscopic loops when~$x\geq \tfrac{1}{\sqrt{2}}$ and exponential decay of loop sizes when~$x< \tfrac{1}{\sqrt{2}}$.
The former is confirmed in this paper for~$x = 1$ and in~\cite{DumGla17} for~$x=\tfrac{1}{\sqrt{2}}$.
The latter behaviour is shown to hold for~$x < \tfrac{1}{\sqrt{3}}+\eps$ for some~$\eps > 0$ in~\cite{GlaMan18}. 
The correspondence between the loop~$O(2)$ model and Lipschitz functions holds for any~$x > 0$, but the corresponding height functions are not uniform:
they are weighted by~$x$ to the number of pairs of adjacent faces of~$\bbH$ having different values.
The regime of exponential decay of loop sizes corresponds to localisation for the height function; that of macroscopic loops corresponds to logarithmic delocalisation. 

A main difficulty in the study of the loop~$O(n)$ model is the lack of monotonicity and positive association. 
These type of properties are however expected to hold in convenient representations of the model, as illustrated by the present paper and by \cite{DumGla17}. 
Indeed, a core ingredient of our arguments is the FKG inequality, which we show for the marginals of the double-spin representation.
We then develop Russo--Seymour--Welsh (or RSW)-type results for these marginals, which translate to similar statements for the loop and height function models. 

The RSW theory was initially developed for percolation \cite{Rus78,SeyWel78}, 
and later generalised to other models via more robust arguments (see for instance \cite{BefDum12b,DumSidTas17,DumSidTas16,Tas16,DinGos16}). 
It has become increasingly clear that for the latter type of arguments to apply the essential feature of the model is an instance of the FKG inequality. Indeed, other restrictions such as independence, symmetries and planarity have been, in some forms, relaxed in recent works. 
In this paper we do yet another step towards generalising this approach by considering a case where the Spatial Markov property applies only in a limited way. 

The FKG inequality mentioned above extends to the case of certain non-uniform distributions on Lipschitz functions (corresponding to the loop~$O(2)$ model with~$x<1$) and more generally to the loop~$O(n)$ model with~$n\geq 2, x\leq \tfrac{1}{\sqrt{n-1}}$. Thus, we hope that this instance of the FKG inequality, together with the strategy of our proofs can be useful in other studies of the loop~$O(n)$ model. 

Finally, we want to emphasise that in this work we do not attempt to prove convergence to the GFF. In general, the RSW theory can be viewed as a robust technique based on geometric constructions, but is not expected to lead to subtle convergence results. Indeed the seminal proofs of convergence of~\cite{Sch00,Ken00,Smi01,Smi10,CheSmi12,CheDumHon12a} are all based on some form of exact solvability, which is missing in our case.

\subsection{Uniform Lipschitz functions}\label{sec:height_functions}

Let~$\bbH$ denote the hexagonal lattice, embedded in~$\bbR^2$ with the origin~$0$ being the center of a face and the distance between the centres of any adjacent faces being~$1$. Write~$F(\bbH)$ for the set of faces of~$\bbH$. 
A subgraph~$\calD = (V(\calD),E(\calD))$ of~$\bbH$ without isolated vertices is called a \emph{domain} if there exists a self-avoiding polygon in~$\bbH$ denoted by~$\partial_E\calD$ such that~$E(\calD)$ is the set of edges surrounded by~$\partial_E\calD$ (excluding those of~$\partial_E\calD$). 
Denote by~$F(\calD)$ the set of faces adjacent to at least one edge of~$\calD$. 
The inner (and outer) face boundary of~$\calD$, written~$\partial_\int \calD$ (and~$\partial_\out\calD$, respectively) 
is the set of faces of~$\calD$ (and~$\bbH \setminus \calD$, respectively) bounded by at least one edge in~$\partial_E \calD$. 
The faces strictly in the interior of~$\calD$ are~$\Int(\calD) = F(\calD) \setminus \partial_\int \calD$.

For a domain~$\calD$, a Lipschitz function on~$\calD$ with zero boundary conditions is an integer-valued function~$\phi$ 
on the faces of~$\calD$ with the constraint that
\begin{itemize}
    \item if~$u,v \in F(\calD)$ are two adjacent faces, then~$|\phi(u) - \phi(v)|\leq 1$;
    \item for each~$u\in \partial_\int \calD$ we have~$\phi(u) = 0$.
\end{itemize} 
Since~$\calD$ is finite, only finitely many such functions exist. 
Write~$\pi_\calD$ for the uniform measure on such functions, and let~$\Phi_\calD$ denote a random variable with law~$\pi_\calD$. 

\begin{thm}\label{thm:lipschitz}~
	\begin{itemize}
		\item[(i)] There exist constants~$c, C > 0$ such that, for any finite domain~$\calD$ with~$0 \in \calD$, 
		\begin{align*}
			c  \, \log\mathrm{dist}(0,\calD^c) \leq  \mathrm{Var}(\Phi_\calD(0)) \leq C\,\log \mathrm{dist}(0,\calD^c).
		\end{align*}
		\item[(ii)] For any increasing sequence of domains~$(\calD_n)_{n\geq 1}$ with~$0 \in \calD_1$ and~$\bbH = \bigcup_n \calD_n$,
		the sequence of variables~$\Phi_{\calD_n} - \Phi_{\calD_n}(0)$ converges in law as~$n \to \infty$ to a 
		random Lipschitz function~$\Phi_\bbH : \bbH \to \bbZ$ that is equal to~$0$ at~$0$. Write~$\pi_\bbH$ for the law of~$\Phi_\bbH$.
		\item[(iii)] There exists~$c,C> 0$ such that, for any distinct~$x,y \in F(\bbH)$, 
		\begin{align*}
			c\log |x- y|\leq \mathrm{Var}(\Phi_\bbH(x) - \Phi_\bbH(y)) \leq C\log |x- y|.
		\end{align*}
		The same holds for~$\Phi_\calD$ for any domain~$\calD$ containing the ball of radius~$2|x- y|$ around~$x$.
	\end{itemize}
\end{thm}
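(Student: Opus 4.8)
My plan is to reduce all three parts to two-sided estimates for the expected number of level lines separating two faces of $\calD$, and then to obtain those estimates from RSW and FKG for the double-spin representation. The level lines of a Lipschitz function $\phi$ on $\calD$ form a loop configuration on $E(\calD)$, and the map $\phi\mapsto(\text{level lines of }\phi,\ \text{increment sign across each level line})$ is a bijection from Lipschitz functions onto pairs $(\ell,\theta)$ with $\ell$ a loop configuration and $\theta\in\{\pm1\}^{\mathrm{loops}(\ell)}$; under it $\pi_\calD$ becomes the product of the loop~$O(2)$ measure with $x=1$ and an independent family $(\epsilon_\gamma)$ of uniform signs. Crossing a level line changes the height by the associated sign, so for any faces $u,v$,
\[
	\Phi_\calD(u)-\Phi_\calD(v)=\sum_{\gamma}\epsilon_\gamma\big(\ind_{u\text{ inside }\gamma}-\ind_{v\text{ inside }\gamma}\big),
\]
where only loops separating $u$ from $v$ contribute. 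Since the signs are centred and independent of $\ell$, this gives $\mathbb{E}[\Phi_\calD(u)-\Phi_\calD(v)]=0$ and $\mathrm{Var}(\Phi_\calD(u)-\Phi_\calD(v))=\mathbb{E}_\calD[\calN(u,v)]$, where $\calN(u,v)$ is the number of loops separating $u$ and $v$; with $v\in\partial_\int\calD$ this reads $\mathrm{Var}(\Phi_\calD(0))=\mathbb{E}_\calD[\calN_0]$, where $\calN_0$ counts the loops surrounding $0$. Everything then follows from two-sided bounds on $\mathbb{E}_\calD[\calN_0]$ and $\mathbb{E}_\calD[\calN(x,y)]$.

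For the lower bounds I would use the \emph{circuit estimate}: there is $c_0>0$ such that for every annulus $A(w;\rho,2\rho)\subset\calD$, with probability at least $c_0$ the loop~$O(2)$ configuration contains a loop surrounding $w$ inside this annulus. I would derive it by expressing the event ``some loop surrounds $w$ inside $A(w;\rho,2\rho)$'' through a monochromatic circuit of one of the two spin configurations of the double-spin representation, and building such a circuit from four crossings of sub-rectangles via the FKG inequality and the square-root trick, using the RSW crossing bounds for the spin model. Granting this, the $\asymp\log\mathrm{dist}(0,\calD^c)$ dyadic annuli $A(0;2^k,2^{k+1})$ contained in $\calD$ each contribute, with probability $\ge c_0$, a distinct loop surrounding $0$, whence $\mathbb{E}_\calD[\calN_0]\ge c\log\mathrm{dist}(0,\calD^c)$; and for $r=|x-y|$ with $\calD\supseteq B(x,2r)$, every loop surrounding $x$ inside $B(x,r/2)$ separates $x$ from $y$, so summing the circuit estimate over the $\asymp\log r$ dyadic annuli inside $B(x,r/2)$ gives $\mathbb{E}_\calD[\calN(x,y)]\ge c\log r$.

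For the upper bounds I would extract from the same RSW estimates, via quasi-multiplicativity and a BK-type iteration, two facts uniform in $\calD$: (a) for each $\rho$ the number of loops surrounding a given face at scale $\rho$ (those realising a crossing of the annulus $A(\cdot;\rho,2\rho)$) has an exponential tail, hence expectation $O(1)$; and (b) the expected number of loops meeting $B(w,r)$ and reaching distance $R\ge r$ is at most $C(r/R)^{a}$ for some $a>0$ (by the two-arm estimate, since such a loop crosses $A(w;r,R)$ twice). For part~(i): loops around $0$ of scale $\le\mathrm{dist}(0,\calD^c)$ contribute $O(\log\mathrm{dist}(0,\calD^c))$ by~(a), and loops around $0$ of larger scale must pinch past the point of $\partial_E\calD$ nearest $0$, of which there are $O(1)$ in expectation by a variant of~(a); hence $\mathrm{Var}(\Phi_\calD(0))\le C\log\mathrm{dist}(0,\calD^c)$. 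For part~(iii): a loop separating $x$ from $y$ surrounds exactly one of them, say $x$, and then meets $\bar B(x,r)$ since it crosses $[x,y]$; loops of scale $\le r$ contribute $O(\log r)$ by~(a), while those of scale $\rho\ge r$ contribute $\sum_{\rho}C(r/\rho)^{a}=O(1)$ by~(b), so $\mathbb{E}_\calD[\calN(x,y)]\le C\log r$. Finally, for part~(ii), using the thermodynamic limit (and uniqueness) of the loop~$O(2)$ measures, the field $\Phi_{\calD_n}-\Phi_{\calD_n}(0)$ restricted to loops of diameter $\le R$ inside $B(0,R)$ is a local function whose law converges, and since loops are a.s.\ finite and the omitted loops separating $0$ from any fixed face have expected number $\le\epsilon(R)\to0$ uniformly in $n$ by~(b), a truncation argument yields convergence of all finite-dimensional marginals; the limit is carried by Lipschitz functions vanishing at $0$, as the constraints are local.

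The hard part is not these reductions but the inputs they rely on: the RSW estimates for the double-spin representation and the faithfulness of the translation ``loop of the loop~$O(2)$ model around a face $\leftrightarrow$ monochromatic spin circuit''. The usual RSW machinery conditions freely on the state explored along an interface, whereas here the spatial Markov property holds only in a weakened form and the two spin configurations are coupled and only partially Markovian; the crossing and circuit constructions must therefore be arranged so that the monochromatic structures can be assembled using the FKG inequality alone. Once the circuit estimate and facts~(a)--(b) are in hand, assembling Theorem~\ref{thm:lipschitz} as above is routine.
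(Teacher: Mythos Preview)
Your reduction of all three parts to counting loops separating two faces, via the bijection with oriented loop configurations, is exactly the paper's approach (Proposition~\ref{prop:lip-to-loop}), and your lower bounds via a circuit estimate summed over dyadic annuli match the paper's use of \eqref{eq:RSW_for_loops} and \eqref{eq:E_surrounding_loops}. You also correctly identify the RSW theory for the double-spin representation as the substantive input.

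Where your proposal diverges from the paper is in the upper bounds. You propose extracting an exponential tail for the number of loops at a given scale via ``a BK-type iteration'' and a polynomial two-arm bound~(b) via quasi-multiplicativity. Neither tool is available here: no BK inequality is proved (or expected) for the loop $O(2)$ model or its spin representation, and the paper never establishes polynomial arm exponents. The paper's route is both simpler and more robust: it exploits the \emph{spatial Markov property of the loop measure itself}, namely that conditionally on an outermost loop $\Gamma$ surrounding a face, the configuration inside $\Gamma$ has law $\bbP_{\Int(\Gamma)}$. For part~\textit{(i)} this yields, via \eqref{eq:RSW_for_loops_ub}, a uniform bound $\bbP_\calD(d_{j+2}<\rho\,d_j\mid\Gamma_j)\ge c$ for the distances $d_j$ of the nested loops to~$0$, from which $\bbE_\calD(N_\calD)\le C\log\mathrm{dist}(0,\calD^c)$ follows by a stopping-time argument. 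For part~\textit{(iii)} the paper conditions on the outermost loop $\Gamma$ surrounding $x$ but not $y$; since then $\mathrm{dist}(x,\Int(\Gamma)^c)\le|x-y|$, the bound on $\bbE_\bbH[N_{x\setminus y}]$ reduces directly to~\textit{(i)} applied inside $\Int(\Gamma)$, with no need for your estimate~(b). For part~\textit{(ii)} the paper simply invokes the existence of the loop limit $\bbP_\bbH$ and the a.s.\ finiteness of all loops (Theorem~\ref{thm:loops}\textit{(i)--(ii)}); no quantitative truncation via~(b) is required, since for each fixed face $u$ the set of loops separating $u$ from $0$ is a local functional once loops are finite.

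In short: your plan is sound, but the upper-bound toolbox you reach for (BK, arm exponents) is unnecessary and unproven in this setting; the nested-loop exploration via the loop model's own Markov property is the device you are missing.
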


One may wish to study height functions with different values imposed on the boundary via so-called boundary conditions. 
While we do not attempt to provide the most general form of our result, let us briefly mention some direct generalisations. 
First, for constant boundary conditions -- that is if we study uniform height functions with~$\phi(u) = c$ for all~$u\in \partial_\int \calD$ 
-- the law obtained is that of~$c + \Phi_\calD$, and the results above adapt readily. 
Versions of the results above may also be deduced for ``flat'' boundary conditions, 
that is boundary conditions whose maximum and minimum differ by at most a constant, independently of~$\calD$. 
The results for such boundary conditions may be obtained using the FKG inequality for the height function;
we refer the reader to the upcoming paper \cite{DCKaMaOu} for formulations and proofs of such results in a slightly different context. 

In addition to the theorem above, RSW-type statements may be proved for ~$\Phi_\calD$, see Theorem~\ref{thm:RSW_heights}.
These may be used to prove bounds on the tail of~$\frac{1}{\sqrt{ \log N}}\Phi_\calD(0)$ in a domain where~$\mathrm{dist}(0,\calD^c) =N$. 
\medskip


To the best of our knowledge this is the first instance when a uniformly distributed Lipschitz function is proven to have logarithmically diverging variance. Previously known results establish that the variance is bounded (referred to as \emph{localisation}) in high dimensions~\cite{Pel17}, or when the underlying graph is a tree~\cite{PelSamYeh13b} or an expander~\cite{PelSamYeh13a}.
The conjectured convergence of the height function to the GFF indicates that localisation should also hold on lattices in dimensions three and above. 

Recently it was established in~\cite{DumGla17}  that the variance is logarithmic in a very similar setup~--- also on the hexagonal lattice, though the distribution is not uniform but instead the probability of a function~$\phi$ is proportional to~$(1/\sqrt{2})^{\#\{u\sim v\colon \phi(u) \neq \phi(v)\}}$.
This result follows from~\cite[Thm.~1]{DumGla17} for~$n=2$.

On the square lattice~$\bbZ^2$, one may also consider the related model of graph homomorphisms from~$\bbZ^2$ to~$\bbZ$, which are defined as functions on the faces of~$\bbZ^2$ restricted to differ by \emph{exactly} one between any two adjacent faces. These functions may be viewed as height functions of the six-vertex model that has parameters~$a,b,c>0$. When~$a=b=1$ and~$c>0$ is general, the height functions are weighted by~$c^{n_5+n_6}$, where~$n_5+n_6$ is the number of vertices of~$\bbZ^2$ for which the four adjacent faces contain only two values. For the uniform model~$c=1$ (termed square ice) a non-quantitative delocalisation result is proved in \cite{ChaPelSheTas18} based on an approach described in \cite{She05}. 
In \cite{DumHarLasRauRay18} a dichotomy theorem similar to our Theorem~\ref{thm:dicho} is developed and logarithmic delocalisation is shown. In~\cite{GlaPel18} logarithmic delocalisation at~$c=2$ and localisation for~$c>2$ are shown, based on the Baxter--Kelland--Wu coupling~\cite{BaxKelWu76} with the random-cluster model and results of \cite{DumSidTas17} and~\cite{DumGanHar16}, where the order of the phase transition in the latter model is computed.
In the upcoming \cite{DCKaMaOu}, the logarithmic delocalisation result is generalised to all~$c \in [1,2]$. 

Convergence of the height function of the dimer model to the GFF was proven in a seminal work by Kenyon~\cite{Ken00} and was recently extended to the case of a weak interaction~\cite{GiuMasTon17}. On the square lattice, this corresponds to graph homomorphisms to~$\bbZ$ with~$c \approx \sqrt{2}$. Proving convergence of delocalised discrete-valued height functions outside of the free-fermion solution remains a major open problem.

The case of real-valued height functions is better understood. In particular, convergence to the GFF was established for uniformly convex symmetric potentials (under additional regularity assumptions)~\cite{Mil10} and the delocalisation was proven for some non-convex nearest-neighbour potentials~\cite{MilPel15}.

\subsection{The loop~$O(2)$ model}\label{sec:loop_model_intro}

Let~$\calD$ be a domain of~$\bbH$. 
A \emph{loop configuration on~$\calD$} is a subgraph of~$\calD$ in which every vertex has even degree. 
Thus, a loop configuration is a disjoint union of loops (i.e., subgraphs which are isomorphic to cycles) that are contained entirely in~$\calD$. In particular, none of these loops contain edges of~$\partial_E\calD$. Denote by~$\calL(\calD)$ the set of all loop configurations on~$\calD$.

The loop~$O(2)$ model on~$\calD$ with edge-weight~$1$ (and empty boundary conditions) is the measure~$\bbP_{\calD}$ on~$\calL(\calD)$ given by 
\[
	\bbP_{\calD}(\omega) = \frac1{Z(\calD)}\,2^{\ell(\omega)},
\]
where~$\ell(\omega)$ is the number of loops in~$\omega$.
The normalising constant~$Z(\calD)$, chosen so that~$\bbP_{\calD}$ is a probability measure, is called the partition function.

\begin{figure}
    \begin{center}
        \includegraphics[width=0.65\textwidth, page=6]{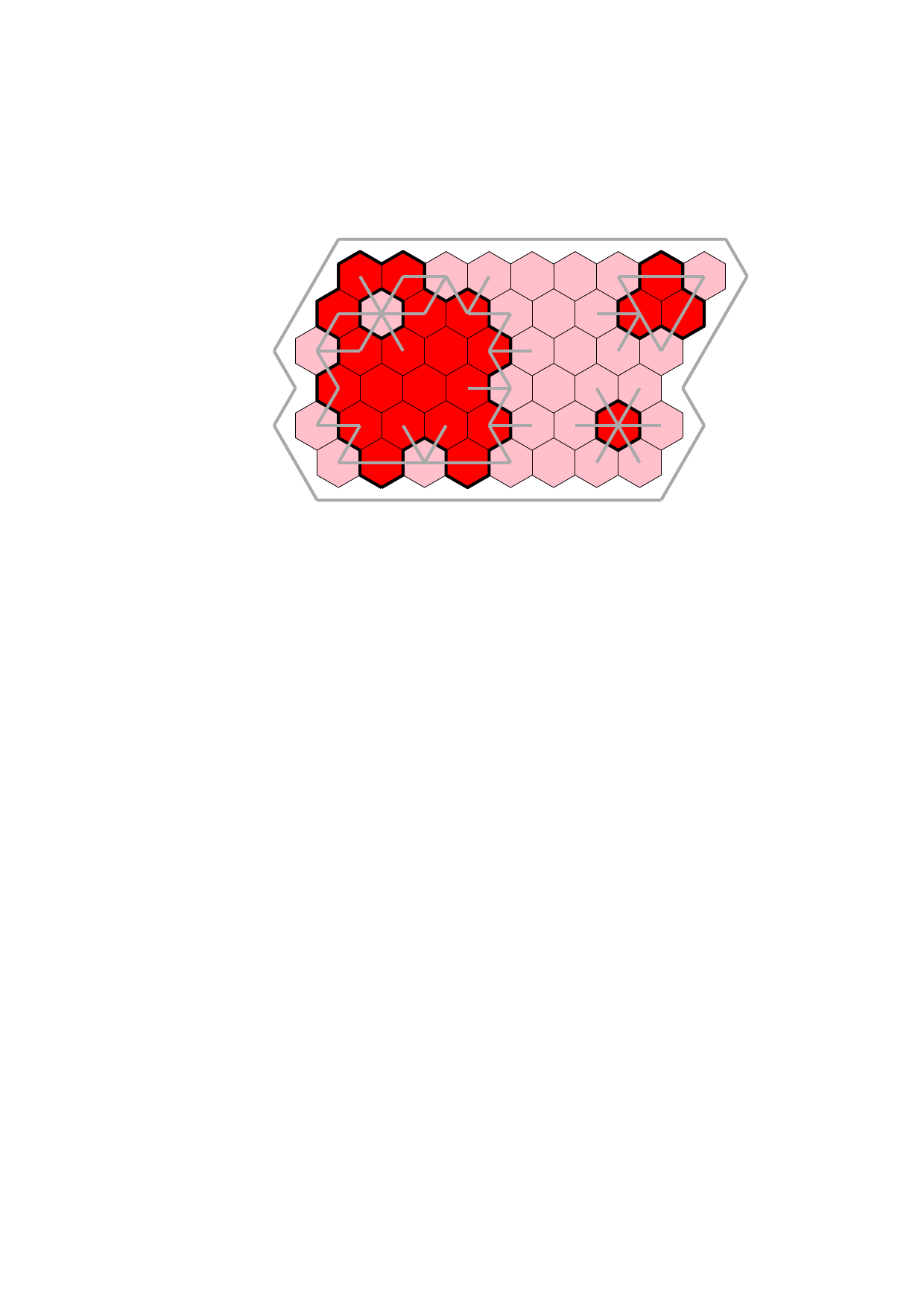}
        \caption{A loop configuration in a domain~$\calD$ bounded by the path~$\gamma$.
        The domain~$\calD$ is formed of all the edges {\em strictly} in the interior of~$\gamma$. 
        Loops inside~$\calD$ are contained in the interior of~$\gamma$ and are not allowed to intersect~$\gamma$. 
        The hexagons or~$\partial_{\int} \calD$ and~$\partial_{\out}\calD$ are marked by light and dark gray, respectively.}
        \label{fig:domain}
    \end{center}
\end{figure}

Write~$\La_n$ for the domain defined by a self-avoiding contour going around the set of faces at distance~$n$ from~$0$ (for the graph distance on the dual~$\bbH^*=\bbT$ of~$\bbH$). 
A sequence of domains~$(\calD_n)_{n\geq 1}$ is said to converge to~$\bbH$ if, for all~$k$, all except finitely many domains of~$(\calD_n)_{n\geq 1}$ contain~$\La_k$. 

\begin{thm}[Existence of Gibbs measure and delocalisation]\label{thm:loops}~
	\begin{itemize}
		\item[(i)] For any increasing sequence of domains~$(\calD_n)_{n\geq 1}$ converging to~$\bbH$,
		$\bbP_{\calD_n}$ has a limit denoted by~$\bbP_\bbH$.
		\item[(ii)] The measure~$\bbP_{\bbH}$ is supported on even subgraphs of~$\bbH$ that contain only finite loops.
		\item[(iii)] The measure~$\bbP_{\bbH}$ is ergodic and invariant under translations and rotations by~$\pi/3$.
		\item[(iv)] There exists~$c > 0$ such that, for any even integer~$n$ and any finite domain~$\calD$ containing~$\La_{n}$, 
		or for~$\calD = \bbH$,
		\begin{align}\label{eq:RSW_for_loops}
			\bbP_{\calD}(\text{there exists a loop in~$\La_{n}$ surrounding~$\La_{n/2}$ }) &\geq c.
		\end{align}
		Moreover, there exists~$\rho < 1$, such that, for any finite domain~$\calD$, if we set~$n = \mathrm{dist} (0,\partial_E \calD)$, 
		we have 
		\begin{align}\label{eq:RSW_for_loops_ub}
			\bbP_{\calD}(\text{there exist two loops surrounding~$\La_{\rho n}$}) &\leq 1 - c.
		\end{align}
		\item[(v)]
		Write~$N_\calD$ for the number of loops surrounding~$0$, contained in some domain~$\calD$. 
		There exist constants~$c,C >0$ such that for any domain~$\calD$, 
		\begin{align}\label{eq:E_surrounding_loops}
			c  \, \log\mathrm{dist}(0,\calD^c) \leq \bbE_{\calD}(N_\calD) \leq  C  \, \log\mathrm{dist}(0,\calD^c),
		\end{align}
		where~$\bbE_\calD$ denotes the expectation with respect to~$\bbP_\calD$.
		The same holds if we replace~$\bbE_\calD(N_\calD)$ with~$\bbE_\bbH(N_\calD)$. 
		In particular,~$\bbP_\bbH$-a.s., there are infinitely many loops surrounding the origin.
	\end{itemize}
\end{thm}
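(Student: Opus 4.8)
The plan is to package all five parts of the statement behind a single Russo--Seymour--Welsh (RSW) theorem for the double-spin representation, and then harvest (i)--(v) from it by soft arguments. Recall the representation: colouring the loops of $\omega\in\calL(\calD)$ independently red/blue turns $\bbP_\calD$ into the uniform measure on pairs $(\sigma^{\mathrm r},\sigma^{\mathrm b})$ of $\pm1$-spin assignments on the faces of $\calD$, equal to $+1$ on $\partial_\out\calD$, whose contour sets are edge-disjoint; the loop configuration $\omega$ is recovered as the union of the two contour sets, equivalently as the set of face-adjacencies across which $\tau:=\sigma^{\mathrm r}\sigma^{\mathrm b}$ changes sign, so that the $\omega$-loops are exactly the contour circuits of the spin picture. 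I will take for granted (from the double-spin construction) the FKG inequality for this pair in its natural partial order, the associated notion of monochromatic connectivity, and monotonicity of the relevant marginal in the boundary conditions and in the domain. The key structural fact to be used repeatedly is: there is an $\omega$-loop surrounding $\La_{m/2}$ and contained in $\La_m$ if and only if, in the annulus $\La_m\setminus\La_{m/2}$, there is \emph{both} a $+$-monochromatic circuit and a $-$-monochromatic circuit around the hole (equivalently, no monochromatic radial crossing of the annulus) --- this uses the self-matching property of $\bbT$. All loop/height statements then follow because $\tau$ and $\Phi_\calD$ have the same level lines.

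\emph{Step 1 (the heart).} Prove RSW crossing estimates for the spin connectivity: rectangles of a fixed aspect ratio are crossed by a monochromatic path in the easy and in the hard direction with probability bounded away from $0$ and from $1$, uniformly in $\calD$ and in boundary conditions (the two extreme boundary conditions being controlled by FKG monotonicity). The inputs are the self-matching duality of $\bbT$ and the FKG inequality, which together should let one run a Tassion-type construction. \textbf{This is the step I expect to be the main obstacle:} the spatial Markov property holds only in a restricted form --- the edge-disjointness constraint couples $\sigma^{\mathrm r}$ and $\sigma^{\mathrm b}$ nonlocally --- so the conditional crossing estimates that drive the inductive RSW argument (``given the configuration outside a box, a crossing inside still has probability $\geq c$'') cannot be read off an explicit conditional specification and must instead be obtained by sandwiching the conditional law between tractable measures, using FKG together with the combinatorics of the constraint. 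The same restricted-Markov-plus-FKG sandwiching is what makes conditioning-on-an-innermost-circuit arguments go through, and it will be invoked again below.

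\emph{Step 2 (part (iv)).} For \eqref{eq:RSW_for_loops}: by the structural fact, the event is ``$\exists$ a $+$-circuit and a $-$-circuit around the hole in $\La_n\setminus\La_{n/2}$''; condition on the outermost monochromatic circuit of one sign produced by Step 1 in a sub-annulus, then apply the conditional crossing estimates of Step 1 in the leftover sub-annulus to produce the circuit of the other sign, giving probability $\geq c$ uniformly over $\calD\supseteq\La_n$ and for $\calD=\bbH$. For \eqref{eq:RSW_for_loops_ub}: two disjoint $\omega$-loops surrounding $\La_{\rho n}$ are nested, so the annular region between them is crossed radially by a monochromatic path that must pass through the ``pinched'' part of $\calD$ near the boundary edge realising $\mathrm{dist}(0,\partial_E\calD)=n$; conditionally on the configuration away from that edge, Step 1 yields probability $\geq c$ of a monochromatic circuit around $\La_{\rho n}$ localised in a bounded neighbourhood of that edge, which is incompatible with having two such loops. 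Choosing $\rho<1$ close enough to $1$ makes the geometry (pinch of width $(1-\rho)n$) work.

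\emph{Steps 3--4 (parts (i), (ii), (iii), (v)).} Under empty boundary conditions, enlarging the domain only unfreezes spins, so along an increasing exhausting sequence $(\calD_n)$ the relevant marginal decreases monotonically and therefore converges; the limit is independent of the sequence because any two such sequences are mutually cofinal, and re-randomising the colours gives convergence of $\bbP_{\calD_n}$ to a limit $\bbP_\bbH$, proving (i). Translation invariance and invariance under rotations by $\pi/3$ follow from sequence-independence of the limit together with the corresponding invariances of $\bbH$ and of the family $(\La_k)$; ergodicity follows by the standard argument that $\bbP_\bbH$, being the maximal Gibbs measure for an FKG system, is extremal. For (ii): fix an edge $e$; by the conditional circuit estimates the event ``some $\omega$-loop surrounds $e$ and lies in a dyadic annulus around $e$'' occurs for infinitely many scales $\bbP_\bbH$-almost surely (successive conditioning using FKG and the restricted Markov property), and the unique $\omega$-loop through $e$, if any, must then lie inside one such surrounding loop and hence be finite. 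For (v): the lower bound comes from summing \eqref{eq:RSW_for_loops} over dyadic scales $2^k\leq\mathrm{dist}(0,\calD^c)$, which yields at least $c$ expected $0$-surrounding loops per scale; the upper bound uses that $N_\calD$ is at most the number of sign changes of $\tau$ along a fixed self-avoiding path from $0$ to $\partial_E\calD$, and feeding \eqref{eq:RSW_for_loops_ub} into the restricted-Markov machinery at each dyadic scale shows the number of $0$-surrounding loops inside each dyadic annulus has a scale-uniform exponential tail, so expectation $\leq C$; summing over the $O(\log\mathrm{dist}(0,\calD^c))$ relevant scales finishes the proof, and the statements for $\bbE_\bbH$ follow by passing to the limit (or directly, since the RSW bounds also hold for $\calD=\bbH$).
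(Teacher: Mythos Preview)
Your plan has a structural gap at the very start, in the choice of spin variable. You reduce $\omega$ to the contours of $\tau=\sigma^{\mathrm r}\sigma^{\mathrm b}$ and then want to run an RSW/FKG machinery on $\tau$-monochromatic connections. But the $\tau$-marginal is \emph{exactly} the loop $O(2)$ measure ($\tau$ determines $\omega$, and the weight is $2^{\ell(\omega)}$), which is precisely the measure that fails positive association; there is no FKG for $\tau$, so your Step~1 has no engine. If instead you interpret ``monochromatic'' as $\sigma^{\mathrm r}$-monochromatic (where FKG \emph{does} hold, for the red marginal), then your ``structural fact'' is false: a blue $\omega$-loop in the annulus leaves $\sigma^{\mathrm r}$ unchanged, so the existence of an $\omega$-loop does not force both a $\sigma^{\mathrm r}=+$ and a $\sigma^{\mathrm r}=-$ circuit. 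You cannot have both the FKG and the duality statement for the same object.

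The paper resolves this tension by working with the $\sigma^{\mathrm r}$-marginal (which has FKG, but only a restricted Spatial Markov property) and \emph{separately} proving that both $\rp$-circuits and $\rm$-circuits are abundant. The latter is far from automatic: it goes through a dichotomy theorem (either $\inf_n \mu^{\srm\srm}_{\La_{\rho n}}[\Circ_{\srp\srp}(n,2n)]>0$ or stretch-exponential decay) and then excludes the decay alternative using the non-quantitative delocalisation result $\mu_\bbH^{\srp\srp}=\mu_\bbH^{\srm\srm}$. Only after all that does one get, in Corollary~\ref{cor:RSW_strong}, a uniform-in-boundary-conditions RSW for double-$\rp$ (hence by symmetry double-$\rm$) circuits, and the lower bound \eqref{eq:RSW_for_loops} then follows by nesting a double-$\rp$ circuit inside a double-$\rm$ circuit. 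Your Step~1, read charitably, is asking for the output of this entire development; the obstacle you flagged (restricted Markov) is real but secondary to the missing FKG.

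Two further points. Your argument for \eqref{eq:RSW_for_loops_ub} is not correct as written: a ``monochromatic circuit localised in a bounded neighbourhood'' of a single boundary edge cannot surround $\La_{\rho n}$. The paper instead observes that if the two outermost $\omega$-loops around $\La_{\rho n}$ have colours (blue, red) from outside in (probability $1/4$), the outer one is a double-$\rp$ circuit and the inner forces a simple-$\rm$ circuit inside it; conditioning on the outermost double-$\rp$ circuit and using monotonicity, this is dominated by $\mu_\calD^{\srp\srp}[\Circ_{\srm}(\rho n)]$, which is small because a $\rp$-circuit in a thin annulus around the nearest boundary point blocks it. Finally, for (i) note that $\bbP_\calD$ corresponds to $\mu_\calD^{\srp\srm}$, not $\mu_\calD^{\srp\srp}$; the monotone-limit argument applies to the latter, and one needs Theorem~\ref{thm:uniqueness_nu} to show the two have the same infinite-volume limit.
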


Any limit of measures of the type~$\bbP_{\calD}$ is supported on even subgraphs of~$\bbH$. 
Such graphs are in general disjoint unions of loops and infinite paths on~$\bbH$. 
Thus, point \textit{(ii)} of the above states that no infinite path exists~$\bbP_\bbH$-a.s.

Point \textit{(iv)} of the theorem above resembles a RSW-type statement for the loops of the~$O(2)$ model;
indeed, it stems from an actual RSW result for a related model (see Corollary~\ref{cor:RSW_strong}).
Due to the imperfect correspondence between the models, the upper bound of~\eqref{eq:RSW_for_loops_ub} takes this slightly odd form. 
We believe that a similar bound should apply to any~$\rho < 1$ (with~$c$ depending on~$\rho$), 
for any~$n$ and for a single loop instead of two. This statement is of an independent interest as it would in particular imply, via Aizenman--Burchard~\cite{AizBur99}, tightness of interfaces under Dobrushin 0/1 boundary conditions.


Point \textit{(v)} is a direct consequence of \textit{(iv)}. 
Moreover, bounds on the deviation of~$N_\calD$ from~$\log\mathrm{dist}(0,\calD^c)$ may be obtained in a straightforward manner. 
\smallskip 

\newcommand{\bc}{\mathrm{bc}}

Finally, we discuss the issue of Gibbs measures for the loop~$O(2)$ model. 
Consider a measure~$\eta$ on~$\{0,1\}^{\bbH}$ supported on even configurations.
Recall that these are disjoint unions of bi-infinite paths and finite loops. 
Let~$\omega$ be a configuration in the support of~$\eta$ and~$\calD$ be a finite domain. 
Then~$\omega \cap \calD^c$ induces certain connections between the vertices of~$\partial_E\calD$. 
Indeed, each such vertex may be connected to another such vertex, to infinity, or be isolated. 
These connections constitute a boundary condition on~$\calD$. Formally we describe boundary conditions as follows. 

For~$\xi_1$ and~$\xi_2$ two restrictions to~$\calD^c$ of even configurations on~$\bbH$, 
write~$\xi_1 \sim \xi_2$ if they induce the same connections on~$\partial_E \calD$.  
A measure~$\eta$ on even configurations on~$\bbH$ is called a Gibbs measure for the loop~$O(2)$ model with edge weight~$1$ if, 
for any finite domain~$\calD$ of~$\bbH$ and any restriction~$\xi$ of an even configuration to~$\calD^c$, 
\begin{align}\label{eq:DLR}\tag{DLR}
    \eta\big(\omega \cap \calD = \omega_0 \,\big|\, \omega\cap\calD^c \sim \xi\big)
    = \frac{1}{Z_\calD^{\xi}} \,2^{\#\text{finite loops of~$\omega_0 \cup \xi$ that intersect~$\calD$}} 
   \, \ind_{\{\text{$\omega_0 \cup \xi$ is even}\}}
\end{align}
for all~$\omega_0 \in \{0,1\}^{E(\calD)}$.
The above equation needs only to hold when the conditioning is not degenerated. 
Write~$\bbP_\calD^{\xi}$ for the measure on~$\{0,1\}^{E(\calD)}$  described by the right-hand side above; 
it is the loop~$O(2)$ measure on~$\calD$ with boundary conditions~$\xi$. 
It is immediate that~$\bbP^\xi_\calD$ does not depend on the choice of~$\xi$ within its equivalency class for~$\sim$. 

Notice that the infinite paths do not contribute to the right-hand side of \eqref{eq:DLR}.
One may be tempted to add a term of the form~$(n')^{\#\text{infinite paths of~$\omega_0 \cup \xi_0$ that intersect~$\calD$}}$ 
in~\eqref{eq:DLR} for some~$n' >0$. 
This would be superfluous, as the number of infinite paths intersecting~$\calD$ is imposed by the boundary conditions. 

\begin{thm}[Uniqueness of Gibbs measure]\label{thm:Gibbs}
	There exists only one Gibbs measure for the loop~$O(2)$ model on~$\bbH$ with edge-weight~$1$, namely~$\bbP_{\bbH}$.
\end{thm}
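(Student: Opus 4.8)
The plan is to show that every Gibbs measure~$\eta$ for the loop~$O(2)$ model with~$x=1$ equals~$\bbP_\bbH$. That~$\bbP_\bbH$ is itself a Gibbs measure follows in a standard way from it being a weak limit of finite-volume loop measures, which satisfy the DLR equations on every subdomain; the substance is uniqueness. So fix a local event~$A$, measurable with respect to the edges inside~$\La_k$. Applying~\eqref{eq:DLR} with~$\calD=\La_N$ yields the mixture representation~$\eta(A)=\int\bbP_{\La_N}^{\xi}(A)\,d\eta(\xi)$, where~$\xi$ runs over equivalence classes of restrictions to~$\La_N^c$ of even configurations. It is therefore enough to prove
\[
\sup_{\xi}\bigl|\bbP_{\La_N}^{\xi}(A)-\bbP_\bbH(A)\bigr|\longrightarrow 0\qquad\text{as }N\to\infty,
\]
the supremum being over all such~$\xi$; since local events generate the~$\sigma$-algebra on~$\{0,1\}^{E(\bbH)}$, this forces~$\eta=\bbP_\bbH$.

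Two ingredients enter. The first is a spatial Markov property, available because~$x=1$: every loop of a configuration contributes the same factor~$2$ whatever surrounds it, so the weight factorises across any loop~$\Gamma$ that is present; writing~$U_\Gamma$ for the domain bounded by~$\Gamma$, conditionally on~$\Gamma$ and on the configuration outside~$U_\Gamma$ the configuration inside~$U_\Gamma$ has law~$\bbP_{U_\Gamma}$ (empty boundary conditions). The second is the RSW lower bound of Theorem~\ref{thm:loops}(iv), which I will use \emph{uniformly over boundary conditions}: there is~$c>0$ so that, for every domain~$\calD$ containing~$\La_{2m}$ and every boundary condition~$\xi'$ realisable from an even configuration, the~$\bbP_{\calD}^{\xi'}$-probability that some loop surrounds~$\La_m$ and is contained in the annulus~$\La_{2m}\setminus\La_m$ is at least~$c$. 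This uniform form is what the strong RSW statement of Corollary~\ref{cor:RSW_strong} for the double-spin representation provides, the FKG inequality there serving to dominate arbitrary realisable boundary conditions; note that empty boundary conditions, which appear above, are realisable.

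The proof then proceeds as follows. Fix~$k\le M\le N$ with~$M$ a power of~$2$ and put~$J=\log_2(N/M)$. Reveal the configuration under~$\bbP_{\La_N}^{\xi}$ one dyadic annulus at a time from~$\partial_E\La_N$ inwards; in each of the~$J$ annuli between~$\La_M$ and~$\La_N$, the presence of a loop surrounding the inner circle is measurable with respect to that annulus and, conditionally on everything revealed so far (which induces a realisable boundary condition), has probability at least~$c$ by the uniform estimate above. Hence the event~$\calE$ that some loop of the sample surrounds~$\La_M$ inside~$\La_N$ satisfies~$\bbP_{\La_N}^{\xi}(\calE^c)\le(1-c)^{J}$. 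On~$\calE$, let~$\Gamma$ be the outermost loop surrounding~$\La_M$ inside~$\La_N$; the event~$\{\Gamma=\gamma\}$ depends only on the configuration outside~$U_\gamma$, and~$\La_M\subseteq U_\gamma$, so the spatial Markov property gives
\[
\bbP_{\La_N}^{\xi}(A)\;=\;\bbP_{\La_N}^{\xi}(A;\calE^c)\;+\;\sum_{\gamma}\bbP_{\La_N}^{\xi}(\Gamma=\gamma)\,\bbP_{U_\gamma}(A).
\]
Set~$\beta(M)=\sup\{\,|\bbP_U(A)-\bbP_\bbH(A)|:U\text{ a domain with }\La_M\subseteq U\,\}$; since any sequence of domains containing~$\La_{M_n}$ with~$M_n\to\infty$ converges to~$\bbH$ as far as the local event~$A$ is concerned, Theorem~\ref{thm:loops}(i) gives~$\beta(M)\to 0$ as~$M\to\infty$. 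Bounding~$|\bbP_{U_\gamma}(A)-\bbP_\bbH(A)|\le\beta(M)$ in the display and using~$\bbP_{\La_N}^{\xi}(\calE^c)\le(1-c)^{J}$ yields
\[
\sup_{\xi}\bigl|\bbP_{\La_N}^{\xi}(A)-\bbP_\bbH(A)\bigr|\;\le\;(1-c)^{\lfloor\log_2(N/M)\rfloor}+\beta(M).
\]
Letting~$N\to\infty$ with~$M=M(N)\to\infty$ slowly enough that~$\log_2(N/M)\to\infty$ makes both terms vanish, which finishes the argument.

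I expect the main obstacle to be the uniformity over boundary conditions in the RSW input. Theorem~\ref{thm:loops}(iv) as stated is for empty boundary conditions, but the telescoping above needs a surrounding loop to occur with probability bounded below \emph{regardless} of the boundary condition imposed by~$\xi$ together with the already-revealed annuli — and such boundary conditions can be arbitrarily rough and may even contain infinite paths. Securing this requires working in the double-spin representation, using its FKG inequality to compare an arbitrary realisable boundary condition with extreme ones, and then translating crossing estimates for the spin marginals back into the existence of a surrounding loop of the~$O(2)$ model; this translation, together with checking that every boundary condition arising during the reveal is indeed realisable and hence covered, is the delicate part.
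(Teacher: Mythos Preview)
Your argument has a genuine gap at exactly the point you flag yourself: the uniform-in-$\xi$ RSW bound. You need, for every boundary condition $\xi$ arising from an even configuration, that $\bbP_{\La_N}^{\xi}$ produces a loop in a given dyadic annulus with probability bounded below by a universal constant $c$. The paper does not prove this, and in the paragraph following Theorem~\ref{thm:Gibbs} it explicitly disclaims the closely related statement that $\bbP_{\calD_n}^{\xi_n}\to\bbP_\bbH$ for arbitrary $\xi_n$. Your proposed fix --- pass to the double-spin representation and use FKG to sandwich --- does not go through: a general loop boundary condition, once coloured, has both red and blue loops crossing $\partial_E\La_N$, so the induced spin boundary condition has $\sigma_b$ taking both values on $\partial_\int\La_N$. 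Remark~\ref{rem:FKG}(i) says FKG for the red marginal fails exactly in that situation. Corollary~\ref{cor:RSW_strong} is only stated for the extremal spin boundary conditions $\rm\rm$, and there is no mechanism in the paper to transfer it to arbitrary mixed red/blue boundary data.

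The paper's own proof avoids uniform RSW entirely by a different idea. Rather than taking the supremum over $\xi$, it exploits that under any Gibbs measure $\eta$ the event that some \emph{finite} loop crosses a thick annulus $\La_M\setminus\La_N$ can be made $\eta$-rare by taking $M$ large. On the complement of that event one removes from $\La_M$ the interiors of all loops touching its boundary; in the resulting domain $\calD(\xi)\supset\La_N$ only \emph{infinite} paths cross the boundary. Colouring all infinite paths red forces $\sigma_b$ to be constant on $\partial_\int\calD(\xi)$, which is precisely the hypothesis of Corollary~\ref{cor:fkg}(iii), so FKG does apply and the red marginal is dominated by $\nu_{\La_N}^{\srp\srp}$. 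This comparison with a single extremal spin measure (rather than a uniform loop-RSW bound) is the missing idea in your proposal.
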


In particular, any Gibbs measure is supported on configurations formed entirely of finite loops. 
Notice that we do not require that the Gibbs measure be translation invariant or ergodic for it to be equal to~$\bbP_\bbH$. 
However, we do not claim that for any sequence of domains~$\calD_n$ converging to~$\bbH$ and any sequence of boundary conditions~$\xi_n$ on these domains,~$\bbP_{\calD_n}^{\xi_n}$ tends to~$\bbP_\bbH$. 
This is a stronger statement than Theorem~\ref{thm:Gibbs}; we believe it to be true, but have no proof.
It may appear surprising, but limits of measures~$\bbP_{\calD_n}^{\xi_n}$ need not be Gibbs in the sense of \eqref{eq:DLR}. 
Theorem~\ref{thm:Gibbs} does not imply the uniqueness of the Gibbs measure for height functions. 
We will discuss more on this point in the following section. 
\smallskip

The loop model studied here is part of the larger class of loop~$O(n)$ models with edge weight~$x$, where~$n$ and~$x$ are positive parameters.
The loop~$O(n)$ model with edge-weight~$x$ in a domain~$\calD$ is the measure on loop configuration given by
\[
	\bbP_{\calD,n,x}(\omega) = \frac1{Z(\calD,n,x)}\,x^{|\omega|}n^{\ell(\omega)},
\]
where~$|\omega|$ is the number of edges in~$\omega$ and~$Z(\calD,n,x)$ is called the partition function. 

Results similar to Theorems~\ref{thm:loops} and~\ref{thm:Gibbs} were proved in~\cite[Theorems~1 and~2]{DumGla17} 
for the loop~$O(n)$ model with~$n \in [1,2]$ and~$x = \frac{1}{\sqrt{2 + \sqrt{2-n}}}$.
They are based on the (single) spin representation of the loop~$O(n)$ model, 
which is shown to satisfy the FKG inequality for~$n\geq 1$ and~$x\leq 1/\sqrt{n}$.
This is then used to prove a dichotomy similar to our Theorem~\ref{thm:dicho}.
For~$x = \frac{1}{\sqrt{2 + \sqrt{2-n}}}$ and~$n \in [1,2]$, the parafermionic observable is then used to exclude exponential decay of loops, 
thus proving the equivalent of Theorem~\ref{thm:loops}.
The uniqueness of the Gibbs measure is shown via the stronger statement which we are unable to prove here: 
convergence to the unique infinite-volume measure of finite-volume measures on any increasing sequence of domains, with any boundary conditions. 

The point~$n=2$,~$x=1$ is clearly outside of the FKG regime determined in \cite{DumGla17}, 
and a more complicated spin representation is required.
This representation will involve two spin configurations, and will therefore be sometimes referred to as the double-spin representation (see Section~\ref{sec:red_blue} for precise definitions).

Let us also mention that \cite{DumPelSamSpi17} proves that for~$n$ large enough and any~$x >0$, the loops of the loop~$O(n)$ model with edge-weight~$x$ exhibit exponential decay. 
Moreover, for~$n\,x^6$ large enough, it is shown that at least three distinct, linearly independent infinite-volume Gibbs measures exist. 
For~$n\,x^6$ small enough (and~$n$ large) it was shown in the same paper that at least one Gibbs measure exists, but its uniqueness (though expected) was not proved. 

\subsection{Relation between the loop~$O(2)$ model and random Lipschitz functions}

Fix a domain~$\calD$. 
For a Lipschitz function~$\varphi$ on~$\calD$, define an edge configuration~$\omega_\varphi$ by~$\omega_\varphi(e) = 1$ 
if and only if the two faces separated by~$e$ have different values of~$\varphi$. 
It is straightforward to check that~$\omega_\varphi$ is indeed a loop configuration.

\begin{prop}\label{prop:lip-to-loop}
	\begin{itemize}
	\item[(i)] If~$\Phi$ has law~$\pi_\calD$, then~$\omega_\Phi$ has law~$\bbP_\calD$.
	\item[(ii)] Given some loop configuration~$\omega$, the law of~$\Phi$ conditionally on~$\omega_\Phi = \omega$ is obtained as follows: 
	define~$\overrightarrow{\omega}$ by choosing a clockwise or a counter-clockwise orientation uniformly and independently for each loop~$\ell$ of~$\omega$. Then, for each face~$u$ of~$\calD$, set~
	\begin{equation}\label{eq:ol-to-lip}
		\Phi(u) = \ell_\circlearrowright(\overrightarrow{\omega}; u) - \ell_\circlearrowleft(\overrightarrow{\omega}; u),
	\end{equation}
	where~$\ell_\circlearrowright(\overrightarrow{\omega}; u)$ and~$\ell_\circlearrowleft(\overrightarrow{\omega}; u)$ 
	stand for the number of clockwise (resp. counter-clockwise) oriented loops of~$\overrightarrow{\omega}$ surrounding~$u$.
	\end{itemize} 
\end{prop}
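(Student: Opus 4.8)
The plan is to analyse the fibres of the map $\varphi\mapsto\omega_\varphi$. First, $\omega_\varphi\in\calL(\calD)$ for every Lipschitz function $\varphi$ with $0$ boundary conditions: at any vertex $w$ of $\bbH$ the three incident faces carry values that are pairwise within $1$ of one another, hence are either all equal or take exactly two consecutive values, and in both cases an even number (namely $0$ or $2$) of the three edges at $w$ lie in $\omega_\varphi$; so $\omega_\varphi$ has all degrees even. Since $\pi_\calD$ is uniform, for fixed $\omega\in\calL(\calD)$ the conditional law of $\Phi_\calD$ given $\{\omega_{\Phi_\calD}=\omega\}$ is the uniform measure on the fibre $\calF_\omega:=\{\varphi:\omega_\varphi=\omega\}$, and $\pi_\calD(\omega_{\Phi_\calD}=\omega)=|\calF_\omega|\big/\sum_{\omega'}|\calF_{\omega'}|$. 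Thus both assertions reduce to a single claim: $\calF_\omega$ is in natural bijection with the set of orientations of the loops of $\omega$, an orientation $\overrightarrow{\omega}$ corresponding to the function of~\eqref{eq:ol-to-lip}. Indeed, $|\calF_\omega|=2^{\ell(\omega)}$ then gives part~(i), since $\sum_{\omega'}|\calF_{\omega'}|=\sum_{\omega'}2^{\ell(\omega')}=Z(\calD,2,1)$ and $x^{|\omega|}=1$, so $\pi_\calD(\omega_{\Phi_\calD}=\omega)=2^{\ell(\omega)}/Z(\calD,2,1)=\bbP_\calD(\omega)$; and it gives part~(ii), since the uniform law on orientations is the product over the loops of $\omega$ of the uniform law on $\{\text{clockwise},\text{counter-clockwise}\}$.

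To construct the bijection, call two faces of $\calD$ \emph{$\omega$-connected} if joined by a chain of successively adjacent faces whose separating edges all avoid $\omega$, and call the classes of this relation the \emph{regions}. Every $\varphi\in\calF_\omega$ is constant on each region and changes by exactly $1$ across any edge of $\omega$. The topological input is that the loops of $\omega$ are pairwise vertex-disjoint simple cycles (by the definition of a loop configuration), hence nested; that by the Jordan curve theorem each loop separates the faces immediately inside it from those immediately outside it; that the faces lying outside every loop form one region, the \emph{outer region}, which contains $\partial_\int\calD$; and that the region-adjacency graph --- one vertex per region, one edge per loop of $\omega$ joining the region just inside it to the one just outside it --- is a tree. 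Given this, any $\varphi\in\calF_\omega$ is reconstructed from the value $0$ on the outer region together with, for each loop $\ell$, the signed increment $\sigma(\ell)\in\{-1,+1\}$ of $\varphi$ across $\ell$, with the convention that $\sigma(\ell)=+1$ is the clockwise orientation; conversely, any prescription of signs $\sigma(\ell)\in\{-1,+1\}$ yields a function in $\calF_\omega$ via $\varphi(u):=\sum\sigma(\ell)$, the sum over all loops $\ell$ surrounding $u$ --- the Lipschitz bound and the identity $\omega_\varphi=\omega$ hold because crossing an edge $e$ alters the collection of loops surrounding the current face by exactly the loop through $e$ when $e\in\omega$ and not at all when $e\notin\omega$ (this is where vertex-disjointness of the loops enters), while $\varphi\equiv0$ on $\partial_\int\calD$ because no loop surrounds a face of $\partial_\int\calD$, each such face being adjacent, across a boundary edge carrying no loop, to a face outside every loop. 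The two maps are mutually inverse, and the formula $\varphi(u)=\sum_{\ell\text{ surrounds }u}\sigma(\ell)$ is precisely~\eqref{eq:ol-to-lip} once $\sigma(\ell)=\pm1$ is read as clockwise/counter-clockwise.

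The only step that is not bookkeeping is the topological claim that the region-adjacency graph is a tree indexed by the loops --- equivalently, that for $\varphi\in\calF_\omega$ the increment across a loop is the same at each of its edges. I would establish this by induction on $\ell(\omega)$: an innermost loop $\ell$ bounds a region on which $\varphi$ is constant, and collapsing the interior of $\ell$ to the neighbouring value removes one loop while preserving the structure; the base case $\ell(\omega)=0$ is trivial, and the Jordan curve theorem is what renders ``innermost'', ``interior'' and ``immediately inside/outside'' meaningful. Once this is in hand, the parity check, the count $|\calF_\omega|=2^{\ell(\omega)}$, the identification $\pi_\calD(\omega_{\Phi_\calD}=\cdot)=\bbP_\calD$, and the identification of the conditional law with the orientation construction are all immediate.
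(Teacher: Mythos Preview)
Your proof is correct and follows essentially the same approach as the paper: both establish that the map from Lipschitz functions with zero boundary conditions to oriented loop configurations (via~\eqref{eq:ol-to-lip}) is a bijection, and then read off (i) and (ii) from the resulting count $|\calF_\omega|=2^{\ell(\omega)}$. The paper's proof is much terser --- it simply asserts the bijection and describes the inverse (orient each loop of $\omega_\varphi$ clockwise if $\varphi$ is higher inside than outside) --- whereas you spell out the parity check that $\omega_\varphi$ is indeed a loop configuration, the region decomposition, and the tree structure of the region-adjacency graph that makes the reconstruction well-defined; these are precisely the details the paper sweeps under ``straightforward to check'' and ``in fact a bijection''.
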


\begin{proof}
	The correspondence between oriented loop configurations and Lipschitz functions defined by~\eqref{eq:ol-to-lip} is in fact a bijection.  
	Indeed, the reverse mapping can be defined as follows: given a Lipschitz function~$\varphi : \calD \to \bbZ$, 
	each loop of the corresponding (unoriented) loop configuration~$\omega_\varphi$ is oriented 
	clockwise if the values of~$\varphi$ inside of the loop are higher than those outside, 
	and is oriented counter-clockwise otherwise. 
	
	The push-forward of~$\pi_\calD$ under this bijection is a uniform measure on all oriented loop configurations on~$\calD$. Considering the projection on the set of unoriented loop configurations we obtain~$\bbP_\calD$, since each loop has two possible orientations. This proves~$(i)$, and~$(ii)$ follows readily.
\end{proof}

Using the correspondence between Lipschitz functions and loop configurations described above, Theorem~\ref{thm:lipschitz} follows easily from Theorem~\ref{thm:loops}. 

\begin{proof}[Theorem~\ref{thm:lipschitz} (assuming Theorem~\ref{thm:loops})]
	\textbf{\textit{(i)}} By Proposition~\ref{prop:lip-to-loop}~\textit{(ii)},
	a random Lipschitz function~$\Phi_\calD$ distributed according to~$\pi_\calD$  
	can be sampled from a random loop configuration~$\omega$ distributed according to~$\bbP_\calD$ 
	by orienting each loop of~$\omega$ uniformly and independently. 
	Then~$\Phi_\calD(0)$ has the distribution of a simple random walk on~$\bbZ$ with~$N_\calD$ steps,  
	where~$N_\calD$ is the number of loops in~$\omega$ surrounding~$0$. 
	Thus,~$\textrm{Var}(\Phi_\calD(0)) = \bbE_\calD(N_\calD)$.
	The conclusion follows from~\eqref{eq:E_surrounding_loops}.
        \smallskip
	
	\noindent\textbf{\textit{(ii)}}
	Using the coupling from Proposition~\ref{prop:lip-to-loop}, we get that for any~$u\in F(\calD)$, the value of~$\Phi_\calD(u) - \Phi_\calD(0)$ is a function of number of loops separating~$u$ from~$0$ and their orientations. By items~$(i)$ and~$(ii)$ of Theorem~\ref{thm:loops} the infinite-volume limit of~$\bbP_\calD$ exists and consists only of finite loops. Thus, the infinite-volume limit of~$\Phi_\calD - \Phi_\calD(0)$ also exists.\smallskip
	
	\noindent\textbf{\textit{(iii)}}
	We will prove the statement for~$\Phi_\bbH$; that for~$\Phi_\calD$ is proved in the same way. 
	Similarly to the previous items, we have
	\begin{align}\label{eq:Var_NxNy}
		\mathrm{Var}(\Phi_\bbH(x) - \Phi_\bbH(y)) = \bbE_\bbH(N_{x \setminus y} + N_{y \setminus x}),
	\end{align}
	where~$N_{x\setminus y}$ stands for the number of loops surrounding~$x$ but not~$y$ 
	and~$N_{y \setminus x}$ for those surrounding~$y$ but not~$x$.

	For the lower bound, notice that~$N_{x\setminus y}$ is larger than the number of loops surrounding~$x$ and contained in~$\La_{|x-y|}$. 
	Thus, by~\eqref{eq:E_surrounding_loops},~$\bbE_\bbH(N_{x \setminus y}) \geq c \log |x-y|$ for some universal constant~$c > 0$.
	The desired lower bound on~$\mathrm{Var}(\Phi_\bbH(x) - \Phi_\bbH(y))$ follows. 
	
	For the upper bound, define~$\Gamma$ to be the outermost loop surrounding~$x$ but not~$y$, provided such a loop exists. 
	Let~$\gamma$ be a possible realisation of~$\Gamma$ and let~$\Int(\gamma)$ be the interior of the domain delimited by~$\gamma$. 
	Notice that the event~$\{\Gamma = \gamma\}$ is measurable in terms of the configuration on and outside~$\gamma$.
	Therefore, conditionally on~$\Gamma = \gamma$, the restriction of~$\bbP_\bbH$ to~$\Int(\gamma)$ is
	the uniform measure among all loop configuration in~$\Int(\gamma)$, which is to say it is equal to~$\bbP_{\Int(\gamma)}$. 
	Thus 
	\begin{align*}
		\bbE_\bbH(N_{x\setminus y}) = \bbP( N_{x\setminus y} > 0) + \sum_\gamma \bbE_{\Int(\gamma)}(N_{\Int(\gamma)}(x))\cdot \bbP_\bbH(\Gamma = \gamma),
	\end{align*}
	where the sum is over all possible realisations~$\gamma$ of~$\Gamma$ and~$N_{\Int(\gamma)}(x)$ in the right hand side 
	stands for the number of loops surrounding~$x$ and contained in~$\Int(\gamma)$. 
	Now, since~$y \notin \Int(\Gamma)$,~$\mathrm{dist}(x,\Int(\gamma)^c) \leq |x-y|$ for any path~$\gamma$ appearing in the sum. 
	Thus,~\eqref{eq:E_surrounding_loops} proves that~$\bbE_\bbH(N_{\Int(\gamma)}(x)) \leq 1 + C \log |x-y|$ for some universal constant~$C$. 
	
	The same holds for~$\bbE_\bbH(N_{y\setminus x})$. Using this and~\eqref{eq:Var_NxNy}, we obtain the desired upper bound on~$\mathrm{Var}(\Phi_\bbH(x) - \Phi_\bbH(y))$.
\end{proof}

Let us briefly comment on the uniqueness of infinite-volume measures for Lipschitz functions. 
One may think that, due to Theorem~\ref{thm:Gibbs}, 
$\pi_\bbH$ should be the only infinite-volume measure 
with the property that its restriction to any finite domain is uniform among Lipschitz functions that take the value~$0$ at the origin. 
This is not the case. Indeed, the correspondence between the loop and Lipschitz functions models is not perfect, and does not allow us to deduce this. 

Moreover the claim is false, as an infinite family of infinite-volume measures for uniform Lipschitz functions is expected to exist, one for each global ``slope''.
The loop representation of any of these contains infinite paths and is not Gibbs in the sense of~\eqref{eq:DLR}.

\paragraph{Structure of the paper} 
The rest of the paper is entirely dedicated to the loop~$O(2)$ model with~$x = 1$. 
In Section~\ref{sec:red_blue} we derive a representation of the loop model in terms of two loop~$O(1)$ configurations conditioned not to intersect. 
These are in turn represented in terms of spin configurations that are shown to satisfy the FKG inequality and a certain form of Spatial Markov property. 

In Section~\ref{sec:infinite_volume} this spin representation is used to construct an infinite-volume, ergodic loop~$O(2)$ measure. 
The infinite-volume measure is then shown to be unique (in some sense that will be made precise later). 
In doing so, we show that~$0$ is surrounded by infinitely many loops. 
For height functions, this corresponds to the delocalisation of~$\Phi(0)$ or equivalently to the divergence of covariances. 
At this stage, the delocalisation/divergence is not quantitative.

Section~\ref{sec:dichotomy} contains a dichotomy theorem.
In the language of uniform Lipschitz functions, the dichotomy theorem roughly states that the covariance between two points 
either is bounded or diverges logarithmically in the distance between the  points. 

Finally, in Section~\ref{sec:macro}, 
the non-quantitative delocalisation result and the dichotomy theorem are used to prove Theorem~\ref{thm:loops}.
Theorem~\ref{thm:Gibbs} is also proved here. Moreover, we provide an RSW result for height functions in Section~\ref{sec:RSW_heights}. 

The paper is structured so as to isolate the different ingredients of our argument;
some of them may be useful for the analysis of the loop~$O(n)$ model with other values of~$n$ and~$x$, or other similar models. 
We further discuss in Section \ref{sec:spin_representation} the various properties of the loop~$O(2)$ model that are necessary for our proof. 

\paragraph{Notation} 
Below is a list of notation used throughout the paper. Some of it was already mentioned, some is new. 

Recall that~$\bbH$ denotes the hexagonal lattice; its dual is the triangular lattice, written~$\bbH^* = \bbT$. 
We will call edge-path any finite or infinite sequence of adjacent edges of~$\bbH$ with no repetitions. 
A face-path is a sequence of adjacent faces of~$\bbH$ with no repetitions, 
or equivalently it is a path on~$\bbT$ that does not visit the same vertex twice. 

Domains~$\calD = (V(\calD),E(\calD))$ are interior of edge-polygons of~$\bbH$.
The edges of the polygon form the edge-boundary of~$\calD$, written~$\partial_E\calD$.
The faces of~$\bbH$ adjacent to~$\partial_E\calD$ and inside~$\partial_E\calD$ (outside, respectively) 
form the inner face boundary of~$\calD$, written~$\partial_\int \calD$ (and the outer face-boundary written~$\partial_\out \calD$, respectively). 
The set of faces of~$\bbH$ inside~$\partial_E\calD$ is written~$F(\calD)$; 
those not adjacent to~$\partial_E\calD$ form the interior of~$\calD$, denoted by~$\Int(\calD) = F(\calD) \setminus \partial_\int\calD$. 
The dual~$\calD^* = (V(\calD^*), E(\calD^*))$ of~$\calD$ is the induced subgraph of~$\bbT$ with vertex set~$F(\calD)$.  

An edge configuration on~$\calD$ is an element~$\omega \in \{0,1\}^{E(\calD)}$; 
it is identified to the graph with vertex set~$V(\calD)$ and edge-set~$\{e \in E(\calD) :\, \omega(e) = 1\}$. 
Write~$u \xlra{\omega} v$ to indicate that two vertices~$u,v$ of~$V(\calD)$ are connected in~$\omega$. 
The same notation applies to~$\bbH$ and~$\calD^*$. 

A spin configuration on~$\calD$ is an element~$\sigma \in \{-,+\}^{F(\calD)}$; the notation extends to~$\bbH$. 
Below we will use two superposing spin configurations. 
We identify one as red, the other as blue and denote the relevant spins by~$\{\rm,\rp\}$ and~$\{\bm,\bp\}$ for legibility. 

For a red-spin configuration~$\sigma \in \{\rm,\rp\}^{F(\calD)}$ and two faces~$u,v \in F(\calD)$, 
write~$u \xlra{\srp} v$ (or~$u \xlra{\srp \text{ in~$\calD$}} v$ when the choice of~$\calD$ is unclear) 
to indicate that there exists a face-path in~$\calD$ starting at~$u$ and ending at~$v$, formed entirely of faces with~$\sigma$-spin~$\rp$. 
Such a path will be called a~$\rp$-path or simple-$\rp$ path. Connected components for this notion of connectivity are called~$\rp$-clusters. 

A double-$\rp$ path will be an edge-path for which all adjacent faces have spin~$\rp$;
connections by double-$\rp$ paths will be denoted by~$\xlra{\srp\srp}$.
The same applies to spins~$\rm, \bp$ and~$\bm$. 

Write~$\nxlra{}$ for the negation of~$\lra$.

\paragraph{Acknowledgements} 
The authors would like to thank Hugo Duminil-Copin for numerous discussions and tips, 
especially concerning the dichotomy theorem of Section \ref{sec:dichotomy}, and Ron Peled for suggesting to develop the loop-weight~$2 = 1 + 1$ following Chayes and Machta.
Our conversations with Matan Harel, Marcelo Hilario, and Nick Crawford also were very helpful. 
We acknowledge the hospitality of IMPA (Rio de Janeiro), where this project started. 

The first author is supported by the Swiss NSF grant P300P2\_177848, and partially supported by the European Research Council starting grant 678520 (LocalOrder).
The second author is a member of the NCCR SwissMAP.

\section{1+1 = 2}\label{sec:red_blue}

Fix a domain~$\calD$. Choose a loop configuration~$\omega$ according to~$\bbP_{\calD}$ and colour each loop of~$\omega$ in either red or blue, with equal probability, independently for each loop. Extend~$\bbP_\calD$ to include this additional randomness. 
Write~$\omega_r$ and~$\omega_b$ for the configurations of blue and red loops. Then, for any two disjoint loop configurations~$\omega_r,\omega_b$, 
\begin{align*}
   	\bbP_\calD(\omega_r,\omega_b) 
	&=\frac{1}{Z(\calD)}2^{\ell(\omega)} \big(\tfrac12\big)^{\ell(\omega_r)} \big(\tfrac12\big)^{\ell(\omega_b)}= \frac{1}{Z(\calD)}.
\end{align*}
In other words,~$\bbP_\calD$ is the uniform distribution on pairs of loop configurations~$(\omega_r,\omega_b)$ that do not to intersect. 

In the context of Lipschitz functions, one may think of~$\omega_r$ as the level lines with higher value on the inside and~$\omega_b$ as those with higher value on the outside 
(that is the clockwise and counter-clockwise, respectively, oriented loops in the language of~\eqref{eq:ol-to-lip}).
 While accurate, this interpretation is not relevant below.

Keeping the idea of colouring loops as the intuition, in the next section we introduce a measure on pairs of red and blue~$\pm 1$ spin configurations. Though this measure is tightly linked to the loop~$O(2)$ measure on pairs of red and blue loops and under certain boundary conditions these two measures will be shown to coincide, 
we emphasise that this is not always the case.

To shorten notation, we will use the symbols~$\rm,\rp$ to denote the values of red spins and~$\bm,\bp$ for blue spins. 

\subsection{Spin representation}\label{sec:spin_representation}

Define~$\mu_\calD$ to be the uniform measure on all pairs of spin configurations~$\sigma_r\in\{\rm,\rp\}^{F(\calD)}$ and~$\sigma_b\in\{\bm,\bp\}^{F(\calD)}$ such that for every two adjacent faces~$u,v\in F(\calD)$ at least one of the equalities~$\sigma_r(u)=\sigma_r(v)$ and~$\sigma_b(u)=\sigma_b(v)$ holds. We call such configurations~$\sigma_r$,~$\sigma_b$ \emph{coherent} and denote this relation by~$\sigma_r\perp\sigma_b$.

Given a spin configuration~$\sigma\in\{\pm 1\}^\calD$, define~$\omega(\sigma)$ to be set of edges of~$\calD$ separating adjacent faces bearing different spin in~$\sigma$. Then~$\omega(\sigma)$ consists of disjoint loops and paths linking boundary vertices in~$\calD$.

The correspondence~$\sigma\mapsto \omega(\sigma)$ is a classical tool in the study of the Ising model, called the high temperature representation (see for instance \cite[Sec.~3.10.1]{FriVel17}). If~$\sigma$ is chosen according to a Ising distribution, then~$\omega(\sigma)$ has the law of a loop~$O(1)$ model, with parameter~$x$ depending on the temperature of the Ising measure. 
For the loop~$O(n)$ model with general values of~$n$, this correspondence was used in~\cite{DumGla17} with the name cluster representation.

The following proposition describes the relation between~$\mu_\calD$ and~$\bbP_{\calD}$. Define~
\begin{align}\label{eq:srpsrm_def}
	\mu^{\srp\srm}_\calD:=
	\mu_\calD(\cdot\,|\,\sigma_r \equiv \rp \text{ on } \partial_\int\calD \text{ and } \sigma_b \equiv \bp \text{ or } \sigma_b \equiv \bm \text{ on } \partial_\int\calD),
\end{align}
where~$\equiv$ should be understood as ``equal everywhere to''. The notation~$\mu^{\srp\srm}_\calD$ comes from Theorem~\ref{thm:DMP}, where these boundary conditions are shown to be equivalent to setting~$\rp$ on the interior boundary of~$\calD$ and~$\rm$ on its exterior boundary.

\begin{prop}\label{prop:loop-to-spin}
	If the couple~$(\sigma_r,\sigma_b)$ has law~$\mu^{\srp\srm}_\calD$, then the couple~$(\omega(\sigma_r), \omega(\sigma_b))$ has law~$\bbP_\calD$. In particular~$\omega(\sigma_r)\cup \omega(\sigma_b)$ has the law of the loop~$O(2)$ model on~$\calD$. 
\end{prop}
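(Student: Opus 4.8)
The plan is to realise $(\sigma_r,\sigma_b)\mapsto(\omega(\sigma_r),\omega(\sigma_b))$ as a $2$-to-$1$ map from the support of $\mu^{\srp\srm}_\calD$ onto the set of \emph{edge-disjoint} pairs of loop configurations on $\calD$, and then to invoke the elementary identity from the start of Section~\ref{sec:red_blue} which identifies the uniform measure on such pairs with $\bbP_\calD$ enriched by an independent fair colouring of its loops. Since that colouring construction is by definition an extension of $\bbP_\calD$, this will give both assertions of the proposition.

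First I would record a two-coloured version of the high-temperature correspondence. If a red spin configuration $\sigma_r$ is identically $+$ on $\partial_\int\calD$, then $\omega(\sigma_r)\subseteq E(\calD)$ contains no edge of $\partial_E\calD$ and, because consecutive faces of $\partial_\int\calD$ then carry equal spin, no edge meeting $\partial_E\calD$ at all; hence every vertex of $V(\calD)$ has even degree in $\omega(\sigma_r)$, so $\omega(\sigma_r)$ is a disjoint union of loops none of which touches $\partial_E\calD$, i.e.\ $\omega(\sigma_r)\in\calL(\calD)$. Conversely, given $\omega\in\calL(\calD)$, the fact that $\omega$ is an even subgraph makes the parity of the number of edges of $\omega$ crossed by a dual path between two faces depend only on the endpoints; assigning $\sigma_r(u)\in\{+,-\}$ by this parity relative to $\partial_\int\calD$ gives the unique red configuration identically $+$ on $\partial_\int\calD$ with $\omega(\sigma_r)=\omega$ (that it really is constant on $\partial_\int\calD$ uses once more that $\omega$ avoids $\partial_E\calD$). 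So $\sigma_r\mapsto\omega(\sigma_r)$ is a bijection from $\{\sigma_r\equiv+\text{ on }\partial_\int\calD\}$ onto $\calL(\calD)$, and the same map on $\{\sigma_b\equiv+\text{ on }\partial_\int\calD\}\cup\{\sigma_b\equiv-\text{ on }\partial_\int\calD\}$ is $2$-to-$1$ onto $\calL(\calD)$, the two preimages being global spin-flips of one another.

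Next I would note that an edge $e$ separating faces $u,v$ belongs to $\omega(\sigma_r)$ iff $\sigma_r(u)\ne\sigma_r(v)$ and to $\omega(\sigma_b)$ iff $\sigma_b(u)\ne\sigma_b(v)$, so $\sigma_r\perp\sigma_b$ holds exactly when $\omega(\sigma_r)$ and $\omega(\sigma_b)$ share no edge; since $\bbH$ has maximal degree $3$, they then also share no vertex, and $\omega(\sigma_r)\cup\omega(\sigma_b)\in\calL(\calD)$. Combining this with the previous paragraph, $(\sigma_r,\sigma_b)\mapsto(\omega(\sigma_r),\omega(\sigma_b))$ maps the support of $\mu^{\srp\srm}_\calD$ onto the set of edge-disjoint pairs in $\calL(\calD)^2$, and each fibre has exactly two elements: the red coordinate is forced, the blue one has its two global signs, and both are coherent with $\sigma_r$ because coherence does not see the global sign of $\sigma_b$. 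Hence the push-forward of the uniform measure $\mu^{\srp\srm}_\calD$ is the uniform measure on edge-disjoint pairs $(\omega_r,\omega_b)$. By the computation displayed at the start of Section~\ref{sec:red_blue} (namely $2^{\ell(\omega)}(\tfrac12)^{\ell(\omega_r)}(\tfrac12)^{\ell(\omega_b)}=1$, using $\ell(\omega)=\ell(\omega_r)+\ell(\omega_b)$), this is precisely the law of $(\omega_r,\omega_b)$ obtained by sampling $\omega\sim\bbP_\calD$ and colouring its loops fairly and independently; therefore $(\omega(\sigma_r),\omega(\sigma_b))$ has that law, and forgetting the colours shows $\omega(\sigma_r)\cup\omega(\sigma_b)=\omega_r\cup\omega_b$ has law $\bbP_\calD$, the loop $O(2)$ model on $\calD$ with $x=1$.

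The one delicate point is the bijection in the second paragraph: one must check that conditioning the boundary spins to be constant is exactly what suppresses the ``paths linking boundary vertices'' in $\omega(\sigma)$ and upgrades the high-temperature correspondence to a genuine bijection onto $\calL(\calD)$. This rests entirely on the convention that loops of a loop configuration are not permitted to touch $\partial_E\calD$. Once it is in place, the remaining steps are bookkeeping of the $2$-to-$1$ fibres and the one-line weight count already carried out in the text.
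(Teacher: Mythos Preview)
Your proof is correct and follows essentially the same approach as the paper's: establish that $\sigma_r\mapsto\omega(\sigma_r)$ is a bijection from red configurations constant equal to $\rp$ on $\partial_\int\calD$ onto $\calL(\calD)$, that the blue map is $2$-to-$1$, that coherence is equivalent to edge-disjointness, and then identify the push-forward with the uniform law on non-intersecting coloured loop pairs already computed at the start of Section~\ref{sec:red_blue}. Your write-up is slightly more detailed (e.g.\ the parity description of the inverse map and the degree-$3$ observation yielding vertex-disjointness and hence $\ell(\omega)=\ell(\omega_r)+\ell(\omega_b)$), but the argument is the same.
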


\begin{proof}
	The map~$\sigma_r\mapsto \omega(\sigma_r)$ is a bijection between spin configurations~
	$\sigma_r\in\{\rm,\rp\}^{F(\calD)}$ that are equal to~$\rp$ on~$\partial_\int\calD$ and all loop configurations on~$\calD$.
	Indeed, due to the constant spin of~$\sigma_r$ on~$\partial_\int\calD$,~$\omega(\sigma_r)$ is indeed a loop configuration. 
	Moreover, the reverse mapping is the following: a loop configuration~$\omega$ on~$\calD$ is mapped to the spin configuration~$\sigma_r\in\{\rm,\rp\}^{F(\calD)}$ that is equal to~$\rp$ (resp.~$\rm$) at all faces of~$\calD$ that are surrounded by an even (resp. odd) number of loops of~$\omega$. 
	
	Similarly, the map~$\sigma_b \mapsto \omega(\sigma_b)$ 
	defined on the set of spin configurations~$\sigma_b\in\{\bm,\bp\}^{F(\calD)}$ that are constant on~$\partial_\int\calD$
	and taking values in~$\calL(\calD)$ is two to one, due to its invariance under global spin flip.
	
	The condition~$\sigma_r\perp\sigma_b$ corresponds to~$\omega(\sigma_r)\cap \omega(\sigma_b) = \emptyset$. Thus,~$\mu^{\srp\srm}_\calD$  induces a uniform measure on all pairs~$(\omega(\sigma_r), \omega(\sigma_b))$ of non-intersecting red and blue loop configurations on~$\calD$, that is~$\bbP_\calD$. As described above, the marginal of this measure on the non-coloured loop configuration~$\omega(\sigma_r)\cup \omega(\sigma_b)$ is the loop~$O(2)$ measure on~$\calD$. 
\end{proof}

\begin{rem}\label{rem:x_neq_1}
	Extensions of the statement to all~$x \neq 1$ are possible and result in non-uniform measures on pairs of spin configurations. 
	As already mentioned, the correspondence between double-spins and loops 
	does not extend to general boundary conditions for the loop~$O(2)$ model. 
\end{rem}

We will show below that, under~$\mu_\calD$, the marginals~$\sigma_r$ and~$\sigma_b$ satisfy the FKG inequality. 
Moreover the spin measures of the type~$\mu_\calD$ satisfy the Spatial Markov property in the following sense. 
If~$\calD'$ is a domain contained in some larger domain~$\calD$, 
then the restriction of~$\mu_\calD$ to~$\calD'$, conditionally on the spins~$\sigma_r, \sigma_b$ outside~$\calD'$, 
is entirely determined by the values of~$\sigma_r$ and~$\sigma_b$ on~$\partial_\out\calD$.

It may be tempting to think that these two observations suffice to apply the techniques developed for the random-cluster model to our setting (such as those of \cite{DumSidTas17,DumRaoTas18}). Unfortunately this is easier said than done.  
Indeed, many of these techniques use a form of monotonicity of boundary conditions. 
In our case, it is unclear how to compare boundary conditions consisting of pairs of spins, 
as the FKG inequality applies only individually to the single-spin marginals of~$\mu_\calD$. 

To circumvent this difficulty, we will focus our study on one of the single-spin marginals of~$\mu_\calD$; 
we arbitrarily choose the red-spin marginal, and call it~$\nu_\calD$. 
As already stated, this measure satisfies the FKG inequality, but fails to have a general spatial Markov property. 
However, we show in Theorem~\ref{thm:DMP} and Corollary~\ref{cor:DMP} that a limited version of the spatial Markov property applies to~$\nu_\calD$, under certain restrictions.
\smallskip 

One may attempt to apply the same strategy to other values of~$n$ and~$x$. 
Our argument is quite intricate, and different parts of it use different properties of the double spin representation described above.
The paper is organised to separate the different arguments, so as to facilitate the identification of blocks that may be applied to other models. 
Below is brief list of the essential properties of the double spin representation and their uses.
\begin{itemize}
\item The FKG inequality for the red-spin marginal is crucial and is used extensively throughout the proof. 
As mentioned in Remark~\ref{rem:FKG}~\textit{(iii)}, the FKG inequality extends to the red-spin marginal of a certain double spin representation of the loop~$O(n)$ model with parameters~$n \geq 2$  and~$x \leq \frac{1}{\sqrt{n-1}}$. 
\item That~$x = 1$ is essentially only used for the spatial Markov property. In its current form, the property does not apply to~$x \neq 1$. 
\item The symmetry between the red and blue spin marginals (which, in light of Remark~\ref{rem:FKG}~\textit{(iii)} boils down to~$n =2$)
is akin to a self-duality property, and is used  to prove RSW type estimates (see Lemma~\ref{lem:square_crossed}).
\end{itemize} 
Finally, let us mention that it is expected that the loop~$O(2)$ model for~$n =2$ and~$x \geq 1/\sqrt2$ has a similar behaviour to the case~$x = 1$, that is macroscopic loops exist at every scale. However, for all~$ n>2$ and any~$x > 0$ or~$n =2$ and~$x < 1/\sqrt2$, loops are expected to  exhibit exponential decay. 
Thus, parts of our proof need to fail for more general values of~$n$ and~$x$. 
The dichotomy theorem of Section~\ref{sec:dichotomy} (or similar statements) may be expected 
to hold for all values of~$n$ and~$x$, but no proof is generally available.

\subsection{Spatial Markov property}

In general, the measures~$\nu_\calD$, that is the red-spin marginals of~$\mu_\calD$, do not have the spatial Markov property.
However, a version of this property holds in certain cases. 
Recall the definition \eqref{eq:srpsrm_def} of~$\mu^{\srp\srm}_\calD$ 
and set~
\begin{align*}
	\mu^{\srp\srp}_\calD:=\mu_\calD(\cdot\,|\,\sigma_r \equiv \rp \text{ on } \partial_\int\calD).
\end{align*}
Let~$\nu^{\srp\srm}_\calD$ and~$\nu^{\srp\srp}_\calD$, respectively, be the marginals on~$\sigma_r$ of the above two measures. 
Define the measures~$\mu^{\srm\srp}_\calD$,~$\mu^{\srm\srm}_\calD$,~$\mu^{\sbp\sbm}_\calD$ etc. in a similar ways, and write~$\nu^{\srm\srp}$ etc. their red-spin marginals. 

\begin{thm}[Spatial Markov property]\label{thm:DMP}
	Let~$\calD,\calD'\subset\bbH$ be two domains such that~$\partial_E\calD \subset E(\calD')$. Let~$\tau_r\in\{\rp,\rm\}^{F(\calD')}$ and~$\tau_b\in\{\bp,\bm\}^{F(\calD')}$ be two coherent spin configurations on~$\calD'$.
	\begin{enumerate}[label=(\roman*)]
		\item if~$\tau_r = \rp$ on~$\partial_\int\calD\cup\partial_\out\calD$, then
		\begin{align}\label{eq:domain-markov}
			\mu_{\calD'}\big[\sigma_r,\sigma_b  \,\big|\, 
				\sigma_r = \tau_r\text{ on } F(\calD') \setminus \Int(\calD), \,
				\sigma_b = \tau_b \text{ on } F(\calD') \setminus F(\calD)
			\big] \eqonD\mu_\calD^{\srp\srp};
		\end{align}
		\item if~$\tau_r = \rp$ on~$\partial_\int\calD$, 
		$\tau_r =\rm$ on~$\partial_\out\calD$ and~$s:=\tau_b(u)$ for some~$u\in\partial_\out\calD$ , then\\
		\begin{align}\label{eq:domain-markov-rm}
			& \mu_{\calD'}\big[\sigma_r,\sigma_b  \,\big|\, 
				\sigma_r = \tau_r\text{ on } F(\calD') \setminus \Int(\calD), \,
				\sigma_b = \tau_b \text{ on } F(\calD') \setminus F(\calD)
			\big] \\
			&\quad \eqonD \mu_\calD^{\srp\srm}(\cdot\,|\, \sigma_b(v) = s\text{ for some } v\in\partial_\int\calD), \nonumber
		\end{align}
	\end{enumerate}	
	where symbol~$\eqonD$ means that the two measures are equal when~$\sigma_r$ and~$\sigma_b$ are restricted to~$\calD$.
\end{thm}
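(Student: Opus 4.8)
The plan is to verify the claim directly from the definition of $\mu_{\calD'}$ as a uniform measure on coherent pairs, by comparing the conditional law appearing on the left-hand side with the explicit description of $\mu_\calD^{\srp\srp}$ (resp.\ $\mu_\calD^{\srp\srm}$). First I would fix the exterior data: conditionally on $\sigma_r = \tau_r$ on $F(\calD')\setminus\Int(\calD)$ and $\sigma_b=\tau_b$ on $F(\calD')\setminus F(\calD)$, the only free spins are $\sigma_r$ on $\Int(\calD)$ and $\sigma_b$ on $F(\calD)\setminus\partial_\int\calD = \Int(\calD)$. Since $\mu_{\calD'}$ is uniform on coherent pairs, the conditional law is uniform on those completions of the free spins that keep the \emph{whole} configuration on $\calD'$ coherent. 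The key observation is that coherence is a purely local (edge-by-edge) constraint, so the only edges whose coherence is not already fixed by the exterior data are the edges of $E(\calD)$ together with the edges joining $\partial_\int\calD$ to $\partial_\out\calD$; and I would check that, given the prescribed boundary values of $\tau_r$ (constant $\rp$ on $\partial_\int\calD\cup\partial_\out\calD$ in case (i), and $\rp$ on $\partial_\int\calD$, $\rm$ on $\partial_\out\calD$ in case (ii)), those cross-boundary edges are automatically coherent whatever the interior spins are — in case (i) because $\sigma_r$ is already constant there, in case (ii) because the red spin \emph{does} jump across $\partial_E\calD$, so coherence forces $\sigma_b$ to be constant across each such edge, i.e.\ $\sigma_b$ on $\partial_\int\calD$ must agree with $\tau_b$ on the adjacent face of $\partial_\out\calD$.

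This reduces the conditional law to the uniform measure on pairs $(\sigma_r,\sigma_b)$ restricted to $\calD$ that are coherent on $\calD$ and satisfy the appropriate boundary condition on $\partial_\int\calD$. In case (i) the boundary condition is exactly $\sigma_r\equiv\rp$ on $\partial_\int\calD$, with no constraint on $\sigma_b|_{\partial_\int\calD}$ beyond coherence, which is precisely the definition of $\mu_\calD^{\srp\srp}$. In case (ii), writing $s=\tau_b(u)$ for some (equivalently, using coherence of $\tau_b$ along $\partial_\out\calD$, \emph{any}) $u\in\partial_\out\calD$, the cross-boundary coherence just derived forces $\sigma_b$ to take the value $s$ on all of $\partial_\int\calD$; in particular $\sigma_b$ is constant on $\partial_\int\calD$, so by definition of $\mu_\calD^{\srp\srm}$ the resulting measure is $\mu_\calD^{\srp\srm}(\,\cdot\mid \sigma_b(v)=s \text{ for some } v\in\partial_\int\calD)$, as claimed. (Here one uses that $\partial_\out\calD$ is connected and coherent in $\tau$, so that $s$ is well defined independently of the chosen $u$, and likewise that $\partial_\int\calD$ is connected so the conditioning event pins $\sigma_b$ on all of $\partial_\int\calD$.)

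The only point requiring genuine care — and the one I would treat as the main obstacle — is the bookkeeping of \emph{which} edges carry a coherence constraint, and the verification that this partitions cleanly into "edges determined by the exterior data" and "edges of $E(\calD)$", with the cross-boundary edges behaving exactly as described. The delicate ingredient is the hypothesis $\partial_E\calD\subset E(\calD')$: it guarantees that $\partial_E\calD$ is made of genuine edges of $\calD'$ (so that their coherence is part of the constraint in $\mu_{\calD'}$) while simultaneously $\partial_\out\calD\subset F(\calD')$, so that $\tau_b$ is actually defined on $\partial_\out\calD$ and the value $s$ makes sense. I would spell out, using the notation of the paper (inner/outer face boundaries, $\Int$, $F$, $E$), that every edge of $E(\calD')$ is either (a) incident only to faces outside $\Int(\calD)$ for $\sigma_r$ and outside $F(\calD)$ for $\sigma_b$ — hence frozen by the conditioning — or (b) an edge of $E(\calD)$ or a $\partial_E\calD$-edge, which are exactly the edges governing $\mu_\calD$ with the stated boundary behaviour. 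Once this combinatorial dissection is in place, the equality of the two measures is immediate from uniformity, and the restriction statement (equality only after restricting $\sigma_r,\sigma_b$ to $\calD$) is automatic since the exterior spins are deterministic under the conditioning.
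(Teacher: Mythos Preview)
Your approach is correct and is essentially the same as the paper's: both sides are uniform measures on coherent pairs with prescribed boundary values, so it suffices to identify the two supports, which you do by partitioning the coherence constraints into interior edges, exterior edges, and the cross-boundary edges of $\partial_E\calD$, handling the latter via the hypothesis on $\tau_r$. Two minor points: your bookkeeping line ``$\sigma_b$ on $F(\calD)\setminus\partial_\int\calD=\Int(\calD)$'' is a slip (the conditioning fixes $\sigma_b$ only on $F(\calD')\setminus F(\calD)$, so $\sigma_b$ is a priori free on all of $F(\calD)$, including $\partial_\int\calD$), though you treat this correctly afterwards; and the claim that $\tau_b$ is constant on $\partial_\out\calD$ needs slightly more than connectivity of $\partial_\out\calD$ alone---the clean argument (implicit in the paper) is that at each vertex of $\partial_E\calD$ the three surrounding faces all share the same $\tau_b$ value, which then propagates around the polygon.
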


\begin{proof}
	All measures under consideration are uniform over sets of coherent pairs~$(\sigma_r,\sigma_b)$ that agree with the corresponding boundary conditions. Thus, it is enough to show that the two sets corresponding to the two sides of~\eqref{eq:domain-markov}, and of~\eqref{eq:domain-markov-rm}, respectively, are equal.
	\begin{itemize}
		\item[\textit{(i)}] Consider a pair of coherent configurations~$\sigma_r\in\{\rp,\rm\}^{F(\calD)}$ and~$\sigma_b\in\{\bp,\bm\}^{F(\calD)}$ contributing to the RHS of~\eqref{eq:domain-markov}; let us show that they also contribute to the LHS. By definition,~$\sigma_r \equiv \rp$ on~$\partial_\int\calD$, which is to say that~$\sigma_r = \tau_r$ on~$\partial_\int\calD$. 
		It remains to check that, if~$\sigma_r$ and~$\sigma_b$ are completed by~$\tau_r$ and~$\tau_b$, respectively, on~$F(\calD')\setminus F(\calD)$, they are coherent on~$\calD'$.
		For edges of~$E(\calD)$ and~$E(\calD') \setminus (E(\calD) \cup \partial_E\calD)$, the coherence condition follow from the coherence of~$\sigma_r$ with~$\sigma_b$ and that of~$\tau_r$ with~$\tau_b$, respectively. 
		 For edges of~$\partial_E\calD$ the statement holds because both faces adjacent to each such edge are~$\rp$ in~$\sigma_r$.
		
		The reverse direction is straighforward since each pair of configurations~$\sigma_r\in\{\rp,\rm\}^{F(\calD)}$ and~$\sigma_b\in\{\bp,\bm\}^{F(\calD)}$ contributing to the LHS of~\eqref{eq:domain-markov} is coherent and satisfies~$\sigma_r \equiv \rp$ on~$\partial_\int\calD$.
		
		\item[\textit{(ii)}] The values of~$\tau_r$ imply that~$\tau_b$ is constant on~$\partial_\int\calD\cup\partial_\out\calD$. 
		Similarly, the definition of~$\mu^{\srp\srm}$ requires that~$\sigma_b$ be constant on~$\partial_\int\calD$ in the RHS of~\eqref{eq:domain-markov-rm}.	
		The values of~$\tau_b$ and~$\sigma_b$ on~$\partial_\int\calD$ are the same because of the condition~$\tau_b(u)= \sigma_b(v)=s$ for some~$u\in\partial_\int\calD$ and~$v\in\partial_\out\calD$. 
		Thus, the pairs~$(\tau_r,\tau_b)$ and~$(\sigma_r,\sigma_b)$ agree on~$\partial_\int\calD$ 
		and as a consequence these boundary values impose the same distribution on the LHS and the RHS of~\eqref{eq:domain-markov-rm}.
	\end{itemize}
\end{proof}

Summing equalities of Theorem~\ref{thm:DMP} over all possibilities for~$\sigma_b$, we get the following corollary for the red-spin marginals of the measures.

\begin{cor}[Spatial Markov property for~$\nu$]\label{cor:DMP}
	Let~$\calD, \calD'$ be two domains such that~$\partial_E\calD \subset E(\calD')$. 
	Let~$\tau_r\in\{\rp,\rm\}^{F(\calD')}$. Then the following statements hold:
	\begin{enumerate}[label=(\roman*)]
		\item if~$\tau_r \equiv \rp$ on~$\partial_\int\calD\cup\partial_\out\calD$, then 
			\[\nu_{\calD'}\big(\sigma_r \,\big|\, 
				\sigma_r = \tau_r \text{ on } F(\calD') \setminus \Int(\calD)
			\big)
		\eqonD \nu_\calD^{\srp\srp}(\sigma_r);
		\]
		\item if~$\tau_r \equiv \rp$ on~$\partial_\int\calD$ and~$\tau_r \equiv \rm$ on~$\partial_\out\calD$, then 
		\[
			\nu_{\calD'}\big(\sigma_r  \,\big|\, 
				\sigma_r = \tau_r \text{ on } F(\calD') \setminus  \Int(\calD)
			\big)
		 \eqonD \nu_\calD^{\srp\srm}(\sigma_r),
		\]
	\end{enumerate}
	where by symbol~$\eqonD$ we mean that the two measures are equal when~$\sigma_r$ is restricted to~$\calD$.
\end{cor}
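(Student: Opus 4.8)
The plan is to derive Corollary~\ref{cor:DMP} from Theorem~\ref{thm:DMP} by integrating out the blue spins. The only real issue is that the right-hand sides of \eqref{eq:domain-markov} and \eqref{eq:domain-markov-rm} still refer to the blue boundary data $\tau_b$, and in the second case \eqref{eq:domain-markov-rm} carries an extra conditioning on $\sigma_b$ along $\partial_\int\calD$; I need to check that both dependencies become invisible after passing to red-spin marginals.

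I would first isolate two elementary facts. First, since $\nu_{\calD'}$ is by definition the red-spin marginal of $\mu_{\calD'}$, conditioning $\nu_{\calD'}$ on an event $A$ that depends only on $\sigma_r$ produces exactly the red-spin marginal of $\mu_{\calD'}(\,\cdot\mid A)$; the same relation links $\nu_\calD^{\srp\srp},\nu_\calD^{\srp\srm}$ to $\mu_\calD^{\srp\srp},\mu_\calD^{\srp\srm}$. Second, the global flip of all blue spins is a $\mu_\calD$-preserving involution that fixes $\sigma_r$ and the event $\{\sigma_r\equiv\rp\text{ on }\partial_\int\calD\}$ while exchanging $\{\sigma_b\equiv\bp\text{ on }\partial_\int\calD\}$ with $\{\sigma_b\equiv\bm\text{ on }\partial_\int\calD\}$; since $\sigma_b$ is already constant on $\partial_\int\calD$ under $\mu_\calD^{\srp\srm}$ by \eqref{eq:srpsrm_def}, conditioning $\mu_\calD^{\srp\srm}$ on $\{\sigma_b(v)=s\text{ for some }v\in\partial_\int\calD\}$ merely selects one of these two sub-events, so its red-spin marginal equals $\nu_\calD^{\srp\srm}$ for \emph{both} values $s\in\{\bp,\bm\}$.

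Then I would fix $\tau_r$ as in the statement, set $A=\{\sigma_r=\tau_r\text{ on }F(\calD')\setminus\Int(\calD)\}$ (implicitly assumed non-degenerate, else the left-hand side is undefined), and write $A$ as the disjoint union of the events $A_{\tau_b}=A\cap\{\sigma_b=\tau_b\text{ on }F(\calD')\setminus F(\calD)\}$ over $\tau_b\in\{\bp,\bm\}^{F(\calD')\setminus F(\calD)}$, so that $\mu_{\calD'}(\,\cdot\mid A)$ is the convex combination of the measures $\mu_{\calD'}(\,\cdot\mid A_{\tau_b})$ with weights $\mu_{\calD'}(A_{\tau_b})/\mu_{\calD'}(A)$, the sum running over those $\tau_b$ of positive probability. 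For each such $\tau_b$ the hypothesis on $\tau_r$ makes coherence across $\partial_E\calD$ automatic, so $\tau_b$ extends to a coherent configuration on $\calD'$ (e.g.\ taken constant on $F(\calD)$) and Theorem~\ref{thm:DMP} applies — only the restriction of $\tau_b$ to $F(\calD')\setminus F(\calD)$ enters the conditioning, so the choice of extension is immaterial. It gives that, restricted to $\calD$, the measure $\mu_{\calD'}(\,\cdot\mid A_{\tau_b})$ equals $\mu_\calD^{\srp\srp}$ in case (i), and $\mu_\calD^{\srp\srm}(\,\cdot\mid\sigma_b(v)=s\text{ for some }v\in\partial_\int\calD)$ in case (ii), for some $s=s(\tau_b)\in\{\bp,\bm\}$.

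Taking red-spin marginals and restricting to $\calD$ then finishes it: by the first fact the left-hand sides average exactly to $\nu_{\calD'}(\,\cdot\mid A)$ restricted to $\calD$; in case (i) each right-hand term becomes $\nu_\calD^{\srp\srp}$, and in case (ii) the two facts together make each right-hand term equal to $\nu_\calD^{\srp\srm}$ — the \emph{same} measure for every $\tau_b$. Since a convex combination of copies of a single measure is that measure, we obtain $\nu_{\calD'}(\,\cdot\mid A)\eqonD\nu_\calD^{\srp\srp}$ in case (i) and $\eqonD\nu_\calD^{\srp\srm}$ in case (ii). The whole argument is the law of total probability combined with Theorem~\ref{thm:DMP}; the only place that needs input beyond that theorem is the second fact, namely that the extra $\sigma_b$-conditioning on the right of \eqref{eq:domain-markov-rm} is erased by the blue $\pm$ symmetry of $\mu_\calD$, and that is the step I would be most careful to state precisely.
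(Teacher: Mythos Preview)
Your proposal is correct and follows essentially the same approach as the paper, which simply states ``Summing equalities of Theorem~\ref{thm:DMP} over all possibilities for~$\sigma_b$.'' You carry out exactly this summation, with the additional care of explicitly handling the residual $\sigma_b$-conditioning in~\eqref{eq:domain-markov-rm} via the global blue-flip symmetry --- a point the paper leaves implicit. One small imprecision: the phrase ``the hypothesis on $\tau_r$ makes coherence across $\partial_E\calD$ automatic'' is accurate in case~(i) but slightly misleading in case~(ii), where coherence across $\partial_E\calD$ actually \emph{forces} $\tau_b$ to be constant on $\partial_\out\calD$; this constancy is, however, guaranteed for the $\tau_b$ of positive probability you are summing over, so your constant extension on $F(\calD)$ (with the same value $s$) is indeed coherent and the argument goes through.
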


\begin{rem}\label{rem:4arcs}
	It is tempting to think that the above Spatial Markov property holds for any boundary conditions on~$\partial_\int\calD\cup\partial_\out \calD$. 
	This is not the case. One significant example is that of the boundary conditions consisting of four arc of alternating spins~$\rp\rp$,~$\rm\rm$,~$\rp\rp$,~$\rm\rm$. Indeed, these boundary conditions are coherent with non-intersecting loop configurations\footnote{Here, due to the atypical boundary conditions, the configurations~$\omega_r$ and~$\omega_b$ are allowed to induce odd degrees for vertices on the boundary of~$\calD$. Then they are formed of disjoint loops and paths with endpoints on the boundary of~$\calD$. }
	$(\omega_r, \omega_b)$ where
	~$\omega_b$ contains 
	 \begin{itemize} 
	 \item paths between the arcs~$\rp\rp$,
	 \item paths between the arcs~$\rm\rm$ or
	 \item none of the above.
	 \end{itemize}
	 The three cases above are mutually exclusive. Depending on the red configuration outside~$\calD$ 
	 one or both of the first two cases may be excluded. 
\end{rem}

\subsection{FKG inequality}

In this section we show that the red-spin marginals~$\nu$ of the measures~$\mu$ satisfiy the FKG inequality.
This property is crucial to all our proofs. 
Similar properties were found in~\cite{DumGla17} for the single-spin representation of the loop~$O(n)$ for a certain range of parameters 
and in \cite{GlaPel18} for a  spin representation of height functions on~$\bbZ^2$ arising from the six-vertex model.

Fix some domain~$\calD$.
We start by introducing a partial order on~$\{\rp,\rm\}^{F(\calD)}$. Given two elements~$\sigma,\tau\in\{\rp,\rm\}^{F(\calD)}$ we say that~$\sigma \leq \tau$ if~$\sigma(u)\leq \tau(u)$ for every~$u\in F(\calD)$, where by convention~$\rm\leq \rp$. 
An event~$A\subset \{\rp,\rm\}^{F(\calD)}$ is called increasing if for any~$\sigma\in A$ and~$\tau\in\{\rp,\rm\}^{F(\calD)}$ 
such that~$\sigma \leq \tau$, we have~$\tau\in A$.

A probability measure~$\bbP$ on~$\{\rp,\rm\}^{F(\calD)}$ is said to satisfy the FKG inequality (or called positively associated) if for any two increasing events~$A,B\subset \{\rp,\rm\}^{F(\calD)}$, we have
\begin{equation}\label{eq:fkg-def}
	\bbP(A\cap B) \geq \bbP(A)\cdot \bbP(B).
\end{equation}

Recall that the marginal of~$\mu_\calD$ on the red spin configurations is denoted by~$\nu_\calD$.

\begin{thm}\label{thm:FKG}
	The measure~$\nu_\calD$ satisfies the FKG inequality~\eqref{eq:fkg-def}.
\end{thm}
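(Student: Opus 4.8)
The plan is to verify the FKG lattice condition (also called the Holley criterion or the FKG--Harris condition) for $\nu_\calD$ and then invoke the standard fact that a strictly positive measure on $\{\rm,\rp\}^{F(\calD)}$ satisfying this condition is positively associated. Recall that $\nu_\calD$ is the marginal on $\sigma_r$ of the uniform measure $\mu_\calD$ on coherent pairs; by summing over $\sigma_b$ we get the explicit formula
\begin{align*}
	\nu_\calD(\sigma_r) = \frac{1}{Z} \cdot 2^{\#\{\text{connected components of the set of edges of $\calD$ whose two adjacent faces agree in $\sigma_r$}\}},
\end{align*}
or more precisely $\nu_\calD(\sigma_r) \propto N(\sigma_r)$, where $N(\sigma_r)$ is the number of blue configurations $\sigma_b$ coherent with $\sigma_r$. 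The key combinatorial observation is that an edge $e$ of $\calD$ with $\sigma_r$ agreeing on its two adjacent faces imposes no constraint on $\sigma_b$, whereas an edge $e$ with $\sigma_r$ disagreeing forces $\sigma_b$ to agree across $e$. Hence $\sigma_b$ must be constant on each connected component of the graph $G(\sigma_r)$ on $F(\calD)$ whose edges are the $e$ with $\sigma_r(u)=\sigma_r(v)$, and conversely any such locally-constant assignment is coherent; therefore $N(\sigma_r) = 2^{k(\sigma_r)}$ where $k(\sigma_r)$ is the number of connected components of $G(\sigma_r)$. So the FKG inequality for $\nu_\calD$ reduces to the lattice condition for the weight $w(\sigma_r) = 2^{k(\sigma_r)}$.

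The lattice (log-supermodularity) condition requires that for all $\sigma,\tau \in \{\rm,\rp\}^{F(\calD)}$,
\begin{align*}
	w(\sigma \vee \tau)\, w(\sigma \wedge \tau) \geq w(\sigma)\, w(\tau),
\end{align*}
equivalently $k(\sigma\vee\tau) + k(\sigma\wedge\tau) \geq k(\sigma) + k(\tau)$. It suffices (by a standard reduction) to check this when $\sigma$ and $\tau$ differ in exactly two faces $u \neq v$, with $\sigma(u) = \rp, \sigma(v) = \rm$ and $\tau(u) = \rm, \tau(v) = \rp$; then $\sigma \vee \tau$ has both faces $\rp$, $\sigma \wedge \tau$ has both faces $\rm$, and one must show $k(\sigma\vee\tau) + k(\sigma\wedge\tau) \geq k(\sigma)+k(\tau)$. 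I would argue this by analysing how flipping a single spin changes the component count: for a fixed configuration, changing the spin at one face $u$ removes the edges from $u$ to its same-spin neighbours and adds the edges to its opposite-spin neighbours, so $k$ changes by a bounded amount governed purely by local connectivity. The cleanest route is to observe that $k(\sigma) = |F(\calD)| - \mathrm{rank}(\text{edge set of } G(\sigma))$ is, up to the constant $|F(\calD)|$, minus the rank of a graphic matroid, and that $\sigma \mapsto -\mathrm{rank}(G(\sigma))$ is supermodular on the Boolean lattice because the agreement-edge sets behave submodularly under $\vee,\wedge$ — indeed $E(G(\sigma\vee\tau)) \cup E(G(\sigma\wedge\tau)) \supseteq E(G(\sigma)) \cup E(G(\tau))$ and $E(G(\sigma\vee\tau)) \cap E(G(\sigma\wedge\tau)) \supseteq E(G(\sigma)) \cap E(G(\tau))$, combined with submodularity of matroid rank.

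The main obstacle, and the step needing genuine care rather than routine bookkeeping, is establishing $k(\sigma\vee\tau)+k(\sigma\wedge\tau) \geq k(\sigma)+k(\tau)$ from the edge-set containments; matroid rank submodularity gives $\mathrm{rank}(E_1\cup E_2) + \mathrm{rank}(E_1\cap E_2) \leq \mathrm{rank}(E_1) + \mathrm{rank}(E_2)$ for the graphic matroid on vertex set $F(\calD)$, and one then has to feed in $E_1 = E(G(\sigma\vee\tau))$, $E_2 = E(G(\sigma\wedge\tau))$ together with the two containments above and monotonicity of rank to get $\mathrm{rank}(E(G(\sigma\vee\tau))) + \mathrm{rank}(E(G(\sigma\wedge\tau))) \leq \mathrm{rank}(E(G(\sigma))) + \mathrm{rank}(E(G(\tau)))$; translating back via $k = |F(\calD)| - \mathrm{rank}$ yields exactly the desired supermodularity of $k$, hence log-supermodularity of $w = 2^k$. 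Once the lattice condition holds and since $w > 0$ everywhere, the FKG inequality \eqref{eq:fkg-def} follows from Holley's theorem. I would present the two edge-set containments as a short lemma, verify them face by face at the two sites where $\sigma,\tau$ disagree, and then assemble the rank inequality; the rest is citing the Holley/FKG criterion.
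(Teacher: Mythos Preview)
Your approach has a genuine gap that cannot be repaired along the lines you sketch.

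First, a bookkeeping slip: the graph whose component count governs $\nu_\calD$ is the \emph{disagreement} graph $\theta(\sigma_r)$ (edges $uv$ with $\sigma_r(u)\neq\sigma_r(v)$), not the agreement graph. You state correctly that $\sigma_b$ is forced to agree across edges where $\sigma_r$ disagrees, but then define $G(\sigma_r)$ with the opposite convention. With the correct graph, your edge-set containments fail outright: if $\sigma(u)=\rp,\sigma(v)=\rm$ and $\tau(u)=\rm,\tau(v)=\rp$, the edge $uv$ lies in $\theta(\sigma)\cap\theta(\tau)$ but in neither $\theta(\sigma\vee\tau)$ nor $\theta(\sigma\wedge\tau)$.

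More seriously, even with the agreement graph your matroid chain is broken. You want
\[
\mathrm{rank}(E(G(\sigma\vee\tau))) + \mathrm{rank}(E(G(\sigma\wedge\tau))) \;\le\; \mathrm{rank}(E(G(\sigma))) + \mathrm{rank}(E(G(\tau))),
\]
but submodularity applied to $E_1=E(G(\sigma\vee\tau)),\,E_2=E(G(\sigma\wedge\tau))$ gives only $\mathrm{rank}(E_1\cup E_2)+\mathrm{rank}(E_1\cap E_2)\le \mathrm{rank}(E_1)+\mathrm{rank}(E_2)$, a \emph{lower} bound on the LHS. Your containments $E_1\cup E_2\supseteq E(G(\sigma))\cup E(G(\tau))$ and $E_1\cap E_2\supseteq E(G(\sigma))\cap E(G(\tau))$ then push via monotonicity in the wrong direction. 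There is no way to assemble these pieces into the desired inequality.

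The decisive obstruction is structural: the lattice condition here is \emph{not} a purely matroid-theoretic fact. As the paper records (Remark~\ref{rem:FKG}\textit{(ii)}), counter-examples exist on non-simply-connected face sets, so any valid argument must exploit planarity and simple connectedness. Your matroid route uses neither, hence cannot succeed. The paper's proof works by a careful case analysis at two flipped faces $u,v$, splitting the component count into contributions $k_P$ and $k_M$ from the $\rp$- and $\rm$-clusters touching $u,v$, and proving the key identities (Claim~\ref{cl:number-of-clusters}) by tracing interface paths along cluster boundaries --- an argument that explicitly uses the planar, simply connected structure of $\calD$.
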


Before proving the FKG inequality, let us compute~$\nu_\calD$. For a spin configuration~$\sigma$ on~$\calD$, let~$\theta(\sigma) \in \{0,1\}^{E(\calD^*)}$ be the set of all edges~$e=uv\in E(\calD^*)$ such that~$\sigma(u) \neq \sigma(v)$.
If~$\sigma$ is associated to a loop configuration~$\omega$, then~$e^*\in \theta(\sigma)$ if and only if~$e$ is present in~$\omega$. 
For readers familiar with the notion of duality in percolation (where the dual configuration is written~$\omega^*$), we mention that~$\theta(\sigma) =  (\omega^*)^c$. See Figure~\ref{fig:some_loops} for an example.  
Denote by~$k(\theta(\sigma))$ the number of connected components of~$\theta(\sigma)$;
note that isolated vertices of~$\calD^*$ (that is faces of~$\calD$) are also counted as connected components. 

\begin{prop}\label{prop:marginal-ditribution}
	\begin{enumerate}[label=(\roman*)]
		\item The law of~$\sigma_r$ under~$\mu_\calD$ is given by 
	\begin{align}
		\label{eq:red_spin_law_free}
		\nu_\calD(\sigma_r) =\frac1{Z_\calD}\, 2^{k(\theta(\sigma_r))},
	\end{align}
	where~$Z_\calD$ is a normalising constant.
		\item The law of~$\sigma_b$ on~$\calD$ under the conditional measure~$\mu_\calD(.\,|\, \sigma_r)$ 
		is obtained by colouring independently and uniformly the clusters of~$\theta(\sigma_r)$ in either~$\bm$ or~$\bp$.
	\end{enumerate}
\end{prop}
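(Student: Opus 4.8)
The plan is to fix the red configuration and simply count the blue configurations that are coherent with it; everything then follows from the fact that $\mu_\calD$ is by definition uniform on the finite set of coherent pairs.

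First I would fix $\sigma_r\in\{\rm,\rp\}^{F(\calD)}$ and regard $\theta(\sigma_r)$ as a subgraph of the dual graph $\calD^*$: it has vertex set $F(\calD)$ and edge set $\{uv\in E(\calD^*):\sigma_r(u)\neq\sigma_r(v)\}$, and, with the convention that an isolated face counts as its own component, its connected components partition $F(\calD)$ into $k(\theta(\sigma_r))$ blocks. The key step is to describe which $\sigma_b\in\{\bm,\bp\}^{F(\calD)}$ satisfy $\sigma_r\perp\sigma_b$. For an edge $e$ of $\calD$ separating two faces $u,v$, the coherence requirement is vacuous when $\sigma_r(u)=\sigma_r(v)$ and forces $\sigma_b(u)=\sigma_b(v)$ when $\sigma_r(u)\neq\sigma_r(v)$, i.e.\ exactly when $uv$ is an edge of $\theta(\sigma_r)$. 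Propagating this equality along paths inside a component, one sees that $\sigma_r\perp\sigma_b$ if and only if $\sigma_b$ is constant on each connected component of $\theta(\sigma_r)$; conversely any such $\sigma_b$ is coherent with $\sigma_r$ by the dichotomy above. Hence there are exactly $2^{k(\theta(\sigma_r))}$ blue configurations coherent with a given $\sigma_r$.

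With this count in hand, part~(i) is immediate: since $\mu_\calD$ is uniform on coherent pairs, its red marginal assigns to $\sigma_r$ a mass proportional to the number of coherent $\sigma_b$'s, namely $2^{k(\theta(\sigma_r))}$; normalising yields~\eqref{eq:red_spin_law_free} with $Z_\calD=\sum_{\sigma}2^{k(\theta(\sigma))}$. For part~(ii), conditionally on $\sigma_r$ the measure $\mu_\calD(\cdot\,|\,\sigma_r)$ is the uniform measure on those $2^{k(\theta(\sigma_r))}$ blue configurations, i.e.\ on the assignments of a spin in $\{\bm,\bp\}$ to each cluster of $\theta(\sigma_r)$ — which is precisely the independent, uniform colouring of the clusters of $\theta(\sigma_r)$.

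I do not expect a genuine obstacle here. The only point that needs care is the equivalence between the local, edge-by-edge coherence condition and the global statement that $\sigma_b$ is constant on each $\theta(\sigma_r)$-cluster, together with the bookkeeping convention (isolated faces counted as clusters) that makes the count $2^{k(\theta(\sigma_r))}$ exact even for components carrying no edges; once this is set up, both parts of the proposition drop out of the uniformity of $\mu_\calD$.
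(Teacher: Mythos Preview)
Your argument is correct and essentially identical to the paper's own proof: both fix $\sigma_r$, observe that coherence forces $\sigma_b$ to be constant on each connected component of $\theta(\sigma_r)$ with no further restriction, count $2^{k(\theta(\sigma_r))}$ coherent blue configurations, and conclude both parts from the uniformity of $\mu_\calD$. The only difference is cosmetic---you spell out the equivalence between the edge-by-edge constraint and the per-cluster constancy slightly more explicitly than the paper does.
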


\begin{figure}
    \begin{center}
        \includegraphics[width=0.53\textwidth, page=5]{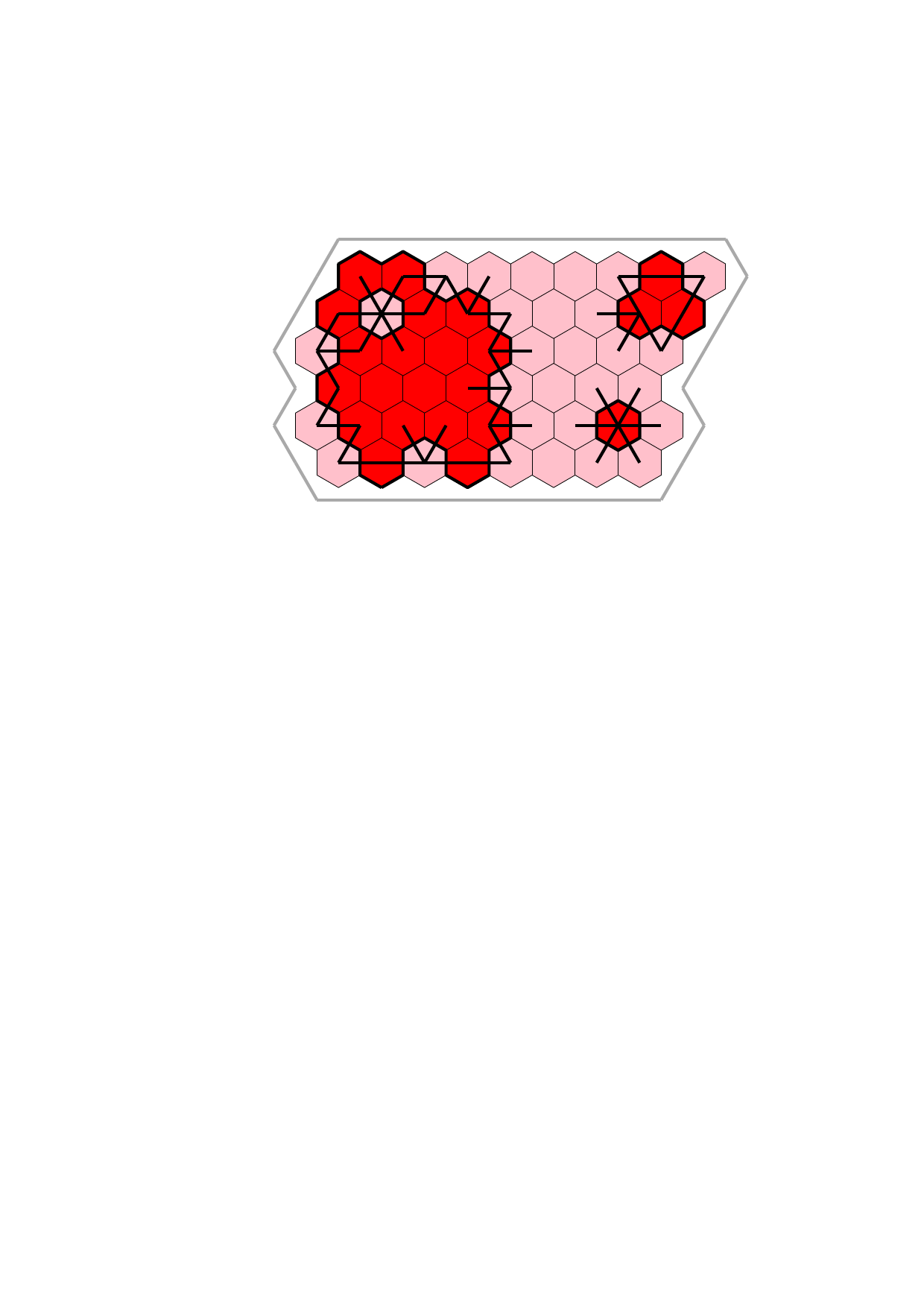} \hspace{-1.8cm}
       \includegraphics[width=0.53\textwidth, page=4]{some_loops2.pdf}
        \caption{{\em Left:} A red-spin configuration on a domain~$\calD$ and the associated loops. 
        The pink faces correspond to red spin~$\frp$, while the gray ones to red spin~$\frm$. 
        The graph~$\theta(\sigma_r)$ is drawn in black.
        {\em Right:} A blue-spin configuration coherent with the red one.}
        \label{fig:some_loops}
    \end{center}
\end{figure}

\begin{proof}
	Let~$\sigma_r\in\{\rp,\rm\}^{F(\calD)}$ and consider any~$\sigma_b\in\{\bp,\bm\}^{F(\calD)}$ that is coherent with~$\sigma_r$. For any two faces~$u,v\in F(\calD)$ corresponding to vertices in~$V(\calD^*)$ that are connected by an edge in~$\theta(\sigma_r)$, we have~$\sigma_b(u) = \sigma_b(v)$. Thus,~$\sigma_b$ has a constant value on each connected component of~$\theta(\sigma_r)$. 
	Moreover, there is no restriction on values of~$\sigma_b$ on different connected components of~$\theta(\sigma_r)$.
	Thus, there are exactly~$2^{k(\theta(\sigma_r))}$ blue spin configurations coherent with~$\sigma_r$, and~\textit{(i)} follows readily. 
	In addition, when conditioned on~$\sigma_r$, the measure on these blue spin configurations is uniform, thus asserting~\textit{(ii)}.
\end{proof}	

\begin{rem}\label{rem:blue+}
	A straightforward adaptation of the proof above shows that, for~$\calA \subset F(\calD)$, 
	the law of~$\sigma_r$ under~$\mu_\calD(.\,|\, \sigma_b \equiv \bp \text{ on~$\calA$})$ is given by 
	$\frac1{Z}\, 2^{k_\calA(\theta(\sigma_r))}$, 
	where~$k_\calA(\theta(\sigma_r))$ is the number of connected components of~$\theta(\sigma_r)$ 
	when all components intersecting~$\calA$ are counted as a single one. 
	When~$\calA$ is connected,~$k_\calA(\theta(\sigma_r))$ may be viewed as the number of connected components of the configuration
	obtained by adding to~$\theta(\sigma_r)$ all edges between pairs of adjacent faces of~$\calA$. 
	
	As a consequence 
	\begin{align*}
    	\nu_\calD^{\srp\srp}(\sigma_r) =\frac1{Z_\calD^{\srp\srp}}\, 2^{k(\theta(\sigma_r))}
    	\ind_{\{\sigma_r \equiv \srp \text{ on~$\partial_\int \calD$}\}} 
    	\quad \text{ and }\quad 
    	\nu_\calD^{\srp\srm}(\sigma_r) =\frac1{Z_\calD^{\srp\srm}}\, 2^{k_{\partial\calD}(\theta(\sigma_r))}
    	\ind_{\{\sigma_r \equiv \srp \text{ on~$\partial_\int \calD$}\}},
	\end{align*}
	where~$k_{\partial\calD}(\theta(\sigma_r))$ is the number of connected components of~$\theta(\sigma_r)$, where all components intersecting~$\partial_\int\calD$ are counted as a single one. 
\end{rem}

We are in a position to prove Theorem~\ref{thm:FKG}.

\begin{proof}[Theorem~\ref{thm:FKG}]
	By~\cite[Thm.~4.11]{Gri10}, it is enough to show the FKG lattice condition, which states that, for any two spin configurations~$\sigma$ and~$\tilde\sigma$, 
	\begin{align}\label{eq:FKG1}
		\nu_\calD(\sigma \vee \tilde \sigma) \nu_\calD(\sigma \wedge \tilde \sigma) 
		\geq\nu_\calD(\sigma) \nu_\calD(\tilde \sigma),
	\end{align}
	where~$\sigma \vee \tilde \sigma, \sigma \wedge \tilde\sigma\in \{\rp, \rm\}^\bbT$ are defined by~$\sigma \vee \tilde\sigma(u) = \max (\sigma(u),\tilde\sigma(u))$ and~$\sigma \wedge \tilde\sigma(u) = \min (\sigma(u),\tilde\sigma(u))$ for every~$u\in\bbT$.
	Moreover, by \cite[Thm.~(2.22)]{Gri06}, it is enough to show~\eqref{eq:FKG1} for any two configurations which differ for exactly two faces. 	That is, that for any~$\sigma\in\{\rp, \rm\}^{F(\calD)}$ and~$u, v\in F(\calD)$ two distinct faces, 
	\begin{align}\label{eq:FKG-u-v}
		\nu_\calD(\sigma^{\srp\srp}) \cdot \nu_\calD(\sigma^{\srm\srm})\ge \nu_\calD(\sigma^{\srp\srm}) \cdot \nu_\calD(\sigma^{\srm\srp}),
	\end{align}
	where~$\sigma^{ab}$ is the configuration coinciding with~$\sigma$ except (possibly) at~$u$ and~$v$, 
	and such that~$\sigma^{ab}(u)= a$ and~$\sigma^{ab}(v)= b$.
	By Proposition~\ref{prop:marginal-ditribution}, the ratio of the LHS and RHS of~\eqref{eq:FKG-u-v} is written
	\begin{align}\label{eq:FKG2}
		\frac{\nu_\calD(\sigma^{\srp\srp}) \nu_\calD(\sigma^{\srm\srm})}
		{\nu_\calD(\sigma^{\srp\srm}) \nu_\calD(\sigma^{\srm\srp})}
		= 2^{k(\theta(\sigma^{\srp\srp})) + k(\theta(\sigma^{\srm\srm})) -k(\theta(\sigma^{\srp\srm})) - k(\theta(\sigma^{\srm\srp})) }.
	\end{align}
	Our goal is thus to show that
	\begin{equation}\label{eq:fkg-k}
		k(\theta(\sigma^{\srp\srp})) + k(\theta(\sigma^{\srm\srm})) -k(\theta(\sigma^{\srp\srm})) - k(\theta(\sigma^{\srm\srp})) \geq 0.
	\end{equation}
	
	
	First we will treat the simple case where~$\sigma$ is such that~$u \nxlra{\srp} v$ in~$\sigma^{\srp\srp}$ and~$u \nxlra{\srm} v$ in~$\sigma^{\srm\srm}$. Then~$u$ and~$v$ are not adjacent and there exist two paths or circuits, one of~$\rm$ the other of~$\rp$, that separate~$u$ from~$v$ in~$\calD$. Hence, there exists a path or loop~$\gamma$ in~$\omega(\sigma)$ that separates~$u$ from~$v$ and does not contain any edges of the faces~$u$ or~$v$. For any choice of~$a,b\in\{\rp,\rm\}$, edges in~$\bbT$ that cross~$\gamma$ belong to~$\theta(\sigma^{ab})$, thus forming a path or a circuit of edges in~$\theta(\sigma^{ab})$ that separates~$u$ from~$v$. The effect on~$k(\theta(\sigma))$ of switching the spin at~$v$ from~$\rp$ to~$\rm$ is then independent of the value of the spin at~$u$: 
	\begin{align*}
		k(\theta(\sigma^{\srp\srp})) -k(\theta(\sigma^{\srp\srm})) 
		= k(\theta(\sigma^{\srm\srp})) - k(\theta(\sigma^{\srm\srm})).			
	\end{align*}
	As a consequence, the LHS of~\eqref{eq:fkg-k} is zero. 
	\smallskip
	
	We move on to the case where~$u$ and~$v$ are connected by a path of~$\rp$ or by a path of~$\rm$. 
	Before diving into the core of the proof, we need to eliminate a degenerate case: 
	when~$u$ and~$v$ are neighbouring faces and no face of~$\calD$ is adjacent to both~$u$ and~$v$. 
	Then~$\calD$ may be split into two domains~$\calD_u$ and~$\calD_v$ containing 
	all faces connected to~$u$ in~$\calD \setminus \{v\}$ and those connected to~$v$ in~$\calD\setminus \{u\}$, respectively. 
	It is then immediate to see that the number of connected components of~$\theta(\sigma^{\srp\srp})$ 
	intersecting~$\calD_u$ is the same as that for~$\theta(\sigma^{\srp\srm})$.
	The same statement applies to~$\theta(\sigma^{\srm\srp})$ and~$\theta(\sigma^{\srm\srm})$.
	A similar statement may be formulated for~$\calD_v$, by pairing~$\theta(\sigma^{\srp\srp})$ with~$\theta(\sigma^{\srm\srp})$
	and~$\theta(\sigma^{\srp\srm})$ with~$\theta(\sigma^{\srm\srm})$. 
	Finally, in~$\theta(\sigma^{\srp\srm})$ and~$\theta(\sigma^{\srm\srp})$, faces~$u$ and~$v$ are in the same connected component, 
	while in~$\theta(\sigma^{\srp\srp})$ and~$\theta(\sigma^{\srm\srm})$ they are in different components. 
	Thus, we find  
	\begin{align*}
		k(\theta(\sigma^{\srp\srp})) + k(\theta(\sigma^{\srm\srm})) -k(\theta(\sigma^{\srp\srm})) - k(\theta(\sigma^{\srm\srp})) =2\geq 0.
	\end{align*}
	
	Henceforth we may assume that, if~$u$ and~$v$ are neighbours, then there exists at least one face of~$\calD$ adjacent to both~$u$ and~$v$. 
	Moreover, we will suppose that~$u$ and~$v$ are connected by a path of~$\rp$ in~$\sigma^{\srp\srp}$ or by a path of~$\rm$ in~$\sigma^{\srm\srm}$.
	By symmetry, we may limit our study to the case where~$u$ is connected to~$v$ in~$\sigma^{\srp\srp}$ by a~$\rp$-path;
	when~$u$ and~$v$ are neighbours, we may choose the path to contains at least one vertex other than~$u$ and~$v$. 
	
	Denote by~$P$ the~$\rp$-cluster of~$u$ (and implicitly of~$v$ as well) in~$\sigma^{\srp\srp}$; 
	denote by~$M$ the union of all~$\rm$-clusters in~$\sigma^{\srp\srp}$ that are adjacent to~$u$ or~$v$. 
	Both~$P$ and~$M$ are fixed sets of faces of~$\calD$. 
	Then all the connected components of~$\theta(\sigma^{\srp\srp})$,~$\theta(\sigma^{\srp\srm})$,~$\theta(\sigma^{\srm\srp})$, and~$\theta(\sigma^{\srm\srm})$ that do not intersect~$P\cup M$ are the same in these four configurations,
	and thus cancel out in~\eqref{eq:fkg-k}. 
	It remains to study the contribution of connected components of~$\theta(.)$ that do intersect~$P\cup M$.
	
	 For a spanning subgraph~$\Theta$ of~$\calD^*$, define~$k_P(\Theta)$ to be the number of connected components of~$\Theta$ that intersect~$P$, and ~$k_M(\Theta)$ as number of connected components that intersect~$M$ and do not intersect~$P$. 
	Clearly,~$k_P(\Theta)+ k_M(\Theta)$ is equal to the number of connected components in~$\Theta$ that intersect~$P\cup M$. 
	Thus, is suffices to prove the following two inequalities:
	\begin{align}\label{eq:fkg-p}
		k_P(\theta(\sigma^{\srp\srp})) + k_P(\theta(\sigma^{\srm\srm})) 
		- k_P(\theta(\sigma^{\srp\srm})) - k_P(\theta(\sigma^{\srm\srp})) &\geq 0, \\
		\label{eq:fkg-m}
		k_M(\theta(\sigma^{\srp\srp})) + k_M(\theta(\sigma^{\srm\srm})) 
		- k_M(\theta(\sigma^{\srp\srm})) - k_M(\theta(\sigma^{\srm\srp})) &\geq 0.
	\end{align}
	
We start by proving the easier inequality~\eqref{eq:fkg-m}. 
Four types of components contribute to~$k_M(\theta(\sigma^{\srm\srm}))$:
those who contain faces adjacent to both~$u$ and~$v$, those who contain faces adjacent to~$u$ but not~$v$, 
those who contain faces adjacent to~$v$ but not~$u$, and those containing no faces adjacent to~$u$ or~$v$. 
Write~$K_{\{u,v\}}$,~$K_{\{u\}}$,~$K_{\{v\}}$ and~$K_{\emptyset}$ for the number of components in each category above. 
By the definition of~$k_M$ and the fact that~$u,v \in P$, any connected component contributing to~$k_M(\theta(\sigma^{\srm\srm}))$ 
is such that all its faces that are adjacent to~$u$ or~$v$ have spin~$\rm$ in~$\sigma^{\srm\srm}$. 
When turning the spin of~$u$ from~$\rm$ to~$\rp$, all components of the type~$K_{\{u,v\}}$,~$K_{\{u\}}$ become connected to~$u$, and thus cease to contribute to~$k_M$. 
The same holds for~$v$, and we find:
\begin{align*}
	k_M(\theta(\sigma^{\srp\srm})) = K_{\{v\}} + K_{\emptyset}, \qquad	k_M(\theta(\sigma^{\srm\srp})) = K_{\{u\}} + K_{\emptyset}, \quad \text{ and } \quad 
	k_M(\theta(\sigma^{\srp\srp})) = K_{\emptyset}.
\end{align*}
Using that~$k_M(\theta(\sigma^{\srm\srm})) = K_{\{u,v\}} + K_{\{u\}} + K_{\{v\}} + K_{\emptyset}$, we find that the LHS of~\eqref{eq:fkg-m} is equal to~$K_{\{u,v\}}$, hence is non-negative. 
\smallskip

Let us now prove~\eqref{eq:fkg-p}.  
Denote by~$E_u,E_v\subset E(\calD^*)$ the sets of all edges linking~$u$ (resp.~$v$) to adjacent vertices in~$V(\calD^*)$.
The next claim constitutes the core of the proof and, as we will see below, implies readily~\eqref{eq:fkg-p}.
	\begin{claim}\label{cl:number-of-clusters}
		The following equalities hold:
		\begin{align}
			k_P(\theta(\sigma^{\srm\srp})) &= k_P(\theta(\sigma^{\srp\srp})\cup E_u),\label{eq:fkg-claim-u} \\
			k_P(\theta(\sigma^{\srp\srm})) &= k_P(\theta(\sigma^{\srp\srp})\cup E_v),\label{eq:fkg-claim-v} \\
			k_P(\theta(\sigma^{\srm\srm})) &= k_P(\theta(\sigma^{\srp\srp})\cup E_u\cup E_v).\label{eq:fkg-claim-uv} 
		\end{align}
	\end{claim}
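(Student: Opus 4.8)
The plan is to prove the three identities \eqref{eq:fkg-claim-u}, \eqref{eq:fkg-claim-v}, \eqref{eq:fkg-claim-uv} in turn. Since the roles of $u$ and $v$ are symmetric, \eqref{eq:fkg-claim-v} is \eqref{eq:fkg-claim-u} with $u$ and $v$ interchanged, so it is enough to treat \eqref{eq:fkg-claim-u} and \eqref{eq:fkg-claim-uv}; I describe the argument for \eqref{eq:fkg-claim-u}, that for \eqref{eq:fkg-claim-uv} being entirely analogous with both $u$ and $v$ flipped. The first step is the observation that $\theta(\sigma^{\srm\srp})\subseteq\theta(\sigma^{\srp\srp})\cup E_u$: the edges of $\theta(\sigma^{\srm\srp})$ not incident to $u$ agree with those of $\theta(\sigma^{\srp\srp})$ (only the spin at $u$ has changed), while the edges of $\theta(\sigma^{\srm\srp})$ incident to $u$ join $u$ to its $\rp$-neighbours and hence belong to $E_u$. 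Adding edges to a graph can only merge components, and in particular can only decrease the number of components that meet $P$; therefore $k_P(\theta(\sigma^{\srm\srp}))\ge k_P(\theta(\sigma^{\srp\srp})\cup E_u)$, and only the reverse inequality remains.

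For the reverse inequality one passes from $\theta(\sigma^{\srm\srp})$ to $\theta(\sigma^{\srp\srp})\cup E_u$ by adding, one at a time, the edges $uw$ with $w$ an $\rm$-neighbour of $u$ — these being exactly the edges present in $\theta(\sigma^{\srp\srp})\cup E_u$ and absent from $\theta(\sigma^{\srm\srp})$ — and it suffices to check that no such addition decreases $k_P$. Adding $uw$ merges the component of $u$ with that of $w$; since $u\in P$ the former always meets $P$, so $k_P$ can only drop if, at that stage, the component of $w$ meets $P$ and is distinct from that of $u$. Consequently it is enough to prove the following local statement: \emph{for every $\rm$-neighbour $w$ of $u$, the connected component of $w$ in $\theta(\sigma^{\srm\srp})$ either does not meet $P$, or contains $u$.} Because in $\theta(\sigma^{\srm\srp})$ the neighbours of $u$ are precisely its $\rp$-neighbours, which all lie in $P$, this is equivalent to: if $w$ is connected in $\theta(\sigma^{\srm\srp})$ to a face of $P$, then $w$ is connected in $\theta(\sigma^{\srm\srp})$ to a $\rp$-neighbour of $u$ (recall $u$ has such a neighbour, since $u\xlra{\srp}v$ in $\sigma^{\srp\srp}$ by a path not reduced to $\{u,v\}$).

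The local statement I would prove through the interface loops. If $w$'s component meets $P$ then $w$ is adjacent, in $\sigma^{\srm\srp}$, to an $\rp$-face, hence to some loop $\Gamma$ of $\omega(\sigma^{\srm\srp})$, while $u$ is adjacent to a loop $\Lambda$ of $\omega(\sigma^{\srm\srp})$. The geometric fact doing the work is that the faces adjacent to a single loop of $\omega$ form a cyclically connected ``ring'' in $\theta$ — consecutive edges of the loop are crossed by $\theta$-edges that share an endpoint — so deleting from this ring the $\theta$-edges incident to $u$ only cuts it into an arc. Tracing the ring of $\Gamma$, going around $u$ ``the long way'' whenever $\Gamma$ runs alongside $u$, one connects $w$ to a $\rp$-neighbour of $u$, and hence to $u$ itself in $\theta(\sigma^{\srm\srp})$, which is what is needed. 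For \eqref{eq:fkg-claim-uv} the same scheme applies with $u$ and $v$ both flipped, the only additional bookkeeping being the case where $u$ and $v$ are adjacent or have a common neighbour.

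The hard part is this last step when the $\rm$-cluster of $u$ in $\sigma^{\srm\srp}$ is not simply connected — equivalently, when $w$ lies in the interior of a run of consecutive $\rm$-neighbours of $u$. Then $\Gamma$ need not coincide with $\Lambda$, and one must show that chaining the rings of $\Gamma$ and of the loops $\theta$-linked to it genuinely reaches a loop adjacent to $u$, rather than getting stranded on a boundary loop of $P$ that stays away from $u$. Settling this requires a careful analysis of how the loops bounding $P$ and those bounding the $\rm$-cluster of $u$ interlace, together with the remark that a face all of whose neighbours bear the same spin is an isolated vertex of $\theta$, so that its component is trivially disjoint from $P$ and creates no obstruction.
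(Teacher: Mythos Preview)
Your reduction to the local statement --- for every $\rm$-neighbour $w$ of $u$, if the $\theta(\sigma^{\srm\srp})$-component of $w$ meets $P$ then it contains $u$ --- is correct and is exactly what the paper isolates (its implication \eqref{eq:fkg-claim-u2}). The inclusion $\theta(\sigma^{\srm\srp})\subset\theta(\sigma^{\srp\srp})\cup E_u$ and the identification of the missing edges are also right.

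The gap is in your proof of the local statement. Your ring argument only works when the loop $\Gamma$ adjacent to $w$ is itself adjacent to $u$; when $\Gamma$ bounds a piece of $P$ that in $\sigma^{\srm\srp}$ has been cut off from $u$ (this happens whenever removing $u$ disconnects $P$), following the dual cycle of $\Gamma$ never reaches a neighbour of $u$. You propose to chain rings of successive loops, but as you yourself note, it is not clear this chain must terminate at a loop adjacent to $u$ rather than cycle among boundary loops of $P$ that all avoid $u$. This is the actual content of the claim, and your sketch does not supply it.

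The paper's argument avoids loop-chaining entirely and uses planarity directly. It takes a $\theta(\sigma^{\srm\srp})$-path from $w$ to its first entry $\gamma_n$ into $P$, continues by any face-path inside $P$ to $u$, and closes up with the edge $uw$; this produces a simple cycle on $\bbT$ bounding a simply connected region $D_\gamma$. The boundary of $P\setminus\{u\}$ meets $\partial D_\gamma$ only at the two edges $\gamma_{n-1}\gamma_n$ and $\gamma_{m-1}\gamma_m$, hence contains an edge-path $\chi$ inside $D_\gamma$ joining them. Every $\bbT$-edge crossing $\chi$ has one endpoint in $P\setminus\{u\}$ (spin $\rp$) and one outside (spin $\rm$), so lies in $\theta(\sigma^{\srm\srp})$; this gives $u\xlra{\theta(\sigma^{\srm\srp})}\gamma_n\xlra{\theta(\sigma^{\srm\srp})}w$ in one stroke, regardless of how many pieces $P\setminus\{u\}$ has.

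For \eqref{eq:fkg-claim-uv} you also underestimate the work. The paper does \emph{not} flip $u$ and $v$ simultaneously: it factors as $k_P(\theta(\sigma^{\srm\srm}))=k_P(\theta(\sigma^{\srp\srm})\cup E_u)=k_P(\theta(\sigma^{\srp\srp})\cup E_u\cup E_v)$, the second step coming from \eqref{eq:fkg-claim-v}. The first step repeats the Jordan-curve argument with $P\setminus\{u,v\}$ in place of $P\setminus\{u\}$; now the interface may hit $\partial D_\gamma$ at four points (when the face-path through $P$ is forced through $v$), producing two arcs $\chi^1,\chi^2$ and the conclusion $u\xlra{}v\xlra{}w$. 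A further separate case $w=v$ is also needed. This is a genuine case analysis, not bookkeeping.
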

	
	\begin{proof}
		We start by showing~\eqref{eq:fkg-claim-u}. Note that
		\[
			\theta(\sigma^{\srm\srp}) \subset \theta(\sigma^{\srp\srp})\cup E_u \quad\quad \text{and} 
			\quad\quad [\theta(\sigma^{\srp\srp})\cup E_u] \setminus \theta(\sigma^{\srm\srp}) 
			 = \{uw \in E(\calD^*)\, \colon \, \sigma^{{\srm\srp}}(w) = \rm \}.
		\]
		Thus, it remains to show that for any face~$w\sim u$, 
		\begin{align}\label{eq:fkg-claim-u2}
			\big(\sigma^{\srm\srp}(w) = \rm \text{ and } w \xlra{\theta(\sigma^{\srm\srp})} P\big) 
			\Rightarrow w \xlra{\theta(\sigma^{\srm\srp})} u.
		\end{align} 

		Figure~\ref{fig:k_P}, left diagram, helps illustrate the construction below.
		Consider a face~$w$ neighbouring~$u$, such that~$\sigma^{\srm\srp}(w) = \rm$ and~$w \xlra{\theta(\sigma^{\srm\srp})} P$.
		Let~$\gamma = (\gamma_0,\dots, \gamma_n)$ be a simple path of~$\theta(\sigma^{\srm\srp})$ with~$\gamma_0 = w$,~$\gamma_n \in P$ 
		and such that~$\gamma_0,\dots, \gamma_{n-1} \notin P$. 
		By our assumption~$\sigma^{{\srm\srp}}(w) = \rm$, we have~$w \notin P$, so~$n\geq1$. 
		Continue~$\gamma$ by a face-path~$\gamma_{n},\gamma_{n+1}, \dots, \gamma_{m}$ contained in~$P$ and with~$\gamma_m =u$. 
		(Note that we do not require that the path~$\gamma_n \dots, \gamma_m$ be contained in~$\theta(\sigma^{\srm\srp})$.)
		Then it is necessary that~$\sigma^{\srm\srp}(\gamma_{m-1}) = \rp$, hence~$\gamma_{m-1} \in P$, which is to say~$n < m$.
		Finally set~$\gamma_{m+1} = w$. 	
		
		\begin{figure}
		\begin{center}
		\includegraphics[height=0.27 \textwidth]{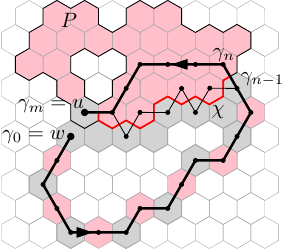}\quad
		\includegraphics[height=0.27 \textwidth]{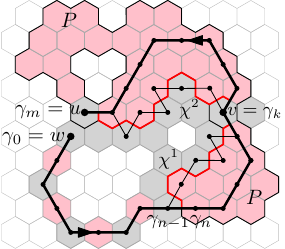}\quad
		\includegraphics[height=0.27 \textwidth]{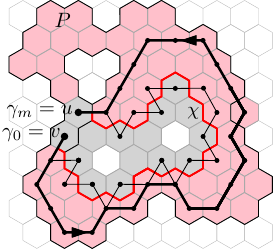}
		\caption{The constructions used in the proofs of~\eqref{eq:fkg-claim-u} (left) and~\eqref{eq:fkg-claim-uv} (centre and right).
		Spins~$\frp$ are pink and~$\frm$ are gray; only spins of interest are depicted.
		The path~$\gamma$ (black bold) uses faces of alternating spins until it enters~$P$, 
		then it continues on~$P$, whose faces (except for~$v$ in the central and right diagrams) are of spin~$\frp$.  
		The paths~$\chi$,~$\chi^1$ and~$\chi^2$ (in red) are part of the boundary of~$P$ and separate faces of distinct spins. 
		Their dual edges contain paths linking~$u$ to~$\gamma_n$,~$u$ to~$v$ and~$v$ to~$\gamma_n$, respectively. }
		\label{fig:k_P}
		\end{center}
		\end{figure}	 
		
		Then~$\gamma$ is a non-trivial simple cycle on~$\bbT$. 
		Since the domain~$\calD$ is simply connected,~$\gamma$ delimits a simply connected domain which we denote by~$D_\gamma$. 
		The boundary of~$P \setminus \{u\}$ intersects~$\gamma$ at two places: the midpoint of the edge~$\gamma_{m-1}\gamma_m$ 
		and the midpoint of the edge~$\gamma_{n-1}\gamma_n$.
		Thus the boundary of~$P\setminus\{u\}$ contains a path~$\chi$ that is contained in~$D_\gamma$ 
		and that connects these two midpoints of edges.
		
		Finally notice that, for any two adjacent faces~$a,b$ with~$a \in P \setminus \{u\}$ and~$b \notin P\setminus \{u\}$, 
		we have~$\sigma^{\srm\srp}(a) = \rp$ and~$\sigma^{\srm\srp}(b) = \rm$, hence~$ab \in \theta(\sigma^{\srm\srp})$.
		Applying this to faces on either side of~$\chi$, 
		we find that all edges of~$\bbT$ crossing~$\chi$ are contained in~$\theta(\sigma^{\srm\srp})$.
		In particular, we deduce that~$u$ is connected in~$\theta(\sigma^{\srm\srp})$ to~$\gamma_n$, hence also to~$w$.
		This completes the proof of~\eqref{eq:fkg-claim-u}.
		The same argument proves~\eqref{eq:fkg-claim-v}. 
		
		We turn to the proof of~\eqref{eq:fkg-claim-uv}.
		We will prove this in two steps:
		\begin{align}\label{eq:fkg-claim-uv2}
			k_P(\theta(\sigma^{\srm\srm})) 
			=k_P(\theta(\sigma^{\srp\srm})\cup E_u)   
			= k_P(\theta(\sigma^{\srp\srp})\cup E_u\cup E_v)
		\end{align}
		The second equality above is implied by~\eqref{eq:fkg-claim-v}. 
		Indeed, we have proved that no edge of ~$\{vw \in E(\calD^*)\, \colon \, \sigma^{{\srm\srp}}(w) = \rm \}$
		may connect two distinct clusters contributing to~$k_P(\theta(\sigma^{\srp\srm}))$. That is also true for clusters contributing to 
		$k_P(\theta(\sigma^{\srp\srm}) \cup E_u)$, since the latter configuration dominates the former. 
		 
		The first equality of~\eqref{eq:fkg-claim-uv2} is similar to~\eqref{eq:fkg-claim-u}, 
		with the only difference that it applies to~$\sigma^{\srm\srm}$ rather than~$\sigma^{\srm\srp}$. 
		This apparent detail complicates the proof slightly as~$u$ is not necessarily connected to all points of~$P$ 
		by paths of~$\rp$ in~$\sigma^{\srp\srm}$.
		The middle and right diagram of Figure~\ref{fig:k_P} helps illustrate the argument below.
		
		As for \eqref{eq:fkg-claim-u}, the proof goes through the equivalent of \eqref{eq:fkg-claim-u2}. 
		Fix a face~$w$ neighbouring~$u$ with~$\sigma^{\srm\srm}(w) = \rm$ 
		and which belongs to a connected component of~$\theta(\sigma^{\srm\srm})$ that intersects~$P$.
		Our goal is to prove that~$w$ is connected to~$u$ in~$\theta(\sigma^{\srm\srm})$
			
		In a first instance let us suppose that~$w \neq v$. 
		Then, as in the proof of~\eqref{eq:fkg-claim-u},
		we may produce a path~$w = \gamma_0,\dots, \gamma_n,\dots, \gamma_m = u$ 
		such that~$\gamma_0,\dots, \gamma_{n-1} \notin P$,~$\gamma_n,\dots, \gamma_m \in P$ 
		and~$\gamma_{0},\dots,\gamma_{n}$ uses only edges of~$\theta(\sigma^{\srm\srm})$. 
		If such a path may be constructed to not include~$v$, then we choose~$\gamma$ such, 
		and the same reasoning as in~\eqref{eq:fkg-claim-u} (applied with~$P\setminus \{v\}$ instead of~$P$) allows us to conclude that 
		$ u \xlra{\theta(\sigma^{\srm\srm})} \gamma_{n}\xlra{\theta(\sigma^{\srm\srm}) } w$. 
		
		Suppose now that no path~$\gamma$ with the properties above and which avoids~$v$ exists. 
		Then pick~$\gamma$ to visit~$v$ at some index~$k \geq n$ and with~$k < m -1$ (see Figure~\ref{fig:k_P}, center).
		We have~$k \geq n$ since~$v \in P$; 
		we may pick~$k < m-1$ since, even when~$u$ and~$v$ are adjacent,~$u$ is connected to~$v$ by a non-trivial path of~$\rp$, 
		and we include this path in~$\gamma$. 
		It is also true that~$k > n$, since~$\sigma^{\srm\srm}(\gamma_{k-1}) = \rp$ necessarily. 
		Let~$D_\gamma$ be the domain delimited by~$\gamma$.

		Consider the boundary of~$P \setminus \{u,v\}$ inside the domain~$D_\gamma$;
		it intersects the boundary of~$D_\gamma$ at four points: the midpoint of the edges
		$\gamma_{n-1}\gamma_n$,~$\gamma_{k-1}\gamma_k$,~$\gamma_{k}\gamma_{k+1}$ and~$\gamma_{m-1}\gamma_m$. 
		Since no path~$\gamma$ avoiding~$v$ exists, the boundary of~$P \setminus \{u,v\}$ contains two non-empty segments~$\chi^1$ and~$\chi^2$
		which connect~$\gamma_{n-1}\gamma_n$ to ~$\gamma_{k-1}\gamma_k$ and~$\gamma_{k}\gamma_{k+1}$ to~$\gamma_{m-1}\gamma_m$, respectively.
		By the choice of~$\chi^1$ and~$\chi^2$ as parts of the boundary of~$P \setminus \{u,v\}$, 
		all edges of~$\bbT$ that intersect~$\chi^1$ and~$\chi^2$ are present in~$\theta(\sigma^{\srm\srm})$.
		In particular, we find~$ u \xlra{\theta(\sigma^{\srm\srm})} v$ and~$v\xlra{\theta(\sigma^{\srm\srm}) } w$, 
		which implies that~$u$ is connected to~$w$ in~$\theta(\sigma^{\srm\srm})$.
		\smallskip
		
		Finally let us study the case when~$w = v$ and hence~$u$ and~$v$ are adjacent  (see Figure~\ref{fig:k_P}, right).
		Then, due to our assumption that~$u$ and~$v$ are connected by a non-trivial path of~$\rp$ in~$\sigma^{\srp\srp}$, 
		we may choose a face-path~$\gamma = \gamma_0,\dots,\gamma_m$ with~$m\geq 2$,~$\gamma_0 = v$,~$\gamma_m = u$ 
		and~$\sigma^{\srm\srm}(\gamma_k) = \rp$ for all~$1 \leq k < m$. 
		The cycle~$\gamma \cup \{uv\}$ delimits a simply connected domain~$D_\gamma$. 
		By considering the interface between~$P$ and the~$\rm$ cluster of~$u$ in~$\sigma^{\srm\srm}$, 
		we deduce the existence of an edge-path~$\chi$ on~$E(D_\gamma)$ with~$\rp$ on one side and~$\rm$ on the other, 
		that starts on an edge adjacent to~$v$ and ends on one adjacent to~$u$. 
		This implies that~$u \xlra{\theta(\sigma^{\srm\srm}) }v$, and the proof is complete. 
	\end{proof}
	
	Using Claim~\ref{cl:number-of-clusters},~\eqref{eq:fkg-p} becomes
	\begin{equation}\label{eq:fkg-ineq-fk-ising}
		k_P(\theta(\sigma^{\srp\srp})) - k_P(\theta(\sigma^{\srp\srp})\cup E_u) 
		\geq  k_P(\theta(\sigma^{\srp\srp})\cup E_v)  - k_P(\theta(\sigma^{\srp\srp})\cup E_u\cup E_v).
	\end{equation}
	The LHS above is the number of distinct connected components in~$\theta(\sigma^{\srp\srp})$ 
	that contain at least one endpoint of an edge of~$E_u$ minus one. 
	The RHS is the same number for~$\theta(\sigma^{\srp\srp})\cup E_v$ instead of~$\theta(\sigma^{\srp\srp})$. 
	Clearly, the former is greater or equal than the latter, and the proof of~\eqref{eq:fkg-p} is finished.
\end{proof}
	
Below we formulate several corollaries about the FKG inequality under various boundary conditions that we are going to use in the proofs.
	
\begin{cor} \label{cor:fkg}
	The FKG inequality~\eqref{eq:fkg-def} holds also in the following cases:
	\begin{enumerate}[label=(\roman*)]
			\item for the red-spin marginal of~$\mu_\calD$, 
			when the red spins are conditioned to take given values on a set of faces of~$\calD$
			and the blue spins are conditioned to be~$\bp$ on a connected set of faces of~$\calD$.
			More precisely, for~$\sigma_r$ chosen according to
			~$\mu_{\calD}(.|\, \sigma_r = \sigma_0 \text{ on~$\calA$} \text{ and } \sigma_b\equiv \bp  \text{ on~$\calB$})$, 
			where~$\calA$ is any set of faces of~$\calD$,~$\sigma_0$ is any red spin configuration on~$\calA$, 
			and~$\calB$  is a connected set of faces of~$\calD$;
			\item for the measures~$\nu^{\srm\srm}_\calD$,~$\nu^{\srm\srp}_\calD$,~$\nu^{\srp\srm}_\calD$ and~$\nu^{\srp\srp}_\calD$.
	\end{enumerate}
\end{cor}
	
\begin{proof}
	\noindent\textit{(i)}
		First let us show the FKG inequality when~$\calA$ is empty. 
		As described in Remark~\ref{rem:blue+}, conditioning the blue spins to be~$\bp$ on~$\calB$ boils down to 
		counting all connected components of~$\theta(\sigma_r)$ that intersect $\calB$ as a single one. 
		When~$\calB$ is connected, this may be achieved by adding to~$\theta(\sigma_r)$ all edges linking pairs of neighbouring vertices in~$\calB$. 
		The proofs of~\eqref{eq:fkg-claim-u},~\eqref{eq:fkg-claim-v} and~\eqref{eq:fkg-claim-uv} adapt directly to this situation.
		Indeed, as already discussed in the proof above, 
		adding edges to~$\theta(\sigma_r)$ only helps in proving~\eqref{eq:fkg-claim-u},~\eqref{eq:fkg-claim-v} and~\eqref{eq:fkg-claim-uv}.
		The rest of the proof of Theorem~\ref{thm:FKG} applies directly.

		Next assume that~$\calA$ is non-empty and~$\sigma_0$ is given. 
		The FKG lattice condition for~$\mu_{\calD}(.|\, \sigma_r = \sigma_0 \text{ on~$\calA$}\text{ and } \sigma_b\equiv \bp  \text{ on~$\calB$})$ 
		is a subset of the inequalities that constitute the FKG lattice condition for~$\mu_{\calD}(.|\, \sigma_b\equiv \bp  \text{ on~$\calB$})$.
		Since the latter were proved to hold, so do the former. 
	\smallskip 
		
	\noindent\textit{(ii)} Let~$\calD'$ be a domain containing~$F(\calD) \cup \partial_\out \calD$. 
	Then~$\nu_{\calD'}$ satisfies the FKG inequality. 
	By point \textit{(i)} and the Spatial Markov property (Corollary~\ref{cor:DMP}), the FKG inequality also applies to 
	$\nu^{\srm\srm}_\calD$,~$\nu^{\srm\srp}_\calD$,~$\nu^{\srp\srm}_\calD$ and~$\nu^{\srp\srp}_\calD$.
%
\end{proof}

\begin{rem}\label{rem:FKG}
	\begin{itemize}
	\item[(i)] The FKG inequality does not apply to~$\sigma_r$ 
	under~$\mu_{\calD}(.|\, \sigma_b \equiv \bp \text{ on~$\calB$})$
	when~$\calB$ is not connected. 
	A counter-example is provided by a domain formed of six faces in a line, 
	with~$\calB$ being formed of the first and last face,~$u$ and~$v$ being the second and fifth face, respectively, 
	and~$\sigma$ being the red spin configuration formed of alternating~$\rp$ and~$\rm$ spins.
		
	Nor does the FKG inequality apply to the red spin marginal of 
	$\mu_{\calD}(.|\, \sigma_b \equiv \bp \text{ on~$\calB_+$ and } \sigma_b \equiv \bm \text{ on~$\calB_-$})$, 
	where~$\calB_+$ and~$\calB_-$ are disjoint sets of faces of~$\calD$.  
	
	\item[(ii)] The proof of the FKG inequality only uses limited features of the hexagonal lattice. 
	Indeed, it adapts to any planar trivalent graph whose set of faces forms a simply connected domain. 
	It is however worth mentioning that the condition of simply connectedness is essential.
	Indeed, counter-examples may be given for sets of faces of~$\bbH$ which are not simply connected: 
	the counter-example of point (i) above may easily be adapted. 

	\item[(iii)] A similar instance of the FKG inequality extends to the loop~$O(n)$ model with~$n\geq 2$ and~$x\leq 1/\sqrt{n-1}$, 
	when the red spin configuration is obtained by colouring loops in red with probability~$1/n$ and in blue otherwise, independently. 
	The only difference in the proof is that the term~$2^{k(\theta(\sigma))}$ in~\eqref{eq:red_spin_law_free} 
	should be replaced by the partition function of the Ising model on the graph obtained by collapsing 
	each cluster of~$\theta(\sigma_r)$ into a single vertex. 
	The FKG property of the FK-Ising representation then leads to the analogue of~\eqref{eq:fkg-ineq-fk-ising}. 
	We do not give further details of this generalisation as it is irrelevant here;
	the reader is referred to~\cite{GlaPel18}, where similar ideas are used 
	to prove a FKG statement for the spin representation of a six-vertex model.
	\end{itemize}
\end{rem}

\subsection{Comparison between boundary conditions}\label{subsec:comparison-b-c}

Above he have introduced a number of boundary conditions for the positively associated measure~$\nu_\calD$. 
As for the random cluster model or other positively associated models, the boundary conditions may have an increasing or decreasing effect on the measure. 

For two measures~$\nu_1,\nu_2$ on~$\{\rm,\rp\}^{F}$ (where~$F$ is some non-empty set), say that~$\nu_1$ stochastically dominates~$\nu_2$, written~$\nu_1 \geq_{\text{st}} \nu_2$, 
if for any increasing event~$A \subset \{\rm,\rp\}^{F}$,~$\nu_1(A) \geq \nu_2(A)$. 

\begin{cor}[Comparison between boundary conditions]\label{cor:monotonicity_bc}\hfill
	\begin{itemize}
		\item[(i)] Let~$\calD$ be a domain and let~$\calA \subset F(\calD)$. Let~$\sigma^1 \leq \sigma^2$ be two (red) spin configurations on~$\calA$. Then~$\nu_\calD( . \,|\, \sigma_r = \sigma^1 \text{ on~$\calA$})\leq_{\text{st}} \nu_\calD( . \,|\, \sigma_r = \sigma^2 \text{ on~$\calA$})$.
		\item[(ii)] For any domain~$\calD$ the following comparison inequalities hold:
		\begin{equation}\label{eq:comparison}
			\nu^{\srm\srm}_\calD \leq_{\text{st}}\nu^{\srm\srp}_\calD \leq_{\text{st}}\nu^{\srp\srm}_\calD \leq_{\text{st}}\nu^{\srp\srp}_\calD.
		\end{equation}
	\end{itemize}	
\end{cor}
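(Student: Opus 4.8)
For part~\textit{(i)}, I would use only positive association (Corollary~\ref{cor:fkg}), supplemented by the spatial Markov property (Corollaries~\ref{cor:DMP}) and the spin‑flip symmetry of $\nu_\calD$ for part~\textit{(ii)}. Recall the elementary fact that conditioning a positively associated measure on an increasing event makes it stochastically larger, and on a decreasing event stochastically smaller — both immediate from~\eqref{eq:fkg-def}. Given $\calA\subset F(\calD)$ and $\sigma^1\leq\sigma^2$ on $\calA$, enumerate the faces $w_1,\dots,w_N$ of $\calA$ at which $\sigma^1\ne\sigma^2$ (necessarily $\sigma^1(w_i)=\rm$, $\sigma^2(w_i)=\rp$), and let $\rho^{(j)}$ be the configuration on $\calA$ agreeing with $\sigma^1$ off $\{w_1,\dots,w_j\}$ and equal to $\rp$ on $\{w_1,\dots,w_j\}$, so $\rho^{(0)}=\sigma^1$ and $\rho^{(N)}=\sigma^2$. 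Both $\nu_\calD(\,\cdot\mid\sigma_r=\rho^{(j)}\text{ on }\calA)$ and $\nu_\calD(\,\cdot\mid\sigma_r=\rho^{(j+1)}\text{ on }\calA)$ arise from $\nu_\calD(\,\cdot\mid\sigma_r=\rho^{(j)}\text{ on }\calA\setminus\{w_{j+1}\})$ — positively associated by Corollary~\ref{cor:fkg}~\textit{(i)} — by conditioning further on $\{\sigma_r(w_{j+1})=\rm\}$, respectively $\{\sigma_r(w_{j+1})=\rp\}$; as the first event is decreasing and the second increasing, the first measure is $\leq_{\text{st}}$ the second. Chaining over $j$ gives~\textit{(i)}.

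For the middle inequality $\nu^{\srm\srp}_\calD\leq_{\text{st}}\nu^{\srp\srm}_\calD$ of~\eqref{eq:comparison}, let $\tilde\nu_\calD$ denote the red‑spin marginal of $\mu_\calD(\,\cdot\mid\sigma_b\equiv\bp\text{ on }\partial_\int\calD)$. Since $\partial_\int\calD$ is connected, the Remark following Proposition~\ref{prop:marginal-ditribution} gives $\tilde\nu_\calD(\sigma_r)\propto 2^{k_{\partial\calD}(\theta(\sigma_r))}$, and Corollary~\ref{cor:fkg}~\textit{(iii)} ensures $\tilde\nu_\calD$ is positively associated. The same computation shows that $\nu^{\srp\srm}_\calD$ and $\nu^{\srm\srp}_\calD$ are exactly the conditionings of $\tilde\nu_\calD$ on $\{\sigma_r\equiv\rp\text{ on }\partial_\int\calD\}$ and on $\{\sigma_r\equiv\rm\text{ on }\partial_\int\calD\}$ respectively (recall $\nu^{\srp\srm}_\calD(\sigma_r)\propto 2^{k_{\partial\calD}(\theta(\sigma_r))}\ind_{\{\sigma_r\equiv\rp\text{ on }\partial_\int\calD\}}$, and symmetrically with $\rm$ for $\nu^{\srm\srp}_\calD$). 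As these two events are respectively increasing and decreasing, the fact used in part~\textit{(i)} yields
\[
\nu^{\srm\srp}_\calD \leq_{\text{st}} \tilde\nu_\calD \leq_{\text{st}} \nu^{\srp\srm}_\calD .
\]

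For the outer inequalities, the global spin flip $R:\sigma_r\mapsto-\sigma_r$ reverses $\leq_{\text{st}}$ and fixes $\theta$ (hence $k(\theta)$ and $k_{\partial\calD}(\theta)$), so it exchanges $\nu^{\srm\srm}_\calD\leftrightarrow\nu^{\srp\srp}_\calD$ and $\nu^{\srm\srp}_\calD\leftrightarrow\nu^{\srp\srm}_\calD$; thus $\nu^{\srm\srm}_\calD\leq_{\text{st}}\nu^{\srm\srp}_\calD$ is equivalent to $\nu^{\srp\srm}_\calD\leq_{\text{st}}\nu^{\srp\srp}_\calD$, which is what I would prove. Fix a domain $\calD'$ with $\calD\subsetneq\calD'$, $\partial_E\calD\subset E(\calD')$ and $\partial_\out\calD\subset F(\calD')$. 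Set $\tau^+_r\equiv\rp$ on $\calA:=F(\calD')\setminus\Int(\calD)$, and let $\tau^-_r$ equal $\rp$ on $\partial_\int\calD$ and $\rm$ on $F(\calD')\setminus F(\calD)$, so that $\tau^-_r\leq\tau^+_r$ on $\calA$. By the spatial Markov property (Corollary~\ref{cor:DMP}~\textit{(i)} applied with $\tau_r=\tau^+_r$, and~\textit{(ii)} with $\tau_r=\tau^-_r$), the marginals on $F(\calD)$ of $\nu_{\calD'}(\,\cdot\mid\sigma_r=\tau^+_r\text{ on }\calA)$ and $\nu_{\calD'}(\,\cdot\mid\sigma_r=\tau^-_r\text{ on }\calA)$ equal $\nu^{\srp\srp}_\calD$ and $\nu^{\srp\srm}_\calD$ respectively. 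Part~\textit{(i)} applied to $\nu_{\calD'}$ gives $\nu_{\calD'}(\,\cdot\mid\sigma_r=\tau^-_r\text{ on }\calA)\leq_{\text{st}}\nu_{\calD'}(\,\cdot\mid\sigma_r=\tau^+_r\text{ on }\calA)$; since passing to the marginal on a set of coordinates preserves stochastic domination, $\nu^{\srp\srm}_\calD\leq_{\text{st}}\nu^{\srp\srp}_\calD$, and hence also $\nu^{\srm\srm}_\calD\leq_{\text{st}}\nu^{\srm\srp}_\calD$. Together with the middle inequality this proves~\eqref{eq:comparison}.

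The only point requiring real care is that the four measures do not form a single increasing family of boundary conditions: $\nu^{\srm\srp}_\calD$ and $\nu^{\srp\srm}_\calD$ carry the same cluster weight $2^{k_{\partial\calD}(\theta)}$ but have incomparable red boundary values, whereas $\nu^{\srm\srm}_\calD$ and $\nu^{\srm\srp}_\calD$ have the same boundary value but different weights $2^{k(\theta)}$ and $2^{k_{\partial\calD}(\theta)}$. This is why the chain must be split into a part handled inside the auxiliary measure $\tilde\nu_\calD$ and a part handled by enlarging the domain so that $2^{k_{\partial\calD}(\theta)}$ becomes a genuine boundary condition; identifying which weight and which of $\rp,\rm$ attaches to each $\nu^{\bullet\bullet}_\calD$ is the bookkeeping one must get right.
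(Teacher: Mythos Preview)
Your proof is correct and follows essentially the same approach as the paper: part~\textit{(i)} via positive association of the conditioned measure (the paper does this in one step rather than your explicit chain, but the content is identical), the outer inequalities of~\eqref{eq:comparison} via an enlarged domain $\calD'$ together with Corollary~\ref{cor:DMP}, and the middle inequality via Corollary~\ref{cor:fkg}~\textit{(iii)} applied to the measure conditioned on $\sigma_b\equiv\bp$ on the connected set $\partial_\int\calD$. Your use of the spin-flip symmetry to reduce the two outer inequalities to one is a minor variation on the paper's ``proved in the same way''.
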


\begin{proof}
		\textit{(i)} Write~$\calA = \calA_= \sqcup \calA_\neq$, where~$\calA_=$ is the set of faces where~$\sigma_1$ and~$\sigma_2$ agree 
		and~$\calA_{\neq}$ that where they disagree. 
		By the ordering~$\sigma_1 \leq \sigma_2$, we deduce that~$\sigma_1$ is constantly~$\rm$ on~$\calA_\neq$ 
		while~$\sigma_2$ is constantly~$\rp$ on this set. 
		Due to the positive association of~$\nu_{\calD}( . \,|\, \sigma_r = \sigma_1 \text{ on~$\calA_=$})$ shown in Corollary~\ref{cor:fkg},
		\begin{align*}
			\nu_{\calD}( . \,|\, \sigma_r = \sigma_1 \text{ on~$\calA$})
			&=\nu_{\calD}( . \,|\, \sigma_r = \sigma_1 \text{ on~$\calA_=$ and }\sigma_r \equiv \rm \text{ on~$\calA_\neq$})\\
			&\leq_{\text{st}}
			\nu_{\calD}( . \,|\, \sigma_r = \sigma_1 \text{ on~$\calA_=$ and }\sigma_r \equiv \rp \text{ on~$\calA_\neq$})\\
			&=\nu_{\calD}( . \,|\, \sigma_r = \sigma_2 \text{ on~$\calA$}).
		\end{align*}
		
		\noindent
		\textit{(ii)} Let us begin with the first and last inequalities of~\eqref{eq:comparison}.
		Let~$\calD'$ be a domain containing~$F(\calD) \cup \partial_\out \calD$. By point \textit{(i)} above, 
		\begin{align*}
    		&\nu_{\calD'} \big[. \,\big|\, \sigma_r \equiv \rp \text{ on~$\partial_{\int} \calD$ and } 
    			\sigma_r \equiv \rm \text{ on~$F(\calD')\setminus F(\calD)$}\big]\\
    		&\qquad \leq_{\text{st}}	
    		\nu_{\calD'} \big[. \,\big|\, \sigma_r \equiv \rp \text{ on~$\partial_\int \calD \cup (F(\calD')\setminus F(\calD)$})\big].
 		\end{align*}
		The Spatial Markov property (Corollary~\ref{cor:DMP}) translates the above to 
		$\nu^{\srp\srm}_\calD \leq_{\text{st}}\nu^{\srp\srp}_\calD$. 
		The first inequality of~\eqref{eq:comparison} is proved in the same way. 
		
		We move on to the middle inequality of~\eqref{eq:comparison}.
		Considering~\eqref{eq:domain-markov-rm} and the symmetry of blue spins, this inequality may be written as
		\[
			\mu_\calD (\cdot \, | \, \sigma_r = \rm \text{ on } \partial_\int\calD, \,  \sigma_b = \bp \text{ on } \partial_\int\calD) 
			\leq_{\text{st}} 
			\mu_\calD (\cdot \, | \, \sigma_r = \rp \text{ on } \partial_\int\calD, \,  \sigma_b = \bp \text{ on } \partial_\int\calD),
		\]
		where the stochastic ordering refers only to the red-spin marginal. 
		Clearly, the set~$\partial_\int\calD$ is connected in~$\bbH^*$, thus the inequality follows from Corollary~\ref{cor:fkg} \textit{(i)}.		
\end{proof}

Let~$\calD$ be a domain with vertices~$a,b,c,d$ on its boundary~$\partial_E \calD$, arranged in counter-clockwise order, and
such that the edges incident to~$a$,~$b$,~$c$ and~$d$ all belong to~$\partial_E \calD$ or to~$E(\calD)$. 
Call~$(ab)$ the segment of~$\partial_E \calD$ between~$a$ and~$b$, when going around~$\calD$ in the counter-clockwise direction.
Define~$(bc)$,~$(cd)$ and~$(da)$ similarly. 

Let~$\mu_\calD^{ a\srp b \srm c \srp d \srm a}$ be the uniform measure on pairs of coherent red and blue spin configurations on~$\calD$ 
with the property that~$\sigma_r$ is equal to~$\rp$ on all faces of~$\partial_\int\calD$ adjacent to~$(ab)$ or~$(cd)$ 
and~$\rm$ on all other faces of~$\partial_\int\calD$. 
The condition above also imposes that the blue spins of the two faces of~$\partial_\int\calD$ that are adjacent to~$a$ are equal, 
and the same for the pairs of faces adjacent to~$b$,~$c$ and~$d$. 
Other than this, there is no restriction for the blue spins on~$\partial_\int\calD$.
The marginal on red spins of the above is denoted by~$\nu_\calD^{a\srp b \srm c \srp d \srm a}$.

Fix now a larger domain~$\calD'$ with~$\calD \subset \calD'$ (possibly with~$\partial_E\calD\cap \partial_E\calD' \neq \emptyset$). 
We say that a configuration~$\tau_r$ on~$\calD' \setminus \Int(\calD)$ 
imposes boundary conditions~${a\rp b \rm c \rp d \rm a}$ on~$\calD$
if all the faces adjacent to~$(ab) \cup (cd)$ but not to~$(bc) \cup (da)$ have spin~$\rp$ in~$\tau_r$
and all those adjacent to~$(bc) \cup (da)$ but not to~$(ab) \cup (cd)$ have spin~$\rm$.
The faces of~$\partial_\int \calD \cup \partial_\out \calD$ that are adjacent to both~$(bc) \cup (da)$ and~$(ab) \cup (cd)$
may have spins~$\rp$ or~$\rm$ in~$\tau_r$%
\footnote{Due to the choice of~$a,b,c$ and~$d$, all such faces are on~$\partial_\out\calD$; their spins have no influence on the measure induced in~$\calD$.}
(see Figure~\ref{fig:pmpm}).

\begin{figure}
\begin{center}
	\includegraphics[width=0.35\textwidth]{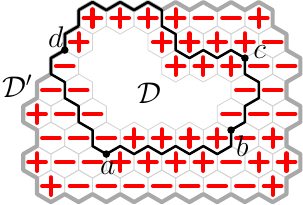}
	\caption{Two domains~$\calD \subset \calD'$ bounded by the black and grey contours, respectively. 
	The configuration in~$\calD' \setminus \Int(\calD)$ imposes boundary conditions~${a\frp b \frm c \frp d \frm a}$ on~$\calD$.}
\label{fig:pmpm}
\end{center}
\end{figure}

As already discussed in Remark~\ref{rem:4arcs},
the spatial Markov property does not apply to the boundary conditions~${a\rp b \rm c \rp d \rm a}$.
Indeed, the connectivity of the edges of~$\theta(\tau_r)$ that are adjacent to~$a$,~$b$,~$c$ and~$d$ influences the measure in~$\calD$ 
and is not determined by the spins on~$\partial_\int \calD \cup \partial_\out \calD$.
It may be that certain configuration~$\sigma_b$ are awarded positive probability in~$\mu_\calD^{a\srp b \srm c \srp d \srm a}$, 
but null probability in~$\mu_{\calD'}(.\,|\, \sigma_r = \tau_r \text{ on~$\calD' \setminus \Int(\calD)$})$.
However, if we limit ourselves to the red-spin marginal, the Radon-Nikodim derivative of the second measure with respect to the first may be shown to be uniformly bounded. 

\begin{lem}\label{lem:4-arcs}
	\textit{(i)} Let~$\calD$ be a domain and~$a,b,c,d$ be points on~$\partial_E\calD$ as above. 
	Let~$\calD'$ be a domain containing~$\calD$ and~$\tau_r$ be a red spin configuration on~$\calD'\setminus \Int(\calD)$, 
	that imposes boundary conditions~${a\rp b \rm c \rp d \rm a}$ on~$\calD$.
	Then, for any configuration~$\varsigma_r \in \{\rm,\rp\}^{F(\calD)}$, 
	\begin{align*}
		\tfrac{1}{2^3}\, \nu_\calD^{a\srp b \srm c \srp d \srm a}(\varsigma_r) \leq 
		\nu_{\calD'}[\sigma_r = \varsigma_r \text{ on~$\calD$} \, | \, \sigma_r = \tau_r \text{ on } F(\calD')\setminus \mathrm{Int}(\calD)]
		\leq 2^3 \,\nu_\calD^{a\srp b \srm c \srp d \srm a}(\varsigma_r).
	\end{align*}
	\noindent\textit{(ii)} Moreover, if~$a,b,c,d \in \partial_E \calD \cap \partial_E \calD'$ and 
	the arcs~$(bc)$ and~$(da)$ of ~$\partial_E \calD$ and~$\partial_E \calD'$ coincide, then 
	\begin{align*}
		\nu_\calD^{a\srp b \srm c \srp d \srm a} \geq_{\text{st}}{\nu_{\calD'}^{a\srp b \srm c \srp d \srm a}},
	\end{align*}
	by which we mean that the former stochastically dominates the restriction to~$\calD$ of the latter. 
	
	\noindent\textit{(iii)} Finally, if~$a',b',c',d' \in \partial_E\calD$ are another set of four points with the same properties 
	as~$a,b,c,d$ and such that~$(bc)\subset (b'c')$ and~$(da)\subset (d'a')$, then
	\begin{align*}
	\nu_\calD^{a\srp b \srm c \srp d \srm a} \geq_{\text{st}}\nu_{\calD}^{a'\srp b' \srm c' \srp d' \srm a'}.
	\end{align*}
\end{lem}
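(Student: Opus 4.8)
The plan is to compare the relevant measures as weighted distributions on red spin configurations, using the formula $\nu(\sigma_r)\propto 2^{k(\cdot)}$ from Proposition~\ref{prop:marginal-ditribution} together with the appropriate modification for pinned blue spins described in the Remark following that proposition. For part \textit{(i)}, the measure $\nu_\calD^{a\srp b\srm c\srp d\srm a}$ is, by construction, the uniform measure on coherent pairs with the stated red boundary values; summing over coherent $\sigma_b$ as in the proof of Proposition~\ref{prop:marginal-ditribution} gives $\nu_\calD^{a\srp b\srm c\srp d\srm a}(\varsigma_r)\propto 2^{\tilde k(\theta(\varsigma_r))}$, where $\tilde k$ counts connected components of $\theta(\varsigma_r)$ after merging, within each of the four boundary arcs, the components touching that arc (this is forced because red spin is constant on each arc, hence blue spin is too). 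On the other hand, conditioning $\nu_{\calD'}$ on $\sigma_r=\tau_r$ outside $\Int(\calD)$ yields a measure on $\varsigma_r$ proportional to $2^{k_{\tau_r}(\theta(\varsigma_r))}$, where $k_{\tau_r}$ counts components after merging everything that is connected \emph{through} $\theta(\tau_r)$ outside $\calD$. The two merging rules differ only in how the four arc-clusters adjacent to $a,b,c,d$ are glued to one another: in $\nu_\calD^{a\srp b\srm c\srp d\srm a}$ they are free, whereas $\tau_r$ may glue some of them (for instance, $\theta(\tau_r)$ may connect the arc $(ab)$ to the arc $(cd)$ around the outside). Each such external gluing changes the exponent by at most $1$, and there are at most three independent gluings possible among four arc-clusters (a fourth would be redundant); hence the two exponents differ by at most $3$, which after normalisation gives the factor $2^3$ on each side. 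I would make this precise by writing $k_{\tau_r}(\theta(\varsigma_r)) = \tilde k(\theta(\varsigma_r)) - g(\varsigma_r)$ where $g(\varsigma_r)\in\{0,1,2,3\}$ is the number of extra identifications among the four arc-clusters induced by $\tau_r$, noting that $g$ is bounded independently of $\varsigma_r$, and then bounding the ratio of partition functions by the same quantity.

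For part \textit{(ii)}, when $a,b,c,d$ lie on both boundaries and the arcs $(bc),(da)$ coincide, the extra faces $F(\calD')\setminus F(\calD)$ all lie ``beyond'' the arcs $(ab)$ and $(cd)$. I would realise $\nu_{\calD'}^{a\srp b\srm c\srp d\srm a}$ inside a still larger domain and, via the Spatial Markov property of Corollary~\ref{cor:DMP} together with Corollary~\ref{cor:fkg}~\textit{(i)}, express it as $\nu_\calD^{a\srp b\srm c\srp d\srm a}$ conditioned on an additional event. The point is that the boundary data coming from $\calD'$ only adds pinnings of $\sigma_r$ to $\rp$ (on faces adjacent to $(ab)\cup(cd)$) and $\rm$ (on faces adjacent to $(bc)\cup(da)$) together with a constraint that the arc-cluster along $(bc)$, say, extends to the enlarged $(bc)$; because $(bc)$ and $(da)$ are unchanged, the only genuine effect is to force more $\rm$ spins near $(ab)$ and $(cd)$ in $\nu_{\calD'}$ relative to $\nu_\calD$, i.e. $\nu_{\calD'}^{a\srp b\srm c\srp d\srm a}$ is $\nu_\calD^{a\srp b\srm c\srp d\srm a}$ conditioned on a decreasing event (roughly, on the red spins being $\rm$ on the relevant extra boundary faces once we pass to the common larger domain). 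Comparison with boundary conditions (Corollary~\ref{cor:monotonicity_bc}~\textit{(i)}) then gives $\nu_\calD^{a\srp b\srm c\srp d\srm a}\geq_{\text{st}}\nu_{\calD'}^{a\srp b\srm c\srp d\srm a}$. The care needed here is to handle the blue-spin gluing at the four special vertices correctly, so that the two measures are genuinely related by conditioning and not merely by a bounded Radon--Nikodym derivative; the fixed arcs $(bc),(da)$ are exactly what makes this work.

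For part \textit{(iii)}, both measures live on the same domain $\calD$, so it is purely a comparison-of-boundary-conditions statement. Passing from $a,b,c,d$ to $a',b',c',d'$ with $(bc)\subset(b'c')$ and $(da)\subset(d'a')$ enlarges the $\rm$ arcs at the expense of the $\rp$ arcs. Using Corollary~\ref{cor:fkg}~\textit{(i)} to view $\nu_\calD^{a'\srp b'\srm c'\srp d'\srm a'}$ as $\nu_\calD^{a\srp b\srm c\srp d\srm a}$ conditioned on turning the spins of the boundary faces in $(b'c')\setminus(bc)$ and $(d'a')\setminus(da)$ from $\rp$ to $\rm$ — again after checking the blue-spin constraints at the endpoints match up, which they do because the order of the four arcs is preserved — the monotonicity in boundary conditions yields the stochastic domination. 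I expect the main obstacle throughout to be bookkeeping of the blue-spin coherence conditions at the four marked boundary vertices: these are what distinguish the $a\rp b\srm c\rp d\srm a$ boundary conditions from genuine Markovian ones, and the counting of ``at most $3$'' identifications in part \textit{(i)}, as well as the precise identification of which decreasing event implements the conditioning in parts \textit{(ii)} and \textit{(iii)}, must be done carefully using the planarity of $\bbH$ and the cyclic order of $a,b,c,d$.
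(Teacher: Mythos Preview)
Your overall strategy for part~\textit{(i)}---express both measures as $\propto 2^{\text{(cluster count)}}$ and bound the difference of exponents by~$3$---is the paper's approach. However, your formula for $\nu_\calD^{a\srp b\srm c\srp d\srm a}$ is wrong, and the error propagates. You justify the arc-merged count $\tilde k$ by ``red spin constant on an arc, hence blue spin constant too''. That implication is false: coherence forces $\sigma_b(u)=\sigma_b(v)$ only when $\sigma_r(u)\neq\sigma_r(v)$; along an arc of constant red spin the blue spins are unconstrained. The correct weight is simply $2^{k(\theta(\varsigma_r))}$ with no merging (exactly Proposition~\ref{prop:marginal-ditribution}). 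This matters because your comparison ``the two merging rules differ only in how the four arc-clusters are glued'' then collapses: the outside configuration $\tau_r$ does \emph{not} induce arc-merging either (along each arc the inner and outer boundary faces carry the same red spin, so no $\theta$-edge crosses $\partial_E\calD$ there), and $\tilde k$ and $k_{\tau_r}$ are not directly comparable. The correct argument, as in the paper, is that the only $\theta$-edges crossing $\partial_E\calD$ occur at the four corners $a,b,c,d$, so at most the four components of $\theta(\varsigma_r)$ containing the corner $\partial_\int\calD$-faces can be merged by $\tau_r$; hence $0\le k(\theta(\varsigma_r))-k_\calD(\theta(\varsigma_r\cup\tau_r))\le 3$.

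For part~\textit{(ii)} your plan is both over-complicated and has the conditioning reversed: you try to view $\nu_{\calD'}^{a\srp b\srm c\srp d\srm a}$ as a conditioning of $\nu_\calD^{a\srp b\srm c\srp d\srm a}$, which cannot work since the former lives on the larger domain, and you invoke Corollary~\ref{cor:DMP}, which does not cover four-arc boundary conditions. The paper's argument is a one-liner in the opposite direction: since $(bc)$ and $(da)$ are shared, all faces of $\calD'\setminus\Int(\calD)$ not already fixed to $\rm$ lie beyond the $\rp$-arcs, and one checks directly that
\[
\nu_\calD^{a\srp b\srm c\srp d\srm a}
=\nu_{\calD'}^{a\srp b\srm c\srp d\srm a}\big(\,\cdot\,\big|\,\sigma_r\equiv\rp\text{ on }\calD'\setminus\Int(\calD)\big);
\]
since the conditioning event is increasing and $\nu_{\calD'}^{a\srp b\srm c\srp d\srm a}$ satisfies FKG by Corollary~\ref{cor:fkg}~\textit{(i)}, the stochastic domination follows immediately.

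Part~\textit{(iii)} is correct and coincides with the paper: both measures are $\nu_\calD$ conditioned on comparable red boundary assignments, and Corollary~\ref{cor:monotonicity_bc}~\textit{(i)} concludes.
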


\begin{proof}
	\textbf{\textit{(i)}}
	Both measures in the statement are supported on configuration~$\varsigma_r \in \{\rm,\rp\}^{F(\calD)}$ that agree with~$\tau_r$ on~$\partial_\int\calD$.
	By Proposition~\ref{prop:marginal-ditribution}, for any such configuration~$\varsigma_r$, 		
	\begin{align}\label{eq:marginal-ditribution1}
    	&\nu_\calD^{a\srp b \srm c \srp d \srm a}(\varsigma_r) = \frac{1}{Z_\calD^{a\srp b \srm c \srp d \srm a}} 2^{k(\theta(\varsigma_r))} \text{ and }\\
    	&\nu_{\calD'}[\sigma_r = \varsigma_r \text{ on~$\calD$} \, | \, \sigma_r = \tau_r \text{ on } F(\calD')\setminus \mathrm{Int}(\calD)]
	=  \frac{1}{\tilde Z} 2^{k_\calD(\theta(\varsigma_r \cup \tau_r))},
	\end{align}
	where~$Z_\calD^{a\srp b \srm c \srp d \srm a}, \tilde Z$ are normalising constants and~$k_\calD(\theta(\varsigma_r \cup \tau_r))$ is the number of connected components of 
	$\theta(\varsigma_r \cup \tau_r)$ that intersect~$\calD$. 
	Indeed, the number of connected components that do not intersect~$\calD$ does not depend on~$\varsigma_r$, hence cancel out.
	
	Observe that~$\theta(\varsigma_r \cup \tau_r)$ contains more connections than~$\theta(\varsigma_r)$, hence fewer connected components. 
	However, in~$\theta(\varsigma_r)$, there are at most four distinct connected components that may be connected in ~$\theta(\varsigma_r \cup \tau_r)$. 
	Thus, 
	$$k(\theta(\varsigma_r)) - 3 \leq k_\calD(\theta(\varsigma_r \cup \tau_r)) \leq k(\theta(\varsigma_r)).$$
	By summing the above over all configuration~$\sigma_r$ in the support of ~$\nu_\calD^{a\srp b \srm c \srp d \srm a}$, 
	we find~$\frac{1}{8}Z_\calD^{a\srp b \srm c \srp d \srm a} \leq \tilde Z \leq Z_\calD^{a\srp b \srm c \srp d \srm a}$.
	Inserting the last two inequalities in \eqref{eq:marginal-ditribution1} provides the desired bound.

	\noindent\textbf{\textit{(ii)}}
	We have 
	$$\nu_\calD^{a\srp b \srm c \srp d \srm a} 
	= \nu_{\calD'}^{a\srp b \srm c \srp d \srm a}(.\,|\, \sigma_r \equiv \rp \text{ on~$\calD'\setminus \Int(\calD)$}).$$
	The FKG inequality implies the desired stochastic domination. 
	\medskip
	
	\noindent\textbf{\textit{(iii)}}
	Let~$A_+$ be the faces of~$\partial_\int\calD$ that are adjacent to~$(a'b')$ or~$(c'd')$ and
	$A_-$ be the faces of~$\partial_\int\calD$ that are adjacent to~$(bc)$ or~$(da)$.
	Set~$A_\neq = \partial_\int\calD \setminus (A_+\cup A_-)$.
	Then
	\begin{align*}
		&\nu_\calD^{a\srp b \srm c \srp d \srm a} = \nu_{\calD}(.\,|\, \sigma_r \equiv \rp \text{ on~$A_+ \cup A_\neq$ and }\sigma_r \equiv \rm \text{ on~$A_-$})	
		\qquad \text{ and }\\
		&\nu_\calD^{a'\srp b' \srm c' \srp d' \srm a'} = \nu_{\calD}(.\,|\, \sigma_r \equiv \rp \text{ on~$A_+$ and }\sigma_r \equiv \rm \text{ on~$A_- \cup A_\neq$}).
	\end{align*}
	Corollary \ref{cor:monotonicity_bc} \textit{(i)} implies that the first measure dominates the second. 
\end{proof}

\section{Infinite-volume measure: existence and uniqueness}\label{sec:infinite_volume}

In this section we construct an infinite-volume Gibbs measure for the loop~$O(2)$ model. 
As for the Ising, Potts or FK models, the infinite-volume limit will be created as a limit of finite-volume measures. 
The existence of the limit rests on the monotonicity of the measures~$\nu_\calD$ in their boundary conditions.  

\begin{thm}[Existence of limiting measure]\label{thm:limit_joint}
	For any increasing sequence of domains ~$(\calD_n)_{n \geq 0}$ with~$\bigcup_{n\geq0} \calD_n = \bbH$, 
	the sequence of measures~$\mu_{\calD_n}^{\srp\srp}$ converges to a measure~$\mu^{\srp\srp}_\bbH$ on~$\{\rm,\rp\}^{F(\bbH)}$.
	Moreover, the ~$\mu^{\srp\srp}_\bbH$ is invariant under translations and rotations by multiples of~$\pi/3$, 
	ergodic with respect to translations, 
	and has positively associated blue- and red-spin marginals. 
\end{thm}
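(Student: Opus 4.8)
The plan is to obtain $\mu^{\srp\srp}_\bbH$ as a monotone limit using the comparison inequalities of Corollary~\ref{cor:monotonicity_bc} and the Spatial Markov property of Corollary~\ref{cor:DMP}, then to deduce invariance and ergodicity from standard arguments. First I would observe that, for the red-spin marginal, the boundary condition $\srp\srp$ is maximal: by Corollary~\ref{cor:DMP}~\textit{(i)} and Corollary~\ref{cor:monotonicity_bc}, if $\calD \subset \calD'$ then $\nu^{\srp\srp}_{\calD'}$ restricted to $\calD$ stochastically dominates $\nu^{\srp\srp}_\calD$ (the $\srp\srp$ boundary condition on $\calD$ is the conditioning of $\nu_{\calD'}$ on $\sigma_r \equiv \srp$ on $F(\calD')\setminus \Int(\calD)$, which by FKG only raises the measure). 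Hence, for any fixed finite region $A \subset F(\bbH)$ and any increasing event depending only on spins in $A$, the sequence $\nu^{\srp\srp}_{\calD_n}$ is eventually monotone decreasing along $n$ (wait — care with direction: larger domains give less boundary influence, so it is monotone \emph{decreasing} in the increasing event), and in any case monotone, hence convergent. By the standard argument (increasing events generate the $\sigma$-algebra, inclusion–exclusion), the finite-dimensional distributions of $\nu^{\srp\srp}_{\calD_n}$ converge; this defines the red-spin marginal $\nu^{\srp\srp}_\bbH$. The limit does not depend on the sequence $(\calD_n)$, by a sandwiching argument between two sequences.

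Next I would upgrade to the joint law $\mu^{\srp\srp}_{\calD_n}$. By Proposition~\ref{prop:marginal-ditribution}~\textit{(ii)}, conditionally on $\sigma_r$ the blue configuration is obtained by colouring the clusters of $\theta(\sigma_r)$ uniformly and independently; this conditional law is a local, continuous function of $\sigma_r$ in the product topology (a cluster of $\theta(\sigma_r)$ restricted to a finite window is determined by $\sigma_r$ on a slightly larger window, using that the $\srp\srp$ boundary makes $\theta$ finite-range near the boundary — more precisely all clusters are finite a.s., which follows from the loop representation and the absence of infinite paths under $\srp\srp$ boundary conditions). Therefore the convergence of the $\sigma_r$-marginals plus this explicit conditional kernel gives convergence of the joint laws $\mu^{\srp\srp}_{\calD_n} \to \mu^{\srp\srp}_\bbH$, and positive association of the blue marginal is inherited from the blue marginal of each $\mu^{\srp\srp}_{\calD_n}$, which is positively associated by the red–blue symmetry of the construction together with Corollary~\ref{cor:fkg} (or by repeating the proof of Theorem~\ref{thm:FKG} for blue spins).

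For the symmetries, I would use the $\pi/3$-rotational and translational invariance of $\bbH$ together with the fact that the limit is independent of the exhausting sequence: given a lattice symmetry $g$, applying $g$ to $(\calD_n)$ produces another exhausting sequence with $\srp\srp$ boundary conditions, so $g_*\mu^{\srp\srp}_\bbH = \mu^{\srp\srp}_\bbH$. Ergodicity with respect to translations I would get from the FKG inequality for $\nu^{\srp\srp}_\bbH$ (extended to the infinite volume by passing the FKG lattice condition / positive association to the limit): a translation-invariant, positively associated measure on a spin system with a translation-ergodic limit is ergodic provided tail-triviality holds, and tail triviality for $\nu^{\srp\srp}_\bbH$ follows because it is an extremal Gibbs measure — it is the stochastically maximal one among limits with $\srp\srp$-type boundary conditions, and a positively associated measure that is maximal among a comparison family is extremal, hence its tail is trivial; extremality plus translation invariance gives ergodicity (Burton–Keane-type argument, or the standard fact that extremal translation-invariant Gibbs measures are ergodic).

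The main obstacle I anticipate is the joint (rather than marginal) convergence and the passage of positive association of the \emph{blue} marginal to the limit: the blue marginal of $\mu_\calD$ is \emph{not} itself of the clean form \eqref{eq:red_spin_law_free} conditioned on $\srp\srp$ — one has to be careful that under $\mu^{\srp\srp}_\calD$ the blue boundary condition is free, not fixed, and to check that the infinite-volume blue marginal is still positively associated (the cleanest route is the red–blue symmetry observed in Section~\ref{sec:spin_representation}, which shows the blue marginal of $\mu^{\srp\srp}_\bbH$ is, up to relabelling, of the same type as a red marginal with free-type boundary conditions, to which Corollary~\ref{cor:fkg} applies in the limit). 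The remaining subtlety is justifying that all clusters of $\theta(\sigma_r)$ are finite $\mu^{\srp\srp}_\bbH$-a.s., which is needed for the conditional colouring kernel to be well defined and continuous; this should follow from the RSW-type estimates proved later, or more directly from the fact that an infinite $\theta$-cluster would force an infinite $\omega^*$-dual cluster and one can rule this out by the monotone domination of $\nu^{\srp\srp}_\bbH$ by its finite-volume analogues together with a Borel–Cantelli / annulus argument — though if this turns out to require the later RSW input, I would simply defer that half-sentence and state here only the joint convergence and invariance, postponing finiteness of loops to the next section.
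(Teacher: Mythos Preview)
Your overall architecture matches the paper's: first take a monotone limit of the red marginals using the Spatial Markov property for $\srp\srp$ boundary conditions together with FKG (the paper's Theorem~\ref{thm:limit-for-red}), then lift to the joint law via the explicit colouring kernel of Proposition~\ref{prop:marginal-ditribution}~\textit{(ii)}. You are also right that the passage from red-marginal convergence to joint convergence hinges on the $\nu^{\srp\srp}_\bbH$-almost-sure finiteness of the clusters of $\theta(\sigma_r)$; the paper does exactly what you suspect and inserts the RSW input (Proposition~\ref{prop:RSW_Vincent} $\Rightarrow$ Corollary~\ref{cor:circ_dp_dp} $\Rightarrow$ Corollary~\ref{cor:no_inf_cluster_theta}) between the red-marginal limit and the joint limit. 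Your parenthetical claim that the conditional colouring is ``a local, continuous function of $\sigma_r$'' is false without that finiteness result---whether two faces in a window lie in the same $\theta$-cluster may depend on arbitrarily distant spins---so the RSW step is not optional. The blue-marginal FKG via the red/blue symmetry is fine (it is Corollary~\ref{cor:fkg}~\textit{(iii)} with the colours swapped).

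The one genuine gap is your ergodicity argument. You invoke ``maximal $\Rightarrow$ extremal Gibbs $\Rightarrow$ tail-trivial $\Rightarrow$ ergodic''. The trouble is that the red-spin marginal does \emph{not} come with a bona fide Gibbsian specification: as the paper stresses (Remark~\ref{rem:4arcs} and the discussion around Corollary~\ref{cor:DMP}), the Spatial Markov property for $\nu$ holds only for very special boundary configurations, so there is no well-defined convex set of ``Gibbs measures for $\nu$'' in which $\nu^{\srp\srp}_\bbH$ could be declared maximal and hence extremal. The paper sidesteps this entirely by proving mixing directly: for increasing local events $A,B$, FKG gives $\nu^{\srp\srp}_\bbH(A\cap\tau_x B)\ge\nu^{\srp\srp}_\bbH(A)\nu^{\srp\srp}_\bbH(B)$, while conditioning on $\sigma_r\equiv\rp$ outside two distant boxes (legitimate because the Spatial Markov property \emph{does} hold for $\srp\srp$) factorises the measure and yields the matching upper bound in the limit $|x|\to\infty$. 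Ergodicity of the joint measure then follows from ergodicity of the red marginal together with the finiteness of $\theta$-clusters. I would replace your extremality sketch with this direct mixing computation.
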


The same argument may be used to construct measures~$\mu^{\srm\srm}_\bbH$,~$\mu^{\sbp\sbp}_\bbH$ and~$\mu^{\sbm\sbm}_\bbH$
as limits of finite-volume measures with the proper boundary conditions. 
Below we prove that these measures are all equal to a single measure~$\mu_\bbH$. 
We also show that the measures~$\mu_{\calD}^{\srp\srm}$ converge to~$\mu_\bbH$ as~$\calD$ increases to~$\bbH$. 

For the double-spin representation, the theorem below may be understood as a partial uniqueness theorem; 
see Remark \ref{rem:frozen-states} for more on why it is not a complete uniqueness theorem. 
For Lipschitz functions, the theorem below amounts to non-quantitative delocalisation: 
it proves that the value at~$0$ is not tight as the domain increases to~$\bbH$, 
but does not offer the speed at which its variance increases. 

For~$n \geq 1$, let~$\Circ_{\srp\srp}(n)$ be the event that there exists a simple closed path of edges of~$\bbH$
surrounding~$\La_n$ with the property that the red spin of all faces adjacent to any of its edges is~$\rp$. 
The events~$\Circ_{\srm\srm}(n)$,~$\Circ_{\sbp\sbp}(n)$ and~$\Circ_{\sbm\sbm}(n)$ are defined similarly. 

\begin{thm}[Uniqueness of infinite-volume measure / Delocalisation]\label{thm:uniqueness_nu}
	For any~$n \geq 1$, 
	\begin{align}\label{eq:uniqueness_nu}
		\mu^{\srp\srp}_\bbH(\Circ_{\srp\srp}(n)) = \mu^{\srp\srp}_\bbH(\Circ_{\srm\srm}(n)) = 
		\mu^{\srp\srp}_\bbH(\Circ_{\sbp\sbp}(n)) = \mu^{\srp\srp}_\bbH(\Circ_{\sbm\sbm}(n)) = 1.
	\end{align}
	In particular~$\mu^{\srp\srp}_\bbH = \mu^{\srm\srm}_\bbH = \mu^{\sbp\sbp}_\bbH = \mu^{\sbm\sbm}_\bbH$, and we will simply write~$\mu_\bbH$. 
	
	Also, for any sequence of finite domains~$\calD_n$ that increases to~$\bbH$, the measures 
	$\mu_{\calD_n}^{\srp\srm}$,~$\mu_{\calD_n}^{\srm\srp}$,~$\mu_{\calD_n}^{\sbp\sbm}$ and~$\mu_{\calD_n}^{\sbm\sbp}$
	all converge to~$\mu_\bbH$.
\end{thm}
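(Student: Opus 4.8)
The plan is to first reduce the four identities in~\eqref{eq:uniqueness_nu} to a single \emph{delocalisation} statement and then to deduce the equality of the infinite-volume measures from it by soft arguments.

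\textbf{Step 1: circuits from the unconditioned marginal.} The key observation is that, since $\mu^{\srp\srp}_\calD$ conditions only the red spins, it is invariant under a global flip of the blue spins (flipping all blue spins preserves coherence and the red boundary condition); hence $\mu^{\srp\srp}_\bbH$ is blue-flip invariant, and its blue marginal is a translation-invariant, ergodic (Theorem~\ref{thm:limit_joint}), blue-flip invariant measure satisfying the FKG inequality (Theorem~\ref{thm:FKG}). I would then argue, by a Burton--Keane/Zhang-type non-coexistence argument adapted to this representation, that under $\mu^{\srp\srp}_\bbH$ neither blue spin value forms an infinite cluster. On the self-matching lattice $\bbT$ this forces $\La_n$ to be surrounded, a.s., by circuits of $\bp$-faces and by circuits of $\bm$-faces at every scale; enclosing such a $\bp$-circuit in its (finite) monochromatic blue component, whose outer boundary contains an $\omega(\sigma_b)$-loop around $\La_n$, and iterating outwards yields infinitely many nested blue loops around $\La_n$. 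Between two sufficiently far apart consecutive such loops the region of constant blue spin is wide enough to contain a $\sbp\sbp$- or $\sbm\sbm$-circuit around $\La_n$, and these values alternate; thus $\mu^{\srp\srp}_\bbH(\Circ_{\sbp\sbp}(n)) = \mu^{\srp\srp}_\bbH(\Circ_{\sbm\sbm}(n)) = 1$ for all $n$, and moreover every cluster of $\theta(\sigma_b)$ is a.s. finite (a $\sbp\sbp$-circuit cannot be crossed by a $\theta(\sigma_b)$-path). Since $\mu^{\sbp\sbp}_\bbH$ is the colour-swap of $\mu^{\srp\srp}_\bbH$ (Theorem~\ref{thm:limit_joint}), the same reasoning gives $\mu^{\sbp\sbp}_\bbH(\Circ_{\srp\srp}(n)) = \mu^{\sbp\sbp}_\bbH(\Circ_{\srm\srm}(n)) = 1$.

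\textbf{Step 2: transfer to the red marginal.} Fix $\La_n$ and a finite $\calD \supseteq \La_n$; in $\mu^{\sbp\sbp}_\calD$ explore the spins inwards from $\partial_\int\calD$ and stop at the outermost $\srp\srp$-circuit $\Gamma$ around $\La_n$. By the spatial Markov property (Theorem~\ref{thm:DMP}(i)), conditionally on $\Gamma = \gamma$ the configuration inside $\gamma$ has law $\mu^{\srp\srp}_{\Int(\gamma)}$, whose red marginal dominates the restriction of $\nu^{\srp\srp}_\bbH$ to $\La_n$ (the measures $\nu^{\srp\srp}_\calD$ decrease to $\nu^{\srp\srp}_\bbH$; Theorem~\ref{thm:limit_joint}, Corollary~\ref{cor:monotonicity_bc}). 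As $\calD \uparrow \bbH$ the probability that such a $\Gamma$ exists tends to $1$ by Step~1, so the red marginal of $\mu^{\sbp\sbp}_\bbH$ stochastically dominates $\nu^{\srp\srp}_\bbH$; the same exploration with $\srm\srm$-circuits shows it is dominated by $\nu^{\srm\srm}_\bbH$. Since $\nu^{\srm\srm}_\bbH \leq_{\text{st}} \nu^{\srp\srp}_\bbH$ always, all three coincide, so $\nu^{\srp\srp}_\bbH = \nu^{\srm\srm}_\bbH$. In particular $\nu^{\srp\srp}_\bbH$ is red-flip invariant, so repeating Step~1 with the roles of the colours exchanged gives $\mu^{\srp\srp}_\bbH(\Circ_{\srp\srp}(n)) = \mu^{\srp\srp}_\bbH(\Circ_{\srm\srm}(n)) = 1$ and that every cluster of $\theta(\sigma_r)$ is a.s. finite. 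Together with Step~1 this proves~\eqref{eq:uniqueness_nu}.

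\textbf{Step 3: equality of the joint measures and convergence of the mixed boundary conditions.} The measures $\mu^{\srp\srp}_\bbH$ and $\mu^{\srm\srm}_\bbH$ have the same red marginal, and since all clusters of $\theta(\sigma_r)$ are a.s. finite the conditional law of $\sigma_b$ given $\sigma_r$ is, for both, the independent uniform colouring of these clusters (Proposition~\ref{prop:marginal-ditribution}; in the infinite-volume limit the boundary constraints on $\sigma_b$ become irrelevant). Hence $\mu^{\srp\srp}_\bbH = \mu^{\srm\srm}_\bbH$, and the colour-swap together with the marginal identities from Step~2 gives $\mu^{\srp\srp}_\bbH = \mu^{\srm\srm}_\bbH = \mu^{\sbp\sbp}_\bbH = \mu^{\sbm\sbm}_\bbH =: \mu_\bbH$. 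Finally, for $\calD_n \uparrow \bbH$ the red marginals of $\mu^{\srp\srm}_{\calD_n}$ and $\mu^{\srm\srp}_{\calD_n}$ are squeezed, by Corollary~\ref{cor:monotonicity_bc}(ii), between $\nu^{\srm\srm}_{\calD_n}$, which increases to $\nu_\bbH$, and $\nu^{\srp\srp}_{\calD_n}$, which decreases to $\nu_\bbH$, hence converge to $\nu_\bbH$; any subsequential limit of $\mu^{\srp\srm}_{\calD_n}$ therefore has red marginal $\nu_\bbH$, so finite $\theta(\sigma_r)$-clusters, and must equal $\mu_\bbH$ by the colouring description. The colour-swap handles $\mu^{\sbp\sbm}_{\calD_n}$ and $\mu^{\sbm\sbp}_{\calD_n}$.

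\textbf{The main obstacle} is Step~1 --- ruling out an ordered phase for the unconditioned colour, i.e.\ showing that a.s.\ there is no infinite monochromatic blue cluster under $\mu^{\srp\srp}_\bbH$. The classical non-coexistence (Zhang) and uniqueness (Burton--Keane) arguments rely on a finite-energy property that the spin marginals here do \emph{not} possess (the coherence constraint can force individual spins), so they have to be run in a form tailored to the double-spin representation; once this delocalisation input is in hand, everything downstream is monotonicity and the explicit cluster-colouring description of the measures.
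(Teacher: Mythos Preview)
Your broad outline --- run Burton--Keane/Zhang on the \emph{unconstrained} colour to exclude infinite monochromatic clusters, then use monotonicity and the cluster-colouring description to identify all the infinite-volume measures --- is exactly the paper's, and your Steps~2--3 are essentially its concluding arguments. Your ``main obstacle'' is also the paper's: Proposition~\ref{prop:Burton_Keane_blue} runs Burton--Keane and Zhang on the blue marginal of $\mu^{\srp\srp}_\bbH$, simply asserting that enough finite energy holds; so this is a shared step, not a place where your route deviates.

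The genuine gap is in your Step~1, \emph{after} you have infinitely many nested blue loops. You try to extract double-$\bp$/double-$\bm$ circuits from the annuli \emph{between consecutive blue loops}, but such an annulus can be too thin to carry any edge-circuit of $\bbH$ with both adjacent faces of the same blue spin --- for instance, the single ring of six hexagons between two nested hexagonal blue loops contains six pairwise vertex-disjoint edges of constant blue spin and no circuit at all --- and you give no argument that ``sufficiently far apart'' consecutive blue loops ever occur. The paper bypasses this completely with a cleaner mechanism: under $\mu^{\srp\srp}_\bbH$, conditionally on the uncoloured loop configuration $\omega$, each finite loop is independently red or blue with probability $1/2$, so among the infinitely many loops surrounding $\La_n$ there a.s.\ occur four consecutive loops coloured red, blue, red, blue. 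The key observation is then that \emph{a loop of one colour is itself a double-circuit of the other colour} (every face adjacent to a red loop has the same blue spin, since the third edge at each vertex of the loop is absent from $\omega$), and the blue loop sandwiched between the two red loops forces their blue signs to be opposite. This single paragraph yields all four identities in~\eqref{eq:uniqueness_nu} at once, without your detour through $\mu^{\sbp\sbp}_\bbH$ and the exploration of Step~2.
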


\begin{rem}\label{rem:frozen-states}
	The theorem above states that any finite-volume measure with any red boundary condition converges to~$\mu_\bbH$. 
	However, we do not claim this for mixed red and blue boundary conditions. 
	
	It may be tempting to believe that, for any assignment of red and blue spins~$\xi_n^r,\xi_n^b$ on~$\partial_\int\calD_n$,
	the measure~$\mu_{\calD_n}$ conditioned to have spins~$\xi_n^r$ and~$\xi_n^b$ on~$\partial_\int\calD_n$ also converge to~$\mu_\bbH$.
	Unfortunately this is not the case: 
	counter examples may be created where the boundary conditions~$\xi_n^r,\xi_n^b$ force one single configuration inside the domain.
\end{rem}

The rest of the section is dedicated to the proofs of the two theorems above. 
The RSW theorem developed in Section~\ref{sec:RSW_weak} will be of great use also in Section~\ref{sec:dichotomy}. 

\subsection{Infinite-volume measure for red marginal}

We will work here only with the red-spin marginals~$\nu$ of the measures~$\mu$. 

\begin{thm}[Limiting measure for red spins]
\label{thm:limit-for-red}
	For any increasing sequence of domains ~$(\calD_n)_{n \geq 0}$ with~$\bigcup_{n\geq0} \calD_n = \bbH$, the sequence of measures~$\nu_{\calD_n}^{\srp\srp}$ converges to a measure~$\nu^{\srp\srp}_\bbH$ on~$\{\rm,\rp\}^{F(\bbH)}$.
	Moreover ~$\nu^{\srp\srp}_\bbH$ is translation-invariant, ergodic with respect to translations and positively associated. 
\end{thm}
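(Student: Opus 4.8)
The plan is to build the limit as a decreasing limit of stochastically ordered measures, using the FKG machinery already established. First I would fix an increasing exhausting sequence $(\calD_n)$ and observe that, for $m \geq n$, the restriction of $\nu^{\srp\srp}_{\calD_m}$ to $F(\calD_n)$ is stochastically dominated by $\nu^{\srp\srp}_{\calD_n}$: indeed the configuration that is constantly $\rp$ on $F(\calD_m)\setminus \Int(\calD_n)$ imposes on $\calD_n$ exactly the $\srp\srp$ boundary condition (via Corollary~\ref{cor:DMP}\,(i)), and conditioning on an arbitrary red configuration outside $\calD_n$ and comparing with the all-$\rp$ one gives domination by Corollary~\ref{cor:monotonicity_bc}\,(i). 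Hence $n \mapsto \nu^{\srp\srp}_{\calD_m}\!\restriction_{F(\calD_n)}$ is, for each fixed $n$, nonincreasing in $m$ in the stochastic order, and bounded below by $\nu^{\srm\srm}_{\calD_n}$ restricted suitably (or simply bounded below trivially). A nonincreasing sequence of measures on the finite product space $\{\rm,\rp\}^{F(\calD_n)}$, in the stochastic order, converges: concretely, for each \emph{decreasing} cylinder event $A$ depending on finitely many faces, $\nu^{\srp\srp}_{\calD_m}(A)$ is nondecreasing in $m$, hence converges, and decreasing cylinder events generate the topology, so one gets weak convergence to a limit $\nu^{\srp\srp}_\bbH$ on $\{\rm,\rp\}^{F(\bbH)}$. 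Independence of the limit from the choice of $(\calD_n)$ follows by a standard sandwiching argument between two sequences.

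Next I would verify the three asserted properties. \emph{Positive association:} each $\nu^{\srp\srp}_{\calD_n}$ satisfies FKG by Corollary~\ref{cor:fkg}\,(ii), and the FKG inequality passes to weak limits on $\{\rm,\rp\}^{F(\bbH)}$ since it need only be checked on increasing cylinder events, on which $\nu^{\srp\srp}_\bbH(A\cap B) = \lim \nu^{\srp\srp}_{\calD_n}(A\cap B) \geq \lim \nu^{\srp\srp}_{\calD_n}(A)\nu^{\srp\srp}_{\calD_n}(B) = \nu^{\srp\srp}_\bbH(A)\nu^{\srp\srp}_\bbH(B)$. \emph{Translation invariance:} for a translation $t$ of $\bbH$, $t\calD_n$ is again an exhausting sequence, $\nu^{\srp\srp}_{t\calD_n} = t_*\nu^{\srp\srp}_{\calD_n}$ by the lattice symmetry of the definition of $\nu^{\srp\srp}$, and both sequences converge to the same limit by the independence of the exhaustion; hence $t_*\nu^{\srp\srp}_\bbH = \nu^{\srp\srp}_\bbH$. \emph{Ergodicity:} this is the one point needing real work. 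Since $\nu^{\srp\srp}_\bbH$ is positively associated and translation invariant, it suffices to show it is \emph{extremal} among translation-invariant positively associated measures, or equivalently (Burton--Keane / standard tail-triviality for FKG limits with a maximal boundary condition) that it is tail trivial. The clean route is: $\nu^{\srp\srp}_\bbH$ is the maximal element, in the stochastic order, among all infinite-volume limits of $\nu_{\calD}$-type measures, being a decreasing limit of the maximal finite-volume boundary condition; a standard argument (as for the random-cluster model, see Grimmett's book) then shows that such a "plus-boundary-condition" FKG measure is tail trivial, hence ergodic with respect to the translation group.

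The main obstacle I anticipate is precisely the ergodicity step: the other two properties are soft consequences of FKG plus lattice symmetry plus passage to the limit, but ergodicity requires either an explicit tail-triviality argument for the maximal FKG measure or an appeal to a general principle about extremal Gibbs states, and here one must be a little careful because $\nu^{\srp\srp}_\bbH$ is only a marginal of a Gibbs measure, not itself obviously Gibbsian, and the Spatial Markov property for $\nu$ holds only in the restricted form of Corollary~\ref{cor:DMP}. I would handle this by working with the monotone coupling of all $\nu^{\srp\srp}_{\calD_n}$ and the finite-energy/insertion-tolerance-type estimates available from Proposition~\ref{prop:marginal-ditribution} (the law is $\propto 2^{k(\theta(\sigma_r))}$, which changes by a bounded factor under a single spin flip), combined with the square-root trick, to deduce triviality of the translation-invariant $\sigma$-field; alternatively, one can postpone full ergodicity and only record translation invariance here, upgrading to ergodicity in Section~\ref{sec:infinite_volume} once $\mu_\bbH$ is identified and shown to be a limit from every boundary condition. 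I would also explicitly flag that a completely analogous construction yields $\nu^{\srm\srm}_\bbH$, $\nu^{\sbp\sbp}_\bbH$, $\nu^{\sbm\sbm}_\bbH$ and, as a decreasing/increasing pair, sandwiches any subsequential limit of $\nu^{\srp\srm}_{\calD}$ between $\nu^{\srm\srm}_\bbH$ and $\nu^{\srp\srp}_\bbH$ — a fact that will be used in Theorem~\ref{thm:uniqueness_nu}.
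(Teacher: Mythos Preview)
Your argument for existence of the limit, independence from the exhaustion, translation invariance, and positive association is correct and matches the paper's proof essentially line for line. The only divergence is in the ergodicity step, and there you are making things harder than necessary.

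The paper proves ergodicity directly via \emph{mixing}: for increasing cylinder events $A,B$, one has on the one hand $\nu^{\srp\srp}_\bbH(A\cap\tau_x B)\geq \nu^{\srp\srp}_\bbH(A)\,\nu^{\srp\srp}_\bbH(B)$ by FKG; on the other hand, once $|x|>2N+2$, conditioning on $\{\sigma_r\equiv\rp$ outside $\Int(\La_N)\cup\tau_x(\Int(\La_N))\}$ can only increase the probability of the increasing event $A\cap\tau_x B$, and under this conditioning the two boxes decouple by the Spatial Markov property for $\rp\rp$ (Corollary~\ref{cor:DMP}(i)), yielding $\nu^{\srp\srp}_{\La_N}(A)\,\nu^{\srp\srp}_{\La_N}(B)$, which is within $2\eps$ of $\nu^{\srp\srp}_\bbH(A)\,\nu^{\srp\srp}_\bbH(B)$ for $N$ large. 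This is exactly the standard mixing argument for the plus state of a monotone model, and crucially it uses only the restricted Spatial Markov property you already have (for $\rp\rp$ on both sides of the boundary), not any full DLR characterization of $\nu^{\srp\srp}_\bbH$.

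Your proposed routes---extremality/tail-triviality via maximality, finite energy plus square-root trick, or postponing ergodicity to Theorem~\ref{thm:uniqueness_nu}---would ultimately work but are circuitous. In particular your worry that ``$\nu^{\srp\srp}_\bbH$ is only a marginal, not obviously Gibbsian'' is a red herring here: the mixing argument needs nothing beyond FKG and the stochastic domination $\nu^{\srp\srp}_\bbH \leq_{\text{st}} \nu^{\srp\srp}_{\La_N}$ that you already established in the first paragraph.
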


\begin{proof}
	Let~$\calD$ and~$\tilde\calD$ be two finite domains, with~$\calD \subset \tilde\calD$. 
	Due to the FKG inequality (Theorem~\ref{thm:FKG}) and to the Spatial Markov property (Theorem~\ref{thm:DMP}) 
	for the boundary conditions~$\rp\rp$, 
	$$ \nu_\calD^{\srp\srp}\geq_\textrm{st} \nu_{\calD'}^{\srp\srp},$$
	where the above only refers to the restrictions of the measures to~$\calD$. 
	Thus, the sequence of measures~$(\nu^{\srp\srp}_{\calD_n})_{n\geq 0}$ is decreasing,
	hence converges to a measure on~$\{\rm,\rp\}^{F(\bbH)}$, which we denote by~$\nu^{\srp\srp}_\bbH$.
	
	Since the limit exists for any sequence of domains, it necessarily is the same for any sequence of domains~$(\calD_n)_{n\geq 0}$. 
	In particular, the same limit is obtained for any sequence~$(\La_n + z)_{n\geq 1}$ with~$z \in V(\bbH)$, 
	which implies that ~$\nu^{\srp\srp}_\bbH$ is invariant under translations. 
	
	That~$\nu^{\srp\srp}_\bbH$ is positively associated for increasing events depending only on the state of finitely many faces
	follows by passing to the limit. The property extends to arbitrary increasing events by the monotone class theorem 
	(see \cite[Prop.~4.10]{Gri06}). 
	
	In order to prove that~$\nu^{\srp\srp}_\bbH$ is ergodic, we will show that it has the following mixing property. 
	The measure~$\nu^{\srp\srp}_\bbH$ is said to be {\em mixing} if, 
	for any events~$A, B$, if~$\tau_x(B)$ denotes the translation of~$B$ by some~$x \in V(\bbH)$, 
	\begin{align}\label{eq:mixing}
		\lim_{|x|\to\infty}\nu^{\srp\srp}_\bbH(A\cap \tau_x(B)) = \nu^{\srp\srp}_\bbH(A)\nu^{\srp\srp}_\bbH(B).
	\end{align}
	The above implies that~$\nu^{\srp\srp}_\bbH$ is ergodic with respect to translations, as explained in \cite[Cor.~4.23]{Gri06}.
	By the monotone class theorem, it suffices to prove~\eqref{eq:mixing} for events~$A$ and~$B$ that are increasing and only depend on finitely many faces. 
	We do this below. 
	
	Let~$A,B$ be increasing events depending only on the states of faces in some finite domain of~$\bbH$.
	Fix~$\epsilon > 0$. Then there exists~$N \geq 1$ such that 
	$\nu^{\srp\srp}_\bbH(A) \geq \nu^{\srp\srp}_{\La_N}(A) - \epsilon$ and~$\nu^{\srp\srp}_\bbH(B) \geq \nu^{\srp\srp}_{\La_N}(B) - \epsilon$. 
	Then, for any~$x \in V(\bbH)$ with~$|x| > 2N+2$, by positive association of~$\nu^{\srp\srp}_\bbH$ and \eqref{eq:domain-markov},
	\begin{align*}
		\nu^{\srp\srp}_\bbH(A\cap \tau_x(B)) 
		&\leq \nu^{\srp\srp}_\bbH[A\cap \tau_x(B) \,|\, \sigma_r \equiv \rp \text{ outside~$\Int(\La_N) \cup \tau_x(\Int(\La_N))$}] \\
		&=\nu^{\srp\srp}_{\La_N}(A) \, \nu^{\srp\srp}_{\La_N}(B)\\
		&\leq  \nu^{\srp\srp}_\bbH(A)	\, \nu^{\srp\srp}_\bbH(B) + 2\epsilon. 
	\end{align*}
	Conversely, positive association implies that 
	\begin{align*}
		\nu^{\srp\srp}_\bbH(A\cap \tau_x(B))
		\geq  \nu^{\srp\srp}_\bbH(A)\nu^{\srp\srp}_\bbH(B).
	\end{align*}	
	These two inequalities and the fact that~$\epsilon > 0$ is arbitrary imply~\eqref{eq:mixing}, and hence the ergodicity of~$\nu^{\srp\srp}_\bbH$.
\end{proof}

\subsection{Crossing estimates for double-plus percolation (weak version)}\label{sec:RSW_weak}

We will work in the rest of the paper with two percolation models derived from spin configurations.
Let us describe them for a red spin configuration~$\sigma_r$ on~$\bbH$; 
the definitions adapt readily to blue spins, to~$-$ instead of~$+$, and to domains of~$\bbH$.
  
The first corresponds to connections via face-paths of spins~$\rp$. 
This percolation, along with its paths, clusters etc. will be referred to as {\em simple-$\rp$}; 
connections between two sets of faces~$A$ and~$B$ are denoted by~$A \xlra{\srp} B$.  
This notion was implicitly used in the proof of Theorem~\ref{thm:FKG}.

The second is termed {\em double-$\rp$} percolation. 
The double-plus configuration~$\dbp(\sigma_r) \in\{0,1\}^{E(\bbH)}$ associated to~$\sigma_r$ 
is formed of the edges of~$\bbH$ whose two adjacent faces have spins~$\rp$. 
We regard~$\dbp(\sigma_r)$ as a bond percolation on~$\bbH$ and use the ensuing notion of connectivity. 
In particular, for sets~$A$ and~$B$ of vertices, we write~$A \xlra{\srp\srp}B$
for the event that there exists an edge-path in~$\dbp(\sigma_r)$ with one endpoint in~$A$ and the other in~$B$. 
More generally, we call a double-$\rp$ path, or a double-path of spin~$\rp$, a path of edges in~$\dbp(\sigma_r)$.
All notions related to this percolation (clusters, crossings, circuits etc.) will be referred to as double-$\rp$. 
Thus, the event~$\Circ_{\srp\srp}(n)$ of Theorem~\ref{thm:uniqueness_nu} may be described as
the existence of a double-$\rp$ circuit surrounding~$\La_n$. 
The appeal of this second, more restrictive percolation model is that double-$\rp$ circuits isolate the inside from the outside 
in the sense of the Spatial Markov property \eqref{eq:domain-markov}.

Since~$\sigma_r$ is positively correlated under~$\mu_\calD$, so is~$\dbp(\sigma_r)$. 
Indeed any event~$A$ which is increasing for~$\dbp(\sigma_r)$ is also increasing for~$\sigma_r$. 
A {double-minus} configuration~$\dbm(\sigma_r) \in\{0,1\}^{E(\bbH)}$ is defined in a similar way and is also positively correlated under~$\mu_\calD$. We want to stress however that the union~$\dbp(\sigma_r) \cup \dbm(\sigma_r)$ is not necessarily positively correlated.

\newcommand{\R}{\mathsf{R}}

Write~$\Par_{m,n}$ for the set of faces of~$\bbH$ with centres at~$k + \ell e^{i\pi/3}$ with~$0 \leq k \leq m$ and~$0 \leq \ell \leq n$
(see Figure~\ref{fig:simple_cross}). 
They form a domain approximately shaped as a parallelogram. 
Its boundary~$\partial_E \Par_{m,n}$ may be partitioned into four sides called 
$\Bottom(m,n)$, $\Right(m,n)$, $\Top(m,n)$ and~$\Left(m,n)$, defined as their name indicates. 
To be precise,~$\Right(m,n)$ and~$\Left(m,n)$ start and end with vertical edges. 
Below we will also use the notation~$\Bottom(m,n)$, $\Right(m,n)$, $\Top(m,n)$ and~$\Left(m,n)$ 
to refer to the faces of~$\partial_\int \Par_{m,n} \cup \partial_\out \Par_{m,n}$ 
that are adjacent to these sections of~$\partial_E\Par_{m,n}$.
Faces in the corners of~$\Par_{m,n}$ belong to two such sets. 

Write~$\calC^h_{\srp}(m,n)$ (and~$\calC_{\srp}^v(m,n)$) for the event that there exits a face-path in~$\Par_{m,n}$ 
formed only of faces with spin~$\rp$,
with the first face adjacent to the left side of~$\Par_{m,n}$ and the last face adjacent to the right side 
(and top and bottom sides, respectively). 
Call such face-paths horizontal (respectively, vertical) \emph{simple-$\rp$ crossings} of~$\Par_{m,n}$. 

Write~$\calC^h_{\srp\srp}(m,n)$ (and~$\calC_{\srp\srp}^v(m,n)$) for the event that there exits a path of edges of~$\dbp(\sigma_r)$ 
contained in~$\Par_{m,n}$ with one endpoint on the left side of~$\Par_{m,n}$ and the other on the right side (and top and bottom sides, respectively); for technical reasons, we ask that the endpoints of the paths not be corners of~$\Par_{m,n}$. 
We call such paths horizontal (and vertical, respectively) {\em double-$\rp$ crossings} of~$\Par_{m,n}$. 

The Russo-Seymour-Welsh (or RSW for short) theory first appeared in the simultaneous works of Russo and Seymour and Welsh for Bernoulli percolation \cite{Rus78,SeyWel78}. 
Its ultimate conclusion is that rectangles are crossed with probability bounded by constants that only depend on the rectangles' aspect-ratios, not their sizes. Such crossing probability bounds were obtained for Bernoulli percolation using two separate arguments:
\begin{itemize}
    \item a self-duality argument proves that the probability of crossing a square of any size is~$1/2$ 
    (or more generally bounded uniformly away from~$0$);
    \item the so-called RSW lemma proves that crossing a rectangle of aspect ratio 2 in the long direction 
    is bounded by a function of the probability of crossing a square of (roughly) the same size. 
\end{itemize}
The same two step procedure will be used below for the double-$\rp$ percolation. 
While for bond percolation on~$\bbZ^2$ with parameter~$1/2$ the first point is immediate due to self-duality, 
in our context a more complex argument is needed. 
The second point also requires special attention, due to the lack of independence and even of a general Spatial Markov property. 
A weak version of the RSW lemma is obtained easily using a general argument due to Tassion \cite{Tas16} 
(see Proposition~\ref{prop:RSW_Vincent} below).
A more elaborate statement is proved later on (see Proposition~\ref{prop:RSW_Hugo}); it requires considerable work.

\subsubsection{Crossings of symmetric domains}\label{sec:sym_domains}

This part contains results on crossing of symmetric domains; they are akin to the consequences of self-duality for site percolation on the triangular lattice or bond percolation on~$\bbZ^2$. 
Two type of crossings will be treated: simple-$\rp$ crossings and double-$\rp$ crossings. 
We start with the former, where self-duality applies as for percolation. 

\begin{lem}\label{lem:simple_self_duality}
	Let~$\calD$ be a domain containing~$\Par_{n,n} \cup \partial_\out \Par_{n,n}$ for some~$n$.
	Let~$\zeta \in\{\rp,\rm\}^{F(\calD)}$ be such that~$\zeta \equiv \rp$ on~$\Bottom(n,n)\cup\Top(n,n)$. 
	Then
	\begin{align}\label{eq:simple_self_duality}
		\mu_\calD\big[ \calC^v_{\srp}(n,n) \,\big|\, \sigma_r = \zeta \text{ outside~$\Int(\Par_{n,n})$}\big] \geq \tfrac13.
	\end{align}
\end{lem}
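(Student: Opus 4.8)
The plan is to combine the self-duality of site percolation on the triangular lattice $\bbT=\bbH^*$ with the symmetries of $\nu_\calD$ and the monotonicity in boundary conditions of Corollary~\ref{cor:monotonicity_bc}.

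First, note that $\calC^v_\srp(n,n)$ is measurable with respect to $\sigma_r$ and increasing for the red spins, so $\mu_\calD[\calC^v_\srp(n,n)\mid\sigma_r=\zeta\text{ outside }\Int(\Par_{n,n})]=\nu_\calD[\calC^v_\srp(n,n)\mid\sigma_r=\zeta\text{ outside }\Int(\Par_{n,n})]$, and by Corollary~\ref{cor:monotonicity_bc}\,(i) this quantity is minimised over all admissible $\zeta$ by the smallest one, $\zeta_0\equiv\srp$ on $\Top(n,n)\cup\Bottom(n,n)$ (which includes the four corner faces of $\Par_{n,n}$) and $\zeta_0\equiv\srm$ on the rest of $F(\calD)\setminus\Int(\Par_{n,n})$. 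Hence it suffices to prove the bound for $\zeta_0$. For this choice the spins are constant along each side of $\partial_\out\Par_{n,n}$, so no component of $\theta$ meeting $\Int(\Par_{n,n})$ leaves $\Par_{n,n}$; therefore, by Proposition~\ref{prop:marginal-ditribution}, the conditional law of $\sigma_r$ on $F(\Par_{n,n})$ is unchanged if one shrinks $\calD$ to $\Par_{n,n}\cup\partial_\out\Par_{n,n}$, a domain invariant under the reflection $R$ of $\Par_{n,n}$ across the diagonal exchanging the pair $\{\Top(n,n),\Bottom(n,n)\}$ with $\{\Left(n,n),\Right(n,n)\}$.

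Next I would invoke the purely topological self-duality: for any two-colouring of the triangulated quadrilateral $\Par_{n,n}$, with each of its four corner faces assigned to exactly one of the four boundary arcs, exactly one of $\calC^v_\srp(n,n)$ and $\calC^h_\srm(n,n)$ occurs, where $\calC^h_\srm(n,n)$ is the horizontal simple-$\srm$ crossing. Applying the global spin flip $\srp\leftrightarrow\srm$ followed by $R$ — both of which preserve the weight $2^{k(\theta(\sigma_r))}$ of Proposition~\ref{prop:marginal-ditribution} — maps the conditional measure $\nu$ with boundary $\zeta_0$ to the conditional measure $\nu'$ with boundary $R(-\zeta_0)$, and maps the event $\calC^v_\srp(n,n)$ to $\calC^h_\srm(n,n)$. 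A short check shows that $R(-\zeta_0)$ agrees with $\zeta_0$ except at corner faces, where it equals $\srm$ rather than $\srp$; in particular $R(-\zeta_0)\le\zeta_0$. Since $\calC^h_\srm(n,n)$ is decreasing for the red spins, Corollary~\ref{cor:monotonicity_bc}\,(i) then yields
\[
	\nu\big[\calC^v_\srp(n,n)\big]=\nu'\big[\calC^h_\srm(n,n)\big]\ \ge\ \nu\big[\calC^h_\srm(n,n)\big]=1-\nu\big[\calC^v_\srp(n,n)\big],
\]
hence $\nu[\calC^v_\srp(n,n)]\ge\tfrac12\ge\tfrac13$, which is~\eqref{eq:simple_self_duality}.

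The main obstacle is combinatorial-topological rather than probabilistic: formulating the self-duality for $\Par_{n,n}$ as a triangulated quadrilateral with a consistent assignment of its corner faces, and checking that the transformed boundary condition $R(-\zeta_0)$ differs from $\zeta_0$ only at the corners and only in the favourable direction, so that plain monotonicity — rather than a lossy Radon--Nikodym bound — can be used. The localisation step, namely that the part of $\calD$ beyond $\Par_{n,n}\cup\partial_\out\Par_{n,n}$ is irrelevant (which is what allows the reflection $R$ of $\Par_{n,n}$ to be applied), is routine but must be spelled out. Note that this argument in fact gives the constant $\tfrac12$; the value $\tfrac13$ in the statement is a harmless weakening.
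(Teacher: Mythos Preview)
Your setup (reducing to the minimal boundary condition $\zeta_0$ via Corollary~\ref{cor:monotonicity_bc}\,(i), then invoking self-duality of site percolation on $\bbT$) matches the paper's, but the ``localisation step'' is a genuine gap, not a routine check.

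You assert that under $\zeta_0$ the conditional law on $F(\Par_{n,n})$ is unchanged upon shrinking $\calD$ to $\Par_{n,n}\cup\partial_\out\Par_{n,n}$, because ``no component of $\theta$ meeting $\Int(\Par_{n,n})$ leaves $\Par_{n,n}$''. This is false at the corners. The bottom-left inner corner face (which you set to $\rp$) is adjacent to left-outer faces ($\rm$), so there \emph{are} $\theta$-edges across $\partial_E\Par_{n,n}$, and the corner $\theta$-cluster reaches $\partial_\out\Par_{n,n}$. Now in the full domain $\calD$, the bottom-outer $\rp$ faces are all $\theta$-linked to one another through the surrounding $\rm$ faces of $\calD\setminus(\Par_{n,n}\cup\partial_\out\Par_{n,n})$; hence the bottom-left and bottom-right corner clusters are \emph{always} merged in $\calD$. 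In the truncated domain $\Par_{n,n}\cup\partial_\out\Par_{n,n}$ those bottom-outer faces are isolated, so the two corner clusters are merged only when $\sigma_r|_{\Int(\Par_{n,n})}$ supplies an internal connection. Thus $k(\theta(\sigma_r\cup\zeta_0))$ computed in $\calD$ versus in the small domain differ by a quantity that \emph{depends on} $\sigma_r|_{\Int(\Par_{n,n})}$, and the two conditional laws do not coincide. This is precisely the four-arc failure of spatial Markov flagged in Remark~\ref{rem:4arcs}.

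Without localisation your global reflection $R$ is unavailable ($\calD$ need not be $R$-symmetric), and the monotonicity comparison between $\zeta_0$ and $R(-\zeta_0)$ cannot be set up. The paper circumvents this by applying the flip-and-reflect only to $\sigma_r|_{\Int(\Par_{n,n})}$ while keeping the boundary $\zeta$ \emph{fixed}, and then bounding the Radon--Nikodym derivative pointwise: after a small adjustment of $\zeta$ at two corner faces one checks $|k(\theta(\sigma_r\cup\zeta))-k(\theta(\tau(\sigma_r)\cup\zeta))|\le 1$, giving $P\ge\tfrac12(1-P)$, i.e.\ $P\ge\tfrac13$. So the constant $\tfrac13$ is not ``a harmless weakening'' of $\tfrac12$ --- it is exactly the price of the corner leakage that sinks your localisation.
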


\begin{proof}
	Fix~$\calD$,~$\zeta$ and~$n$. Drop~$n$ from the notation~$\Par$,~$\Bottom$ and~$\Top$. 
	First observe that by the monotonicity in boundary conditions (Corollary~\ref{cor:monotonicity_bc} \textit{(i)}),
	the LHS of \eqref{eq:simple_self_duality} is minimal when~$\zeta \equiv \rm$ on~$\calD \setminus (\Bottom(n,n)\cup\Top(n,n))$.
	We will assume this to be the case. 
	All faces of~$\Bottom(n,n)\cup\Top(n,n)$ have spin~$\rp$ in~$\zeta$ as required by the proposition; 
	we will switch the sign of the two left-most faces of~$\Top(n,n)$ to~$\rm$ 
	-- this only decreases further the LHS of  \eqref{eq:simple_self_duality}.
	
	For~$\sigma_r$ a red spin configuration on~$\Int(\Par)$, write~$\sigma_r \cup \zeta$ 
	for the configuration on~$\calD$ obtained by completing~$\sigma_r$ with~$\zeta$ on~$\calD \setminus \Int(\Par)$.
	Write~$\tau(\sigma_r)$ for the configuration obtained by applying the symmetry with respect to the line~$e^{i\pi/6}\bbR$ to~$-\sigma_r$. 
	It is a known fact (see duality of site-percolation on~$\bbT$ \cite[Sec. 1.2]{Kes82}) 
	that either~$\sigma_r \in \calC^v_{\srp}(n,n)~$ or~$\tau(\sigma_r) \in \calC^v_{\srp}(n,n)$. 

	Recall from Proposition~\ref{prop:marginal-ditribution}  
	that~$\mu_\calD^{\srm\srm}(\sigma_r \cup \zeta)$ is proportional to~$2^{k(\theta(\sigma_r\cup \zeta))}$. 
	Also notice that~$\theta(\tau(\sigma_r) \cup \zeta)$ is easily determined in function of~$\theta(\sigma_r\cup \zeta)$.
	Indeed, the configuration~$\zeta$ restricted to~$\partial_\int\Par \cup \partial_\out\Par$ is (almost) invariant under~$\tau$. 
	Thus,~$\theta(\tau(\sigma_r) \cup \zeta)$ restricted to~$\Par \cup \partial_\out\Par$ 
	is the reflection with respect to the line~$e^{i\pi/6}\bbR$ of~$\theta(\sigma_r \cup \zeta)$. 
	See Figure~\ref{fig:simple_cross}.
	
	\begin{figure}
	\begin{center}
	\includegraphics[page=2, width=0.46\textwidth]{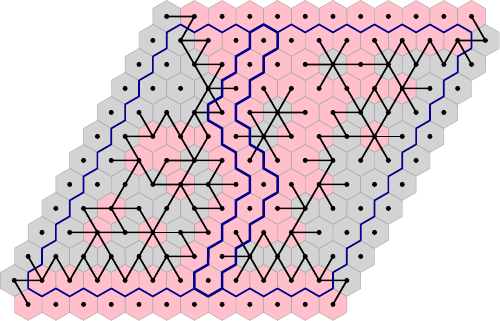}\hspace{0.01\textwidth}
	\includegraphics[page=1, width=0.46\textwidth]{square_cross5.pdf}
	\caption{A configuration~$\sigma_r$ of red spins ($\frp$ is depicted in red,~$\frm$ in gray) on~$\Int(\Par_{n,n})$ with the configuration~$\zeta$ on~$\partial_\int\Par \cup\partial_\out \Par$. 
	In the left image no simple-$\frp$ crossing exists between the red arcs~$\Top$ and~$\Bottom$. 
	The right image is~$\tau(\sigma_r) \cup\zeta$; it contains a simple-$\frp$ crossing between these arcs. 
	Observe that~$\theta(\tau(\sigma_r) \cup \zeta)$ is the reflection of~$\theta(\sigma_r \cup \zeta)$ with respect to the diagonal of the rhombus.
	In this concrete example~$k[\theta(\sigma_r\cup \zeta)] - k[\theta(\tau(\sigma_r)\cup \zeta)]=1$ because the clusters of~$\Top$ and~$\Bottom$ are linked together after the reflection but not before.
	 }
	\label{fig:simple_cross}	
	\end{center}
	\end{figure}

	All other edges have same state in~$\theta(\tau(\sigma_r) \cup \zeta)$ and~$\theta(\sigma_r \cup \zeta)$: 
	they are determined by~$\zeta$ and are quite simple.
	Indeed, in~$\theta(\zeta)$, all faces except those in the corners of~$\partial_\out\Par$ are isolated points.
	Moreover, the top corners of~$\partial_\out\Par$ are connected, as are the bottom ones. 
	
	It follows that, for any~$\sigma_r$,
	$| k[\theta(\sigma_r\cup \zeta)] - k[\theta(\tau(\sigma_r)\cup \zeta)] | \leq 1.$ 
	Thus
	\begin{align}\label{eq:abs_cont}
		\mu_\calD( \sigma_r\cup \zeta)  
		\geq \tfrac{1}2\,
		\mu_\calD( \tau(\sigma_r)\cup \zeta). 
	\end{align}
	By summing the above over~$\sigma_r \in \calC^v_{\srp}(n,n)$ we find
	\begin{align*}
		\mu_\calD[\calC^v_{\srp}(n,n) \,|\, \sigma_r = \zeta \text{ outside~$\Int(\Par)$}]
		\geq \tfrac12 	\mu_\calD[ \calC^v_{\srp}(n,n)^c \,|\, \sigma_r = \zeta \text{ outside~$\Int(\Par)$}].
	\end{align*}
	This proves the desired bound. 
\end{proof}

Next we turn to double-$\rp$ crossings. The absence of such a crossing does not induce the existence of a double-$\rm$ crossing, 
and we may not apply the same argument as above. We do however have a similar statement. 

\begin{lem}\label{lem:monochrom}
	For any~$m,n \geq 1$, and any pair of coherent configurations~$\sigma_r\in \{\rp,\rm\}^{F(\Par_{m,n})}$ 
	and~$\sigma_b\in \{\bp,\bm\}^{ F(\Par_{m,n})}$,
	either~$\Par_{m,n}$ is crossed horizontally by a double path of constant red spin, 
	or it is crossed vertically be a double path of constant blue spin. 
	That is
	\begin{align}\label{eq:monochrom}
		\big[\calC^{v}_{\sbp\sbp}(m,n) \cup \calC^{v}_{\sbm\sbm}(m,n)\big] 
		\subset \big[\calC^{h}_{\srp\srp}(m,n) \cup \calC^{h}_{\srm\srm}(m,n)\big]^c.
	\end{align}
\end{lem}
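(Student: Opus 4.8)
The plan is to establish \eqref{eq:monochrom} by contradiction: assume the coherent pair $(\sigma_r,\sigma_b)$ on $\Par_{m,n}$ carries both a vertical double-blue crossing and a horizontal double-red crossing, and produce an edge of $\bbH$ across which \emph{both} the red and the blue spin change, i.e.\ an edge of $\omega(\sigma_r)\cap\omega(\sigma_b)$ — which is impossible because $\sigma_r\perp\sigma_b$ means exactly $\omega(\sigma_r)\cap\omega(\sigma_b)=\emptyset$. First I would use the two independent global spin flips $\sigma_r\mapsto-\sigma_r$ and $\sigma_b\mapsto-\sigma_b$, which preserve coherence and merely permute the four events in \eqref{eq:monochrom}, to reduce to a horizontal double-$\rp$ crossing $P$ and a vertical double-$\bp$ crossing $Q$. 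I would also record the elementary fact that a double path of constant red spin is precisely a path in the bond percolation $\dbp(\sigma_r)\cup\dbm(\sigma_r)$, whose clusters are automatically monochromatic (an edge of $\dbp(\sigma_r)$ and an edge of $\dbm(\sigma_r)$ cannot meet at a vertex of $\bbH$, for the face between them would bear both spins), and that this percolation equals $E(\Par_{m,n})\setminus\omega(\sigma_r)$; likewise for blue.

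Next I would apply the topological crossing lemma. Since $\Par_{m,n}$ is a topological disk whose boundary meets its sides in the cyclic order $\Left,\Bottom,\Right,\Top$, the arc $P$ (from $\Left$ to $\Right$) and the arc $Q$ (from $\Bottom$ to $\Top$) must intersect as subsets of the plane; as both are unions of edges of $\bbH$, and two distinct edges of $\bbH$ meet only at a common vertex, their intersection contains a vertex $w$ of $\bbH$. The case in which the only such $w$ is an endpoint of $P$ or of $Q$ is handled separately (the endpoints of $P$ lie on $\Left\cup\Right$ and those of $Q$ on $\Bottom\cup\Top$, which are disjoint away from the excluded corners), so I may take $w$ interior to both paths.

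The crucial step is the local analysis at $w$, which I would carry out in the interface (dual) picture. Via planar duality on $\Par_{m,n}$, the presence of the double-red crossing $P$ is equivalent to the absence of a transverse crossing of $\theta(\sigma_r)$, and $Q$ likewise forbids a transverse crossing of $\theta(\sigma_b)$; I would then identify the actual interfaces of $\theta(\sigma_r)$ bordering the double-$\rp$ cluster that contains $P$ and of $\theta(\sigma_b)$ bordering the double-$\bp$ cluster containing $Q$, observe that the former joins $\Bottom$ to $\Top$ and the latter $\Left$ to $\Right$, hence that they too must cross at some face $x$ of $\bbH$. Now $x$ has $\bbT$-degree three; the $\theta(\sigma_r)$-interface uses two of its three incident dual edges and so does the $\theta(\sigma_b)$-interface, so they share a dual edge $\{x,z\}$: being a step of the first gives $\sigma_r(x)\neq\sigma_r(z)$, being a step of the second gives $\sigma_b(x)\neq\sigma_b(z)$, so the $\bbH$-edge between the faces $x$ and $z$ lies in $\omega(\sigma_r)\cap\omega(\sigma_b)$ — the desired contradiction. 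With the $\rm$/$\bm$ variants absorbed by the spin flips, this yields \eqref{eq:monochrom}.

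The step I expect to be the real obstacle is exactly this passage from the planar crossing of $P$ and $Q$ to a violation of coherence. At the crossing vertex $w$ itself nothing goes wrong — the three faces around $w$ may consistently carry both an $\rp$ label and a $\bp$ label — so the contradiction has to be located on the interfaces flanking the two clusters, and one must argue with care that a $\Bottom$–$\Top$ interface of $\theta(\sigma_r)$ and a $\Left$–$\Right$ interface of $\theta(\sigma_b)$ are genuinely \emph{forced} (by the crossing property of $P$ and $Q$, in a disk, rather than merely ``typically present'') to exist and to meet. Making this forcing precise, and tracking the topological bookkeeping of which boundary arcs the various interface curves join, is where the combinatorial–topological work concentrates; the symmetry reductions and the monochromatic-cluster observation are routine by comparison.
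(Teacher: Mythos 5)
The inclusion you set out to prove --- that a vertical double-blue crossing excludes a horizontal double-red crossing --- is false, so no care in the topology can make your argument close. Consider the coherent pair~$\sigma_r \equiv \rp$,~$\sigma_b \equiv \bp$ on~$\Par_{m,n}$: every edge is then both double-$\rp$ and double-$\bp$, so~$\calC^h_{\srp\srp}(m,n)$ and~$\calC^v_{\sbp\sbp}(m,n)$ occur simultaneously. The displayed relation~\eqref{eq:monochrom} carries a typo: the inclusion must point the other way, i.e.~$\big[\calC^{h}_{\srp\srp}(m,n) \cup \calC^{h}_{\srm\srm}(m,n)\big]^c \subset \big[\calC^{v}_{\sbp\sbp}(m,n) \cup \calC^{v}_{\sbm\sbm}(m,n)\big]$. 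That exhaustiveness is what the prose ``either \dots\ or \dots'' means, and it is exactly what is invoked in Lemma~\ref{lem:square_crossed}, where the union of the four crossing events is asserted to have probability at least one.

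The point at which you anticipated trouble is precisely where the argument breaks. You wish to produce a~$\Bottom$--$\Top$ interface of~$\theta(\sigma_r)$ bordering the double-$\rp$ cluster of~$P$ and a~$\Left$--$\Right$ interface of~$\theta(\sigma_b)$ bordering the double-$\bp$ cluster of~$Q$, and derive a contradiction from their meeting. But nothing forces these interfaces to exist, or to join the claimed boundary arcs: in the counterexample above~$\theta(\sigma_r)=\theta(\sigma_b)=\emptyset$, and in general a double-$\rp$ cluster spanning~$\Left$ to~$\Right$ may also touch~$\Top$ and~$\Bottom$, so its~$\theta(\sigma_r)$-boundary consists of loops or short arcs that never separate~$\Top$ from~$\Bottom$. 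The assertion that ``the former joins~$\Bottom$ to~$\Top$'' simply is not a consequence of the existence of~$P$. What one must prove is the exhaustiveness direction, and the route is quite different from yours: apply the~$\bbH$--$\bbT$ duality to the percolation~$\dbp(\sigma_r)\cup\dbm(\sigma_r)$ on~$\Par_{m,n}$ to obtain \emph{either} a left--right crossing of this percolation --- whose clusters are monochromatic in red spin, as you correctly observed, giving~$\calC^h_{\srp\srp}\cup\calC^h_{\srm\srm}$ --- \emph{or} a top--bottom dual crossing~$\gamma^*$ of its complement. In the second case, for each edge~$e$ with~$e^*\in\gamma^*$ the two faces across~$e$ have opposite red spins, hence equal blue spins; coherence then forces all faces meeting the endpoints of~$e$ to share that blue spin, so that~$e$ and all edges meeting it lie in~$\dbp(\sigma_b)\cup\dbm(\sigma_b)$. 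Taking the union along~$\gamma^*$ yields a constant-blue vertical crossing, i.e.~$\calC^v_{\sbp\sbp}\cup\calC^v_{\sbm\sbm}$. Your duality and monochromaticity observations are sound and do reappear in this proof; it is the direction of the implication, and hence the interface-forcing step, that must change.
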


\begin{proof}
	Recall that~$\dbp(\sigma_r),\dbm(\sigma_r),\dbp(\sigma_b),\dbm(\sigma_b)\subset E(\Par_{m,n})$ 
	denote the sets of double plus and double minus edges in~$\sigma_r$ and~$\sigma_b$, respectively. 
	Also, recall that to each edge~$e\in E(\bbH)$ we associate its dual~$e^*\in E(\bbT)$ that is defined as the unique edge on~$\bbT$ that intersects~$e$. For a set~$S\subset E(\bbH)$ we denote by~$S^*\subset\bbT$ the set of edges dual to the edges in~$S$.
	
	By duality between~$\bbH$ and~$\bbT$, either~$\dbp(\sigma_r)\cup\dbm(\sigma_r)$  contains a left-right crossing of~$\Par_{m,n}$, or~$\left[E(\Par_{m,n})\setminus(\dbp(\sigma_r)\cup\dbm(\sigma_r))\right]^*$ contains a top-bottom crossing of~$\Par_{m,n}$.
	
	First consider the case when~$\dbp(\sigma_r)\cup\dbm(\sigma_r)$  contains a left-right crossing of~$\Par_{m,n}$. 
	Any such crossing consists either entirely of edges of~$\dbp(\sigma_r)$ or entirely of edges of~$\dbm(\sigma_r)$.
	Indeed, edges of~$\dbp(\sigma_r)$ and~$\dbm(\sigma_r)$ can never share a vertex. 
	In conclusion, in this case at least one of~$\calC^{h}_{\srp\srp}(m,n)$ and~$\calC^{h}_{\srm\srm}(m,n)$ occurs.
	
	It remains to consider the case when~$\left[E(\Par_{m,n})\setminus(\dbp(\sigma_r)\cup\dbm(\sigma_r))\right]^*$ contains a top-bottom crossing of~$\Par_{m,n}$. Let~$\gamma^*$ be such a crossing. 
	
	For each edge~$e\in E(\calD)$, let~$N(e)\subset E(\Par_{m,n})$ denote the set of edges consisting of~$e$ and all edges in~$E(\Par_{m,n})$ that share a vertex with~$e$. Then, if~$e^*\in \gamma^*$, we claim that~$N(e)\subset \dbp(\sigma_b)\cup \dbm(\sigma_b)$. 
	Indeed, the two faces of ~$\Par_{m,n}$ separated by~$e$ have opposite red spin, hence same blue spin. 
	Moreover, the blue spins of the two faces adjacent the endpoints of~$e$ but not containing~$e$ in their boundary 
	must also coincide with the spins on either side of~$e$. 
	
	It remains to observe that the union of~$N(e)$ taken over all~$e$ such that~$e^*\in \gamma^*$ 
	contains a top-bottom crossing of~$\Par_{m,n}$. 
	Thus~$\dbp(\sigma_b)\cup \dbm(\sigma_b)$ contains a top-bottom crossing of~$\Par_{m,n}$, 
	and thus either~$\calC_{\sbp\sbp}^v(m,n)$ or~$\calC_{\sbm\sbm}^v(m,n)$ occurs.
%
%
%
%
\end{proof}

\begin{rem}\label{rem:monochrom}
	It is obvious from the proof that Lemma~\ref{lem:monochrom} may be generalised to other domains with four arcs marked on the boundary. 
	
	Later we will also use the fact that, if an annulus~$\La_N \setminus \La_n$ does not contain a circuit around~$\La_n$ of either double-$\rp$ or double-$\rm$, then~$\La_n$ is connected to~$\La_N^c$ by a double-path of constant blue spin.
\end{rem}

\begin{lem}\label{lem:square_crossed}
	For any~$n \geq 1$ and any domain~$\calD$ containing~$\Par_{n,n}$ and symmetric with respect to one of the diagonals of~$\Par_{n,n}$, 
	\begin{align*}
		\mu^{\srp\srm}_\calD\big[ \calC^h_{\srp\srp}(n,n)\big] \geq \tfrac14.
	\end{align*}
\end{lem}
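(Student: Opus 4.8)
The plan is to combine the deterministic dichotomy of Lemma~\ref{lem:monochrom} with two symmetries of the measure~$\mu^{\srp\srm}_\calD$: the reflection symmetry of~$\calD$ in a diagonal of~$\Par_{n,n}$, and the red/blue exchange symmetry provided by Proposition~\ref{prop:loop-to-spin}. Abbreviate~$A := \calC^h_{\srp\srp}(n,n) \cup \calC^h_{\srm\srm}(n,n)$, $B := \calC^v_{\srp\srp}(n,n) \cup \calC^v_{\srm\srm}(n,n)$ and~$C := \calC^v_{\sbp\sbp}(n,n) \cup \calC^v_{\sbm\sbm}(n,n)$, all viewed as events for the restriction to~$\Par_{n,n}$ of a configuration sampled from~$\mu^{\srp\srm}_\calD$ (recall~$\Par_{n,n} \subset \calD$).

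First I would note that the proof of Lemma~\ref{lem:monochrom} is a dichotomy coming from planar duality between~$\bbH$ and~$\bbT$: for every coherent pair of spin configurations on~$\Par_{n,n}$, \emph{exactly one} of the events~$A$ and~$C$ occurs, hence~$\mu^{\srp\srm}_\calD(A) + \mu^{\srp\srm}_\calD(C) = 1$. Next, let~$R$ be the reflection of~$\bbH$ in the diagonal of~$\Par_{n,n}$ with respect to which~$\calD$ is symmetric. Then~$R$ maps~$\Par_{n,n}$ onto itself, interchanging the pair of opposite sides~$\{\Left, \Right\}$ with the pair~$\{\Bottom, \Top\}$, and preserves red spins; since the boundary conditions defining~$\mu^{\srp\srm}_\calD$ ($\sigma_r \equiv \rp$ and~$\sigma_b$ constant on~$\partial_\int\calD$) are preserved by every symmetry of~$\calD$, the measure~$\mu^{\srp\srm}_\calD$ is~$R$-invariant. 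As~$R$ turns horizontal double-$\rp$ and double-$\rm$ crossings of~$\Par_{n,n}$ into vertical ones, this yields~$\mu^{\srp\srm}_\calD(A) = \mu^{\srp\srm}_\calD(B)$. Finally, Proposition~\ref{prop:loop-to-spin} says that under~$\mu^{\srp\srm}_\calD$ the pair~$(\omega(\sigma_r), \omega(\sigma_b))$ has law~$\bbP_\calD$, which is invariant under exchanging its two coordinates. Since~$\sigma_r$ is the unique spin configuration equal to~$\rp$ on~$\partial_\int\calD$ whose interface set is~$\omega(\sigma_r)$, and~$\sigma_b$ is obtained from~$\omega(\sigma_b)$ by the same recipe up to a global spin flip, the laws of~$\sigma_r$ and of~$\sigma_b$ coincide on every event invariant under the global spin flip. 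The event~$C$ is of this kind, and corresponds (after identifying blue spins with red ones) to~$B$; therefore~$\mu^{\srp\srm}_\calD(C) = \mu^{\srp\srm}_\calD(B)$. Combining the three identities gives~$\mu^{\srp\srm}_\calD(A) = \tfrac12$.

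It remains to split~$A$. The event~$\calC^h_{\srp\srp}(n,n)$ is increasing for~$\sigma_r$, and~$\calC^h_{\srm\srm}(n,n)$ is its image under the global red flip~$\sigma_r \mapsto -\sigma_r$; since~$\mu_\calD$ is invariant under this flip,~$\nu^{\srm\srm}_\calD$ is the pushforward of~$\nu^{\srp\srm}_\calD$ by it, so~$\mu^{\srp\srm}_\calD(\calC^h_{\srm\srm}(n,n)) = \nu^{\srm\srm}_\calD(\calC^h_{\srp\srp}(n,n))$. By the comparison between boundary conditions, Corollary~\ref{cor:monotonicity_bc}~\textit{(ii)}, we have~$\nu^{\srm\srm}_\calD \leq_{\text{st}} \nu^{\srp\srm}_\calD$, so~$\mu^{\srp\srm}_\calD(\calC^h_{\srm\srm}(n,n)) \leq \mu^{\srp\srm}_\calD(\calC^h_{\srp\srp}(n,n))$. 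Together with a union bound this gives
\[
	\tfrac12 = \mu^{\srp\srm}_\calD(A) \leq \mu^{\srp\srm}_\calD\big(\calC^h_{\srp\srp}(n,n)\big) + \mu^{\srp\srm}_\calD\big(\calC^h_{\srm\srm}(n,n)\big) \leq 2\,\mu^{\srp\srm}_\calD\big(\calC^h_{\srp\srp}(n,n)\big),
\]
that is~$\mu^{\srp\srm}_\calD(\calC^h_{\srp\srp}(n,n)) \geq \tfrac14$, as claimed.

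The step I expect to need the most care is the red/blue exchange: the measure~$\mu^{\srp\srm}_\calD$ is \emph{not} symmetric in the two colours (red spins are pinned to the value~$\rp$ on~$\partial_\int\calD$, blue spins only to a constant value), so one may only push events invariant under a global spin flip through the exchange. Making this precise — checking that~$C$ (and symmetrically~$A$) is such an event, and that the two recovery maps~$\omega(\sigma_r)\mapsto\sigma_r$ and~$\omega(\sigma_b)\mapsto\sigma_b$ really are ``the same'' modulo this flip — is the heart of the argument; the rest is the already-proved monotonicity in boundary conditions together with the geometric symmetry of~$\calD$.
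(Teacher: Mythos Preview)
Your argument follows the same route as the paper's proof: Lemma~\ref{lem:monochrom}, the diagonal reflection of~$\calD$, the red/blue colour swap via Proposition~\ref{prop:loop-to-spin}, the global red spin flip, and the comparison of boundary conditions from Corollary~\ref{cor:monotonicity_bc}~\textit{(ii)}. Two small inaccuracies should be corrected, though neither affects the conclusion. First, Lemma~\ref{lem:monochrom} does \emph{not} give a strict dichotomy: its proof shows~$A\cup C$ is the full space, but~$A$ and~$C$ are not disjoint (take~$\sigma_r\equiv\rp$ and~$\sigma_b\equiv\bp$ on all of~$\Par_{n,n}$). You therefore only obtain~$\mu^{\srp\srm}_\calD(A)+\mu^{\srp\srm}_\calD(C)\geq 1$, not equality; combined with your identity~$\mu^{\srp\srm}_\calD(A)=\mu^{\srp\srm}_\calD(C)$ this still yields~$\mu^{\srp\srm}_\calD(A)\geq\tfrac12$. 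Second, the global red flip sends~$\nu^{\srp\srm}_\calD$ to~$\nu^{\srm\srp}_\calD$, not to~$\nu^{\srm\srm}_\calD$: the constraint that~$\sigma_b$ be constant on~$\partial_\int\calD$ is unaffected by flipping red spins. The stochastic domination you actually need, $\nu^{\srm\srp}_\calD\leq_{\text{st}}\nu^{\srp\srm}_\calD$, is precisely the middle inequality of Corollary~\ref{cor:monotonicity_bc}~\textit{(ii)}, so the final bound is unchanged.
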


\begin{proof}
	By Lemma~\ref{lem:monochrom} we have 
	\begin{align*}
		1 
		&\leq \mu^{\srp\srm}_{\calD}\big[ \calC^h_{\srp\srp}(n,n)\big]
		+\mu^{\srp\srm}_{\calD}\big[ \calC^h_{\srm\srm}(n,n)\big]
		+\mu^{\srp\srm}_{\calD}\big[ \calC^h_{\sbp\sbp}(n,n)\big]
		+\mu^{\srp\srm}_{\calD}\big[ \calC^h_{\sbm\sbm}(n,n)\big]\\
		& = 2\,\mu^{\srp\srm}_{\calD}\big[ \calC^h_{\srp\srp}(n,n)\big]
		+2\, \mu^{\srm\srp}_{\calD}\big[ \calC^h_{\srp\srp}(n,n)\big]\\
		&\leq 4\, \mu^{\srp\srm}_{\calD}\big[ \calC^h_{\srp\srp}(n,n)\big].
	\end{align*}
	In the equality we used the fact that the blue spin marginal of~$\mu^{\srp\srm}_\calD$ is~$\frac12(\nu^{\sbp\sbm}_{\calD} +\nu^{\sbm\sbp}_{\calD})$ and that~$\mu^{\srp\srm}_{\calD}[ \calC^h_{\srm\srm}(n,n)]= \mu^{\srm\srp}_{\calD}[ \calC^h_{\srp\srp}(n,n)]$ ; in the last line we used that~$\nu^{\srp\srm}_{\calD}\geq_{\text{st}}\nu^{\srm\srp}_{\calD}$. 
	This provides the desired result. 
\end{proof}

\begin{cor}\label{cor:square_crossed}
	For any~$n \geq 1$ 
	\begin{align*}
		\nu^{\srp\srp}_\bbH\big[ \calC^h_{\srp\srp}(n,n)\big] \geq \tfrac14.
	\end{align*}
\end{cor}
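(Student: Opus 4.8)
The plan is to get the bound $\tfrac14$ at finite volume from Lemma~\ref{lem:square_crossed} and then transport it to the infinite-volume measure using the convergence part of Theorem~\ref{thm:uniqueness_nu}.

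First I would fix a convenient exhausting sequence of \emph{symmetric} domains. For $k \geq n$, the domain $\Par_{k,k}$ contains $\Par_{n,n}$ and is invariant under the reflection of $\bbH$ across the line $e^{i\pi/6}\bbR$: this reflection exchanges the two basis directions $1$ and $e^{i\pi/3}$, hence sends the face with centre $j + \ell e^{i\pi/3}$ to the one with centre $\ell + j e^{i\pi/3}$, which still lies in $\Par_{k,k}$. The line $e^{i\pi/6}\bbR$ carries the main diagonal of $\Par_{n,n}$ (it passes through the corners $0$ and $n(1+e^{i\pi/3})$). Thus $\Par_{k,k}$ meets the hypotheses of Lemma~\ref{lem:square_crossed}, which gives
\[
\mu^{\srp\srm}_{\Par_{k,k}}\big[\calC^h_{\srp\srp}(n,n)\big] \geq \tfrac14 \qquad \text{for every } k \geq n.
\]

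Next, $(\Par_{k,k})_{k \geq n}$ increases to $\bbH$, so by the last assertion of Theorem~\ref{thm:uniqueness_nu} the measures $\mu^{\srp\srm}_{\Par_{k,k}}$ converge to $\mu_\bbH = \mu^{\srp\srp}_\bbH$. The event $\calC^h_{\srp\srp}(n,n)$ — existence of a double-$\rp$ crossing of $\Par_{n,n}$ contained in $\Par_{n,n}$ — depends only on the red spins of the finitely many faces of $F(\Par_{n,n})$, so it is a cylinder event and its probability passes to the limit. Since the red-spin marginal of $\mu_\bbH$ is $\nu^{\srp\srp}_\bbH$, I conclude
\[
\nu^{\srp\srp}_\bbH\big[\calC^h_{\srp\srp}(n,n)\big] = \lim_{k\to\infty} \mu^{\srp\srm}_{\Par_{k,k}}\big[\calC^h_{\srp\srp}(n,n)\big] \geq \tfrac14 .
\]

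There is essentially no substantive obstacle: the statement is a soft consequence of the self-duality bound of Lemma~\ref{lem:square_crossed} together with the convergence already established in Theorem~\ref{thm:uniqueness_nu}. The only two points requiring (minor) care are that the exhausting sequence must be symmetric with respect to an actual diagonal of $\Par_{n,n}$ realised by a lattice symmetry — which is why I take $\Par_{k,k}$ rather than, say, balls $\La_k$ — and that $\calC^h_{\srp\srp}(n,n)$ is genuinely local in the red spins, so that weak convergence of the $\mu^{\srp\srm}_{\Par_{k,k}}$ is enough; no quantitative estimate and no comparison of boundary conditions beyond what is already packaged in Lemma~\ref{lem:square_crossed} is needed.
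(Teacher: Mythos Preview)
Your argument is circular. You invoke the last assertion of Theorem~\ref{thm:uniqueness_nu} (convergence of $\mu^{\srp\srm}_{\calD_n}$ to $\mu_\bbH$), but in the paper's logical structure that theorem is proved only in Section~\ref{sec:infinite_vol_uniqueness}, and its proof rests on the chain Corollary~\ref{cor:square_crossed} $\Rightarrow$ Proposition~\ref{prop:RSW_Vincent} $\Rightarrow$ Corollary~\ref{cor:circ_dp_dp} $\Rightarrow$ Corollary~\ref{cor:no_inf_cluster_theta} $\Rightarrow$ Theorem~\ref{thm:limit_joint} $\Rightarrow$ Proposition~\ref{prop:Burton_Keane_blue} $\Rightarrow$ Theorem~\ref{thm:uniqueness_nu}. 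In particular, Proposition~\ref{prop:RSW_Vincent} explicitly takes Corollary~\ref{cor:square_crossed} as its input. So you are using the corollary to prove itself.

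The paper's proof avoids this by inserting exactly the one step you claimed was unnecessary: the comparison of boundary conditions. From Lemma~\ref{lem:square_crossed} one has $\nu^{\srp\srm}_\calD[\calC^h_{\srp\srp}(n,n)]\geq \tfrac14$ for symmetric $\calD$; since $\calC^h_{\srp\srp}(n,n)$ is increasing and $\nu^{\srp\srm}_\calD \leq_{\text{st}} \nu^{\srp\srp}_\calD$ (Corollary~\ref{cor:monotonicity_bc}), this gives $\nu^{\srp\srp}_\calD[\calC^h_{\srp\srp}(n,n)]\geq \tfrac14$. One then passes to the limit using Theorem~\ref{thm:limit-for-red}, whose proof relies only on FKG and the Spatial Markov property from Section~\ref{sec:red_blue} and is established \emph{before} Corollary~\ref{cor:square_crossed}. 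Your choice of the symmetric exhaustion $\Par_{k,k}$ is fine; the fix is simply to replace the appeal to Theorem~\ref{thm:uniqueness_nu} by the monotonicity step plus Theorem~\ref{thm:limit-for-red}.
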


\begin{proof}
	For any domain~$\calD$ as in Lemma~\ref{lem:square_crossed}, by the monotonicity of boundary conditions, 
	\begin{align*}
		\nu^{\srp\srp}_\calD\big[ \calC^h_{\srp\srp}(n,n)\big] \geq \tfrac14.
	\end{align*}
	By taking the limit of the above as~$\calD$ grows to~$\bbH$, we obtain the desired bound. 
\end{proof}



\subsubsection{Sub-sequential RSW}\label{sec:RSW_Vincent}

\begin{prop}[RSW]\label{prop:RSW_Vincent}
	We have
	\begin{align}\label{eq:RSW}
		\limsup_{n\to\infty}\nu^{\srp\srp}_\bbH \big[\calC^h_{\srp\srp}(2n,n)\big] >0.
	\end{align}
	As a consequence~$\limsup_{n\to\infty}\nu^{\srp\srp}_\bbH \big[\Circ_{\srp\srp}(n,2n)\big] > 0$.
\end{prop}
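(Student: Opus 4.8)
Regard the double-$\rp$ percolation $\dbp(\sigma_r) \in \{0,1\}^{E(\bbH)}$ as a bond percolation on $\bbH$ sampled under $\nu^{\srp\srp}_\bbH$. The plan is to verify that it satisfies the three hypotheses needed for the RSW machinery of Tassion \cite{Tas16}, deduce \eqref{eq:RSW} from it, and then upgrade a subsequence of rectangle crossings to circuits. The three hypotheses are: (a) \emph{positive association} — every event increasing for $\dbp(\sigma_r)$ is increasing for $\sigma_r$, so this follows from the FKG inequality for $\nu^{\srp\srp}_\bbH$ (Theorems~\ref{thm:FKG} and~\ref{thm:limit-for-red}); (b) \emph{invariance} under translations of $\bbH$ and under the symmetries of $\bbH$ fixing a point (rotations by multiples of $\pi/3$ and reflections) — obtained, exactly as translation invariance in the proof of Theorem~\ref{thm:limit-for-red}, by realising $\nu^{\srp\srp}_\bbH$ as the infinite-volume limit along a symmetric exhausting sequence of domains such as $(\La_n)_{n\ge1}$; (c) \emph{non-degeneracy of square crossings} — $\nu^{\srp\srp}_\bbH[\calC^h_{\srp\srp}(n,n)] \ge \tfrac14$ for every $n$ by Corollary~\ref{cor:square_crossed}, together with an upper bound $\nu^{\srp\srp}_\bbH[\calC^h_{\srp\srp}(n,n)] \le 1-c$ for infinitely many $n$, which is all a $\limsup$ conclusion requires and which follows from the $\bbH/\bbT$ planar duality used in the proof of Lemma~\ref{lem:monochrom} combined with the symmetric-domain estimate of Lemma~\ref{lem:simple_self_duality} in the limit.

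Granting (a)--(c), I would run Tassion's construction on $\dbp(\sigma_r)$: it uses only the FKG inequality, the lattice symmetries and (c) — in particular no independence and no general spatial Markov property — and yields $c_0>0$ and a sequence $n_k\to\infty$ with
\begin{align*}
	\nu^{\srp\srp}_\bbH\big[\calC^h_{\srp\srp}(2n_k,n_k)\big] \ge c_0 \qquad\text{for all }k,
\end{align*}
which is \eqref{eq:RSW} (here $\calC^h(2n,n)$ is read as $\calC^h_{\srp\srp}(2n,n)$). The adaptations are cosmetic: Tassion's deterministic gluing lemmas, originally stated for percolation on $\bbZ^2$ or $\bbT$, are transcribed for double-$\rp$ crossings of the parallelograms $\Par_{m,n}$, using that two double-$\rp$ paths meeting at a vertex are connected in $\dbp(\sigma_r)$ and that any horizontal double-$\rp$ crossing of $\Par_{m,n}$ intersects any vertical double-$\rp$ crossing of an overlapping sub-parallelogram.

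For the consequence, fix the subsequence $(n_k)$. Combining with FKG a bounded number of horizontal double-$\rp$ crossings of shifted copies of $\Par_{2n_k,n_k}$ with vertical double-$\rp$ crossings of $\Par_{n_k,n_k}$ (glued along overlapping squares in the usual way) gives, for each fixed $K$, a constant $c_K>0$ with $\nu^{\srp\srp}_\bbH[\calC^h_{\srp\srp}(Kn_k,n_k)]\ge c_K$; by invariance (b) this lower bound persists for every translate and every $\pi/3$-rotate of $\Par_{Kn_k,n_k}$. A double-$\rp$ circuit surrounding $\La_{n_k}$ and contained in $\La_{2n_k}\setminus\La_{n_k}$ is then assembled in the standard way from $O(1)$ such long crossings — placed one in each sector of the annulus, in the appropriate lattice direction and radial position so as to stay inside the annulus — glued by $O(1)$ crossings of small squares; by FKG its probability is bounded below by a fixed product of the constants above, uniformly in $k$. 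Hence $\limsup_n\nu^{\srp\srp}_\bbH[\Circ_{\srp\srp}(n,2n)]\ge\limsup_k\nu^{\srp\srp}_\bbH[\Circ_{\srp\srp}(n_k,2n_k)]>0$.

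The main obstacle is the first step — confirming that every lemma and the scale induction in Tassion's argument survive the passage to our setting using only FKG and the lattice symmetries, and re-proving in the double-$\rp$ language the few purely geometric statements about crossings of parallelograms. The only other delicate point is the upper bound in (c); the lower bound is immediate from Corollary~\ref{cor:square_crossed}, while the upper bound is extracted from Lemmas~\ref{lem:monochrom} and~\ref{lem:simple_self_duality}.
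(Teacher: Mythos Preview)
Your approach is essentially the paper's: apply Tassion's argument to the double-$\rp$ percolation under $\nu^{\srp\srp}_\bbH$, using FKG, the lattice symmetries, and the square-crossing lower bound of Corollary~\ref{cor:square_crossed}, then glue via FKG to obtain circuits.

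One remark: the upper bound in your hypothesis~(c) is not needed. Tassion's scale-comparison argument produces the $\limsup$ conclusion from the lower bound $\nu^{\srp\srp}_\bbH[\calC^h_{\srp\srp}(n,n)]\ge\tfrac14$ alone, together with FKG and symmetry; this is exactly what the paper invokes. This matters because your proposed derivation of the upper bound from Lemmas~\ref{lem:monochrom} and~\ref{lem:simple_self_duality} is not straightforward at this point --- it would require red/blue symmetry of $\mu_\bbH$, which is only established later (Theorem~\ref{thm:uniqueness_nu}) and in fact relies on the present proposition. Dropping the upper bound removes this circularity and leaves your argument intact.
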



The proof of Proposition~\ref{prop:RSW_Vincent} uses a technique introduced by Tassion in~\cite{Tas16}. 
Indeed, the main argument of \cite[Thm.~1]{Tas16} shows that~$\liminf_{n\to\infty} \nu^{\srp\srp}_\bbH(\calC^h_{\srp}(n,n)) > 0$ 
(which is the result of Corollary~\ref{cor:square_crossed}) implies \eqref{eq:RSW}. 
This technique applies to general percolation measures with the FKG property and sufficient symmetry. 
Our model fits in this framework and the relevant part of the proof of \cite[Thm.~1]{Tas16} applies readily. 
We simply point out that, in order to harness the symmetries of the hexagonal lattice, 
one should apply the argument using crossings of hexagonal domains between opposite sides, rather than crossings of squares or lozenges. 

Note that \cite[Thm.~1]{Tas16} actually claims a stronger result than~\eqref{eq:RSW}, where~$\limsup$ is replaced by~$\liminf$. 
This improvement requires an additional ingredient which is lacking here. For now we are content with the above sub-sequential form of RSW. 

A stronger statement (with the lower bound valid for all~$n$) will be proved in Section~\ref{sec:dichotomy} -- see Proposition~\ref{prop:RSW_Hugo}. 
All the ingredients for it are already available, however the proof is tedious and is not necessary at this point. 
The argument of \cite{Tas16} is elegant, short and quite robust, and suffices to prove Theorem~\ref{thm:uniqueness_nu}; we prefer it for now. 
%

\begin{proof}
	The argument of \cite[Thm.~1]{Tas16} requires minor modifications 
	because the hexagonal lattice in invariant under rotations of~$\pi/3$, unlike the square one, which is invariant under rotations of~$\pi/2$.
	We briefly sketch the adapted argument below. 
	
	Write~$\sfT$ and~$\sfB$ for the top and bottom horizontal sections of~$\partial \La_n$, and 
	let~$\calC_{\srp\srp}^v(\La_n)$ be the event that~$\sfT$ and~$\sfB$
	are connected to each other by a double-$\rp$ path contained in~$\La_n$. 
	From Corollary \ref{cor:square_crossed}, 
	using standard applications of the FKG inequality and the invariance of~$\La_n$ under rotations by multiples of~$\pi/3$, 
	we deduce that~$\nu^{\srp\srp}_\bbH \big[\calC^h_{\srp\srp}(2n,n)\big]$ is bounded away from~$0$ uniformly in~$n$. 
	
	Following \cite{Tas16}, define~$2\alpha_n$ as the maximal width of a centred interval~$I$ on~$\sfB$ 
	such that 
	$$ \nu^{\srp\srp}_\bbH \big[I \xlra{\srp\srp \text{ in }\La_n} \sfT \big] \leq 	
	\nu^{\srp\srp}_\bbH \big[\sfB \setminus I \xlra{\srp\srp \text{ in }\La_n} \sfT \big].$$
	
	\begin{figure}
    \begin{center}
    \includegraphics[width = 0.39\textwidth]{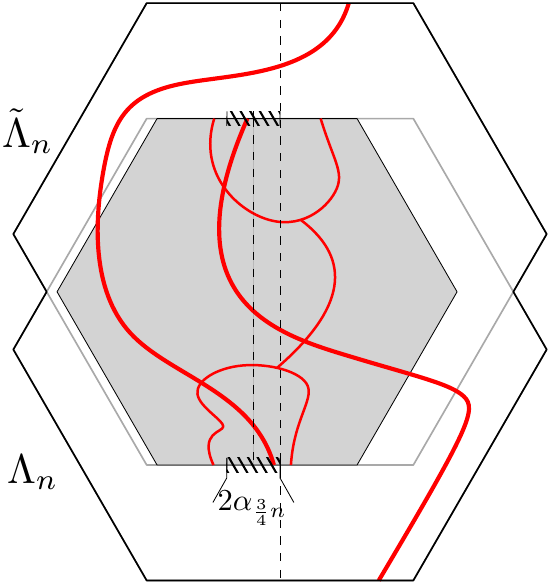}
    \caption{The construction that proves that if~$\alpha_{n} \leq \frac12 \alpha_{3n/4}$, then~$\La_n \cup \tilde\La_{n}$\
    is crossed vertically with uniformly positive probability.}
    \label{fig:tassion}
    \end{center}
\end{figure}

	Call~$\tilde \La_n$ the vertical translate of~$\La_n$ by~$n/2$ and~$\widetilde{\sfT}$ the corresponding translate of~$\sfT$. 
	Then, using the same argument as in \cite[Lemma 2.2]{Tas16} (see also Figure \ref{fig:tassion}), 
	$\alpha_{n} \leq 2 \alpha_{3n/4} \leq n/4$ 	implies that 
	\begin{align*}
		 \nu^{\srp\srp}_\bbH \big[\sfB \xlra{\srp\srp \text{ in }\La_n \cup \tilde \La_n} \sfT \big] >c,
	\end{align*}
	for some constant~$c> 0$ independent of~$n$. 
	Through additional standard applications of the FKG inequality, 
	the above implies in turn that 
	$\nu^{\srp\srp}_\bbH [\calC^h_{\srp\srp}(4n,2n)]$ 
	and~$\nu^{\srp\srp}_\bbH [\Circ_{\srp\srp}(2n,4n)]$ 
	are bounded below by strictly positive quantities depending only on~$c$. 
	Moreover, if~$\alpha_n > n/4$, then lower bounds on 
	$\nu^{\srp\srp}_\bbH [\calC^h_{\srp\srp}(4n,2n)]$ 
	and~$\nu^{\srp\srp}_\bbH [\Circ_{\srp\srp}(2n,4n)]$ 
	follow by simple considerations, similar to those of the start of the proof of \cite[Lemma 2.2]{Tas16}. 
	
	Finally, since~$0\leq \alpha_n \leq n$ for all~$n$, there exist infinitely many values of~$n$ such that~$\alpha_{n} \leq 2 \alpha_{3n/4}$, and the proof is complete. 
\end{proof}

\begin{cor}\label{cor:circ_dp_dp}
	Under~$\nu^{\srp\srp}_\bbH$,~$0$ is surrounded a.s. by an infinite number of disjoint circuits of double-$\rp$. 
\end{cor}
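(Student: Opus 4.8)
The plan is to deduce the corollary from the sub-sequential circuit estimate of Proposition~\ref{prop:RSW_Vincent} together with the ergodicity of $\nu^{\srp\srp}_\bbH$ from Theorem~\ref{thm:limit-for-red}, using a monotone family of ``circuit exists'' events. For $R\geq1$ let $\mathcal{E}_R$ denote the event that there is a double-$\rp$ circuit of $\bbH$ surrounding $\La_R$ (a circuit of edges all of whose adjacent faces carry spin $\rp$, with $\La_R$ in its interior). These events are nested, $\mathcal{E}_{R'}\subseteq\mathcal{E}_R$ whenever $R\leq R'$, since a circuit surrounding $\La_{R'}$ also surrounds $\La_R$. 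Moreover $\Circ_{\srp\srp}(R,2R)\subseteq\mathcal{E}_R$, so Proposition~\ref{prop:RSW_Vincent} provides a constant $c>0$ and a sequence $R_k\to\infty$ with $\nu^{\srp\srp}_\bbH(\mathcal{E}_{R_k})\geq\nu^{\srp\srp}_\bbH\big(\Circ_{\srp\srp}(R_k,2R_k)\big)\geq c$. Since $R\mapsto\nu^{\srp\srp}_\bbH(\mathcal{E}_R)$ is non-increasing, picking for each $R$ an index $k$ with $R_k\geq R$ gives $\nu^{\srp\srp}_\bbH(\mathcal{E}_R)\geq c$ for \emph{every} $R$; hence, by continuity of the measure along the decreasing sequence,
\[
\nu^{\srp\srp}_\bbH\Big(\bigcap_{R\geq1}\mathcal{E}_R\Big)=\lim_{R\to\infty}\nu^{\srp\srp}_\bbH(\mathcal{E}_R)\geq c>0 .
\]

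Next I would note that the event $\mathcal{E}_\infty:=\bigcap_{R\geq1}\mathcal{E}_R$ — ``a double-$\rp$ circuit around the origin exists at every scale'' — is invariant under all translations of $\bbH$: if $\mathcal{E}_\infty$ holds and $z\in V(\bbH)$, then for each $R$ a double-$\rp$ circuit surrounding $\La_{R+|z|}$ also surrounds $\La_R+z$, and the same reasoning works in the other direction. Since $\nu^{\srp\srp}_\bbH$ is ergodic with respect to translations (Theorem~\ref{thm:limit-for-red}) and $\nu^{\srp\srp}_\bbH(\mathcal{E}_\infty)>0$, the ergodic zero-one law forces $\nu^{\srp\srp}_\bbH(\mathcal{E}_\infty)=1$.

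Finally, on $\mathcal{E}_\infty$ I would extract an infinite pairwise disjoint family of double-$\rp$ circuits around $0$ by a greedy construction: take a double-$\rp$ circuit $\gamma_1$ surrounding $\La_1$; it is finite, hence contained in some $\La_{m_1}$ with $m_1>1$; take a double-$\rp$ circuit $\gamma_2$ surrounding $\La_{m_1}\supseteq\gamma_1$, which is therefore disjoint from $\gamma_1$; iterate. The resulting circuits $\gamma_1,\gamma_2,\dots$ are pairwise disjoint and each surrounds $0$, which is the statement of the corollary.

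I do not expect a genuine obstacle here: the substance is entirely contained in Proposition~\ref{prop:RSW_Vincent} (the RSW/circuit estimate) and Theorem~\ref{thm:limit-for-red} (ergodicity), and the argument above is a short soft deduction. The only point requiring a little care is that Proposition~\ref{prop:RSW_Vincent} yields only a $\limsup$, not a $\liminf$; but the monotonicity of $\mathcal{E}_R$ in $R$ upgrades a lower bound along a subsequence to a uniform one, which is exactly what the argument uses. (Obtaining the stronger $\liminf$ version of the crossing estimate would instead require Proposition~\ref{prop:RSW_Hugo}, which is not needed for this corollary.)
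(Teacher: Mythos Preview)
Your proof is correct and takes essentially the same approach as the paper's: both use the monotonicity of the events $\Circ_{\srp\srp}(n)$ (your $\mathcal{E}_R$) to upgrade the sub-sequential bound from Proposition~\ref{prop:RSW_Vincent} to a uniform one, and then invoke the ergodicity of $\nu^{\srp\srp}_\bbH$. The paper phrases the argument by contradiction (if only finitely many circuits, the first scale $N$ at which no surrounding circuit exists is a.s.\ finite, contradicting the $\limsup$ bound), while you argue directly, but the content is identical.
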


\begin{proof}
	Suppose the opposite, that is that with positive~$\nu^{\srp\srp}_\bbH$-probability, 
	$0$ is surrounded by a finite number of disjoint double-$\rp$ circuits.
	Since~$\nu^{\srp\srp}_\bbH$ is ergodic and the above event is translation invariant, it occurs with probability~$1$. 
	
	Set~$N = \min\{n \geq 1: \, \Circ_{\srp\srp}(n) \text{ does not occur}\}$; observe that~$N$ is a random variable that is, by our assumption, 
	$\nu^{\srp\srp}_\bbH$-a.s. finite. 
	Then there exists~$n_0$ such that 
	\begin{align*}
		\nu^{\srp\srp}_\bbH(N \geq n_0) < \limsup_{n\to\infty}\nu^{\srp\srp}_\bbH \big[\Circ_{\srp\srp}(n)\big],
	\end{align*}
	since the right-hand side is strictly positive by Proposition~\ref{prop:RSW_Vincent}. Using that, for all~$n>n_0$,
	\[
		\nu^{\srp\srp}_\bbH(N \geq n_0) \geq \nu^{\srp\srp}_\bbH \big[\Circ_{\srp\srp}(n)\big],
	\]
	we obtain a contradiction.
\end{proof}

\begin{cor}\label{cor:no_inf_cluster_theta}
	The graph~$\theta(\sigma_r)$ contains~$\nu^{\srp\srp}_\bbH$-a.s. no infinite cluster. 
\end{cor}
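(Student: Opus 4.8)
The plan is to use the double-$\rp$ circuits produced by Corollary~\ref{cor:circ_dp_dp} as barriers that confine the clusters of $\theta(\sigma_r)$. Recall that a dual edge lies in $\theta(\sigma_r)$ exactly when the two faces of $\bbH$ it separates carry opposite red spins, so a $\theta(\sigma_r)$-cluster is a maximal set of faces joined by a chain of successively adjacent faces along which the red spin changes at every step. I would first isolate the following purely deterministic statement: if a face $u$ is surrounded by a double-$\rp$ circuit $\calC$, then the $\theta(\sigma_r)$-cluster of $u$ is contained in the finite set of faces enclosed by $\calC$.

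To prove this, take any chain $v_0 = u, v_1, \dots, v_k$ of successively adjacent faces with $\sigma_r(v_{i-1}) \neq \sigma_r(v_i)$ for every $i$, and suppose for contradiction that some $v_i$ lies strictly outside $\calC$; choose $i$ minimal. Since $\calC$ surrounds $u$ and every face of $\bbH$ (an open hexagon disjoint from $\calC$) lies entirely on one side of $\calC$, the face $v_{i-1}$ lies strictly inside $\calC$ while $v_i$ lies strictly outside. As $v_{i-1}$ and $v_i$ are adjacent, the edge $e$ of $\bbH$ separating them must belong to $\calC$: if it did not, the relative interior of $e$ would be disjoint from $\calC$ and hence, together with both $v_{i-1}$ and $v_i$, would lie in a single connected component of $\bbR^2 \setminus \calC$ --- contradicting that one of these faces is inside and the other outside. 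But then $v_{i-1}$ and $v_i$ are precisely the two faces adjacent to an edge of the double-$\rp$ circuit $\calC$, so both have red spin $\rp$, contradicting $\sigma_r(v_{i-1}) \neq \sigma_r(v_i)$. Hence the whole chain remains inside $\calC$, which proves the claim.

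It then remains to combine this with Corollary~\ref{cor:circ_dp_dp}, which guarantees that $\nu^{\srp\srp}_\bbH$-a.s.\ the origin is surrounded by a double-$\rp$ circuit. By the translation invariance of $\nu^{\srp\srp}_\bbH$ (Theorem~\ref{thm:limit-for-red}), for each face $u$ of $\bbH$ the event ``$u$ is surrounded by a double-$\rp$ circuit'' has probability one; since $\bbH$ has only countably many faces, $\nu^{\srp\srp}_\bbH$-a.s.\ every face is surrounded by some double-$\rp$ circuit. On that almost sure event the deterministic statement forces every $\theta(\sigma_r)$-cluster to be finite, which is the assertion of the corollary. (One could equally well invoke the ergodicity of $\nu^{\srp\srp}_\bbH$ in place of the countable union.) I expect the only genuinely delicate point to be the planar topology step identifying the separating edge $e$ with an edge of $\calC$; everything else is routine, and the key conceptual input --- existence of double-$\rp$ circuits at every scale --- has already been established.
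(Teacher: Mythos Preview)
Your proof is correct and follows essentially the same approach as the paper: both argue that a double-$\rp$ circuit blocks $\theta(\sigma_r)$-connections, and then invoke Corollary~\ref{cor:circ_dp_dp}. The paper compresses the deterministic blocking argument into a single sentence (``if $\sigma_r \in \Circ_{\srp\srp}(n)$, then all clusters of $\theta(\sigma_r)$ that intersect $\La_n$ are finite''), whereas you spell out the Jordan-curve step; likewise, the paper concludes via ``$\nu^{\srp\srp}_\bbH(\Circ_{\srp\srp}(n))=1$ for all $n$'' while you use translation invariance and a countable union, but these are equivalent ways of covering every face.
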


\begin{proof}
	Observe that a double-$\rp$-circuit in~$\sigma_r$ blocks connections in~$\theta(\sigma_r)$. 
	More precisely, if~$\sigma_r \in \Circ_{\srp\srp}(n)$, then all clusters of~$\theta(\sigma_r)$ that intersect~$\La_n$ are finite. 
	Now Corollary~\ref{cor:circ_dp_dp} states that~$\nu^{\srp\srp}_\bbH(\Circ_{\srp\srp}(n)) = 1$ for all~$n$, 
	which implies by the observation above that~$\theta(\sigma_r)$ contains no infinite cluster a.s. 
\end{proof}

\subsection{Joint infinite-volume measure}

We now turn to the existence of limiting measures for the joint law of the red and blue spins, that is Theorem~\ref{thm:limit_joint}. 

The crucial property here is that by Lemma~\ref{prop:marginal-ditribution}, conditionally on~$\sigma_r$,
$\sigma_b$ is obtained by colouring independently and uniformly the clusters of~$\theta(\sigma_r)$ in either~$\bm$ or~$\bp$. 
This procedure may also be applied in infinite-volume for red-spin configurations sampled according to~$\nu^{\srp\srp}_\bbH$. 
The absence of infinite clusters in~$\theta(\sigma_r)$ is used to show that 
the result of this procedure in a finite but large volume is close to that in infinite-volume. 

\begin{proof}[Theorem~\ref{thm:limit_joint}]
	Let ~$(\calD_n)_{n \geq 0}$ be an increasing sequence of domains with~$\bigcup_{n\geq0} \calD_n = \bbH$.
	Recall from Theorem~\ref{thm:limit-for-red} that the red-spin marginals of~$\mu_{\calD_n}^{\srp\srp}$ converge
	to an ergodic translation-invariant limiting measure denoted by~$\nu_\bbH^{\srp\srp}$. 
	Let~$\mu_\bbH^{\srp\srp}$ be the measure obtained by sampling~$\sigma_r$ according to~$\nu_\bbH^{\srp\srp}$, 
	then awarding to all faces of each cluster of~$\theta(\sigma_r)$ a blue spin uniformly chosen in~$\{\bm,\bp\}$, 
	independently for each cluster. 
	Let us prove that~$\mu_{\calD_n}^{\srp\srp}$ converges to~$\mu_\bbH^{\srp\srp}$.
	
	Fix~$k\in \bbN$ and~$\eps>0$. 
	We will show that the total-variation distance between the restrictions of 
	$\mu_{\calD_n}^{\srp\srp}$ and~$\mu_\bbH^{\srp\srp}$ to~$\La_k$ 
	is smaller than~$2\eps$, provided that~$n$ is large enough. 
	
	Let~$K \geq k$ be such that~$\nu_\bbH^{\srp\srp}(\La_k \xlra{\theta(\sigma_r)} \La_K^c) <\eps$.
	Due to Corollary~\ref{cor:no_inf_cluster_theta}, it is always possible to choose~$K$ with this property.
	
	Now, let~$N = N(\eps,K)$ be such that, for any~$n \geq N$, the distance in total variation between the restrictions of 
	$\nu_{\calD_n}^{\srp\srp}$ and~$\nu_\bbH^{\srp\srp}$ to~$\La_K$ is smaller than~$\eps$.
	Thus, one may couple~$\nu_{\calD_n}^{\srp\srp}$ and~$\nu_\bbH^{\srp\srp}$ to produce configurations~$\sigma_r, \sigma_r'$
	in such a way~$\sigma_r = \sigma_r'$ on~$\La_K$ with probability at least~$1- \eps$. 
	Moreover, by choice of~$K$, with probability at least~$1 - 2\eps$,~$\sigma_r = \sigma_r'$ on~$\La_K$ 
	and there is no connected component of~$\theta(\sigma_r)$ that intersects both~$\La_k$ and~$\La_K^c$. 
	On this event, the connected components of~$\theta(\sigma_r)$ and~$\theta(\sigma_r')$ that intersect~$\La_k$ are identical. 
	Using the same blue spin assignment for these components, we have produced a coupling of~$\mu_{\calD_n}^{\srp\srp}$ and~$\mu_\bbH^{\srp\srp}$
	that is equal inside~$\La_k$ with probability at least~$1- 2\eps$, which was our goal.
	
	Since~$\eps > 0$ and~$k$ are arbitrary, we conclude that ~$\mu_{\calD_n}^{\srp\srp}$ converges to~$\mu_\bbH^{\srp\srp}$.
		 
	The translation invariance of~$\mu_\bbH^{\srp\srp}$ follows from that of~$\nu_\bbH^{\srp\srp}$. 
	Since~$\mu_{\La_n}^{\srp\srp}$ is invariant under rotations by multiples of~$\pi/3$ and converges to~$\mu_\bbH^{\srp\srp}$, 
	the latter is also invariant under such rotations. 
	The ergodicity of~$\mu_\bbH^{\srp\srp}$ follows from that of~$\nu_\bbH^{\srp\srp}$
	and from the absence of infinite clusters in~$\theta(\sigma_r)$. 
\end{proof}

\begin{prop}\label{prop:Burton_Keane_blue}
	Under~$\mu_{\bbH}^{\srp\srp}$,~$\sigma_b$ contains a.s. no infinite~$\bp$-cluster and no infinite~$\bm$-cluster.
	As a consequence,~$\omega_b$ is formed entirely of finite loops~$\mu^{\srp\srp}_\bbH$-a.s. 
\end{prop}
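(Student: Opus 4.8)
The plan is to show, via a Burton--Keane argument, that $\sigma_b$ has a.s.\ no infinite monochromatic cluster, and then to deduce the statement about $\omega_b$. First I would record that, by Theorem~\ref{thm:limit_joint}, $\mu_\bbH^{\srp\srp}$ is translation-invariant, ergodic, and invariant under the global flip $\sigma_b\mapsto-\sigma_b$ of the blue spins (conditionally on $\sigma_r$ the clusters of $\theta(\sigma_r)$ receive independent fair blue colours, by Proposition~\ref{prop:marginal-ditribution}). Ergodicity forces the number of infinite $\bp$-clusters of $\sigma_b$ to be a.s.\ equal to a constant in $\{0,1,\infty\}$, and the same for $\bm$; the blue-flip symmetry forces these two constants to coincide, to a common value $N$. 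It then remains to prove $N=0$.

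To rule out $N=\infty$ I would run the Burton--Keane argument; its only delicate input, and the step I expect to be the main obstacle, is finite energy. Under $\mu_\bbH^{\srp\srp}$ one cannot freely re-sample the blue spin of a single face, because conditionally on $\sigma_r$ the blue spin is constant on each cluster of $\theta(\sigma_r)$, so only entire such clusters may be recoloured, and a priori such a cluster may be large. I would repair this using two facts already available: by Corollary~\ref{cor:no_inf_cluster_theta} the clusters of $\theta(\sigma_r)$ are a.s.\ finite, and by Corollary~\ref{cor:circ_dp_dp} for every $k$ there is a.s.\ a double-$\rp$ circuit surrounding $\La_k$; such a circuit carries no edge of $\theta(\sigma_r)$ across it, hence contains in its interior every cluster of $\theta(\sigma_r)$ that meets $\La_k$. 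Therefore, for every $k$ and $\epsilon>0$ there is $m=m(k,\epsilon)$ such that, with probability at least $1-\epsilon$, there is a double-$\rp$ circuit around $\La_k$ inside $\La_m$; on this event all clusters of $\theta(\sigma_r)$ meeting $\La_k$ lie in $\La_m$, they number at most $|F(\La_m)|$, and conditionally on $\sigma_r$ and on the blue colours of all clusters not meeting $\La_k$ any prescribed recolouring of the former has conditional probability at least $2^{-|F(\La_m)|}>0$. This cluster-level insertion tolerance is exactly what the trifurcation step requires: assuming $N=\infty$, for $k$ large at least three distinct infinite $\bp$-clusters meet $\La_k$ with probability bounded below, and one recolours a bounded family of clusters inside $\La_k$ to $\bp$ so as to create a trifurcation at a fixed site. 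The usual counting estimate --- distinct trifurcations inside $\La_M$ feed pairwise disjoint triples of infinite $\bp$-arms to $\partial_E\La_M$, so their number is $O(|\partial_E\La_M|)$, whereas translation invariance makes their expected number grow like $|F(\La_M)|$ --- then gives a contradiction. Hence $N\in\{0,1\}$.

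To rule out $N=1$ I would use planarity. Assume $N=1$: a unique infinite $\bp$-cluster $C$ and a unique infinite $\bm$-cluster $C'$. The blue-spin marginal of $\mu_\bbH^{\srp\srp}$ is positively associated (Theorem~\ref{thm:limit_joint}) and invariant under rotations by $\pi/3$, which cyclically permute the six sides of the hexagonal box $\La_n$. Using positive association of the decreasing events ``a given side of $\La_n$ is joined to infinity inside $\La_n^c$ by a path of $\bp$-faces'', together with the fact that the infinite cluster $C$ meets $\partial_E\La_n$ for all large $n$, one obtains that with probability tending to $1$ every side of $\La_n$ is joined to infinity inside $\La_n^c$ by a $\bp$-face-path; as all these paths lie in the single cluster $C$, they form a connected set of $\bp$-faces in $\La_n^c$ touching all sides of $\La_n$. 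The same holds for $C'$ with $\bm$-faces. But a $\bp$-face-path and a $\bm$-face-path of $\bbH$ are vertex-disjoint and cannot cross, so by a standard planarity (Zhang-type) argument a connected $\bp$-set and a connected $\bm$-set in $\La_n^c$ cannot both touch all sides of $\La_n$ --- a contradiction. Therefore $N=0$, which is the first assertion.

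Finally, for the consequence on $\omega_b$: at each vertex of $\bbH$ three faces meet, and since only two blue values occur an even number --- hence $0$ or $2$ --- of the three incident edges separate faces carrying different $\sigma_b$, so $\omega_b=\omega(\sigma_b)$ has degree $0$ or $2$ at every vertex and is a disjoint union of loops and bi-infinite paths. Any edge $e\in\omega_b$ has a $\bp$-face on one side, lying in a cluster $C_e$ which is finite since $N=0$; following $\omega_b$ out of $e$ one remains on the finite edge-boundary $\partial_E C_e$, so the component of $\omega_b$ through $e$ is a finite loop. Hence $\omega_b$ contains no bi-infinite path and consists entirely of finite loops, as claimed.
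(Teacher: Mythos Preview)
Your overall strategy --- Burton--Keane to exclude $N=\infty$, a Zhang-type argument to exclude $N=1$, then the deduction for $\omega_b$ --- is exactly the paper's, and your handling of the Zhang step and of the final consequence is correct (indeed more detailed than the paper's one-line treatment).

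The problem lies in the Burton--Keane step. Your worry that finite energy fails for the blue marginal rests on a misconception: you are reasoning conditionally on $\sigma_r$, where indeed $\sigma_b$ is forced to be constant on clusters of $\theta(\sigma_r)$. But finite energy is a property of the blue marginal \emph{after} $\sigma_r$ has been integrated out, and there it holds directly. By the computation of Proposition~\ref{prop:marginal-ditribution} with the colours exchanged, the blue marginal of $\mu_\calD^{\srp\srp}$ is proportional to $2^{k_0(\theta(\sigma_b))}$, where $k_0$ counts the clusters of $\theta(\sigma_b)$ not meeting $\partial_\int\calD$. Flipping $\sigma_b$ at a single interior face $u$ changes only the six edges of $\calD^*$ incident to $u$, hence changes $k_0$ by a bounded constant; so the ratio of the two conditional weights lies in a fixed compact interval of $(0,\infty)$, uniformly in $\calD$. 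This passes to $\mu_\bbH^{\srp\srp}$ and gives finite energy for the blue marginal, which is precisely what the paper asserts in one line before invoking Burton--Keane.

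Your workaround via recolouring $\theta(\sigma_r)$-clusters, besides being unnecessary, has a genuine gap: you claim it ``creates a trifurcation at a fixed site'', but this does not follow. A face $y$ can be a trifurcation point only if removing $y$ alone disconnects the infinite $\bp$-cluster; since $\sigma_b$ is constant on the $\theta(\sigma_r)$-cluster of $y$, this forces that cluster to be the singleton $\{y\}$, which your conditioning on $\sigma_r$ does not ensure. Moreover, the faces of $\La_m\setminus\La_k$ lying in $\theta(\sigma_r)$-clusters that avoid $\La_k$ keep their fixed colours and may block the $\bp$-paths you need to route from $\partial\La_m$ to the target site. The argument can be repaired --- e.g.\ by also invoking finite energy for $\sigma_r$ to force $\sigma_r$ constant near the target, or by running Burton--Keane with trifurcation \emph{boxes} rather than points --- but the direct route through finite energy of the blue marginal is much simpler and is what the paper does.
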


The proof below is a straightforward application of the uniqueness argument of Burton and Keane \cite{BurKea89} 
and of Zhang's argument for non-coexistence of clusters 
(see \cite[Lem 11.12]{Gri99a} for an illustration of this argument which was never published by Zhang himself).

\begin{proof}
	To start, observe that under~$\mu_{\bbH}^{\srp\srp}$ the number~$N_{\sbp}$ of infinite~$\bp$-clusters is a.s. constant. 
	This is a direct consequence of the ergodicity of~$\sigma_b$ under this measure. The same applies to infinite~$\bm$-clusters. 
	
	The technique introduced by Burton--Keane in \cite{BurKea89} applies readily to the blue-spin marginal under~$\mu_{\bbH}^{\srp\srp}$. 
	Indeed, this marginal satisfies the finite-energy property required by  \cite{BurKea89}. 
	As a consequence we obtain that either~$N_{\sbp} = 0$~$\mu_{\bbH}^{\srp\srp}$-a.s. or~$N_{\sbp} = 1$~$\mu_{\bbH}^{\srp\srp}$-a.s.
	
	Finally, let us prove that ~$N_{\sbp} = 0$~$\mu_{\bbH}^{\srp\srp}$-a.s. by contradiction. 
	Assume that~$N_{\sbp} = 1$~$\mu_{\bbH}^{\srp\srp}$-a.s. Then, by the symmetry of the blue-spin marginal, 
	the number of~$\bm$-infinite clusters is also equal to~$1$ a.s.
	Thus, there exists some~$n \geq 1$ such that~$\mu_{\bbH}^{\srp\srp}(\La_n \xlra{\sbp} \infty) > 1 - 1/4^{6}$. 
	Write~$\partial_1\La_n,\dots, \partial_6 \La_n$ for the six sides of~$\partial \La_n$ in counter-clockwise order. 
	Then 
	\begin{align*}
		\mu_{\bbH}^{\srp\srp}(\La_n \nxlra{\sbp}\infty)
		 = \mu_{\bbH}^{\srp\srp}\big(\bigcap_{j=1}^6 \{\partial_j \La_n \xlra{\sbp \text{ in~$\La_n^c$}}\infty \}^c\big)
		\geq \mu_{\bbH}^{\srp\srp}\big(\{\partial_1 \La_n \xlra{\sbp \text{ in~$\La_n^c$}}\infty \}^c\big)^6.
	\end{align*}
	The inequality is due to the FKG property for~$\sigma_b$ and to the invariance of the measure under rotations by~$\pi/3$. 
	Thus we find 
	\begin{align*}
		\mu_{\bbH}^{\srp\srp}\big(\partial_1 \La_n \xlra{\sbp \text{ in~$\La_n^c$}}\infty \big)
		\geq 1 - \big(1 - \mu_{\bbH}^{\srp\srp}(\La_n \nxlra{\sbp}\infty)\big)^{1/6}
		> 3/4.
	\end{align*}
	The same holds for any side of~$\La_n$ and also for~$\bm$-connections instead of~$\bp$ ones. 
	
	Define the event 
	$\calA_{\sbp} := \{\partial_1 \La_n \xlra{\sbp \text{ in~$\La_n^c$}}\infty\} \cap \{\partial_3 \La_n \xlra{\sbp \text{ in~$\La_n^c$}}\infty\}$ and 
	$\calA_{\sbm} := \{\partial_2 \La_n \xlra{\sbm \text{ in~$\La_n^c$}}\infty\} \cap \{\partial_4 \La_n \xlra{\sbm \text{ in~$\La_n^c$}}\infty\}$. 
	Using the union bound, we find
	\begin{align*}
		\mu_{\bbH}^{\srp\srp}\big(\calA_{\sbp} \cap\calA_{\sbm}\big) >0.
	\end{align*}
	Now notice that when the above event occurs, then necessarily either there exist two infinite~$\bp$-clusters or two infinite~$\bm$-clusters. 
	This contradicts the uniqueness of the infinite cluster proved above. 
	
	Finally, the existence of an infinite path in~$\omega_b$ implies the existence of both infinite~$\bp$ and~$\bm$ clusters, 
	which was excluded above. 
\end{proof}

\subsection{Uniqueness of infinite-volume measure: proof of Theorem~\ref{thm:uniqueness_nu}}\label{sec:infinite_vol_uniqueness}

\begin{proof}[Theorem~\ref{thm:uniqueness_nu}]
	Let us first prove that~$\mu^{\srp\srp}_\bbH$-a.s., 
	there exist infinitely many loops surrounding the origin in the loop representation of~$(\sigma_r,\sigma_b)$.	
	To that end, it is enough to show that for any~$n$,~$\mu^{\srp\srp}_\bbH$-a.s. there exists at least one loop surrounding~$\Lambda_n$.
	Fix~$n$ and consider the union of all~$\bp$- and~$\bm$-clusters that intersect~$\Lambda_n$. 
	Due to Proposition~\ref{prop:Burton_Keane_blue}, all these clusters are finite.  
	The outer boundary of their union is then a finite blue loop surrounding~$\La_n$.  Hence,~$0$ is surrounded a.s. by infinitely many loops. 

	Let us now prove \eqref{eq:uniqueness_nu}.
	Fix~$n$. By the above,~$\mu^{\srp\srp}_\bbH$-a.s. there exist infinitely many loops surrounding~$\Lambda_n$ 
	which may be ordered starting from the inner most.
	Since each loop is blue or red with probability~$1/2$ independently, 
	there exist a.s. four consecutive loops~$\gamma_1,\dots, \gamma_4$ surrounding~$\Lambda_n$ that have colours red, blue, red, blue, in this order, from inside out. 
	Then both~$\gamma_1$ and~$\gamma_3$ have constant blue spins on all faces adjacent to them, but that for~$\gamma_1$ is opposite to that for~$\gamma_3$. That is, either~$\gamma_1$ is double-$\bp$ and~$\gamma_3$ is double-$\bm$ or~$\gamma_1$ is double-$\bm$ and~$\gamma_3$ is double-$\bp$. Similarly, of~$\gamma_2$ and~$\gamma_4$, one is double-$\rp$ and the other is double-$\rm$. 
	This proves \eqref{eq:uniqueness_nu}.
	
	A direct consequence of~$\mu^{\srp\srp}_\bbH(\Circ_{\srm\srm}(n))=1$ is that the restriction of~$\nu^{\srp\srp}_\bbH$ to~$\La_n$ is 
	dominated by~$\nu^{\srm\srm}_\bbH$. Thus~$\nu^{\srp\srp}_\bbH=\nu^{\srm\srm}_\bbH$. 
	Moreover, due to the monotonicity of boundary conditions, for any sequence of finite domains~$\calD_n$ that increases to~$\bbH$, 
	the measures 
	$\nu_{\calD_n}^{\srp\srm}$ and~$\nu_{\calD_n}^{\srm\srp}$, as well as the red-spin marginals 
	of~$\mu_{\calD_n}^{\sbp\sbp}$,~$\mu_{\calD_n}^{\sbp\sbm}$,~$\mu_{\calD_n}^{\sbm\sbp}$ and~$\mu_{\calD_n}^{\sbm\sbm}$
	all converge to~$\nu_\bbH^{\srp\srp}$.
	
	Finally, due to the procedure that selects blue spins knowing the red spins, we conclude that 
	$\mu_{\calD_n}^\xi \xrightarrow[n\to\infty]{} \mu^{\srp\srp}_\bbH$ for all boundary conditions~$\xi \in \{\rp\rp,\rp\rm,\rm\rp,\rm\rm,\bp\bp,\bp\bm,\bm\bp,\bm\bm\}$.
%
%
%
	\end{proof}

\section{A dichotomy theorem}\label{sec:dichotomy}

\newcommand{\Mid}{\mathsf{Mid}}
\newcommand{\Line}{\mathsf{L_h}}
\newcommand{\Linev}{\mathsf{L_v}}
\newcommand{\Ext}{\mathrm{Ext}}
\newcommand{\centre}{\mathsf{mid}}
\newcommand{\wig}{\mathsf{wig}}
\newcommand{\loc}{\mathsf{loc}}
\newcommand{\wb}{\mathsf{wb}}
\newcommand{\hor}{\mathsf{hor}}
\newcommand{\connect}{\mathsf{cnt}}
\renewcommand{\Int}{\mathsf{Int}}
\renewcommand{\Ext}{\mathsf{Ext}}

Below we state a dichotomy result similar to those of \cite{DumSidTas17} and \cite{DumRaoTas18}. 
The result states that the model is in one of two states: co-existence of phases (see case \textit{(i)} of Corollary~\ref{cor:dicho}) or
(stretched)-exponential decay of diameters for clusters of one phase inside the other (see case \textit{(ii)}). 
In Section~\ref{sec:macro}, we show that the latter case contradicts Theorem~\ref{thm:uniqueness_nu}. 

Compared to the setting of \cite{DumSidTas17} and \cite{DumRaoTas18},  
the present model exhibits considerable additional difficulties 
due to the lack of a general Spatial Markov property and to the absence of monotonicity in the boundary conditions~$\rp\rm$.
These difficulties appear in several places, most notably in proving a crossing estimate inside mixed boundary conditions (Corollary~\ref{cor:RSW_mixed_bc})
and in eliminating case \textit{(ii)}.

\begin{thm}\label{thm:dicho}
	There exist constants~$\rho >  2$ and~$C>1$ such that, if for~$n\geq 1$ we set 
	$\alpha_n = \mu_{\La_{\rho n}}^{\srm\srm}[\Circ_{\srp\srp}(n,2n)]$, we have
	\begin{align}\label{eq:recurrence}
		\alpha_{(\rho+2)n} \leq C \alpha_n^2 \qquad \qquad \text{for all~$n\geq 1$}.
	\end{align}
\end{thm}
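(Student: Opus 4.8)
The inequality \eqref{eq:recurrence} is of the self‑improving type pioneered in \cite{DumSidTas17,DumRaoTas18}; the plan is to run a renormalisation argument, the difficulty being to make it work without a general Spatial Markov property and without monotonicity in the $\rp\rm$ boundary conditions. Throughout, write $m=(\rho+2)n$, so that $\alpha_m=\mu^{\srm\srm}_{\La_{\rho m}}[\Circ_{\srp\srp}(m,2m)]$. The tools are: positive association of the red marginal under its various boundary conditions (Theorem~\ref{thm:FKG}, Corollary~\ref{cor:fkg}); comparison between boundary conditions (Corollary~\ref{cor:monotonicity_bc}, Lemma~\ref{lem:4-arcs}); the Spatial Markov property along double‑monochromatic circuits (Theorem~\ref{thm:DMP}, Corollary~\ref{cor:DMP}); the RSW/box‑crossing machinery (Proposition~\ref{prop:RSW_Vincent}, strengthened in Proposition~\ref{prop:RSW_Hugo}) together with the colour‑duality Lemma~\ref{lem:monochrom}; and, above all, a crossing estimate inside mixed $a\srp b\srm c\srp d\srm a$ boundary conditions (Corollary~\ref{cor:RSW_mixed_bc}). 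A preliminary RSW reduction — gluing $O(1)$ crossings into a circuit by FKG, and conversely cutting a circuit into a crossing — shows that $\alpha_k$ is, up to fixed constants and powers, the probability under $\mu^{\srm\srm}_{\La_{\rho k}}$ of a double‑$\rp$ crossing of a fixed bounded‑aspect‑ratio parallelogram of size $\asymp k$ placed inside $\La_{\rho k}$ at distance $\asymp k$ both from $0$ and from $\partial_E\La_{\rho k}$. It therefore suffices to prove the recurrence for these crossing probabilities, and this is where the constant $\rho$ being \emph{large} will be used, both to absorb a polynomial-in-$\rho$ loss and to guarantee existence of auxiliary circuits below.

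The core is to split the scale‑$m$ event into two well‑separated scale‑$n$ events. A double‑$\rp$ circuit around $\La_m$ inside $\La_{2m}$ separates $\La_m$ from the $\rm$‑coloured boundary $\partial_E\La_{\rho m}$, hence forces the existence of an $\omega(\sigma_r)$‑loop $L$ surrounding $\La_m$, with the $\rp$ phase on its inner side and the $\rm$ phase on its outer side, whose bounding $\rp$‑component carries a double‑$\rp$ circuit in $\La_{2m}\setminus\La_m$. Fix two antipodal radial directions; the circuit must cross each of the two corresponding radial corridors of $\La_{2m}\setminus\La_m$, which are at mutual distance $\asymp m\gg n$, and a union bound over the $O(\rho)$ positions along each corridor reduces matters to bounding $\mu^{\srm\srm}_{\La_{\rho m}}[\mathcal X_1\cap\mathcal X_2]$, where $\mathcal X_i$ is a crossing event (of double‑$\rp$ type, straddling $L$) in a fixed parallelogram $Q_i$ of size $\asymp n$, with $Q_1,Q_2$ disjoint and $\asymp m$‑separated. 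The point is that $\mathcal X_i$ is \emph{not} free: first reveal the outermost double‑$\rm$ circuit surrounding $\La_{2m}$ inside $\La_{\rho m}$ — which, by Proposition~\ref{prop:RSW_Vincent} (or Proposition~\ref{prop:RSW_Hugo}), FKG and the $\rp\!\leftrightarrow\!\rm$ symmetry, exists with probability bounded away from $0$ once $\rho$ is large — so that by the Spatial Markov property for double‑$\rm$ circuits its interior carries the law $\mu^{\srm\srm}_{\Int}$ of a domain of size $\asymp m=(\rho+2)n$ around $0$; in that domain $Q_i$ sits within $O(\rho)$ boxes of the $\rm$ boundary, and the event that it is crossed by double‑$\rp$ against that $\rm$‑flanked environment is, through the mixed‑boundary estimate of Corollary~\ref{cor:RSW_mixed_bc} and the comparison Lemma~\ref{lem:4-arcs}, controlled by $C'\alpha_n$ (up to a fixed power and a $\rho$‑dependent constant).

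The third step is the decoupling. Because $Q_1$ and $Q_2$ are $\asymp m$‑separated, one reveals $\sigma_r$ (and the relevant blue clusters) in a slightly fattened neighbourhood of $Q_1$ by an exploration whose outer interface is a double‑$\rp$ or a double‑$\rm$ path; conditionally on that interface, \eqref{eq:domain-markov} (or its $\rp\rm$ variant, Theorem~\ref{thm:DMP}\,\textit{(ii)}) makes the law in the complementary region containing $Q_2$ again of the kind estimated in the second step, so that $\mu^{\srm\srm}_{\La_{\rho m}}[\mathcal X_1\cap\mathcal X_2]\le C''\alpha_n^2$; combining with the $O(\rho)^2$ union bound and the preliminary RSW reduction yields \eqref{eq:recurrence} with $C=C(\rho)$. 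The two genuinely delicate points — already flagged in the introduction to this section — are, first, the mixed‑boundary crossing estimate used in the second step, precisely because the $\rp\rm$ boundary is neither monotone nor Markovian, so that Lemma~\ref{lem:4-arcs} and the colour‑duality Lemma~\ref{lem:monochrom} must be combined with care; and, second, carrying out the decoupling without a general Spatial Markov property, which forces every exploration above to terminate on a double‑monochromatic interface so that Theorem~\ref{thm:DMP} applies. Only the first of these requires substantial new work; the rest is bookkeeping once the value of $\rho$ is fixed large enough.
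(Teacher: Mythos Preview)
Your plan runs the renormalisation in the wrong direction, and this is fatal. You propose to deduce from the scale-$m$ circuit two scale-$n$ crossing events $\mathcal X_1,\mathcal X_2$ and then bound $\mu^{\srm\srm}_{\La_{\rho m}}[\mathcal X_i]$ by $C'\alpha_n$. But monotonicity of boundary conditions (Corollary~\ref{cor:monotonicity_bc}) gives the \emph{opposite} inequality: since $\rm\rm$ is the minimal boundary condition, enlarging the domain pushes the $\rm$ boundary further away and makes increasing events \emph{more} likely, so $\mu^{\srm\srm}_{\La_{\rho m}}[\mathcal X_i]\ge\mu^{\srm\srm}_{\La_{\rho n}(x_i)}[\mathcal X_i]$ whenever the small box sits inside the large one. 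You invoke Corollary~\ref{cor:RSW_mixed_bc} to ``control'' $\mathcal X_i$, but that corollary produces \emph{lower} bounds on crossing probabilities under mixed boundary conditions and cannot be turned into an upper bound. Likewise, your decoupling exploration terminates on ``a double-$\rp$ or a double-$\rm$ path''; if the interface found is double-$\rp$, the complementary region carries $\rp\rp$ boundary and $\mathcal X_2$ becomes more likely, not less. (The geometric reduction is also unclear: a circuit in $\La_{2m}\setminus\La_m$ crosses a radial corridor of width $n$ only transversally, which does not by itself produce a hard-direction crossing of a size-$n$ box.)

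The paper's argument inverts the scheme. It first shows that $\mu^{\srm\srm}_{\La_{\rho(\rho+2)n}}[\Circ_{\srp\srp}(x_L)\cap\Circ_{\srp\srp}(x_R)]\ge c\,\alpha_{(\rho+2)n}$ for two well-separated centres $x_L,x_R$: spend $\alpha_{(\rho+2)n}$ on a single large double-$\rp$ circuit around $\La_{(\rho+2)n}$, then inside it (boundary now $\rp\rp$, favourable) plant the two small circuits at constant cost. Corollary~\ref{cor:RSW_mixed_bc} is then used in its natural direction --- as a lower bound --- to create, still at constant cost, double-$\rm$ horizontal crossings of strips above and below the small circuits (these strips have $\rp$ on one side from the innermost small circuits and $\rm$ on the other from the outer boundary). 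This reduces to a thin strip $\sfD$ with $\rm\rm$ boundary; further vertical double-$\rm$ crossings (Lemma~\ref{lem:double_cross}) on either side of $x_L$ and $x_R$ confine each small circuit in its own box of size $\rho n$ with $\rm\rm$ boundary, and now monotonicity, going the right way, yields the upper bound $\alpha_n^2$. The idea you are missing is that the pushing lemma is not a tool to bound double-$\rp$ events from above, but to manufacture the double-$\rm$ barriers that drag the unfavourable boundary down to scale $\rho n$ around each small box.
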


\begin{cor}\label{cor:dicho}
	For~$\rho > 2$ given by the above, one of the two following statements holds
	\begin{itemize}
	\item[(i)]~$\inf_n \alpha_n >0$ or
	\item[(ii)] there exist constants~$c,C>0$ and~$n_0 \geq 1$ such that~$\alpha_n \leq C e^{-n^c}$ 
	for all~$n = (\rho+2)^k n_0$ with~$k\in \bbN$. 
	\end{itemize} 
\end{cor}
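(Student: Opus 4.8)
The plan is to iterate the recurrence \eqref{eq:recurrence} along the geometric sequence $n_k := (\rho+2)^k n_0$, for a suitably chosen base point $n_0$. The case distinction itself is free: if $\inf_n \alpha_n > 0$ we are in case \textit{(i)} and there is nothing to do, so I would assume $\inf_n \alpha_n = 0$. Since each $\alpha_n$ is a probability it lies in $[0,1]$, and by the standing assumption there exists $n_0 \ge 1$ with $C\,\alpha_{n_0} \le \tfrac12$, where $C>1$ is the constant of Theorem~\ref{thm:dicho}.

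Next I would set $\beta_k := C\,\alpha_{n_k}$. Applying \eqref{eq:recurrence} at $n = n_k$ gives $\alpha_{n_{k+1}} = \alpha_{(\rho+2)n_k} \le C\,\alpha_{n_k}^2$, that is $\beta_{k+1} \le \beta_k^2$. Since $\beta_0 \le \tfrac12$, a straightforward induction yields $\beta_k \le 2^{-2^k}$, hence
\[
	\alpha_{n_k} \le \tfrac1C\, 2^{-2^k}\qquad\text{for all } k \in \bbN .
\]
To turn this into the form stated in \textit{(ii)}, write $k = \log(n_k/n_0)/\log(\rho+2)$, so that $2^k = (n_k/n_0)^{c_0}$ with $c_0 := \log 2/\log(\rho+2) \in (0,\tfrac12)$ (using $\rho+2>4$); consequently $2^{-2^k} = \exp\!\big(-c_1 n_k^{c_0}\big)$ with $c_1 := (\log 2)\, n_0^{-c_0}>0$. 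After replacing $n_0$ by a larger term of the same sequence $(\rho+2)^k n_0$ we may assume $c_1 n^{c_0} \ge n^{c_0/2}$ for all $n \ge n_0$; then $\alpha_n \le C' e^{-n^{c}}$ with $c := c_0/2$ and $C' := 1/C$ holds for every $n = (\rho+2)^k n_0$, which is case \textit{(ii)}.

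There is no serious obstacle; the two points needing a little care are the extraction of a single scale $n_0$ at which the recurrence becomes genuinely contracting — which is exactly the negation of case \textit{(i)} — and the cosmetic cleanup of the constant $c_1$ in the exponent, which is harmless because one is free to shift the base point along the geometric sequence.
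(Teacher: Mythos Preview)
Your argument is correct and is essentially identical to the paper's own proof: both assume $\inf_n\alpha_n=0$, pick $n_0$ with $\alpha_{n_0}\le \tfrac{1}{2C}$, iterate \eqref{eq:recurrence} along $n_k=(\rho+2)^k n_0$ to obtain $\alpha_{n_k}\le \tfrac1C\,2^{-2^k}$, and then rewrite this as a stretched exponential with exponent $c<\log 2/\log(\rho+2)$. The only difference is that you spell out the cosmetic conversion in step~4 more explicitly than the paper, which simply says the bound follows ``for $c<\log 2/\log(\rho+2)$ and $C$ chosen accordingly''.
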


The constant~$\rho$ in Theorem~\ref{thm:dicho} will be chosen large enough to accommodate certain geometric constructions used in the proof. 
Its choice only affects scenario (ii) of Corollary~\ref{cor:dicho}, which we will see is contradictory. 
Thus, any value of~$\rho$ suffices for our purposes.

\subsection{Preparation: measure in cylinder}\label{subsec:cylinder}

Write~$[a,b] \times [c,d]$ for the set of faces of~$\bbH$ with centres inside~$[a,b] \times [c,d]$. 
Any such rectangle is a domain of~$\bbH$ and we will treat it as such. 
Its boundary may be split into four segments: bottom, top, left and right.
We do not give precise definitions, but mention that the left and right sections start and end with vertical edges 
(see Figure~\ref{fig:R_Cyl} for an illustration). 

For~$n,m \in \bbN$, let~$\Rect_{m,n}$ be the rectangle~$[-m,m - \frac12] \times [0,n]$. 
Write~$\rp\rp/\rm\rm$ for the boundary conditions on~$\Rect_{m,n}$ 
where all faces adjacent to the bottom of~$\partial \Rect_{m,n}$ have red spin~$\rm$ 
and all other faces adjacent to ~$\partial \Rect_{m,n}$ have red spin~$\rp$. 
As for other boundary conditions, these may be defined only on~$\Rect_{m,n}$, with no reference to the outside faces. 
One may however check that, since there are only two arcs of different sign on the boundary, these boundary conditions do satisfy the Spatial Markov property. 

We will also consider the cylinder~$\Cyl_{m,n}$ obtained by identifying the left and right boundaries of~$\Rect_{m,n}$. 
Write~$\rp\rp/\rm\rm$ for the boundary conditions on~$\Cyl_{m,n}$ which are double-$\rp$ on the bottom and double-$\rm$ on the top. 
That is~$\mu_{\Cyl_{m,n}}^{\srp\srp/\srm\srm}$ is the uniform measure 
on pairs of coherent spin configurations~$(\sigma_b,\sigma_r)$ on~$\Cyl_{m,n}$
with the property that all faces adjacent to the top boundary of~$\Cyl_{m,n}$ have~$\sigma_r = \rp$
and all those adjacent to the bottom have~$\sigma_r = \rm$. 
No restriction on the blue spins of the boundary faces is imposed. 

It is immediate that the Spatial Markov property applies to~$\mu_{\Cyl_{m,n}}^{\srp\srp/\srm\srm}$ in the same way as for planar domains. 
In particular, ~$\mu_{\Rect_{m,n}}^{\srp\srp/\srm\srm}$ is related to~$\mu_{\Cyl_{m,n}}^{\srp\srp/\srm\srm}$ by the following. 

\begin{lem}\label{lem:cyl_to_rect}
	Fix~$m,n \geq 1$ and let~$\Linev$ be the right boundary of~$\Rect_{m,n}$. Then~$\Linev$ is also an edge-path of~$\Cyl_{m,n}$, and
	\begin{align*}
		\mu_{\Rect_{m,n}}^{\srp\srp/\srm\srm} 
		= \mu_{\Cyl_{m,n}}^{\srp\srp/\srm\srm}(.\,|\, \sigma_r \equiv \rp \text{ on faces adjacent to~$\Linev$}).
	\end{align*}
\end{lem}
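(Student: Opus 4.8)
The plan is to observe that both measures in the statement are, by construction, uniform measures on sets of coherent spin pairs, and then to identify those two sets.

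First I would make precise the combinatorial relation between $\Cyl_{m,n}$ and $\Rect_{m,n}$. Cutting the cylinder along the edge-path $\Linev$ turns it back into $\Rect_{m,n}$: the edges and vertices of $\Linev$ get duplicated (one copy becomes the right boundary, the other the left boundary of the rectangle), but no face is split, so there is a canonical bijection $F(\Cyl_{m,n}) \cong F(\Rect_{m,n})$. Under this identification, two faces adjacent in $\Cyl_{m,n}$ are either adjacent already in $\Rect_{m,n}$, or form a \emph{seam pair} $\{u,v\}$ with $u$ flanking $\Linev$ on one side and $v$ flanking it on the other, made adjacent by the gluing. Hence a pair $(\sigma_r,\sigma_b)\in\{\rp,\rm\}^{F(\Rect_{m,n})}\times\{\bp,\bm\}^{F(\Rect_{m,n})}$ is coherent on $\Cyl_{m,n}$ if and only if it is coherent on $\Rect_{m,n}$ \emph{and} satisfies the extra requirement $\sigma_r(u)=\sigma_r(v)$ or $\sigma_b(u)=\sigma_b(v)$ at every seam pair $\{u,v\}$.

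Next I would note that, under the same identification, the faces of $\Cyl_{m,n}$ adjacent to $\Linev$ are exactly the faces of $\Rect_{m,n}$ adjacent to its left or right boundary. Therefore, on the event $\{\sigma_r\equiv\rp\text{ on faces adjacent to }\Linev\}$ every seam pair $\{u,v\}$ has $\sigma_r(u)=\sigma_r(v)=\rp$, so the extra seam coherence requirements are automatically met; thus, restricted to this event, coherence on $\Cyl_{m,n}$ is the same as coherence on $\Rect_{m,n}$. Combining this with the fact that the $\srp\srp/\srm\srm$ prescription on $\Cyl_{m,n}$ ($\sigma_r\equiv\rp$ on the faces adjacent to the top, $\sigma_r\equiv\rm$ on those adjacent to the bottom) together with the imposed condition $\sigma_r\equiv\rp$ on the faces adjacent to $\Linev$ is precisely the $\srp\srp/\srm\srm$ prescription on $\Rect_{m,n}$ (red spin $\rm$ on faces adjacent to the bottom, $\rp$ on all other boundary faces), I conclude that the two measures in the lemma are uniform measures on one and the same set of coherent pairs, hence are equal.

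I expect the only genuinely delicate point to be the bookkeeping at the two ends of $\Linev$, where a face might in principle be adjacent both to $\Linev$ and to the top or bottom boundary circle of the cylinder; here one must check that the values forced by the $\srp\srp/\srm\srm$ boundary condition and by the conditioning do not conflict. This is ensured by the geometry of $\Rect_{m,n}$: the half-integer offset in its definition and the requirement that the vertical sides start and end with vertical edges are exactly what guarantees that such ambiguous faces either do not occur or lie outside the relevant domain, so that both measures receive consistent prescriptions there. Everything else is a direct unwinding of the definitions of $\mu_{\Rect_{m,n}}^{\srp\srp/\srm\srm}$ and $\mu_{\Cyl_{m,n}}^{\srp\srp/\srm\srm}$.
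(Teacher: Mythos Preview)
Your approach is correct and is essentially what the paper intends, though the paper gives no explicit proof: it simply remarks that ``the Spatial Markov property applies to~$\mu_{\Cyl_{m,n}}^{\srp\srp/\srm\srm}$ in the same way as for planar domains'' and states the lemma as a consequence. Your argument---that both sides are uniform over the same set of coherent pairs once the seam coherence is absorbed by the conditioning---is exactly how one unpacks that remark, and mirrors the proof of Theorem~\ref{thm:DMP}. Your caution about the corners where $\Linev$ meets the bottom (where $\rm$ is forced) is well placed; the geometry described in the paper (half-integer horizontal offset, vertical sides starting and ending with vertical edges, cf.\ Figure~\ref{fig:R_Cyl}) is indeed arranged so that no face is simultaneously adjacent to $\Linev$ and to the bottom row, which is all that is needed.
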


\begin{figure}
    \begin{center}
    \includegraphics[width = 0.5\textwidth]{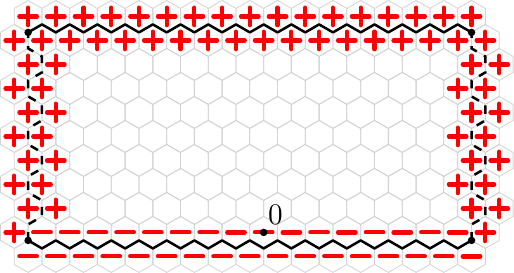}\qquad\qquad
    \includegraphics[width = 0.35\textwidth]{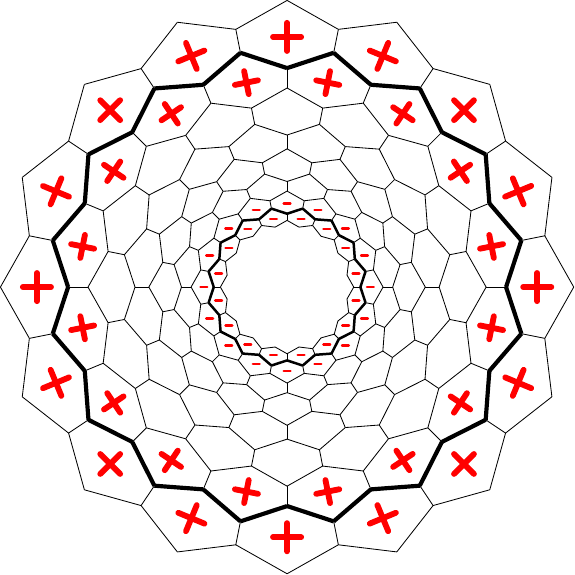}
    \caption{\textbf{Left:} The rectangle~$\Rect_{m,n}$ with boundary conditions~$\frp\frp/\frm\frm$.
    The dashed lines are the left and right boundaries; in~$\Cyl_{m,n}$ they are identified to each other.
    \textbf{Right:} The cylinder~$\Cyl_{m,n}$ may be drawn in the plane as depicted; its top and bottom are marked by bold lines. 
    The resulting graph is trivalent, all its faces except the interior one are hexagons, and their union is simply connected. 
    The FKG lattice condition may be proved for this graph in the same way as for domains of the hexagonal lattice. 
    The measure~$\mu_{\Cyl_{m,n}}^{\sfrp\sfrp/\sfrm\sfrm}$ 
    is obtained as a conditioning of the measure on the depicted graph:
    all faces adjacent to the bottom have red spin~$\frm$, 
    while all those adjacent to the top have red spin~$\frp$. }
    \label{fig:R_Cyl}
    \end{center}
\end{figure}

While~$\Cyl_{m,n}$ is not a planar domain, the FKG inequality applies to it. 

\begin{lem}
	For any~$m,n \geq 1$, the red-spin marginal of~$\mu_{\Cyl_{m,n}}^{\srp\srp/\srm\srm}$ 
	satisfies the FKG lattice condition and is positively associated. 
\end{lem}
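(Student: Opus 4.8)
The plan is to realise $\Cyl_{m,n}$ as a planar trivalent graph and then invoke the observation recorded in Remark~\ref{rem:FKG}~(ii): the FKG machinery of Sections~\ref{sec:spin_representation}--\ref{subsec:comparison-b-c} uses nothing about $\bbH$ beyond its being a planar trivalent graph whose set of faces is a simply connected domain. Concretely, draw $\Cyl_{m,n}$ in the plane as in Figure~\ref{fig:R_Cyl} (right): this yields a finite planar graph $G$, trivalent at every vertex, all of whose faces are hexagons except for a single interior face $f_0$, and such that the union of the faces of $G$ (the hexagons together with $f_0$) is a simply connected domain. Let $\mu_G$ be the uniform measure on pairs of coherent red/blue spin configurations on $F(G)$ and $\nu_G$ its red-spin marginal. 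Exactly as in Proposition~\ref{prop:marginal-ditribution}, $\nu_G(\sigma_r)$ is proportional to $2^{k(\theta(\sigma_r))}$, where $\theta(\sigma_r)$ is now a spanning subgraph of the planar dual of $G$; and the proof of Theorem~\ref{thm:FKG} --- in particular Claim~\ref{cl:number-of-clusters}, whose only topological input is that a simple cycle in the dual bounds a disk --- carries over word for word to $G$. Hence $\nu_G$ satisfies the FKG lattice condition and is positively associated, and, by the transcription of Corollary~\ref{cor:fkg}~(i) to $G$, so does $\nu_G$ conditioned on any fixed assignment of red spins on a subset of $F(G)$.

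Next I would identify $\mu_{\Cyl_{m,n}}^{\srp\srp/\srm\srm}$, under the natural bijection between the hexagons of $G$ and the faces of $\Cyl_{m,n}$, with a conditioning of $\mu_G$. Let $E$ be the event that $\sigma_r\equiv\srp$ on the hexagons adjacent to the top boundary of $G$, that $\sigma_r\equiv\srm$ on those adjacent to the bottom boundary, and that $\sigma_r(f_0)=\srm$. Conditionally on $E$, the face $f_0$ carries the same red spin $\srm$ as each of its neighbours, so every edge of $G$ bordering $f_0$ is red-monochromatic; consequently $f_0$ is an isolated vertex of $\theta(\sigma_r)$ and imposes no constraint on the other spins, its blue spin being a free choice in $\{\sbp,\sbm\}$. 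Summing over $\sigma_b(f_0)$ shows that the restriction of $\mu_G(\,\cdot\mid E)$ to the hexagons of $G$ is exactly the uniform measure on coherent pairs on $F(\Cyl_{m,n})$ with red spin $\srp$ on the top boundary faces and $\srm$ on the bottom boundary faces, that is, $\mu_{\Cyl_{m,n}}^{\srp\srp/\srm\srm}$. (At the level of red marginals this amounts to saying that $k(\theta(\sigma_r))$ computed in the dual of $G$ equals, up to the additive constant $1$ contributed by the isolated vertex $f_0$, the number of components of $\theta(\sigma_r)$ in the triangulated cylinder dual to $\Cyl_{m,n}$, so the two red marginals are proportional.)

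Finally I would assemble the pieces: since $E$ fixes the red spins on a subset of $F(G)$, the measure $\nu_G(\,\cdot\mid E)$ satisfies the FKG lattice condition and is positively associated by the first paragraph, and forgetting the coordinate $f_0$ --- deterministic under the conditioning --- preserves both properties. Hence the red-spin marginal of $\mu_{\Cyl_{m,n}}^{\srp\srp/\srm\srm}$ satisfies the FKG lattice condition, and positive association follows from \cite[Thm.~4.11]{Gri10} exactly as in the proof of Theorem~\ref{thm:FKG}. The one place that requires care --- and the main, if minor, obstacle --- is the bookkeeping around the interior face $f_0$: one must check that the conditioning recovering the cylinder measure is genuinely of the ``prescribed red spins'' type and that $f_0$ introduces no spurious reweighting of the red marginal. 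Everything else is a routine transcription of Theorem~\ref{thm:FKG} and Corollary~\ref{cor:fkg} to the graph $G$, which is legitimate by Remark~\ref{rem:FKG}~(ii).
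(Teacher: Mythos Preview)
Your proof is correct and follows essentially the same approach as the paper's: embed the cylinder in the plane as in Figure~\ref{fig:R_Cyl} (right), invoke Remark~\ref{rem:FKG}~(ii) to transfer Theorem~\ref{thm:FKG} to the resulting planar trivalent graph, and recover $\mu_{\Cyl_{m,n}}^{\srp\srp/\srm\srm}$ by conditioning on red spins as in Corollary~\ref{cor:fkg}~(i). The only cosmetic difference is that the paper works with the slightly larger graph $\Cyl_{m,n}\cup\partial_\out\Cyl_{m,n}$ plus an added face, then appeals to the Spatial Markov property for the identification, whereas you track the interior face $f_0$ explicitly and verify by hand that it contributes only a constant factor; both bookkeepings are fine.
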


\begin{proof}
	We will not give a full proof of this, only a sketch. 
	Notice that~$\Cyl_{m,n} \cup \partial_\out \Cyl_{m,n}$, may be embedded in the plane and rendered simply connected 
	by adding a face of degree~$4m$ below the bottom of~$\partial_\out \Cyl_{m,n}$, as drawn in Figure~\ref{fig:R_Cyl}, right diagram. 
	Write~$\calD$ for the planar graph obtained by this procedure. 
	Then a straightforward adaptation of Theorem~\ref{thm:FKG} (see also remark~\ref{rem:FKG} \textit{(ii)}) 
	shows that the FKG lattice condition also holds for~$\mu_\calD$. 

	As explained in Corollary~\ref{cor:fkg}, conditioning on the value of red spins on a given set 
	conserves the FKG lattice condition for the red spin marginal. 
	In particular, the red-spin marginal of~$\mu_\calD$ 
	conditioned on the event that all faces adjacent to the top of~$\Cyl_{m,n}$ have~$\sigma_r \equiv \rp$
	while all those adjacent to the bottom have ~$\sigma_r \equiv \rm$
	also satisfies the FKG lattice condition. 
	Finally, the Spatial Markov property states that the conditional measure above 
	is identical to~$\mu_{\Cyl_{m,n}}^{\srp\srp/\srm\srm}$ (when restricted to~$\Cyl_{m,n}$). 
\end{proof}

\subsection{Strong RSW theory}

As promised in Section~\ref{sec:RSW_Vincent}, we will now prove a stronger RSW result. 
Variations of it may be envisioned; we will state it in the form most useful to us. 
We start with a general lemma that allows to lengthen crossings of long rectangles. 
The main result of the section (Proposition~\ref{prop:RSW_Hugo}) is given afterwards. 
It will be stated and proved for the cylinder, but may also be deduced in other settings. 

\subsubsection{Lengthening crossings}

Recall the definition of~$\Par_{m,n}$ and its boundary segments~$\Top$ and~$\Bottom$ from Section~\ref{sec:RSW_weak}.

\begin{lem}\label{lem:double_cross}
	Let~$\calD$ be a domain and~$n$ be such that 
	$\Par_{3n,n} \cup \partial_\out\Par_{3n,n} \subset \calD$.
	Fix some red spin configuration~$\zeta$ on~$\calD$ with the property that 
	all faces of~$\Bottom(3n,n) \cup\Top(3n,n)$ are awarded spins~$\rp$. 
	Then 
	\begin{align}\label{eq:double_cross}
		\mu_{\calD}[ \calC^v_{\srp\srp}(3n,n) \, |\, \sigma_r = \zeta \text{ outside~$\Int(\Par_{3n,n})$}] \geq 1/36.
	\end{align}
\end{lem}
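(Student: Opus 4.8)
The plan is to reduce, by monotonicity, to a vertical crossing of a single square, and then to invoke the self-duality estimates together with the comparison of Lemma~\ref{lem:4-arcs}. Since $\calC^v_{\srp\srp}(3n,n)$ is increasing in $\sigma_r$, the comparison between boundary conditions (Corollary~\ref{cor:monotonicity_bc}~\textit{(i)}) lets me replace $\zeta$ by the worst configuration compatible with the hypothesis: the one equal to $\rp$ on $\Bottom(3n,n)\cup\Top(3n,n)$ and to $\rm$ at all other faces outside $\Int(\Par_{3n,n})$; this only lowers the left-hand side of~\eqref{eq:double_cross}. Let $\Par'$ be the translate of $\Par_{n,n}$ by $n$ in the horizontal direction, so that $\Par'\subset\Par_{3n,n}$ and the top and bottom sides of $\Par'$ are portions of $\Top(3n,n)$ and $\Bottom(3n,n)$, hence carry spin $\rp$. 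A double-$\rp$ path crossing $\Par'$ from top to bottom is also a vertical double-$\rp$ crossing of $\Par_{3n,n}$, so it suffices to bound from below the probability of such a crossing of $\Par'$. Conditioning further --- worst-casing again by Corollaries~\ref{cor:fkg} and~\ref{cor:monotonicity_bc}~\textit{(i)} --- on the red spins at \emph{all} faces outside $\Int(\Par')$, namely $\rp$ on $\Bottom(3n,n)\cup\Top(3n,n)$ and $\rm$ elsewhere, puts us exactly in the setting of Lemma~\ref{lem:4-arcs}~\textit{(i)}, with the four corners of $\Par'$ as $a,b,c,d$, the $\rp$-sides as $(ab),(cd)$ and the $\rm$-sides as $(bc),(da)$. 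That lemma gives that the resulting conditional law of $\sigma_r$ on $\Par'$ is within a multiplicative factor $2^3$ of the intrinsic four-arc measure $\nu_{\Par'}^{a\srp b\srm c\srp d\srm a}$, so it remains to show $\nu_{\Par'}^{a\srp b\srm c\srp d\srm a}\big[\,\Par'\text{ is crossed top-to-bottom by a double-}\rp\text{ path}\,\big]\geq c$ for a universal $c>0$; then $1/36$ follows from $c\geq\tfrac14$, the factor $2^3$, and a little slack.

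For this last estimate I would mimic the self-duality arguments of Lemma~\ref{lem:simple_self_duality} and Lemma~\ref{lem:square_crossed}. Because $\theta(\sigma_r)=\theta(-\sigma_r)$, the measure $\nu_{\Par'}^{a\srp b\srm c\srp d\srm a}$ is invariant under the composition of the global red-flip with the reflection across a diagonal of $\Par_{n,n}$; hence a top-to-bottom double-$\rp$ crossing has the same probability as a left-to-right double-$\rm$ crossing. Applying Lemma~\ref{lem:monochrom} to $\Par'$ in both orientations (legitimate since $\Par_{n,n}$ has a diagonal symmetry, cf.\ Remark~\ref{rem:monochrom}), using that a double-$\rp$ left-right crossing and a double-$\rm$ top-bottom crossing are both impossible here (the left/right faces are $\rm$, the top/bottom faces are $\rp$), and using the blue-flip symmetry of $\mu_{\Par'}^{a\srp b\srm c\srp d\srm a}$ together with the fact that two double-paths of opposite blue sign are vertex-disjoint (so one cannot be a left-right and the other a top-bottom crossing), reduces the claim to an upper bound, away from $1$, on the probability of a \emph{horizontal} double-$\bp$ crossing of $\Par'$. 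The genuine input that breaks the otherwise self-referential chain of equalities among these crossing probabilities is the quantitative square-crossing estimate of Lemma~\ref{lem:square_crossed} / Corollary~\ref{cor:square_crossed}, which --- via the red--blue symmetry of the model, applied to the blue marginal (if needed after an FKG conditioning bringing it under the scope of that lemma) --- forces the blue crossing probability below $\tfrac12$ and delivers $c\geq\tfrac14$. One could instead run the whole argument directly on $\Par_{3n,n}$, since its long aspect ratio makes the competing horizontal blue crossing even less likely, but the square reduction keeps the bookkeeping cleanest.

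The main obstacle is precisely the bookkeeping around the mixed $\rp\rm$ boundary conditions forced on $\Par'$: for these the Spatial Markov property fails and there is no monotonicity in the boundary condition (Remark~\ref{rem:4arcs}). This is why one cannot simply condition on the spins outside $\Par'$ and glue square crossings as in classical RSW, and why the detour through Lemma~\ref{lem:4-arcs} (at the cost of the factor $2^3$) and through the duality input of Lemma~\ref{lem:monochrom} is needed; it is also the reason the stated constant $1/36$ is far from optimal.
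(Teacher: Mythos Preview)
Your reduction --- worst-casing $\zeta$, passing to the middle square $\Par'$, and invoking Lemma~\ref{lem:4-arcs}\textit{(i)} to land in the four-arc measure --- is sound, but the final bound $\nu_{\Par'}^{a\srp b\srm c\srp d\srm a}[\calC^v_{\srp\srp}(\Par')]\geq 1/4$ does not follow from the tools you cite. Write $p$ for this probability and $q$ for $\mu_{\Par'}^{a\srp b\srm c\srp d\srm a}[\calC^v_{\sbp\sbp}]$. Your symmetries (diagonal reflection composed with red flip, and blue flip) together with Lemma~\ref{lem:monochrom} in both orientations and the vertex-disjointness of opposite-sign double paths give only $p+2q\geq 1$ and $2q\leq 1$, hence the trivial $p\geq 0$. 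The appeal to Lemma~\ref{lem:square_crossed} cannot close this: that lemma is about $\mu_\calD^{\srp\srm}$ with \emph{constant} red spin on $\partial_\int\calD$, and its proof rests on the identity between the blue marginal of $\mu_\calD^{\srp\srm}$ and $\tfrac12(\nu_\calD^{\sbp\sbm}+\nu_\calD^{\sbm\sbp})$, followed by the comparison $\nu^{\srp\srm}\geq_{\text{st}}\nu^{\srm\srp}$. Under the four-arc measure there is no such red--blue exchange (swapping colours yields the \emph{different} measure $\mu_{\Par'}^{a\sbp b\sbm c\sbp d\sbm a}$), so no relation between $p$ and $q$ is available, and no ``FKG conditioning'' brings the blue marginal here into the scope of that lemma.

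The paper avoids the four-arc measure altogether. It first produces, via Lemma~\ref{lem:simple_self_duality} and FKG, \emph{simple}-$\rp$ vertical crossings of the two flanking squares $\R_1,\R_3$ at cost $1/9$. Conditioning on the leftmost and rightmost such crossings $\Gamma_L,\Gamma_R$ yields a sub-domain $\sfD_0$ containing the middle square $\R_2$, with red spin $\rp$ on all of $\partial_\int\sfD_0$ and $\rm$ just outside along $\Gamma_L,\Gamma_R$: boundary conditions of the type in Corollary~\ref{cor:DMP}\textit{(ii)}, not four-arc. Symmetrising $\sfD_0$ across the diagonal of $\R_2$ produces a domain $\sfD$ to which Lemma~\ref{lem:square_crossed} applies verbatim, giving a double-$\rp$ crossing with probability $\geq 1/4$; the product $\tfrac19\cdot\tfrac14=\tfrac{1}{36}$. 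The role of the simple-$\rp$ flanking crossings is exactly to replace the four-arc situation around $\R_2$ by genuine $\rp\rm$ boundary conditions, where the red--blue symmetry behind Lemma~\ref{lem:square_crossed} is available.
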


The previous lemma may be used to glue crossings of long rectangles as described below.

\begin{cor}\label{cor:lengthen_crossings}
 	Let~$\calD$ be a domain such that~$\Par_{5n,n}\subset \calD$.
	Then 
	\begin{align}\label{eq:lengthen_crossings}
		\mu_{\calD}( \calC^h_{\srp\srp}(5n,n)) \geq
		\tfrac{1}{288} \mu_{\calD}( \calC^h_{\srp\srp}(4n,n))\,
		\mu_{\calD}\big(\calC^h_{\srp\srp}\big[(n,0) + \Par_{4n,n}\big]\big),
	\end{align}
	where~$(n,0) + \Par_{4n,n}$ is the translate of~$\Par_{4n,n}$ by~$n$ units to the right.
\end{cor}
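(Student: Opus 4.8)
The plan is to produce a horizontal double-$\rp$ crossing of $\Par_{5n,n}$ by gluing a horizontal crossing of $\Par_{4n,n}$ to one of $(n,0)+\Par_{4n,n}$ along a vertical crossing of their common part $\R:=(n,0)+\Par_{3n,n}$, and then to estimate the probability of such a picture by combining the FKG inequality (for the red marginal, Theorem~\ref{thm:FKG}) with Lemma~\ref{lem:double_cross}.

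\textbf{Combinatorial step.} Write $A=\calC^h_{\srp\srp}(4n,n)$, $B=\calC^h_{\srp\srp}[(n,0)+\Par_{4n,n}]$ and $V=\calC^v_{\srp\srp}[\R]$. Any horizontal double-$\rp$ crossing of $\Par_{4n,n}$ restricts to a horizontal double-$\rp$ crossing of $\R$ (it enters $\R$ through its left side and leaves it through the line which is the right side of $\R$), and similarly for $B$; moreover a vertical double-$\rp$ crossing of $\R$ meets every horizontal crossing of $\R$. Hence on $A\cap B\cap V$ the union of the three paths is a connected double-$\rp$ subgraph of $\Par_{5n,n}$ joining its left to its right side, so $A\cap B\cap V\subseteq\calC^h_{\srp\srp}(5n,n)$ and
\begin{align*}
\mu_\calD\big(\calC^h_{\srp\srp}(5n,n)\big)\ \geq\ \mu_\calD(A\cap B\cap V).
\end{align*}

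\textbf{The bridge.} The naive continuation — apply FKG to get $\mu_\calD(A\cap B\cap V)\geq\mu_\calD(A)\mu_\calD(B)\mu_\calD(V)$ and then bound $\mu_\calD(V)$ from below — does not work: $V$ is increasing and, under the free boundary condition of $\mu_\calD$, has no unconditional lower bound, while Lemma~\ref{lem:double_cross} controls the vertical crossing probability of an $\R$-shaped box only once its top and bottom faces are known to carry spin $\rp$. These $\rp$ ``caps'' must be supplied by the crossings $A,B$ themselves. So, on $A$ reveal the lowest horizontal double-$\rp$ crossing $\Gamma$ of $\Par_{4n,n}$, and on $B$ reveal the highest one $\Gamma'$ of $(n,0)+\Par_{4n,n}$. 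If $\Gamma$ and $\Gamma'$ meet inside $\R$ then $A\cup B$ is already connected and $\calC^h_{\srp\srp}(5n,n)$ holds outright. Otherwise $\Gamma$ lies strictly below $\Gamma'$ inside $\R$; let $\R^\circ$ be the subdomain of $\R$ strictly between $\Gamma$ and $\Gamma'$. Since $\Gamma,\Gamma'$ are double-$\rp$, all faces of $\R^\circ$ adjacent to its lower and upper boundary arcs have spin $\rp$; and, crucially, the events $\{\Gamma=\gamma\}$ and $\{\Gamma'=\gamma'\}$ (together with the relative position $\{\gamma\text{ below }\gamma'\}$) are measurable with respect to $\sigma_r$ restricted to $F(\calD)\setminus\Int(\R^\circ)$, the first being determined by the faces weakly below $\Gamma$ and the second by the faces weakly above $\Gamma'$, both families disjoint from $\Int(\R^\circ)$.

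\textbf{Collecting constants, and the main obstacle.} Conditionally on $\{\Gamma=\gamma,\Gamma'=\gamma'\}$ with $\gamma$ below $\gamma'$, the law of $\sigma_r$ inside $\R^\circ$ is $\mu_\calD$ conditioned on the red spins outside $\Int(\R^\circ)$, these being $\rp$ on the top and bottom faces of $\R^\circ$. An adaptation of Lemma~\ref{lem:double_cross} to $\R^\circ$ (whose horizontal extent is that of $\R$, hence of order $n$) then gives a vertical double-$\rp$ crossing of $\R^\circ$ with conditional probability at least $\tfrac1{36}$, up to a bounded multiplicative loss — which one may take to be $\tfrac1{2^3}$ — accounting for the mismatch between $\R^\circ$ and the parallelogram $\Par_{3n,n}$ for which Lemma~\ref{lem:double_cross} is stated, a Radon--Nikodym estimate in the spirit of Lemma~\ref{lem:4-arcs}\,(i). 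Such a crossing joins $\Gamma$ to $\Gamma'$, hence $A$ to $B$, hence yields $\calC^h_{\srp\srp}(5n,n)$. Summing over $\gamma,\gamma'$, treating the symmetric case $\Gamma'$ below $\Gamma$ identically, and using FKG ($A,B$ increasing) to replace $\mu_\calD(A\cap B)$ by $\mu_\calD(A)\mu_\calD(B)$, one arrives at
\begin{align*}
\mu_\calD\big(\calC^h_{\srp\srp}(5n,n)\big)\ \geq\ \tfrac1{36}\cdot\tfrac1{2^3}\,\mu_\calD(A)\,\mu_\calD(B)\ =\ \tfrac1{9\cdot2^5}\,\mu_\calD\big(\calC^h_{\srp\srp}(4n,n)\big)\,\mu_\calD\big(\calC^h_{\srp\srp}[(n,0)+\Par_{4n,n}]\big),
\end{align*}
which is \eqref{eq:lengthen_crossings}. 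The hard part is the boundary-condition bookkeeping forced on us by the absence of a general spatial Markov property and of monotonicity in $\rp\rm$-type conditions: Lemma~\ref{lem:double_cross} cannot be invoked verbatim, the $\rp$ caps must be manufactured from conditioned crossings, the intermediate domain $\R^\circ$ must be kept under geometric control, and the resulting discrepancies must be absorbed into a fixed finite factor — verifying that the conditional law in $\R^\circ$ really falls within the scope of (an extension of) Lemma~\ref{lem:double_cross}, and nailing down the constant $2^5$, is where the genuine work lies.
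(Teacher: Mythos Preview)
Your overall strategy --- reveal extremal horizontal double-$\rp$ crossings of the two overlapping parallelograms, then bridge them by a vertical double-$\rp$ crossing of the overlap $\R=(n,0)+\Par_{3n,n}$ using Lemma~\ref{lem:double_cross} --- is the right one, and is what the paper does. The gap is in the ordering of the two revealed paths.

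You take $\Gamma$ to be the \emph{lowest} crossing of $\Par_{4n,n}$ and $\Gamma'$ the \emph{highest} crossing of $(n,0)+\Par_{4n,n}$, and assert that if they do not meet in $\R$ then $\Gamma$ lies below $\Gamma'$. This is false: take a configuration whose only crossing of $\Par_{4n,n}$ is near the top and whose only crossing of $(n,0)+\Par_{4n,n}$ is near the bottom; then $\Gamma$ sits above $\Gamma'$ and they are disjoint. Your fallback, ``treating the symmetric case $\Gamma'$ below $\Gamma$ identically'', does not work either. The lowest path is measurable in terms of the spins weakly \emph{below} it, the highest in terms of the spins weakly \emph{above} it; when $\Gamma$ is above $\Gamma'$ these two explored regions together cover all of $\R$, so there is no unexplored strip $\R^\circ$ left in which to run the bridge argument. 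The two cases are therefore genuinely asymmetric, and no single choice of (lowest, highest) can be made to work.

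The paper's repair is to keep \emph{both} extremal crossings on each side: $\Gamma_L^{\pm}$ for $\Par_{4n,n}$ and $\Gamma_R^{\pm}$ for $(n,0)+\Par_{4n,n}$. On $A\cap B$ one has the deterministic dichotomy: either $\Gamma_L^+$ lies above $\Gamma_R^-$ in $\R$, or $\Gamma_L^-$ lies below $\Gamma_R^+$. Hence one of the two alternatives has probability at least $\tfrac12\,\mu_\calD(A)\,\mu_\calD(B)$ (this is where a factor $\tfrac12$ enters). In the first alternative, say, one explores $\Gamma_L^+$ from \emph{above} and $\Gamma_R^-$ from \emph{below}, which now leaves the region between them unexplored --- exactly the configuration your $\R^\circ$ was meant to be. The conditional measure in that region is compared to $\mu_{(n,0)+\Par_{3n,n}}^{a'\srp b'\srm c'\srp d'\srm}$ via Lemma~\ref{lem:4-arcs} (your factor $\tfrac18$), and Lemma~\ref{lem:double_cross} supplies the vertical bridge (your factor $\tfrac1{36}$). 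So your constants are right in spirit; what is missing is the two-pair dichotomy that makes the unexplored region exist in the first place.
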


Due to the Spatial Markov property, the statements of Lemma~\ref{lem:double_cross} and Corollary~\ref{cor:lengthen_crossings} 
also apply to measures with boundary conditions such as~$\mu_\calD^{\srp\srp}$. 

\begin{figure}
	\begin{center}\hspace{-0.1\textwidth}
	\includegraphics[width = 0.57\textwidth, page = 1]{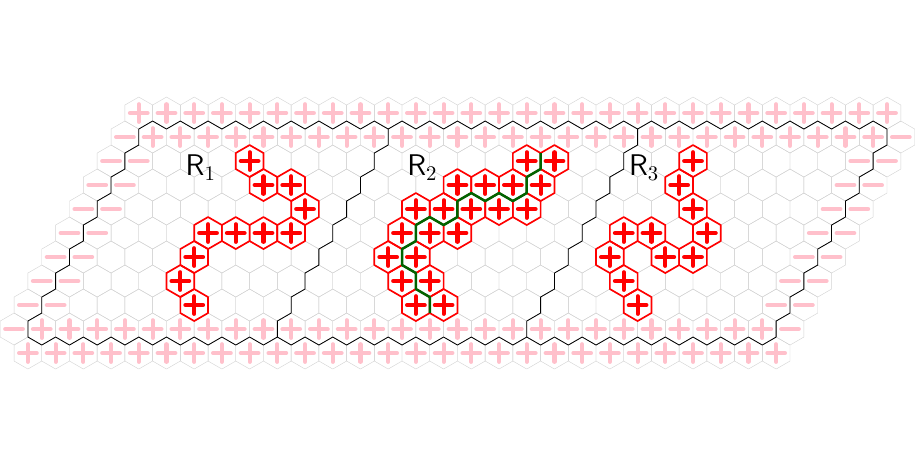}\hspace{-0.06\textwidth}
	\includegraphics[width = 0.57\textwidth, page = 2]{rhombi2.pdf}\hspace{-0.2\textwidth}
  	\caption{{\em Left:}
		The parallelogram ~$\Par_{3n,n}$ contains three disjoint translations~$\R_1$,~$\R_2$ and~$\R_3$ of ~$\Par_{3n,n}$. 
		Given the outside configuration~$\zeta$,~$\R_1$ and~$\R_3$ 
		contain simple-$\frp$ vertical crossings with probability at least~$1/2$. 
		Conditionally on the existence of such crossings,~$\R_2$ contains a vertical double-$\frp$
		crossing with probability at least~$1/4$. 
		{\em Right:} The latter statement is proved by working in the symmetric domain~$\sfD$, 
		which is crossed vertically by a double-$\frp$ path with probability~$1/4$ at least.}
		\label{fig:par}
	\end{center}
\end{figure}

\begin{proof}[Lemma~\ref{lem:double_cross}]
	By the FKG inequality, the LHS of \eqref{eq:double_cross} is minimal when~$\zeta \equiv \rm$ on 
	$\calD \setminus (\Bottom(3n,n) \cup\Top(3n,n))$. We may assume this below. 

	Observe that~$\Par_{3n,n}$ may be partitioned into three translations of~$\Par_{n,n}$; 
	call them~$\R_1$,~$\R_2$ and~$\R_3$ ordered from left to right (see Figure~\ref{fig:par}).
	Due to Lemma~\ref{lem:simple_self_duality} and the FKG inequality, 
	\begin{align}\label{eq:simple_paths}
		\mu_{\calD}[ \calC_{\srp}^v(\R_1) \cap \calC_{\srp}^v(\R_3)  \, |\, \sigma_r = \zeta \text{ outside~$\Int(\Par_{3n,n})$} ] 
		\geq \tfrac19.
	\end{align}
	
	If~$\calC_{\srp}^v(\R_1)$ occurs, let~$\Gamma_L$ be the left-most simple-$\rp$ vertical crossing of~$\R_1$. 
	Formally, let~$\Gamma_L$ be the edge-path running along the left side of the spin-crossing. 
	Also, let~$\Gamma_R$ be the right-most simple-$\rp$ vertical crossing of~$\R_3$ when~$\calC_{\srp}^v(\R_3)$ occurs.
	
	Fix two possible realisations~$\gamma_L,\gamma_R$ of~$\Gamma_L,\Gamma_R$.
	Let~$\sfD_0$ be the domain formed of the faces of~$\Rect_{3n,n}$ between~$\gamma_L$ and~$\gamma_R$. 
	The events~$ \Gamma_L = \gamma_L$ and~$\Gamma_R =\gamma_R$ are both measurable in terms of the configuration outside~$\sfD_0$. 

	Consider the line~$\ell$ running through the bottom left and upper right corner of~$\R_2$ and let~$\rho$ be the orthogonal symmetry with respect to~$\ell$; 
	note that~$\bbH$ is invariant under~$\rho$.
	Let~$\sfD$ be the domain formed of the faces of~$\sfD_0$ and those of~$\rho(\sfD_0)$. 
	
	Let~$a,b,c,d$ be the corners of~$\R_2$ ordered in counter-clockwise order, starting from the top-left corner. 
	Write~$(bc)$ and~$(da)$ for the arcs of~$\partial \sfD$ in counter-clockwise order.
	Then, 
	due to the monotonicity in boundary conditions and the FKG inequality,
	\begin{align*}
		&\mu_{\calD}[\calC^v_{\srp\srp}(\R_2)\, | \, \Gamma_L = \gamma_L\text{ and } \Gamma_R =\gamma_R 
		\text{ and } \sigma_r = \zeta \text{ outside~$\Int(\Par_{3n,n})$}] 
		\\
		&\qquad = \mu_{\sfD}^{\srp\srm}[\calC^v_{\srp\srp}(\R_2)\,|\, \sigma_r \equiv \rp \text{ on } \Bottom(3n,n) \cup\Top(3n,n)]
		\geq \mu_{\sfD}^{\srp\srm}[(bc)\xlra{\srp\srp}(ad)] 
		\geq 1/4.
	\end{align*}
	The equality is due to the specific to the boundary conditions induced by~$\Gamma_L$,~$\Gamma_R$ and~$\zeta$ on~$\sfD_0$
	(a brief analysis is needed to ensure that the is no multiplicative constant appearing between the two sides). 
	The first inequality is due to the FKG inequality and the inclusion of events; the last one is due to Lemma~\ref{lem:square_crossed}. 
	Averaging the above over all possible values of~$ \Gamma_L~$ and~$\Gamma_R$ and using~\eqref{eq:simple_paths}, we obtain the desired bound. 
\end{proof}

\begin{proof}[Corollary~\ref{cor:lengthen_crossings}]
	Let~$\Gamma_L^+$ and~$\Gamma_L^-$ be the top and bottom most, respectively, double-$\rp$ horizontal crossings of~$\Par_{4n,n}$. 
	Define~$\Gamma_R^+$ and~$\Gamma_R^-$ similarly for the rectangle~$(n,0) + \Par_{4n,n}$. 
	When both~$\Par_{4n,n}$ and~$(n,0) + \Par_{4n,n}$ 
	are crossed horizontally by double-$\rp$ paths,
	then either~$\Gamma_L^+$ intersects or is higher than~$\Gamma_R^-$ inside the middle parallelogram~$(0,n)+\Par_{3n,n}$,
	or~$\Gamma_L^-$ intersects or is lower than~$\Gamma_R^+$.
	As a consequence, 
	\begin{align}\label{eq:aaa}
		&\mu_{\calD}(\text{$\Gamma_L^+$ intersects or higher than~$\Gamma_R^-$}) 
		+\mu_{\calD}(\text{$\Gamma_L^-$ intersects or lower than~$\Gamma_R^+$}) \\
		&\quad\geq
		\mu_{\calD}\big( \calC^h_{\srp\srp}(4n,n)\cap \calC^h_{\srp\srp}\big[(n,0) + \Par_{4n,n}\big]\big) 
		\geq 
		\mu_{\calD}\big( \calC^h_{\srp\srp}(4n,n)\big)\,\mu_{\calD}\big(\calC^h_{\srp\srp}\big[(n,0) + \Par_{4n,n}\big]\big).\nonumber
	\end{align}
	In the last inequality we used the FKG property. 
	We focus next on the first term in the LHS above.
	
	For any realisation of~$\Gamma_L^+$ and~$\Gamma_R^-$ with the former intersecting or higher than the latter, 
	if~$\Gamma_L^+$ and~$\Gamma_R^-$ intersect or if they are connected to each other by a double-$\rp$ path, 
	then~$\mu_{\calD}(\calC^h_{\srp\srp}(5n,n))$ occurs. 
	Below we will show that, conditionally on~$\Gamma_L^+$ and~$\Gamma_R^-$, 
	the two paths intersect or are connected by a double-$\rp$ path with positive probability. 
	The case where the paths intersect is trivial; we assume henceforth that~$\Gamma_L^+$ and~$\Gamma_R^-$ are disjoint. 
	Notice that~$\Gamma_L^+$ is measurable in terms of the spins of the faces of~$\Par_{4n,n}$ above it,  
	and~$\Gamma_R^-$ is measurable in terms of the spins of the faces of~$(n,0) + \Par_{4n,n}$ below it.
	Let~$\sfU$ be the set of all faces of~$\Par_{5n,n}$ which are in neither of the two categories above. 
	
	Let~$a$ be the right endpoint of~$\Gamma_L^+$ and~$c$ be the left endpoint of ~$\Gamma_R^-$.
	Orient~$\Gamma_L^+$ from left to right and~$\Gamma_R^-$ from right to left.
	Let~$b$ be the last point of intersection of~$\Gamma_L^+$ with the left side of ~$(n,0) + \Par_{3n,n}$,
	and~$d$ be the last point of intersection of~$\Gamma_R^-$ with the right side of ~$(n,0) + \Par_{3n,n}$.
	Write~$\sfD$ for the domain contained in~$(n,0) + \Par_{3n,n}$, delimited at the top by the section of~$\Gamma_L^+$ between~$b$ and~$a$, 
	and at the bottom by the section of~$\Gamma_R^-$ between~$d$ and~$c$. 
	
	Let~$H$ be the event that there exists an edge-path~$\gamma$ contained in~$\sfD$, 
	connecting the arcs~$(ab)$ and~$(cd)$ of~$\partial_E\sfD$, such that all faces of~$\sfU \cap \sfD$ adjacent to it are of red spin~$\rp$. 
	Then, by the FKG inequality and the properties \textit{(i)} and \textit{(ii)} of Lemma~\ref{lem:4-arcs}, 
	\begin{align*}
		\mu_{\calD}(H \,|\, \Gamma_L^+,\Gamma_R^- )
		\geq \tfrac18\mu_{\sfD}^{a\srp b\srm c\srp d\srm}(H)	
		\geq \tfrac18\mu_{\sfD}^{a\srp b\srm c\srp d\srm}((ab)\xlra{\srp\srp \text{ in } \sfD}(cd)).
	\end{align*}
	The second inequality is due to the inclusion between the two events. 
	Notice also that when~$H$ occurs,~$\Gamma_L^+,\Gamma_R^-~$ are necessarily connected by a double-$\rp$ path.
	
	Now, applying again Lemma~\ref{lem:4-arcs}, we conclude that 
	\begin{align*}
		\mu_{\sfD}^{a\srp b\srm c\srp d\srm}((ab)\xlra{\srp\srp \text{ in } \sfD}(cd))
		\geq \mu_{\Par_{3n,n}}^{a'\srp b'\srm c'\srp d'\srm}[ \calC^v_{\srp\srp}(3n,n)],
	\end{align*}
	where~$a',b',c'$ and~$d'$ are the corners of the parallelogram, ordered in counter-clockwise order, starting from the top left. 
	The RHS of the above is bounded below by~$1/36$, as proved in Lemma~\ref{lem:double_cross}.
	Combining the last two displayed equations, we find 
	\begin{align*}
		\mu_{\calD}(\Gamma_L^+\xlra{\srp\srp \text{ in } \sfU}\Gamma_R^- \,|\, \Gamma_L^+,\Gamma_R^- )
		\geq \tfrac1{288}.
	\end{align*}
	Averaging over all values of~$\Gamma_L^+$ and~$\Gamma_R^-$ as above, we find 
	\begin{align*}
		\mu_{\calD}( \calC^h_{\srp\srp}(5n,n)) \geq	 \tfrac1{288} \mu_{\calD}(\text{$\Gamma_L^+$ intersects or higher than~$\Gamma_R^-$}).
	\end{align*}
	The same bound holds for the second term in \eqref{eq:aaa}, and~\eqref{eq:lengthen_crossings} follows. 
	
	\begin{figure}
	\begin{center}
	\includegraphics[width = 0.48\textwidth, page = 1]{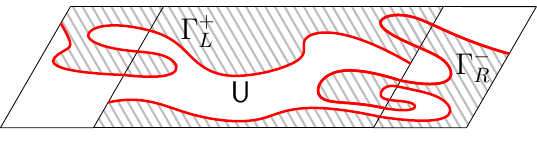}\quad
	\includegraphics[width = 0.48\textwidth, page = 2]{twoGammas2.pdf}\vspace{15pt}\\
	\end{center}
	\caption{{\em Left:} the paths~$\Gamma_L^+$ and~$\Gamma_R^-$ are measurable in terms of the hashed regions; 
	its complement is ~$\sfU$.
	{\em Right:} The domain~$\sfD$ is delimited by parts of~$\Gamma_L^+$ and~$\Gamma_R^-$. 
	Any crossing between the arcs~$(ab)$ and~$(cd)$ in~$\sfD$ induces a connection between~$\Gamma_L^+$ and~$\Gamma_R^-$.}
	\label{fig:twoGammas}
	\end{figure}
\end{proof}

\subsubsection{Statement of the strong RSW}

The notions of simple and double horizontal and vertical crossings adapt readily to rectangular domains~$[a,b]\times[c,d]$. 
Use the notations~$\calC^h_{.}([a,b]\times[c,d])$ and~$\calC^v_{.}([a,b]\times[c,d])$ for the existence of such crossings. 

\begin{prop}[strong RSW]\label{prop:RSW_Hugo}
	There exists a function~$\psi : (0,1] \to (0,1]$ such that, 
	for all~$N,M,n,k \geq 1$ with~$n+k \leq N$ and~$2n < M$,
	\begin{align}\label{eq:RSW_Hugo}
		\mu_{\Cyl_{M,N}}^{\srm\srm/\srp\srp}  \big[ \calC^h_{\srp\srp}([-2n,2n] \times [k,k+n])\big] 
		\geq \psi\Big(\mu_{\Cyl_{M,N}}^{\srm\srm/\srp\srp}  \big[ \calC_{\srp\srp}^v([-3n,3n] \times [k,k+n])\big] \Big).
	\end{align} 
\end{prop}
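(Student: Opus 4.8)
The statement is the classical Russo--Seymour--Welsh lemma, converting an easy‑direction crossing of a wide, short box into a hard‑direction crossing of a box of comparable width. The plan is to run the reflection argument of Russo and of Seymour--Welsh in the robust form developed in \cite{Tas16,DumSidTas17}, using only: (a) the FKG inequality for the red‑spin marginal (Theorem~\ref{thm:FKG}, in the cylinder version proved in Section~\ref{subsec:cylinder}); (b) the invariance of $\mu_{\Cyl_{M,N}}^{\srm\srm/\srp\srp}$ under horizontal rotations of the cylinder and under reflections in vertical lines; (c) the spatial Markov property, which \emph{does} hold for $\mu_{\Cyl_{M,N}}^{\srm\srm/\srp\srp}$ since only two boundary arcs are present; and (d) the building blocks already available: the square‑crossing estimate (Corollary~\ref{cor:square_crossed}, transferred to the cylinder as in Section~\ref{subsec:cylinder}), the short‑way crossing estimate for the $3\times 1$ parallelogram with favourable boundary (Lemma~\ref{lem:double_cross}), and the lengthening/gluing estimate (Corollary~\ref{cor:lengthen_crossings}). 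Since the conclusion only requires \emph{some} $\psi$ with $\psi(t)>0$ for $t>0$, all constants below may be tracked crudely.

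First I would reduce the hypothesis: writing $t=\mu_{\Cyl_{M,N}}^{\srm\srm/\srp\srp}\big[\calC^v_{\srp\srp}([-3n,3n]\times[k,k+n])\big]$, the square‑root trick (a consequence of FKG) applied to the finitely many increasing events $\{\calC^v_{\srp\srp}(B)\}$ over $2n\times n$ sub‑boxes $B$ of $[-3n,3n]\times[k,k+n]$ gives one such $B$ with $\mu\big[\calC^v_{\srp\srp}(B)\big]\geq 1-(1-t)^{1/c_0}$; rotating the cylinder, I place $B$ where convenient. The core step is then the reflection: let $\ell$ be the vertical line through the centre of $B$, under whose reflection the measure is invariant; condition on the left‑most double‑$\rp$ vertical crossing $\Gamma$ of $B$, reflect it to $\Gamma'=r_\ell(\Gamma)$, and note that $\Gamma$, $\Gamma'$, and a double‑$\rp$ square crossing (Corollary~\ref{cor:square_crossed}) of the half‑box adjacent to $\ell$ chain into a path spanning a horizontal extent strictly larger than that of $B$ inside the strip $[k,k+n]$. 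The conditioning on $\Gamma$ is handled by the spatial Markov property combined with FKG; wherever $\Gamma$ together with the outer configuration imposes boundary conditions of the four‑arc type $a\srp b\srm c\srp d\srm a$, I invoke Lemma~\ref{lem:4-arcs} to pass, at the cost of an $n$‑independent multiplicative factor, to a measure governed by FKG and the comparisons of Corollary~\ref{cor:monotonicity_bc}. Finally I would extend the crossing so that it spans exactly $[-2n,2n]\times[k,k+n]$ by gluing on further double‑$\rp$ crossings via Corollary~\ref{cor:lengthen_crossings} and Lemma~\ref{lem:double_cross}, each contributing another bounded factor. Multiplying through yields $\psi(t)=c\,\big(1-(1-t)^{1/c}\big)^{a}$ for suitable constants $c,a$, which is the desired function.

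The main obstacle, and the reason the argument is lengthy, is that conditioning on the exploring curve $\Gamma$ does not leave a clean domain‑Markov situation: the measure $\nu$ has only the restricted spatial Markov property of Corollary~\ref{cor:DMP}, valid for $\srp\srp$‑ and $\srp\srm$‑type data, whereas the configuration built during an exploration typically produces boundary conditions of the four‑arc type, for which only the Radon--Nikodym comparison of Lemma~\ref{lem:4-arcs} and the stochastic comparisons of Corollary~\ref{cor:monotonicity_bc} are at hand. One must therefore arrange the reflections and gluings so that at every stage the relevant conditional measure is sandwiched, up to a constant independent of $n$, between measures enjoying FKG and a usable spatial Markov property, and so that these constants are incurred only a bounded number of times rather than a number growing with $n$. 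Bookkeeping exactly which arcs along $\Gamma$ carry $\srp$ and which carry $\srm$, and verifying that the reflected configuration genuinely closes up into a longer crossing without meeting the top or bottom arcs of the cylinder — which is where the hypothesis $n+k\leq N$ is used — is the delicate part of the proof.
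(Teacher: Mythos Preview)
Your plan is the classical ``reflect the leftmost crossing and glue'' RSW argument in the style of \cite{DumSidTas17}, but that argument relies on an ingredient you do not have here, and the paper's proof takes a substantially different route precisely to avoid it.

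The concrete gap is the step ``$\Gamma$, $\Gamma'$, and a double-$\rp$ square crossing (Corollary~\ref{cor:square_crossed}) of the half-box adjacent to~$\ell$ chain into a longer path''. Corollary~\ref{cor:square_crossed} is proved for $\nu_\bbH^{\srp\srp}$, and its finite-volume progenitor Lemma~\ref{lem:square_crossed} requires a \emph{symmetric} domain with the specific $\srp\srm$ boundary conditions; neither applies to the measure you obtain after conditioning on the leftmost double-$\rp$ crossing~$\Gamma$. Conditioning on~$\Gamma$ produces $\srp\srp$ data along~$\Gamma$ and leaves the cylinder's $\srm\srm/\srp\srp$ data on the remaining arcs: this is not a four-arc situation to which Lemma~\ref{lem:4-arcs} applies, the resulting domain is not symmetric, and there is no available duality input for double-$\rp$ inside it. (Section~\ref{subsec:cylinder} does not ``transfer'' any square-crossing estimate to the cylinder; the cylinder crossing estimate is Proposition~\ref{prop:cross_cyl}, proved \emph{after} and \emph{using} Proposition~\ref{prop:RSW_Hugo}.) The classical reflection argument needs the full spatial Markov property so that the conditioned measure in the half-domain is again a model to which the duality input applies; here only the restricted Markov property of Corollary~\ref{cor:DMP} holds, and Lemma~\ref{lem:4-arcs} cannot substitute because it controls only four-arc domains, not the ragged boundary produced by an exploration.

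The paper's proof circumvents this by a genuinely different mechanism. It argues by contraposition: if the horizontal double-$\rp$ crossing probability is small, a sequence of lemmas (\ref{lem:centre}, \ref{lem:wig}, \ref{lem:loc}, \ref{lem:wb}) forces every vertical double-$\rp$ crossing with endpoints in a short central segment to be geometrically ``well-behaved'' at two scales $m\ll n$. Then, in Lemma~\ref{lem:connect}, the leftmost and rightmost such crossings $\Gamma_L,\Gamma_R$ are connected in two steps: first by \emph{simple}-$\rp$ paths near top and bottom via the reflection argument (this works because Lemma~\ref{lem:simple_self_duality} holds under \emph{arbitrary} red conditioning outside, unlike the double-$\rp$ analogue), and second by a double-$\rp$ path obtained from a purpose-built symmetric domain~$\calD$, assembled by gluing two copies of pieces of $\Int(\Gamma_L,\Gamma_R)$ along a vertical segment so that Lemma~\ref{lem:square_crossed} applies to~$\calD$ with its natural $\srp\srm$ boundary. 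The ``well-behaved'' hypotheses are exactly what guarantee that $\chi^1,\chi^2$ lie entirely on $\partial\calD$, so that the comparison between the conditioned measure on $\Int$ and $\mu_\calD^{\srp\srm}$ goes through via monotonicity alone. Your plan is missing both the two-step simple/double connection and this glued-domain construction, which are the heart of the argument.
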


In other words, the above tells us that if wide rectangles are crossed vertically with positive probability (that is in the easy direction), 
then they are also crossed horizontally (\emph{i.e.} in the hard direction) with positive probability. 
This is a typical RSW result in that it relates probabilities of crossings in the easy direction to those of crossings in the hard direction. 
What is remarkable is that the measure to which it applies, namely~$\mu_{\Cyl_{M,N}}^{\srm\srm/\srp\srp}$, is not rotationally invariant. 
Thus, vertical crossings are ``orthogonal'' to horizontal ones. 
The exact aspect ratio of the two rectangles ($4$ and~$6$, respectively) is not essential; they have been chosen to simplify statements later on.

The proof of Proposition~\ref{prop:RSW_Hugo} is quite intricate. 
It is based on similar results from \cite{DumRaoTas18}, but with additional difficulties due to the two layers of spins necessary for the Spatial Markov property (see Theorem~\ref{thm:DMP}). 
A very similar version appears in \cite{DumHarLasRauRay18}.
The next section is dedicated to proving  Proposition~\ref{prop:RSW_Hugo}.

\subsubsection{Proof of Proposition~\ref{prop:RSW_Hugo}}

The structure is reminiscent of a proof by contradiction: 
assuming that~\eqref{eq:RSW_Hugo} fails 
(more precisely that~$\calC^h_{\srp\srp}([-2n,2n] \times [k,k+n])$ has probability below a certain threshold),
we prove that double-$\rp$ vertical crossings of~$[-3n,3n] \times [k,k+n]$ have some particular behaviour. 
This is done in a series of lemmas (Lemmas~\ref{lem:centre},~\ref{lem:wig},~\ref{lem:loc} and~\ref{lem:wb})
-- the title of each lemma indicates a constraint that vertical crossings need to satisfy {\em for~\eqref{eq:RSW_Hugo} to fail}. 
Then we prove that two typical vertical crossings of~$[-3n,3n] \times [k,k+n]$ with starting points sufficiently close to each other 
are connected with positive probability by a double-$\rp$ path (see Lemma~\ref{lem:connect}).
Proposition~\ref{prop:RSW_Hugo} follows from this last statement. 
Lemma~\ref{lem:connect} is the heart of the proof; it relies on the construction of a symmetric domain, 
similarly to what was done for Lemma~\ref{lem:double_cross}. \smallskip

Fix~$n,k,M,N$ as in the proposition. 
We will work with~$n$ large; the function~$\psi$ in the proposition may be adjusted to incorporate all small values of~$n$.
For ease of writing, translate the cylinder~$\Cyl_{M,N}$ vertically by~$-k$; 
write~$\mu$ for the measure with boundary conditions~$\rm\rm/\rp\rp$ on this translated cylinder.

Using this notation, our goal is to prove that, for any~$C > 0$ there exists~$\Delta > 0$ depending only on~$C$, 
not on~$n$,~$k$,~$M$ or~$N$, such that
\begin{align*}
	\mu[\calC_{\srp\srp}^v(\Rect_{3n,n})] > C \Rightarrow			
	\mu[\calC_{\srp\srp}^h(\Rect_{2n,n})] > \Delta.
\end{align*}
	
For~$m,h\geq 0$, write~$\Linev(m)$ for the right boundary of the rectangle~$\Rect_{m,N}$
and~$\Line(h)$ for the top boundary of~$\Rect_{M,h}$.
That is~$\Line(h)$ is approximately a horizontal line at height~$h$;~$\Linev(m)$ is a vertical line~$m$ units to the right of~$0$. 
Let~$\Strip_h$ be the set of faces of the cylinder contained between~$\Line(0)$ and~$\Line(h)$.
Define~$\Mid_h(m)$ as the segment of~$\Line(h)$ contained in~$[- m,m] \times \bbR$.
See Figure~\ref{fig:wig} for illustrations. 

Below we will talk about double-$\rp$ paths contained in~$\Strip_h$ with endpoints in~$\Mid_0(m)$ and~$\Mid_h(m)$, respectively. 
While not explicitly stating it each time, we will always ask that such a path have trivial winding around the cylinder. 

\begin{lem}[Endpoints of paths are centred]\label{lem:centre}
	For any~$\eps > 0$ and~$C_{\centre} >0$ there exists~$\Delta_{\centre} =\Delta_{\centre}(\eps,C_{\centre}) >0$ such that the following holds. 
	Fix~$m \leq n$ and write~$\calH_\centre(m)$ for the event that there exists a double-$\rp$ path in~$\Strip_m$ with one endpoint 
	in~$\Mid_0(m \eps)$ and one in~$\Line(m)\setminus \Mid_{m}(2 m\eps)$. 
	Then 
	\begin{align}
		\mu[\calH_\centre(m)] > C_{\centre} \Rightarrow			
		\mu[\calC_{\srp\srp}^h(\Rect_{2m,m})] > \Delta_{\centre}.
	\end{align}
\end{lem}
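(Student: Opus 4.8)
The plan is to reduce the statement, by a reflection of the cylinder, to a crossing estimate for a thin box, and then to amplify that box up to $\Rect_{2m,m}$ using the box-crossing machinery already at our disposal: the FKG inequality for the red marginal of $\mu$ and the lengthening estimates of Section~\ref{subsec:cylinder}. The first step is to reduce to a directed event. The event $\calH_\centre(m)$ is invariant under the reflection $\rho$ of the cylinder through the vertical line bisecting $\Mid_0(m\eps)$, and $\rho$ preserves $\mu$ since it preserves the $\srm\srm/\srp\srp$ boundary conditions. Writing $\calH^R(m)\subset\calH_\centre(m)$ for the event that a witnessing path has its endpoint on $\Line(m)$ with positive $x$-coordinate (hence $x>2m\eps$) and $\calH^L(m)=\rho(\calH^R(m))$, the union bound and $\mu[\calH^R(m)]=\mu[\calH^L(m)]$ give $\mu[\calH^R(m)]\ge\tfrac12\mu[\calH_\centre(m)]>\tfrac12 C_\centre$.

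Next I would extract a short crossing. On $\calH^R(m)$ there is a double-$\rp$ path $\gamma\subset\Strip_m$ from a point of $\Line(0)$ with $x$-coordinate in $[-m\eps,m\eps]$ to a point of $\Line(m)$ with $x$-coordinate $>2m\eps$; the portion of $\gamma$ between its last visit to $\{x=m\eps\}$ and its first visit to $\{x=2m\eps\}$ is a horizontal double-$\rp$ crossing of the rectangle $[m\eps,2m\eps]\times[0,m]$. Hence $\mu[\calC^h_{\srp\srp}([m\eps,2m\eps]\times[0,m])]>\tfrac12 C_\centre$, and since $\mu=\mu_{\Cyl_{M,N}}^{\srm\srm/\srp\srp}$ is invariant under horizontal translations (rotations) of the cylinder, the same lower bound holds for the horizontal crossing of every translate of $[0,m\eps]\times[0,m]$ along the cylinder at height $[0,m]$.

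It then remains to pass from ``a box of width $m\eps$ and height $m$ is crossed horizontally with probability $>\tfrac12 C_\centre$'' to ``$\Rect_{2m,m}$ (width $4m$, height $m$) is crossed horizontally with probability $>\Delta_\centre$''. I would do this by combining $O(1/\eps)$ consecutive horizontal translates of the thin crossing, gluing consecutive pieces by means of the FKG inequality for the red marginal of $\mu$ together with the vertical-crossing estimates of Lemma~\ref{lem:simple_self_duality} and Lemma~\ref{lem:double_cross} (a thin slab is crossed vertically in its easy direction), and, once the accumulated width exceeds $m$, the lengthening estimate of Corollary~\ref{cor:lengthen_crossings} applied a number of times depending only on $\eps$. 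All of these statements transfer to $\mu$ because $\mu_{\Cyl_{M,N}}^{\srm\srm/\srp\srp}$ has the Spatial Markov property and, via the planar embedding of Figure~\ref{fig:R_Cyl}, the FKG lattice condition. Each gluing step multiplies the current crossing probability by at least an absolute constant and loses no power of $m$, so after $O(1/\eps)$ steps the probability is bounded below by a positive constant $\Delta_\centre=\Delta_\centre(\eps,C_\centre)$.

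The hard part will be this last step: organising the gluing of the $O(1/\eps)$ translated thin crossings into a single left-to-right crossing of $\Rect_{2m,m}$ (the endpoints of consecutive pieces do not share a vertex, so the bridging must be done through the thin-slab vertical crossings and the symmetric-domain argument used for Corollary~\ref{cor:lengthen_crossings}), while keeping explicit track of the $\eps$-dependence of all constants, all inside the cylinder measure $\mu$, which is neither rotationally symmetric in the $\pi/3$ sense nor free of the influence of the $\srm$ and $\srp$ boundary arcs. A secondary point to verify is that the self-duality and symmetric-domain arguments behind Lemmas~\ref{lem:simple_self_duality} and \ref{lem:double_cross} and Corollary~\ref{cor:lengthen_crossings} do carry over verbatim to $\Cyl_{M,N}$, which is where the planar embedding of Figure~\ref{fig:R_Cyl} and the fact that the reflections involved are genuine lattice symmetries of the cylinder come in.
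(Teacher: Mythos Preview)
Your reduction to $\calH^R(m)$ by the left--right symmetry of the cylinder is exactly how the paper starts. The divergence comes immediately after: you extract from a witnessing path only a horizontal double-$\rp$ crossing of the thin rectangle $[m\eps,2m\eps]\times[0,m]$, and then try to concatenate $O(1/\eps)$ such thin crossings into a crossing of $\Rect_{2m,m}$. This is where the argument breaks.

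The thin rectangle $[m\eps,2m\eps]\times[0,m]$ has width $m\eps$ and height $m$, so your horizontal crossing traverses it in the \emph{short} direction. Two consecutive translates share a vertical segment of length $m$, but the respective crossings can hit that segment at heights anywhere in $[0,m]$; to link them you need a double-$\rp$ \emph{vertical} crossing of a slab of width $O(m\eps)$ and height $m$, i.e.\ a crossing in the \emph{long} direction of a thin rectangle. Neither Lemma~\ref{lem:simple_self_duality} (simple-$\rp$ crossings of squares), nor Lemma~\ref{lem:double_cross} (double-$\rp$ crossing of $\Par_{3n,n}$ in the short direction), nor Corollary~\ref{cor:lengthen_crossings} (which starts from crossings of $\Par_{4n,n}$ in the long direction) provides this. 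Your parenthetical ``a thin slab is crossed vertically in its easy direction'' has the directions reversed: for your thin slab, vertical is the hard direction. In short, by passing to the thin crossing you have discarded the global information about the endpoints of the path and cannot recover it with the available tools.

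The paper avoids this entirely by keeping the whole path. On $\calH^R(m)$ there is a double-$\rp$ path in $\Strip_m$ from $\Mid_0(m\eps)$ to a point of $\Line(m)$ to the right of $\Linev(2\eps m)$. By reflection and a horizontal shift by $4\eps m$, with the same probability there is a double-$\rp$ path from $(4\eps m,0)+\Mid_0(m\eps)$ to a point of $\Line(m)$ to the left of $\Linev(2\eps m)$. These two paths start on the bottom in the order left--right but end on the top in the order right--left, so by planarity they must intersect; hence, by FKG,
\[
\mu\big[\Mid_0(m\eps)\xlra{\srp\srp\text{ in }\Strip_m}(4\eps m,0)+\Mid_0(m\eps)\big]\ \ge\ \tfrac14 C_\centre^2.
\]
Translating this ``arc'' event along the cylinder and combining $O(1/\eps)$ of them by FKG gives $\calC^h_{\srp\srp}(\Rect_{2m,m})$ directly, with $\Delta_\centre=(C_\centre/4)^{1+2/\eps}$. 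No gluing lemmas are needed; the intersection argument does all the work that your Step~3 was meant to do.
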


\begin{proof}
	Fix~$C_\centre$ and suppose~$\mu[\calH_\centre(m)] > C_{\centre}$. 
	Then, by symmetry, with probability at least~$C_\centre /2$ there exists a double-$\rp$ path in~$\Strip_m$ with lower endpoint in 
	$\Mid_0(m\eps)$ and upper end-point on~$\Line(m)$, to the right of~$\Linev(2\eps m)$. 
	By horizontal symmetry and translation invariance, the event that there exists a double-$\rp$ path in~$\Strip_m$ 
	with lower endpoint in~$(4\eps m,0) + \Mid_0(m\eps)$ and upper end-point on~$\Line(m)$, to the left of~$\Linev(2\eps m)$
	also has probability greater than~$C_\centre/2$. Notice that when these two events occur, then the segments~$\Mid_0(m\eps)$
	and~$(4\eps m,0) + \Mid_0(m\eps)$ are connected by a double-$\rp$ path contained in~$\Strip_m$. 
	Using the FKG inequality and the above considerations, we find
	\begin{align}\label{eq:arc}
		\mu\big[\Mid_0(m\eps)\xlra{\srp\srp \text{ in~$\Strip_m$}}(4\eps m,0) + \Mid_0(m\eps)\big] \geq \tfrac{1}4 \, C_\centre^2. 
	\end{align}
	
	For~$j \in \bbZ$, let~$M_j =  (2j\eps m,0) + \Mid_0(m\eps)$. Using again the invariance of~$\mu$ under horizontal shift, 
	we find that 
	\begin{align*}
		\mu\big(M_{j-1}\xlra{\srp\srp \text{ in~$\Strip_m$}}M_{j+1}\big) \geq \tfrac{1}4 \, C_\centre^2 
		\qquad \forall -\tfrac{1}\eps \leq j \leq \tfrac1\eps.
	\end{align*}
	Finally, if all the events above occur simultaneously, then~$\calC_{\srp\srp}^h(\Rect_{2m,m})$ also occurs. By the FKG inequality, we find 
	\begin{align*}
		\mu[\calC_{\srp\srp}^h(\Rect_{2m,m})] 
		\geq
		\mu\Big[ \bigcap_{ j =-1/\eps }^{1/\eps} \big\{M_{j-1}\xlra{\srp\srp \text{ in~$\Strip_m$}}M_{j+1}\big\}\Big]
		\geq \big(C_\centre/4\big)^{1 + 2/\eps  } =: \Delta_\centre.
	\end{align*}
\end{proof}

%

\begin{lem}[Vertical paths wiggle]\label{lem:wig}
	There exist explicit constants~$0 < \rho_\int < \rho_\out$ with the following property.
	For~$m \leq n$ define the events 
	\begin{itemize}
	\item~$\calH_\wig^\int(m)$ is the event that there exists a double-$\rp$ path contained in~$\Strip_m$, 
	from~$\Mid_{0}(\rho_\int m)$ to~$\Mid_{m}(\rho_\int m)$ that
	does not cross~$\Linev(\rho_\int m)$ or that does not cross~$\Linev(-\rho_\int m)$;
	\item~$\calH_\wig^\out(m)$ is the event that there exists a double-$\rp$ path contained in~$\Strip_m$, 
	from~$\Mid_0(\rho_\int m)$ to~$\Mid_m(\rho_\int m)$ that is not contained in~$\Rect(\rho_\out m , m)$.
	\end{itemize} 
	Then, for any~$C_{\wig} >0$ there exists~$\Delta_{\wig} >0$ such that, for any~$m \leq n$, 
	\begin{align}\label{eq:wig}
		\mu[\calH_\wig^\int(m)\cup \calH_\wig^\out(m)] > C_{\wig} \Rightarrow			
		\mu[\calC_{\srp\srp}^h(\Rect_{2m,m})] > \Delta_{\wig}.
	\end{align}
\end{lem}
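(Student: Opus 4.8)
The plan is to turn a ``wiggling'' easy-direction (vertical) crossing of~$\Strip_m$ into a double-$\rp$ left-right crossing of a rectangle of bounded aspect ratio occupying the full height of~$\Strip_m$, and then to lengthen that crossing into a horizontal crossing of~$\Rect_{2m,m}$ using only box-crossing manipulations available for~$\mu$. First I would fix the constants: take~$\rho_\int$ small and~$\rho_\out$ a fixed constant with~$\rho_\int<\rho_\out$, the admissible range being dictated by the constraints~$2n<M$ and~$m\leq n$, which ensure that all the rectangles~$\Rect(\rho m,m)$ and vertical lines~$\Linev(\pm\rho m)$ with~$\rho\leq\rho_\out$ embed in~$\Cyl_{M,N}$ without overlapping themselves. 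Assuming~$\mu[\calH_\wig^\int(m)\cup\calH_\wig^\out(m)]>C_\wig$, the union bound and the symmetries of~$\mu$ yield that one of the following has probability at least~$C_\wig/4$: either there is a double-$\rp$ path in~$\Strip_m$ from~$\Mid_0(\rho_\int m)$ to~$\Mid_m(\rho_\int m)$ leaving~$\Rect(\rho_\out m,m)$ through its right side, or there is a double-$\rp$ path in~$\Strip_m$ from~$\Mid_0(\rho_\int m)$ to~$\Mid_m(\rho_\int m)$ disjoint from~$\Linev(\rho_\int m)$.

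In the first case, following such a path from its last visit to~$\Linev(\rho_\int m)$ up to its first subsequent visit to~$\Linev(\rho_\out m)$ yields a double-$\rp$ left-right crossing of the rectangle~$[\rho_\int m,\rho_\out m]\times[0,m]$, which has bounded aspect ratio and the same height as~$\Strip_m$. In the second case, I would combine such a ``leaning'' vertical crossing with its reflection about~$\Linev(\rho_\int m)$ --- using the reflection invariance of~$\mu$ and the FKG inequality for its red-spin marginal --- and merge the two leaning crossings by means of a horizontal square crossing supplied by Lemma~\ref{lem:square_crossed}, again producing, with probability bounded below by a constant depending only on~$C_\wig$, a double-$\rp$ left-right crossing of a rectangle of bounded aspect ratio at height~$[0,m]$ whose horizontal position is free to translate. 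In either case one obtains, with probability at least~$c_1=c_1(C_\wig)>0$, a double-$\rp$ left-right crossing of such a rectangle.

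Finally I would reassemble: using the horizontal translation invariance of~$\mu$ and the FKG inequality for its red-spin marginal (Theorem~\ref{thm:FKG}, adapted to the cylinder in Section~\ref{subsec:cylinder}), together with the lengthening estimate of Corollary~\ref{cor:lengthen_crossings} --- which applies to~$\mu$ precisely because the~$\rm\rm/\rp\rp$ and~$\rp\rp$ boundary conditions do satisfy the Spatial Markov property on the cylinder --- glue a bounded number of translates of the crossing from the previous step, merging consecutive pieces through square crossings given by Lemma~\ref{lem:square_crossed}, into a double-$\rp$ horizontal crossing of~$\Rect_{2m,m}$. The number of pieces depends only on~$\rho_\int$ and~$\rho_\out$, and each ingredient has probability bounded below uniformly in~$m$, so the construction succeeds with probability at least some~$\Delta_\wig=\Delta_\wig(C_\wig)>0$, which is the claim.

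The hard part will be the reassembly: unlike for Bernoulli percolation, the absence of a general Spatial Markov property for~$\mu$ forbids freely conditioning on explored pieces, so the gluing must be performed purely through the FKG inequality and the symmetries of~$\mu$, using the limited Spatial Markov property only for the special~$\rm\rm/\rp\rp$ and~$\rp\rp$ boundary conditions; in particular~$\rho_\int$ and~$\rho_\out$ must be chosen so that the overlap regions where consecutive pieces are merged are roughly square, so that Lemma~\ref{lem:square_crossed} can be applied there. A secondary difficulty is the bookkeeping in the~$\calH_\wig^\int$ case, where one must verify that a center-to-center crossing of~$\Strip_m$ disjoint from~$\Linev(\rho_\int m)$ can really be leveraged, via reflections, into a left-right crossing of a macroscopic rectangle.
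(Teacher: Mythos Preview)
Your treatment of the~$\calH_\wig^\out$ case is correct and matches the paper: a path from~$\Mid_0(\rho_\int m)$ that exits~$\Rect(\rho_\out m,m)$ contains a horizontal double-$\rp$ crossing of~$[\rho_\int m,\rho_\out m]\times[0,m]$, and with the paper's choice~$\rho_\out=4+\rho_\int$ this is already a crossing of width~$4m$, so a horizontal shift gives~$\calC^h_{\srp\srp}(\Rect_{2m,m})$ directly (no lengthening needed, though your use of Corollary~\ref{cor:lengthen_crossings} would also work).

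The~$\calH_\wig^\int$ case, however, has a genuine gap. Your plan is to take a crossing that avoids~$\Linev(\rho_\int m)$ together with its reflection about that line and merge them via Lemma~\ref{lem:square_crossed} into a horizontal crossing of a rectangle of height~$m$ and bounded aspect ratio. This does not work for two reasons. First, the original path and its reflection live on \emph{opposite} sides of~$\Linev(\rho_\int m)$ and each may wander arbitrarily far away from that line; there is no fixed rectangle of width comparable to~$m$ in which they can be merged, and their union is at best a pair of vertical crossings of unbounded horizontal extent. Second, Lemma~\ref{lem:square_crossed} gives square crossings under~$\mu_\calD^{\srp\srm}$, not under the cylinder measure~$\mu$; there is no a priori lower bound on~$\mu[\calC^h_{\srp\srp}(\Par_{m,m})]$ available here (indeed, producing such a bound is essentially the content of Proposition~\ref{prop:RSW_Hugo}).

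The paper's argument proceeds quite differently. It first pairs a crossing that stays \emph{right} of~$\Linev(-\rho_\int m)$ with a translated-and-reflected one that stays \emph{left} of~$\Linev(3\rho_\int m)$; by FKG these coexist and, since one blocks the other's escape, jointly force a vertical double-$\rp$ crossing of the \emph{thin} rectangle~$[-\rho_\int m,3\rho_\int m]\times[0,m]$. Two such thin vertical crossings, placed at distance~$\sim\rho_\int m$ apart, then play the role of the top and bottom~$\rp$ arcs in a \emph{rotated} Lemma~\ref{lem:double_cross}: conditionally on them, the region between is crossed horizontally with probability~$\geq 1/36$. The choice~$\rho_\int=1/72$ is precisely what makes this rectangle thin enough for that rotated argument to apply. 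The output is a short double-$\rp$ arc on~$\Line(0)$ between two segments at distance~$\sim\rho_\int m$, and one concludes exactly as in Lemma~\ref{lem:centre} by chaining~$O(1/\rho_\int)$ such arcs via FKG. Your sketch is missing both the confinement step (producing a vertical crossing of a rectangle of \emph{fixed} width) and the key use of Lemma~\ref{lem:double_cross} in its rotated, conditional form.
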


In other words, the lemma tells us that, 
if vertical crossings of~$\Strip_m$ starting on~$\Mid_0(\rho_\int m)$ and ending on~$\Mid_h(\rho_\int m)$
wiggle either too little (when~$\calH_\wig^\int(m)$ occurs)  or too much (when~$\calH_\wig^\out(m)$ occurs), 
then wide rectangles may be crossed horizontally.

\begin{proof}
	Take~$\rho_\int = 1/72$ and~$\rho_\out = 4 + \rho_\int$. Fix some constant~$C_\wig$; 
	the value of~$\Delta_{\wig}$ will be determined below. 
	If~$\mu[\calH_\wig^\int(m)  \cup \calH_\wig^\out(m) ] > C_{\wig}$, 
	then either~$\mu[\calH_\wig^\int(m) ] > C_{\wig}/2$ or~$\mu[\calH_\wig^\out(m)] > C_{\wig}/2$. 
	
	Suppose the second inequality is valid. 
	If the event~$\calH_\wig^\out$ occurs, 
	one of the rectangles~$[\rho_\int,\rho_\out m]\times [0,m]$ 
	or~$[-\rho_\out m,-\rho_\int]\times [0,m]$ is crossed horizontally by a double-$\rp$ path. 
	Thus, due to the invariance of~$\mu$ under horizontal shift,~$\mu[\calC_{\srp\srp}^h(\Rect_{2m,m})] > C_{\wig}/4$.
	The implication is therefore proved in this case for any~$\Delta_\wig \leq  C_{\wig}/4$.
	
	It remains to consider the case when~$\mu[\calH_\wig^\int(m) ] > C_{\wig}/2$. 
	Then, by symmetry, the probability that there exists a double-$\rp$ path
	from~$\Mid_{0}(\rho_\int m)$ to~$\Mid_{h}(\rho_\int m)$  contained in~$\Strip_m$ 
	and to the right of~$\Linev(-\rho_\int m)$ is greater than~$C_{\wig}/4$.
	Moreover, due to the invariance of~$\mu$ under horizontal translation and symmetry, with probability at least~$C_\wig / 4$,
	the segment~$(2\rho_\int m,0 )+ \Mid_{0}(\rho_\int m)$ may be connected to~$(2\rho_\int m,0 ) + \Mid_{h}(\rho_\int m)$ 
	using a double-$\rp$ path contained in~$\Strip_m$ that stays to the left of~$\Linev(3\rho_\int m)$.
	The two events described above are increasing, and due to the FKG inequality, 
	they occur simultaneously with probability at least~$(C_\wig / 4)^2$.
	When both do occur, then the rectangle~$[-\rho_\int m,3\rho_\int m] \times [0,m]$ 
	is crossed vertically by a path of double-$\rp$. See Figure~\ref{fig:wig} - left diagram.
	
	\begin{figure}
	\begin{center}
	\includegraphics[scale = 1.2]{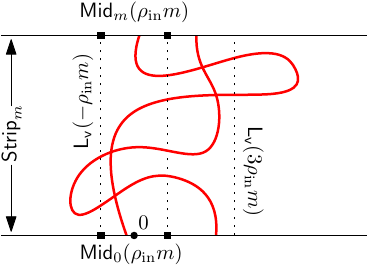}\qquad
	\includegraphics[scale = 1.2]{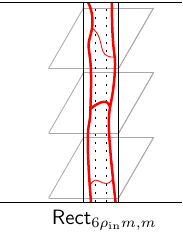}	
	\caption{{\em Left:} Using two symmetric instances of~$\calH_\wig^\int(m)$ we create a vertical crossing of the thin rectangle~$[-\rho_\int m,3\rho_\int m] \times [0,m]$.
	{\em Right:} When both 	$[-6\rho_\int,-2\rho_\int m]\times [0,m]$ and~$[2\rho_\int,6\rho_\int m]\times [0,m]$
	are crossed vertically by double-$\frp$ paths, then these paths may be connected as in Lemma~\ref{lem:double_cross}: 
	first by simple-$\frp$ paths (thin red lines), then by a double-$\frp$ path (bold red line).}
	\label{fig:wig}
	\end{center}
	\end{figure}
	
	Let~$\calA$ be the event that both rectangles~$[-6\rho_\int,-2\rho_\int m]\times [0,m]$
	and~$[2\rho_\int,6\rho_\int m]\times [0,m]$ contain double-$\rp$ vertical crossings. 
	Due to the FKG inequality, invariance under horizontal translation, and the above estimate, we find that~$\mu(\calA) \geq (C_{\wig}/4)^4$.
	
	A rotated version of Lemma~\ref{lem:double_cross} (and Corollary \ref{cor:lengthen_crossings}) 
	applies to the rectangle~$\Rect_{6\rho_\int m,m}$,
	and proves that the left-most vertical crossing of 	$[-6\rho_\int,-2\rho_\int m]\times [0,m]$
	connects via a double-$\rp$ path to the right-most vertical crossing of~$[2\rho_\int,6\rho_\int m]\times [0,m]$
	with probability at least~$1/36$. 
	(The choice of~$\rho_\int$ ensures that this rectangle is sufficiently thin 
	to be covered by three disjoint translates of~$\Par_{m/3,m/3}$ placed one on top of the other. 
	Slight adaptations to the proof need to be made; we leave this to the reader. See also Figure~\ref{fig:wig} - right diagram.)  
	Using this and the lower bound on the probability of~$\calA$, we find 
	\begin{align*}
		\mu\big[(-4\rho_\int m,0) +  \Mid_0(2\rho_\int m)\xlra{\srp\srp \text{ in~$\Strip_m$}}
		(4\rho_\int m,0) + \Mid_0(2\rho_\int m)\big] 
		\geq \tfrac{1}{ 36} \, (C_{\wig}/4)^4.
	\end{align*}
	The above is akin to~\eqref{eq:arc}. We conclude as in the proof of Lemma~\ref{lem:centre} that
	\begin{align}
		\mu[\calC_{\srp\srp}^h(\Rect_{2m,m})] > \Delta_{\wig},
	\end{align}
	for some sufficiently small constant~$\Delta_{\wig}>0$ depending only on~$C_{\wig}$ and~$\rho_\int$. 
\end{proof}

The last two lemmas will be used for different scales~$m \leq n$. 
The following will only be used at scale~$n$. To simplify notation, we only state it at this scale. 

\begin{lem}[Vertical paths have fixed width]\label{lem:loc}
	Let~$c_\loc = 5$ and fix any~$\eps \leq \rho_\int$. 
	For~$k \geq 1$, let  
	$\calG_{\loc}(\eps, k)$ be the event that any double-$\rp$ path contained in~$\Strip_n$ 
	with endpoints in~$\Mid_0(\eps n)$ and~$\Mid_n(\eps n)$ 
	intersects the vertical line 
	$\Linev(k\eps n)$ but not~$\Linev[(k+c_\loc)\eps n]$. 
	Then, for any constant~$ C_{\loc} > 0$ 
	there exists~$\Delta_\loc=\Delta_\loc(\eps,C_\loc) > 0$ such that
	\begin{align*}
		\Big(\mu [ \calG_{\loc}(\eps, k) ] < 1 - C_\loc \,\,\,\text{ for all } k \geq \lfloor \rho_\int / \eps\rfloor\Big)
		\Rightarrow \mu[\calC_{\srp\srp}^h(\Rect_{2n,n})] > \Delta_{\loc}.
	\end{align*}
\end{lem}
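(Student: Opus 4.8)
The plan is to recast $\calG_\loc(\eps,k)$ as a statement about how far to the right the double‑$\rp$ vertical crossings of $\Strip_n$ with endpoints near the centre are allowed to wander, and then to deduce the crossing estimate from Lemma~\ref{lem:wig}. Call a double‑$\rp$ path $\gamma$ contained in $\Strip_n$, of trivial winding, with one endpoint in $\Mid_0(\eps n)$ and one in $\Mid_n(\eps n)$, a \emph{relevant crossing}, and write $R(\gamma)$ for the abscissa of its rightmost vertex. Setting $k_0:=\lfloor\rho_\int/\eps\rfloor\ge 1$ (we use $\eps\le\rho_\int$), the endpoints of a relevant crossing have abscissa at most $\eps n\le k_0\eps n$, so by trivial winding $\gamma$ meets $\Linev(j\eps n)$, for an integer $j\ge k_0$, precisely when $R(\gamma)\ge j\eps n$. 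Hence, letting $E^{<}$ be the event that some relevant crossing $\gamma$ has $R(\gamma)<k_0\eps n$ and $E^{\ge}$ the event that some relevant crossing has $R(\gamma)\ge(k_0+c_\loc)\eps n$, I would first check that $\calG_\loc(\eps,k_0)^{c}=E^{<}\cup E^{\ge}$ (when no relevant crossing exists, $\calG_\loc$ holds vacuously), that $E^{<}$ and $E^{\ge}$ are increasing for double‑$\rp$ percolation, and that the hypothesis gives $\mu[E^{<}]+\mu[E^{\ge}]\ge\mu[\calG_\loc(\eps,k_0)^{c}]>C_\loc$; thus one of $\mu[E^{<}],\mu[E^{\ge}]$ exceeds $C_\loc/2$, and I would treat these two cases separately.

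If $\mu[E^{<}]>C_\loc/2$, I would note that a relevant crossing with $R(\gamma)<k_0\eps n\le\rho_\int n$ is in particular a double‑$\rp$ path in $\Strip_n$ joining $\Mid_0(\rho_\int n)\supseteq\Mid_0(\eps n)$ to $\Mid_n(\rho_\int n)\supseteq\Mid_n(\eps n)$ that does not cross $\Linev(\rho_\int n)$; consequently $E^{<}\subseteq\calH_\wig^{\int}(n)$, so $\mu[\calH_\wig^{\int}(n)]>C_\loc/2$, and Lemma~\ref{lem:wig} applied with $C_\wig=C_\loc/2$ gives $\mu[\calC^{h}_{\srp\srp}(\Rect_{2n,n})]>\Delta_\wig>0$.

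If instead $\mu[E^{\ge}]>C_\loc/2$, then with probability $>C_\loc/2$ there is a relevant crossing reaching $\Linev\big((k_0+c_\loc)\eps n\big)$, whose abscissa exceeds $\rho_\int n$ since $c_\loc=5>2$; taking the portion of this path between its last visit to $\Linev(\eps n)$ before its first visit to $\Linev((k_0+c_\loc)\eps n)$ produces a double‑$\rp$ horizontal crossing of a rectangle of height $n$ and width at least $\rho_\int n$, so $\mu[\calC^{h}_{\srp\srp}([\eps n,(k_0+c_\loc)\eps n]\times[0,n])]>C_\loc/2$. The plan is then to prolong this crossing, one constant‑sized step at a time: shifting the event horizontally by $\tfrac12\rho_\int n$ and using the FKG inequality, horizontal crossings of $[a,b]\times[0,n]$ and of $[a+\tfrac12\rho_\int n,\,b+\tfrac12\rho_\int n]\times[0,n]$ occur simultaneously with probability $>(C_\loc/2)^{2}$; the two overlap in a rectangle of bounded aspect ratio ($\asymp 1/\rho_\int$), and conditioning on them and erecting an auxiliary domain symmetric across the overlap — exactly as in the proofs of Lemma~\ref{lem:double_cross} and Corollary~\ref{cor:lengthen_crossings}, invoking Lemma~\ref{lem:4-arcs} and Lemma~\ref{lem:square_crossed} — joins them into a horizontal crossing of $[a,\,b+\tfrac12\rho_\int n]\times[0,n]$ at a further constant cost. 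Iterating this prolongation, together with its left–right mirror, a number $\asymp 1/\rho_\int$ of times yields a horizontal crossing of $\Rect_{2n,n}$ with probability bounded below by a constant depending only on $C_\loc$ (and the absolute constants $\rho_\int,c_\loc$), which is the desired $\Delta_\loc$.

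The step I expect to be the main obstacle is the prolongation in the last case. The two overlapping pieces are long only in one direction and their endpoints move within windows of width $2\eps n$, so they need not actually meet; forcing a connection amounts to crossing a thin rectangle in its hard direction, which is exactly the phenomenon RSW is meant to capture. The remedy is the symmetric‑domain construction already developed for Lemma~\ref{lem:double_cross} and Corollary~\ref{cor:lengthen_crossings}: across each junction one builds a domain symmetric about a diagonal, reads off $\srp\srm$‑type boundary conditions from the conditioned crossings (controlling them via Lemma~\ref{lem:4-arcs}), and applies the square‑crossing estimate Lemma~\ref{lem:square_crossed} inside (after a reflected and rescaled version of the lengthening of Corollary~\ref{cor:lengthen_crossings} to accommodate the thin overlap). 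The only genuine care is quantitative: the number of FKG applications and symmetric‑domain constructions must stay bounded — which it is, being $\asymp 1/\rho_\int$, an absolute constant — so that the product of the constant costs leaves $\Delta_\loc(\eps,C_\loc)>0$.
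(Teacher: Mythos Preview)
Your treatment of the case $\mu[E^{<}]>C_\loc/2$ is correct and matches the paper: indeed $E^{<}\subseteq\calH_\wig^{\int}(n)$, and Lemma~\ref{lem:wig} applies. The problem is your case $\mu[E^{\ge}]>C_\loc/2$.

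The prolongation step is circular. From $E^{\ge}$ you extract a horizontal double-$\rp$ crossing of a rectangle of height $n$ and width roughly $\rho_\int n$ --- a crossing in the \emph{short} direction. You then shift by $\tfrac12\rho_\int n$ and attempt to glue. But the overlap region has height $n$ and width $\sim\tfrac12\rho_\int n$; to connect the top-most crossing of one rectangle to the bottom-most of the other (as in Corollary~\ref{cor:lengthen_crossings}) you need a \emph{vertical} double-$\rp$ path in a domain that is tall and thin (aspect ratio $\sim 2/\rho_\int\approx 144$), i.e.\ a crossing in the \emph{long} direction. Lemma~\ref{lem:double_cross} and Corollary~\ref{cor:lengthen_crossings} provide vertical crossings only of \emph{wide} parallelograms (aspect ratio $3{:}1$ wide); their symmetric-domain construction uses rhombi placed side by side, and Lemma~\ref{lem:square_crossed} only gives crossings of square-like domains. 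There is no available tool here for crossing a tall thin rectangle in the hard direction --- that is precisely the RSW statement (Proposition~\ref{prop:RSW_Hugo}) to which Lemma~\ref{lem:loc} is an ingredient.

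The paper's proof avoids this by using the hypothesis at \emph{every} $k\ge\lfloor\rho_\int/\eps\rfloor$, not just at $k_0$. Writing $\calA_k$ for the event that some relevant crossing avoids $\Linev(k\eps n)$ and $\calE_k$ for the event that some relevant crossing reaches $\Linev(k\eps n)$, one has $\calG_\loc(\eps,k)^{c}=\calA_k\cup\calE_{k+5}$. Since $\mu(\calA_j)\nearrow\mu(\calE_0)\ge C_\loc$, there is a smallest $k$ with $\mu(\calA_k)\ge C_\loc/2$. If $k\le\rho_\int/\eps$ one is in your $E^{<}$ situation. Otherwise the hypothesis at $k-1$ forces $\mu(\calE_{k+4})>C_\loc/2$; shifting $\calA_k$ horizontally by $4\eps n$ and applying FKG, the resulting path (which stays left of $\Linev((k+4)\eps n)$) and the $\calE_{k+4}$ path (which reaches $\Linev((k+4)\eps n)$) must intersect, giving a double-$\rp$ connection between $\Mid_0(\eps n)$ and its translate by $4\eps n$. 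One then concludes as in Lemma~\ref{lem:centre}. The point is that the ``transition'' index $k$ manufactures two paths guaranteed to cross, sidestepping any gluing in the hard direction.
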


Notice that~$\calG_{\loc}(\eps, k)$ contains all configurations with no double-$\rp$ 
path connecting~$\Mid_0(\eps n)$ to~$\Mid_n(\eps n)$ inside~$\Strip_n$. 
Indeed, the condition is trivially satisfied.

\begin{proof}
	Fix~$\eps \leq \rho_\int$ and~$C_{\loc} > 0$; 
	to simplify notation we will consider ~$\rho_\int/\eps$ to be an integer. 
	Assume that~$\mu [ \calG_{\loc}(\eps, k) ] < 1 - C_\loc$ for all~$k  \geq {\rho_\int}/{\eps}$.
	
	

	For~$k \geq 1$, let~$\calE_k$ be the event that there exists a double-$\rp$ path in~$\Strip_n$ 
	with endpoints in~$\Mid_0(\eps n)$ and~$\Mid_n(\eps n)$ and which intersects~$\Linev(k\eps n)$.
	Also write~$\calE_0$ for the event that~$\Mid_0(\eps n)$ and~$\Mid_n(\eps n)$ are connected by a double-$\rp$ path contained in~$\Strip_n$,
	with no other restriction.
	The events~$\calE_k$ are increasing each, but form a decreasing sequence.
	Additionally, define~$\calA_k$ as the event that there exists a double-$\rp$ path in~$\Strip_n$ 
	that connects~$\Mid_0(\eps n)$ to~$\Mid_n(\eps n)$ and which does not intersect~$\Linev(k\eps n)$.
	Then each~$\calA_k$ is an increasing event and the sequence~$\calA_k$ is increasing in~$k$.
	Moreover~$\calG_{\loc}(\eps, k) = (\calA_k \cup \calE_{k+5})^c$.

	Notice that~$\calE_0^c$ is contained in all events ~$\calG_{\loc}(\eps, k)$, hence~$\mu(\calE_0) \geq C_\loc$.
	Set~$k$ to be the smallest index such that~$\mu(\calA_k) \geq C_\loc/2$.
	The existence of~$k$ is guaranteed by the fact that~$\lim_j \mu(\calA_j) = \mu(\calE_0) \geq C_\loc$.
	
	Suppose first that~$k \leq {\rho_\int}/{\eps}$. 
	Then,~$\calH_\wig^\int(n) \geq \mu(\calA_k) \geq C_\loc/2$, and Lemma \ref{lem:wig} shows that 
	$\mu[\calC_{\srp\srp}^h(\Rect_{2n,n})]$ is bounded below by some constant that only depends on~$C_\loc$.
	
	Henceforth we assume that ~$k > {\rho_\int}/{\eps}$. 
	Then, due to our initial assumption, 
	$$ 1 - C_\loc > \mu(\calG_{\loc}(\eps, k-1)) \geq 1 - \mu(\calA_{k-1}) -\mu(\calE_{k+4}) > 1 - C_\loc/2 -\mu(\calE_{k+4}),$$ 
	which implies~$\mu(\calE_{k+4}) > C_\loc/2$. 
	Write~$\tilde \calA_k$ for the horizontal shift of the event~$\calA_k$ by~$4\eps n$. 
	By the choice of~$k$, we have 
	$\mu(\tilde\calA_k ) = \mu(\calA_k )\geq C_\loc/2$.
	Using the FKG inequality, we find
	\begin{align*}
		\mu(\tilde \calA_k \cap \calE_{k+4})\geq \mu(\tilde \calA_k)\,\mu(\calE_{k+4}) \geq (C_\loc/2)^2. 
	\end{align*}
	Notice now that, if both~$\tilde \calA_k$ and~$\calE_{k+4}$ occur, 
	then the paths in the definition of these two events necessarily intersect. 
	In conclusion 
	\begin{align*}
		\mu\big[\Mid_0(\eps n)\xlra{\srp\srp \text{ in~$\Strip_n$}}(4\eps n ,0) + \Mid_0(\eps n)\big]  \geq (C_\loc/4)^2.
	\end{align*}
	As in the proof of Lemma~\ref{lem:centre}, this implies that 
	$\mu[\calC_{\srp\srp}^h(\Rect_{2n,n})]$ is larger than some threshold depending only on~$\eps$ and~$C_\loc$, and the lemma is proved. 
\end{proof}

In the  proof of Proposition~\ref{prop:RSW_Hugo} we will work with two scales: 
the scale~$n$ and a lower scale~$m$ chosen below. 
Moreover, the endpoints of the vertical paths will be fixed in some segment of length~$2\eps n$ where~$\eps > 0$ is also chosen below.

Fix~$m = \frac{\rho_\int}{2\rho_\out} n$.
Then, fix some~$\eps > 0$ so that
\begin{align}\label{eq:fix_eps}
	\eps \, n <\tfrac12 \rho_\int\,  m \qquad \text{ and } \qquad 
	\rho_\out \, m < \rho_\int \,  n - c_\loc\, \eps\, n. 
\end{align}
In conclusion, the scales~$\eps n$,~$m$ and~$n$ are fixed so that~$\eps n$ is much smaller than~$m$, which in turn is much smaller than~$n$. 
All constants below depend on the ratios between these scales. 

Write~$\Gamma_L$ and~$\Gamma_R$ for the left- and right-most, respectively, 
double-$\rp$ paths contained in~$\Strip_n$, with lower endpoint on~$\Mid_0(\eps n)$ 
and top endpoint in~$\Mid_n(\eps n)$ (recall that these are paths formed of edges of the hexagonal lattice).
If no such crossings exists, set~$\Gamma_L = \Gamma_R = \emptyset$. 
We will always orient such paths from their endpoint on~$\Line(0)$ towards that on~$\Line(n)$. 

When~$\Gamma_L$ and~$\Gamma_R$ exist and are disjoint, 
write~$\Int(\Gamma_L, \Gamma_R)$ for the domain with boundary formed of the concatenation of~$\Gamma_R$, 
the segment of~$\Line(n)$ between the top endpoints of~$\Gamma_R$ and~$\Gamma_L$ (from right to left),
$\Gamma_L$ (in reverse), and the segment of~$\Line(0)$ between 
the bottom endpoints of~$\Gamma_L$ and~$\Gamma_R$ (from left to right). 
Also let~$\Ext(\Gamma_L,\Gamma_R)$ be the set of faces of~$\Strip_n$ which are not strictly inside~$\Int(\Gamma_L, \Gamma_R)$; 
precisely,~$\Ext(\Gamma_L,\Gamma_R)$ contains all faces of~$\Strip_n\setminus \Int(\Gamma_L, \Gamma_R)$
as well as all faces adjacent to~$\Gamma_L$ or~$\Gamma_R$. 

It is standard that~$\Gamma_L$ and~$\Gamma_R$ may be explored from their left and right, respectively. 
That is, for~$\gamma_L$ and~$\gamma_R$ two possible realisations of~$\Gamma_L$ and~$\Gamma_R$, respectively, 
the event~$\{\Gamma_L = \gamma_L \text{ and } \Gamma_R =\gamma_R\}$ 
is measurable with respect to the state of faces in ~$\Ext(\gamma_L,\gamma_R)$. 

Our next goal is to show that, whenever~$\Gamma_L$ and~$\Gamma_R$ exist and behave reasonably well, 
they have a positive probability to be connected inside~$\Int(\Gamma_L, \Gamma_R)$ by a path of double-$\rp$. 
The notion of well-behaved vertical paths is defined below. 

For an edge-path~$\Gamma$ contained in~$\Strip_n$, with starting point in~$\Mid_0(\eps n)$ and endpoint in~$\Mid_n(\eps n)$,
let~$\Gamma^{b}$ be the segment of~$\Gamma$ contained between its starting-point and its first visit of~$\Line(m)$. 
Let~$\Gamma^t$  be the segment of~$\Gamma$  between its last visit of~$\Line(n-m)$ and its endpoint. 

Define~$\calG_\wb(k)$ as the event that 
any double-$\rp$ path~$\Gamma$ contained in~$\Strip_n$, 
with starting point in~$\Mid_0(\eps n)$ and endpoint in~$\Mid_n(\eps n)$ is well-behaved (for this value of~$k$), 
that is
\begin{itemize}
    	\item[(i)]~$\Gamma^b$ has one endpoint in~$\Mid_{m}(2\eps n)$ and~$\Gamma^t$ has one endpoint in~$\Mid_{n-m}(2\eps n)$;
    	\item[(ii)]~$\Gamma^b$ and~$\Gamma^t$ are both contained in~$\Rect(\rho_\out m,n)$ but 
    	each crosses~$\Linev(\rho_\int m)$,
    	\item[(iii)]~$\Gamma$ crosses~$\Linev(k\,\eps\, n)$ but not~$\Linev((k+c_\loc)\,\eps\, n)$.
\end{itemize} 

In order to apply our reasoning, we will ask that~$\Gamma_L$ and~$\Gamma_R$ are well-behaved, 
that is, we will ask that~$\calG_\wb(k)$ occurs for some~$k$. 
This is guaranteed by the following result. 

\begin{lem}[Paths are well-behaved]\label{lem:wb}
	For any~$C_\wb > 0$, there exists a constant~$\Delta_{\wb} > 0$ such that 
	\begin{align*}
		\Big(\mu [ \calG_\wb(k) ] < 1 - C_\wb \,\,\,\text{ for all } k \geq {\rho_\int}/{\eps}\Big)
		\Rightarrow \mu[\calC_{\srp\srp}^h(\Rect_{2n,n})] > \Delta_{\wb}.
	\end{align*}
\end{lem}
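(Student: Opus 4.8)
The plan is to argue by a three-way dichotomy, reading off from the failure of $\calG_\wb(k)$ which of the three defining properties (i), (ii), (iii) is being violated and matching each case with one of Lemmas~\ref{lem:centre}, \ref{lem:wig}, \ref{lem:loc}. Write $\mathcal{B}_{(i)}$ (resp.\ $\mathcal{B}_{(ii)}$) for the event that \emph{some} double-$\rp$ path in $\Strip_n$ with endpoints in $\Mid_0(\eps n)$ and $\Mid_n(\eps n)$ violates (i) (resp.\ (ii)); both are $k$-independent. Since any such path violating (iii) already witnesses $\calG_\loc(\eps,k)^c$ by the very definition of that event, one has
\[
	\calG_\wb(k)^c \ \subseteq\ \mathcal{B}_{(i)}\ \cup\ \mathcal{B}_{(ii)}\ \cup\ \calG_\loc(\eps,k)^c .
\]
I would then identify $\mathcal B_{(i)}$ and $\mathcal B_{(ii)}$ with events handled by the earlier lemmas. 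If a path violates (i), then either its bottom part $\Gamma^b$ is a double-$\rp$ path in $\Strip_m$ from $\Mid_0(\eps n)$ to $\Line(m)\setminus\Mid_m(2\eps n)$, i.e.\ $\calH_\centre(m)$ occurs with the parameter of Lemma~\ref{lem:centre} taken to be $\eps n/m$ (an admissible value by~\eqref{eq:fix_eps}), or the mirror statement holds for $\Gamma^t$ in the strip between $\Line(n-m)$ and $\Line(n)$. If a path violates (ii) but not (i), then by~\eqref{eq:fix_eps} its endpoints on $\Line(0)$ and $\Line(m)$ lie in $\Mid_0(\rho_\int m)$ and $\Mid_m(\rho_\int m)$, so $\Gamma^b$ (or $\Gamma^t$) exhibits $\calH_\wig^\int(m)\cup\calH_\wig^\out(m)$ or its top counterpart. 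Thus $\mathcal B_{(i)}\cup\mathcal B_{(ii)}$ is contained in a union $\mathcal H$ of six events, each of which, by Lemma~\ref{lem:centre} or Lemma~\ref{lem:wig}, has the feature that whenever its $\mu$-probability exceeds a positive constant a horizontal double-$\rp$ crossing of a translate of $[-2m,2m]\times[0,m]$ (resp.\ $[-2m,2m]\times[n-m,n]$) occurs with probability bounded below; the proofs of those two lemmas use only horizontal symmetry and FKG, so they apply unchanged near either end of $\Strip_n$.

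With the displayed inclusion in hand the proof is a short dichotomy. Fix $C_\wb>0$ and assume $\mu[\calG_\wb(k)]<1-C_\wb$ for all $k\ge\rho_\int/\eps$. If $\mu(\mathcal H)\ge C_\wb/2$, one of the six constituent events has probability $\ge C_\wb/12$, so the relevant instance of Lemma~\ref{lem:centre} or~\ref{lem:wig} yields a scale-$m$ horizontal double-$\rp$ crossing with probability $\ge\Delta$; since $m=\tfrac{\rho_\int}{2\rho_\out}n$ is a fixed fraction of $n$, gluing $O(n/m)=O(1)$ horizontal translates of this crossing inside a strip of height $m$ (using FKG, horizontal translation invariance, and the fact that two interleaving arcs in a strip must meet) upgrades it to $\mu[\calC^h_{\srp\srp}(\Rect_{2n,n})]>\Delta'$. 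If instead $\mu(\mathcal H)<C_\wb/2$, then the inclusion forces $\mu(\calG_\loc(\eps,k)^c)>C_\wb/2$ for every $k\ge\rho_\int/\eps$, and since $\eps\le\rho_\int$ by~\eqref{eq:fix_eps}, Lemma~\ref{lem:loc} applies with $C_\loc=C_\wb/2$ and gives $\mu[\calC^h_{\srp\srp}(\Rect_{2n,n})]>\Delta_\loc$ directly. Taking $\Delta_\wb$ to be the minimum of the constants from the two cases completes the proof.

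I do not expect a genuine obstacle inside this lemma: it is essentially a repackaging of the three preceding lemmas, and the work is bookkeeping. The points needing care are (a) the parameter substitutions — checking that $\eps n/m$ is an allowed parameter in Lemma~\ref{lem:centre} and that~\eqref{eq:fix_eps} gives the inclusions $\Mid_0(\eps n)\subset\Mid_0(\rho_\int m)$ and $\Mid_m(2\eps n)\subset\Mid_m(\rho_\int m)$ needed to feed the bottom/top segments into Lemma~\ref{lem:wig}; (b) the scale boost from $m$ to $n$, which costs only a constant factor precisely because $m$ is a fixed fraction of $n$; and (c) the top halves of (i) and (ii), handled by running the arguments of Lemmas~\ref{lem:centre} and~\ref{lem:wig} near $\Line(n)$ rather than $\Line(0)$, which is legitimate since those proofs invoke only horizontal symmetries and the FKG inequality, never the vertical position of the strip. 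The serious difficulties of the strong RSW proof — Lemma~\ref{lem:connect} and the symmetric-domain construction it requires — lie elsewhere.
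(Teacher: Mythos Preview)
Your proof is correct and follows essentially the same approach as the paper: a case split on which of the three defining conditions of $\calG_\wb(k)$ fails, invoking Lemma~\ref{lem:centre}, Lemma~\ref{lem:wig}, or Lemma~\ref{lem:loc} respectively, followed by a scale boost from $m$ to $n$. Your two-way dichotomy (grouping the $k$-independent conditions (i) and (ii) together as $\mathcal{H}$) is a slight reorganisation of the paper's three-way split, and you are more explicit than the paper about handling the top segments $\Gamma^t$ separately and about requiring (i) before feeding a (ii)-violating path into Lemma~\ref{lem:wig}.

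One small imprecision: the scale boost from $\calC^h_{\srp\srp}(\Rect_{2m,m})$ to $\calC^h_{\srp\srp}(\Rect_{2n,n})$ is not achieved just by ``two interleaving arcs in a strip must meet'' --- two horizontal double-$\rp$ crossings of overlapping rectangles need not intersect. The paper handles this via Corollary~\ref{cor:lengthen_crossings} (see~\eqref{eq:cross_m_to_n}), which you should cite in place of that phrase.
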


\begin{proof}
	Fix~$C_\wb > 0$ and assume~$\mu [ \calG_\wb(k) ] < 1 - C_\wb$ for all~$k\geq{\rho_\int}/{\eps}$. 
	Then, one out of the three conditions defining~$\calG_\wb(k)$ fails with probability at least~$C_\wb / 3$ for every~$k$. 
	Thus, at least one of the following cases occurs:
    	\begin{itemize}
	   	\item (i) fails with probability at least ~$C_\wb / 3$ for some~$k$. 
    		Then Lemma~\ref{lem:centre} states that~$\mu[\calC_{\srp\srp}^h(\Rect_{2m,m})] > \Delta_{\centre}$ 
    		for some~$ \Delta_{\centre} > 0$ depending only on~$C_\wb$. 
    		Using the horizontal translation invariance of~$\mu$, the same bound applies to any 
    		horizontal translate~$\Rect_{2m,m} +  j(m,0)$ of~$\Rect_{2m,m}$, with~$j \in \bbZ$. 
    		Using this and Corollary~\ref{cor:lengthen_crossings}, we find
    		\begin{align}\label{eq:cross_m_to_n}
    			\mu[\calC_{\srp\srp}^h(\Rect_{2n,n})] \geq \mu[\calC_{\srp\srp}^h(\Rect_{2n,m})] \geq (c\Delta_{\centre})^{2n/m},
    		\end{align}
			for some universal constant~$c> 0$. 
    	\item (ii) fails with probability at least ~$C_\wb / 3$ for some~$k$. 	
        	Then Lemma~\ref{lem:wig} implies that~$\mu[\calC_{\srp\srp}^h(\Rect_{2m,m})] > \Delta_{\wig}$ 
        	for some~$ \Delta_{\wig} > 0$ depending only on~$C_\wb$. 
        	As above, we conclude that ~$\mu[\calC_{\srp\srp}^h(\Rect_{2n,n})] \geq (c\Delta_{\wig})^{2n/m}$.
    	\item (iii) fails with probability at least ~$C_\wb / 3$ for all~$k$.
    		Then, by Lemma~\ref{lem:loc} applied with~$C_\loc = C_\wb / 3$, 
    		we deduce~$\mu[\calC_{\srp\srp}^h(\Rect_{2n,n})] > \Delta_{\loc}$.
    	\end{itemize}
    	We conclude that in all cases,~$\mu[\calC_{\srp\srp}^h(\Rect_{2n,n})]$ 
    	is bounded below by a constant depending only on~$C_\wb$, as required. 
\end{proof}

Now that we proved that some~$\calG_\wb(k)$ occurs with high probability, we will show that, when it does occur, 
$\Gamma_R$ and~$\Gamma_L$ connect to each other. 
Since~$\Gamma_R$ and~$\Gamma_L$ are measurable in terms of the spins in~$\Ext(\Gamma_L,\Gamma_R)$, the same applies to ~$\calG_\wb(k)$. 
Indeed, if~$\Gamma_L$ and~$\Gamma_R$ satisfy the conditions of~$\calG_\wb(k)$, 
then so do all double-$\rp$ paths contained in~$\Strip_n$, from~$\Mid_0(\eps n)$ to~$\Mid_n(\eps n)$.

\begin{lem}\label{lem:connect}
	There exists some universal constant~$C_{\connect} > 0$ such that, for any possible realisations~$\gamma_L,\gamma_R$ of~$\Gamma_L,\Gamma_R$  
 	with the property that~$\calG_\wb(k)$ occurs for some~$k \geq {\rho_\int}/{ \eps}$
	and any red spin configuration~$\zeta$ such that~$\Gamma_L = \gamma_L$ and~$\Gamma_R = \gamma_R$
	\begin{align}\label{eq:connect}
		\mu\big[ \gamma_L \xlra{\srp\srp \text{ in~$\Int(\gamma_L, \gamma_R)$}} \gamma_R \, \big|\, 
		\Gamma_L = \gamma_L, \, \Gamma_R = \gamma_R, \, \calG_\wb(k), \, \sigma_r = \zeta \text{ on } \Ext(\gamma_L,\gamma_R)\big]
		\geq C_\connect.
	\end{align}
\end{lem}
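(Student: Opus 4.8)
The plan is to deduce \eqref{eq:connect} from the self-duality estimate of Lemma~\ref{lem:square_crossed}, by a symmetrisation in the spirit of Lemma~\ref{lem:double_cross}. First I would reduce the problem to a crossing estimate in a four-arc domain. The event $\{\gamma_L\xlra{\srp\srp\text{ in }\Int(\gamma_L,\gamma_R)}\gamma_R\}$ is increasing in~$\sigma_r$, and the faces of~$\Ext(\gamma_L,\gamma_R)$ adjacent to~$\gamma_L$ or~$\gamma_R$ necessarily carry spin~$\srp$ since these are double-$\srp$ paths. Hence, by monotonicity in boundary conditions (Corollary~\ref{cor:monotonicity_bc}~\textit{(i)}), it suffices to prove \eqref{eq:connect} for the minimal admissible configuration~$\zeta_0$, equal to~$\srp$ on those faces and to~$\srm$ elsewhere on~$\Ext(\gamma_L,\gamma_R)$; this $\zeta_0$ still forces $\Gamma_L=\gamma_L$, $\Gamma_R=\gamma_R$ and (being $\Ext(\gamma_L,\gamma_R)$-measurable) the event~$\calG_\wb(k)$. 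Writing~$a,b,c,d$ for the endpoints of~$\gamma_L$ and~$\gamma_R$ on~$\Line(0)$ and~$\Line(n)$, the configuration~$\zeta_0$ imposes boundary conditions of type~$a\srp b\srm c\srp d\srm a$ on~$\Int(\gamma_L,\gamma_R)$, so by Lemma~\ref{lem:4-arcs}~\textit{(i)} the conditional law of~$\sigma_r$ inside~$\Int(\gamma_L,\gamma_R)$ is comparable, up to a factor~$2^{3}$, to~$\nu_{\Int(\gamma_L,\gamma_R)}^{a\srp b\srm c\srp d\srm a}$. It therefore suffices to bound $\nu_{\Int(\gamma_L,\gamma_R)}^{a\srp b\srm c\srp d\srm a}\big(\gamma_L\xlra{\srp\srp}\gamma_R\big)$ below by a universal constant.

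The core of the argument is the construction of a symmetric domain. Conditions~\textit{(i)}--\textit{(iii)} in the definition of~$\calG_\wb(k)$, together with the relations~\eqref{eq:fix_eps}, pin down the shape of the funnel~$\Int(\gamma_L,\gamma_R)$ precisely enough to carry this out: one extracts a subdomain~$\sfD_0\subseteq\Int(\gamma_L,\gamma_R)$ whose boundary consists of a piece of~$\gamma_L$, a piece of~$\gamma_R$ and two simple-$\srp$ crossing-arcs produced from the prescribed wiggles of the two walls, arranged so that~$\sfD_0$ has an all-$\srp$ red boundary and contains a parallelogram~$\Par_{r,r}$ (with~$r$ of order~$m$) positioned so that any hard-direction double-$\srp$ crossing of~$\Par_{r,r}$ meets both the~$\gamma_L$-piece and the~$\gamma_R$-piece. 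One then reflects~$\sfD_0$ across a diagonal line of~$\Par_{r,r}$ --- a symmetry axis of~$\bbH$, exactly as in Lemma~\ref{lem:double_cross} --- to obtain a domain $\sfD=\sfD_0\cup\rho(\sfD_0)$ that is symmetric with respect to that diagonal and still contains~$\Par_{r,r}$. As in the proof of Lemma~\ref{lem:double_cross}, a short analysis of the induced boundary conditions (again invoking Lemma~\ref{lem:4-arcs} at the endpoints of the chords) shows that the red-spin law induced on~$\sfD_0$ by the conditional measure of the first step stochastically dominates, up to a universal multiplicative constant, the restriction to~$\sfD_0$ of~$\mu^{\srp\srm}_\sfD$ (passing through $\mu_{\sfD_0}^{\srp\srp}$ and Corollary~\ref{cor:monotonicity_bc}).

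It then remains to conclude and track the constants. Lemma~\ref{lem:square_crossed} applied to the symmetric domain~$\sfD$ gives that~$\Par_{r,r}$ is crossed in the hard direction by a double-$\srp$ path with probability at least~$\tfrac14$ under~$\mu^{\srp\srm}_\sfD$; by the previous step, the same lower bound --- up to a fixed constant --- survives for the conditional measure inside~$\sfD_0$. By the choice of~$\Par_{r,r}$, such a crossing realises the connection~$\gamma_L\xlra{\srp\srp\text{ in }\Int(\gamma_L,\gamma_R)}\gamma_R$, possibly after one FKG gluing with a deterministically present portion of a wiggle. Collecting the factor~$2^{-3}$ from the first step, the~$\tfrac14$ from Lemma~\ref{lem:square_crossed}, and the bounded comparison constants produces a universal~$C_\connect>0$ for which~\eqref{eq:connect} holds.

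The main obstacle is the second step: producing~$\sfD_0$ and~$\Par_{r,r}$ explicitly, verifying that~$\sfD=\sfD_0\cup\rho(\sfD_0)$ is a genuine simply connected domain of~$\bbH$ carrying boundary data to which the self-duality argument of Lemma~\ref{lem:square_crossed} applies --- this is where the two superposed spin layers make the bookkeeping awkward and where the specific constants~$\rho_\int,\rho_\out,c_\loc$ and the inequalities~\eqref{eq:fix_eps} are used --- and checking that a hard-direction crossing of~$\Par_{r,r}$ genuinely forces the desired $\gamma_L$--$\gamma_R$ double-$\srp$ connection. The rest is a routine assembly of monotonicity, the FKG inequality, and the comparison estimates of Lemma~\ref{lem:4-arcs}.
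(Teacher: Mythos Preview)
Your plan captures the right flavor --- symmetrise and invoke Lemma~\ref{lem:square_crossed} --- but it collapses into a single step what the paper does in two, and that collapse hides the actual difficulty.

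The paper's proof proceeds at two separate scales. In Step~1 it works inside the small rectangles $R_1=\Rect(\rho_\out m,m)$ and $R_2=R_1+(0,n-m)$, where conditions~\textit{(i)}--\textit{(ii)} of $\calG_\wb(k)$ confine $\gamma_L^b,\gamma_R^b$ and $\gamma_L^t,\gamma_R^t$ and guarantee they each cross $\Linev(\rho_\int m)$. A reflection across $\Linev(\rho_\int m)$ (at scale $m$) then produces, with probability $\geq 1/9$, \emph{simple}-$\rp$ paths $\chi^1,\chi^2$ joining $\gamma_L$ to $\gamma_R$ inside $R_1$ and $R_2$. Only in Step~2 does one pursue a double-$\rp$ connection, and this is done at scale $n$: the segment $[A,C]$ of $\Linev(k\eps n)$ inside $\Int(\gamma_L,\gamma_R,\chi^1,\chi^2)$ splits that domain into $\Int^1,\Int^2$; reflecting $\Int^2$ across $\Linev(k\eps n)$ and superposing gives $\calD^1$, and the final symmetric domain is $\calD=\calD^1\cup\tau(\calD^1)$ \emph{glued along $[A,C]$ in two different copies of $\bbH$}, because the pieces can overlap. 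Condition~\textit{(iii)} (paths cross $\Linev(k\eps n)$ but not $\Linev((k+c_\loc)\eps n)$) together with \eqref{eq:fix_eps} is exactly what guarantees $\chi^1$ and $\tau(\chi^2)$ survive intact on $\partial\calD^1$, so that Lemma~\ref{lem:square_crossed} applies to $\calD$.

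Your proposal skips Step~1 entirely and tries to place a single $\Par_{r,r}$ with $r\asymp m$ inside a subdomain $\sfD_0$ whose boundary already contains pieces of both $\gamma_L$ and $\gamma_R$, reflected across a diagonal. This does not work: the walls $\gamma_L,\gamma_R$ only come within distance $O(m)$ of each other near $\Line(0)$ and $\Line(n)$, but nothing in $\calG_\wb(k)$ controls their relative positions there well enough to fit a common parallelogram between them whose hard-direction crossing hits both. The wiggling conditions are one-sided (each path crosses $\Linev(\rho_\int m)$, not each other), which is why the paper first manufactures the simple-$\rp$ bridges $\chi^1,\chi^2$ before attempting the double-$\rp$ argument. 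Moreover, the symmetric domain in Step~2 is not a union in $\bbH$ but a glued planar object; your ``$\sfD=\sfD_0\cup\rho(\sfD_0)$ is a genuine simply connected domain of $\bbH$'' need not hold, and the paper explicitly works around this. The two-scale, two-step structure is the content of the lemma, not bookkeeping.
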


The conditioning in~\eqref{eq:connect} may be reduced simply to \{$\sigma_r = \zeta$ on~$\Ext(\gamma_L,\gamma_R)$\}, 
since this determines~$\Gamma_L = \gamma_L, \, \Gamma_R = \gamma_R$, which in turn implies that~$\calG_\wb(k)$ occurs. 
We included the latter conditions in~\eqref{eq:connect} to emphasise their importance. 

The lemma above is the heart of the proof of Theorem~\ref{prop:RSW_Hugo}. 
	
\begin{proof}
	Fix~$\gamma_L$,~$\gamma_R$ and~$\zeta$ as in the statement. 
	Let~$k \geq {\rho_\int}/{ \eps}$ be some value for which~$\calG_\wb(k)$ occurs.
	We may assume that~$\gamma_L$ and~$\gamma_R$  are disjoint, otherwise the conclusion is trivially attained. 
	We will proceed in two steps, 
	first we will create simple-$\rp$ connections between~$\gamma_L$ and~$\gamma_R$, 
	close to the top and bottom of~$\Strip_n$, respectively. 
	In a second stage, we connect~$\gamma_L$ and~$\gamma_R$ by a double-$\rp$ path 
	contained between the two simple-$\rp$ paths shown to exist in the previous step. 
	
	Recall that~$\gamma_L$ and~$\gamma_R$ are oriented from bottom to top. 
	Let~$R_1 = \Rect(\rho_\out m, m)$ and~$R_2 = R_1 + (0,n-m)$ 
	be the vertical translation of~$R_1$ contained between~$\Line(n-m)$ and~$\Line(n)$. 
	Due to~$\calG_\wb(k)$ occuring, 
	$\gamma_L^b$ and~$\gamma_R^b$ are contained in~$R_1$, 
	while~$\gamma_L^t$ and~$\gamma_R^t$ are contained in~$R_2$. 

	\smallskip
	
	\noindent{\bf Step 1: Simple-$\rp$ crossings.} 
	Let~$\calI$ be the event that 
	$\gamma_L$ and~$\gamma_R$ are connected by two simple-$\rp$ paths contained in 
	$R_1$ and~$R_2$, respectively. 
	We will now prove that~$\calI$ has positive probability, uniformly in~$m,n$,~$\gamma_L$,~$\gamma_R$ and~$\zeta$. 
	We do this for the connection in~$R_1$; the same argument applies in~$R_2$.
	The argument used in this step is exactly that of \cite{DumRaoTas18}. 
	Figure~\ref{fig:connect} contains an illustration of the construction below. 
	
	Recall that both~$\gamma_L^b$ and~$\gamma_R^b$ intersect~$\Linev(\rho_\int m)$
	but that their endpoints are in~$\Mid_{0}(\eps n)$ and~$\Mid_{m}(2\eps n)$, hence to the left of~$\Linev(\rho_\int m)$.
	Let~$A$ be the first point where~$\gamma_L$ intersects~$\Linev(\rho_\int m)$ and
	write~$\gamma_L'$ for the subpath of~$\gamma_L$ from its starting point up to~$A$.
	Then~$\gamma_R^b$ contains at least one subpath contained in the part of~$R_1$ to the right of~$\Linev(\rho_\int m)$, 
	which has both endpoints on~$\Linev(\rho_\int m)$, one below~$A$ and one above~$A$
	(this is because~$\gamma_R^b$ has both its endpoints to the left of~$\Linev(\rho_\int m)$). 
	Write~$\gamma_R'$ for the left-most such path and let~$C$ be the endpoint of~$\gamma_R'$ above~$A$. 
	
	Write~$\tau$ for the reflection with respect to~$\Linev(\rho_\int m)$
	(actually, with respect to the vertical axis~$\{\lfloor \rho_\int m \rfloor \}\times \bbR$). 
	Then~$\tau$ leaves the lattice invariant. 

	Observe now that~$\tau(\gamma_L')$ intersects~$\gamma_R'$. 
	Indeed,~$\tau(\gamma_L')$ runs from~$A$ to~$\Line(0)$ and is contained in the region to the right of~$\Linev(\rho_\int m)$, 
	while~$\gamma_R'$ separates~$A$ from~$\Line(0)$ in this same region.
	Let~$B$ be the first intersection point of~$\tau(\gamma_L')$ with~$\gamma_R'$ when starting from~$A$
	and let~$\gamma_A$ be the subpath of~$\tau(\gamma_L')$ between~$A$ and~$B$. 
	Let~$\gamma_B$ be the subpath of~$\gamma_R'$ between~$B$ and~$C$. 
	Finally set~$\gamma_C = \tau(\gamma_B)$,~$\gamma_D =\tau(\gamma_A)$ and~$D = \tau(B)$. 
	
	The paths~$\gamma_A,\gamma_B,\gamma_C$ and~$\gamma_D$ only intersect at their endpoints 
	and their concatenation bounds a domain which we call~$\calD$. 
	
	\begin{figure}
    	\begin{center}
    	\includegraphics[width = 0.55\textwidth, page =2]{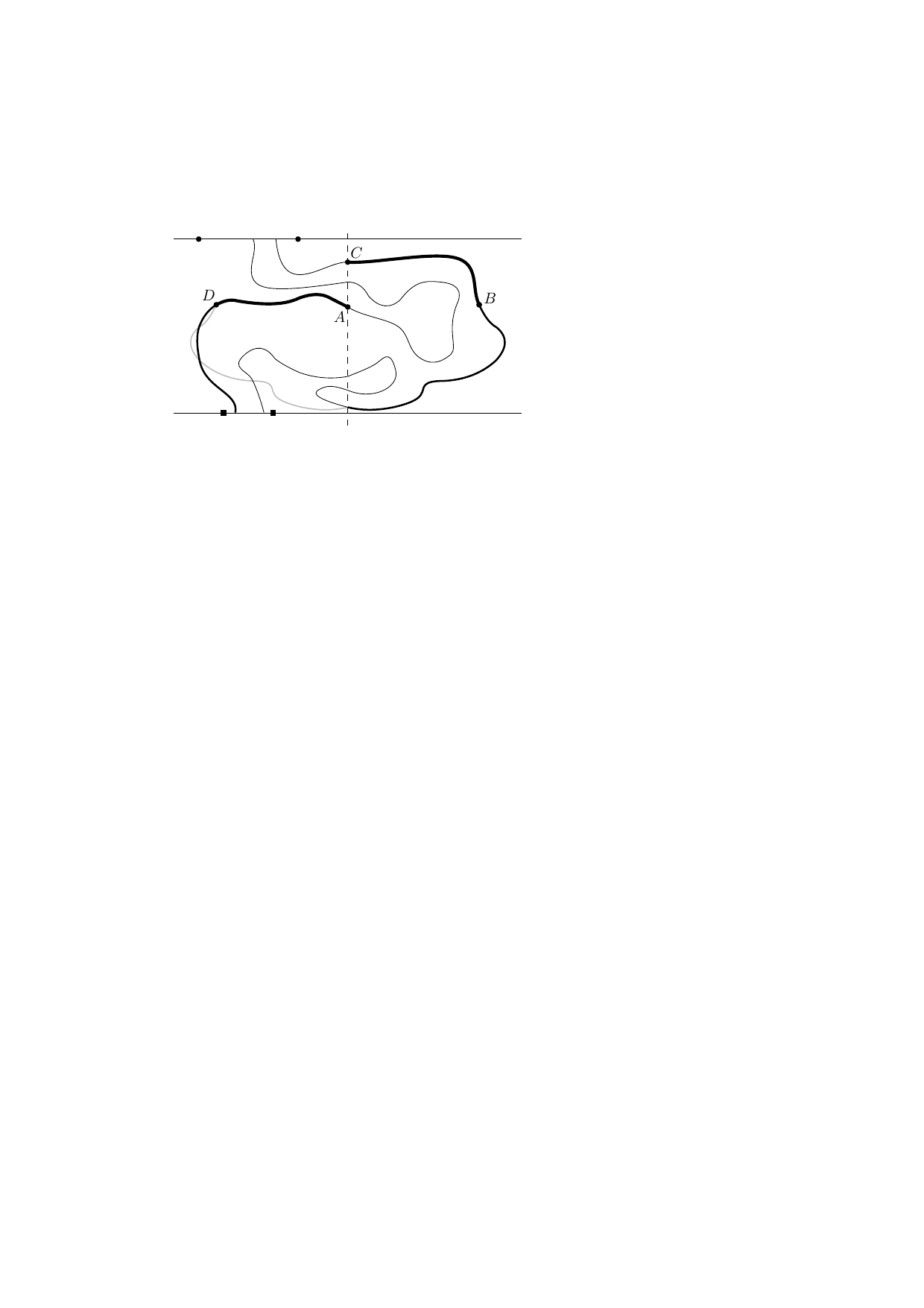}\qquad
    	\caption{The paths~$\gamma_L^b$ and~$\gamma_R^b$ are drawn in solid lines; the thicker parts are~$\gamma_L'$ and~$\gamma_R'$. 
		The reflections of parts of~$\gamma_L'$ and~$\gamma_R'$ are in dashed lines. The domain~$\calD$ is shaded. 
		Observe that any crossing from~$\gamma_B$ to~$\gamma_D$ in~$\calD$  induces a connection between~$\gamma_L^b$ and~$\gamma_R^b$ }
		\label{fig:connect}
    	\end{center}
	\end{figure}
	
	Let us derive a bound on the crossing probability of~$\calD$, independently of how~$\calD$ was formed. 
	Consider the red spin configuration~$\xi$ on~$\Cyl$ consisting only of~$\rm$ with the exception of the faces adjacent to
	$\gamma_B$ and those adjacent to~$\gamma_D$, which have spin~$\rp$. 
	By the same reasoning as in Lemma~\ref{lem:simple_self_duality} and due to the invariance of~$\calD$ under~$\tau$, we obtain
	\begin{align*}
		\mu \big( \gamma_B \xlra{\srp \text{ in } \calD} \gamma_D \,\big|\, \sigma_r = \xi \text{ on~$\calD^c \cup \partial_\int \calD$} \big) 
		\geq \tfrac13. 
	\end{align*}
	Write~$\mu_\calD^\xi$ for the conditional measure above.
	
	By Corollary~\ref{cor:monotonicity_bc} and due to the condition~$\Gamma_L = \gamma_L$ and~$\Gamma_R = \gamma_R$, 
	the measure~$\mu[.\,|\,\sigma_r = \zeta \text{ on~$\Ext(\gamma_L,\gamma_R)$}]$
	restricted to~$\calD \cap \Int(\gamma_L, \gamma_R)$ 
	dominates the restriction of~$\mu_\calD^\xi$ to this same set of faces.
	
	Set~$\calA$ to be the event that there exists a face-path~$\chi$ in~$\calD$, 
	with the first and last faces adjacent to~$\gamma_B$ and~$\gamma_D$, respectively, 
	and such that all faces of~$\chi$ that are contained in~$\Int(\gamma_L, \gamma_R)$ have spin~$\rp$. 
	Then 
	\begin{align*}
		\mu [\calA \,|\,\sigma_r = \zeta \text{ on }\Ext(\gamma_L,\gamma_R)]
		\geq \mu^\xi_\calD(\calA) 
		\geq \mu^\xi_\calD ( \gamma_B \xlra{\srp \text{ in } \calD} \gamma_D)
		\geq \tfrac{1}{3}. 
	\end{align*}
	Now observe that a path~$\chi$ as in the definition of~$\calA$ necessarily contains a subpath contained in~$\Int(\gamma_L, \gamma_R)$
	with the first and last faces adjacent to~$\gamma_L$ and~$\gamma_R$, respectively. 
	We conclude that 
	\begin{align*}
		\mu\big[ \gamma_L \xlra{\srp \text{ in } \Int(\gamma_L, \gamma_R) \cap R_1} \gamma_R \, \big|\, 
		\sigma_r = \zeta \text{ on } \Ext(\gamma_L,\gamma_R)\big]
		\geq \tfrac{1}{3}.
	\end{align*}
	Using the same argument in~$R_2$ and the FKG inequality, we obtain 
	\begin{align}\label{eq:simple_connect}
		\mu\big[\calI \, \big|\, 
		\sigma_r = \zeta \text{ on } \Ext(\gamma_L,\gamma_R)\big]
		\geq \tfrac{1}{9}.
	\end{align}
	\smallskip
	
	\noindent{\bf Step 2: Double-$\rp$ crossing.} 
	We will now prove that 
	\begin{align}\label{eq:double_connect}
		\mu\big[ \gamma_L \xlra{\srp\srp \, \text{ in } \Int(\gamma_L, \gamma_R)} \gamma_R \big|\, 
		\sigma_r = \zeta \text{ on } \Ext(\gamma_L,\gamma_R) \text{ and~$\calI$ occurs}\big]
		\geq \tfrac18.
	\end{align}
	The procedure is similar to that of Step 1, but at scale~$n$ rather than~$m$ and with some additional difficulties. 
	We recommend that the reader inspects Figure~\ref{fig:RSW2}, which contains the strategy of the proof
	as well as the relevant notation.

	When~$\calI$ occurs, we will denote by~$\Xi^1$ and~$\Xi^2$ be the lowest and highest, respectively, 
	paths of simple-$\rp$ from~$\gamma_L$ to~$\gamma_R$, contained in~$\Int (\gamma_L, \gamma_R)$.
	More precisely, define~$\Xi^1$ to be the lowest edge-path contained in~$\Int(\gamma_L, \gamma_R)$, 
	with endpoints on~$\gamma_L$ and~$\gamma_R$, respectively, with the property that all faces above it have spin~$\rp$.
	Define~$\Xi^2$ similarly, only that it is highest and that all faces below it are required to have spin~$\rp$. 
	By the definition of~$\calI$ and the first condition of~$\calG(k)$, 
	$\Xi^1$ and~$\Xi^2$ are contained in~$R_1$ and~$R_2$, respectively, whenever~$\calI$ occurs. 

	Let~$\chi^1$ and~$\chi^2$ be possible realisations of~$\Xi^1$ and~$\Xi^2$, respectively, such that~$\calI$ occurs.
	Define the domain~$\Int = \Int(\gamma_L, \gamma_R, \chi^1, \chi^2)$ as the set of faces delimited by these four paths.
	Also let~$\Ext=\Ext (\gamma_L, \gamma_R, \chi^1, \chi^2)$ be the set of faces  outside~$\Int$ along with those of~$\partial_\int \Int$. 
	By a standard exploration argument, the event~$\{\Xi^1 = \chi^1, \Xi^2 = \chi^2\}$
	is measurable with respect to the spins on~$\Ext$.
	Fix a red spin configuration~$\xi$ on~$\Ext (\gamma_L, \gamma_R, \chi^1, \chi^2)$ with 
	$\xi = \zeta$ on~$\Ext(\gamma_L,\gamma_R)$ and such that~$\Xi^1 = \chi^1$,~$\Xi^2 = \chi^2$.
	This implies in particular that all faces of~$\partial_\int\Int$ have spin~$\rp$ in~$\xi$. 
	 
	\begin{figure}
	\begin{center}
	\includegraphics[width = 0.48 \textwidth, page =1]{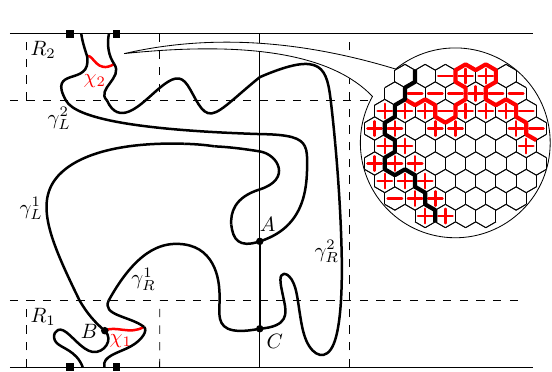}\quad
	\includegraphics[width = 0.48 \textwidth, page =2]{RSW3.pdf} \medskip\\		
	\includegraphics[width = 0.48 \textwidth, page =3]{RSW3.pdf}\quad
	\includegraphics[width = 0.48 \textwidth, page =4]{RSW3.pdf}		
	\caption{{\bf Top left:} 
		the paths~$\gamma_L$ and~$\gamma_R$ are connected in~$R_1$ and~$R_2$ by simple-$\frp$ paths (in red). 
		The dashed lines delimit~$\Strip_m$ and~$(0,n-m) + \Strip_m$;~$\Linev((k+c_\loc )\,\eps\, n)$ is also dashed. 
		{\bf Top right:} The domain~$\calD^1$ contains~$\chi^1$ and~$\tau(\chi^2)$ in its boundary. 
		{\bf Bottom left:} the domain~$\calD$ is formed from~$\calD^1$ (blue) and~$\calD^2$ (orange). 
		These two domains are in different copies of~$\bbH$, glued along the segment~$[A,C]$.
		{\bf Bottom right:} a deformation of~$\calD$ allows to embed it in the plane;
		it contains~$\Int$, whose deformation is shaded. 
		Any crossing from~$(AB)$ to~$(CD)$ contains a path from~$\gamma_L$ to~$\gamma_R$ in~$\Int$. }
		\label{fig:RSW2}
	\end{center}
	\end{figure}
	 
	The line~$\Linev(k\,\eps\, n)$ separates~$\chi^1$ from~$\chi^2$ inside the simply connected domain~$\Int$. 
	It follows that there exists at least one segment of~$\Linev(k\,\eps\, n)$
	that is fully contained in~$\Int$ and that separates~$\chi^1$ from~$\chi^2$ inside this domain. 
	Indeed, 	
	$\Linev(k\,\eps\, n)$ needs to intersect both~$\gamma_L$ to~$\gamma_R$ in order to separate~$\chi^1$ from~$\chi^2$. 
	Consider the intersections of~$\Linev(k\,\eps\, n)$ with~$\gamma_L$ and~$\gamma_R$ in increasing vertical order; 
	there necessarily exists one intersection with~$\gamma_R$ followed by one with~$\gamma_L$. 
	The segment of~$\Linev(k\,\eps\, n)$ between these two intersections has the desired property. 
		
	Let~$[A,C]$ be the first such segment when going from~$\chi^1$ to~$\chi^2$, where~$A$ denotes its higher endpoint
	(the segments with this property are naturally ordered, for instance by their end-points on~$\Gamma_L$). 
	Then~$A$ is a point of~$\gamma_L$ while~$C$ is a point of~$\gamma_R$. 
	Write~$\gamma^1_L$ and~$\gamma^2_L$ for the subpaths of~$\gamma_L$ 
	from the intersection with~$\chi^1$ to~$A$ 
	and from~$A$ to the intersection with~$\chi^2$, respectively. 
	The same notation applies to~$\gamma_R$. 
	
	Then~$[A,C]$ separates~$\Int$ into two sub-domains. 
	The first, which we call~$\Int^1$, has boundary formed of~$\chi^1$,~$\gamma_L^1$,~$[A,C]$ and~$\gamma_R^1$. 
	The boundary of the second, called~$\Int^2$, is the concatenation of~$\chi^2$,~$\gamma_R^2$,~$[A,C]$ and~$\gamma_L^2$. 

	Let~$\tau$ be the reflection with respect to the vertical axis~$\Linev(k\,\eps\, n)$. 
	Now define~$\calD^1$ as the union of the sets of faces of~$\Int^1$ and~$\tau(\Int^2)$. 
	Then~$\calD^1$ is itself a domain, whose boundary consists of~$\chi^1$,~$\tau(\chi^2)$,~$[A,C]$ and 
	pieces of~$\gamma_L^1$,~$\gamma_R^1$,~$\tau(\gamma_L^2)$ and~$\tau(\gamma_R^2)$. 
	It is particularly important that~$\chi^1$ is fully part of the boundary of~$\calD^1$. 
	This is because~$\tau(\Int^2)$ lies entirely to the right of~$\Linev((k-c_\loc)\,\eps\, n)$, 
	and thus does not intersect~$R_1$ (property~$(iii)$ of well-behaved paths, see also \eqref{eq:fix_eps}). 
	For similar reasons,~$\tau(\chi^2)$ is also fully contained in the boundary of~$\calD^1$. 
	
	Let~$B$ be the intersection point of~$\gamma_L$ and~$\chi^1$; it is on the boundary of~$\calD^1$. 
	Write~$\calD^2 = \tau(\calD^1)$ and~$D = \tau(B)$. 
	Define the domain~$\calD$ by gluing~$\calD^1$ and~$\calD^2$ along the segment~$[A,C]$.
	The result of this operation is not a domain of~$\bbH$. Indeed,~$\calD^1$ and~$\calD^2$ may intersect in~$\bbH$; 
	we will consider them as embedded in two different copies of~$\bbH$ that are then glued along the segment~$[A,C]$. 
	Nevertheless,~$\calD$ is planar (that is, it may be embedded in the plane after some distortion; see Figure~\ref{fig:RSW2}) 
	and is simply connected. 
	Orient~$\partial_E \calD$ in counter-clockwise order and 
	write~$(AB)$,~$(BC)$ etc. for the portions of~$\partial_E \calD$ between~$A$ and~$B$,~$B$ and~$C$ etc.
	
	Let us study the measure with boundary conditions~$\rp\rm$ on~$\calD$. 
	By the same argument as for Lemma~\ref{lem:monochrom}, either~$(AB)$ is connected to~$(CD)$ inside~$\calD$ 
	by a path of double-$\rp$ or double-$\rm$,
	or~$(BC)$ is connected to~$(DA)$ by a path of double-$\bp$ or double-$\bm$.
	As in Lemma~\ref{lem:square_crossed},
	the domain is symmetric with respect to~$\tau$ and 
	the boundary conditions~$\rp\rm$ favour the connection with double-$\rp$. Thus we find
	\begin{align}\label{eq:RSW2_double}
		\mu_\calD^{\srp\srm}\big[ (AB)\xlra{\srp\srp \text{ in~$\calD$}}(CD)\big] \geq \tfrac14.
	\end{align}
	
	Observe now that~$\calD$ contains~$\Int$. 
	Moreover, any path crossing from ~$(AB)$ to~$(CD)$ in~$\calD$ contains a subpath 
	which is contained in~$\Int$ and which has endpoints on~$\gamma_L$ and~$\gamma_R$, respectively. 
	Indeed, the segment~$(AB)$ is above~$\gamma_L^1$, while~$(CD)$ is below~$\gamma^2_R$. 
	
	Finally, we claim that the restriction of~$\mu(.\, |\, \sigma_r = \xi \text{ on~$\Ext$})$
	to~$\Int$ dominates that of~$\mu_\calD^{\srp\srm}$. 
	We start off with a heuristic explanation. 
	The key to this argument is to observe that~$\calD$ may be obtained from~$\Int$ by ``pushing away'' parts of the boundary of~$\calD$, 
	but that these only belong to~$\gamma_L$ and~$\gamma_R$, not to~$\chi^1$ or~$\chi^2$. 
	Since these are double-$\rp$ paths in~$\xi$, the monotonicity of boundary conditions applies, and we may conclude. 
	
	Let us now present a rigorous proof of this domination with a slightly weaker conclusion. 
	As already explained,~$\calD$ is part of two copies of~$\bbH$ glued along the segment~$[A,C]$. 
	Let~$\calD'$ be a planar domain of this graph that contains~$\calD$ along with all faces adjacent to it. 
	Then, due to the Spatial Markov property that also applies in this slightly different setting, 
	$\mu_\calD^{\srp\srm}$ is the restriction to~$\calD$ of 
	$\mu_{\calD'}(.\,|\, \sigma_r \equiv \rm \text{ on~$\calD' \setminus \calD$ and }\sigma_r \equiv \rp \text{ on~$\partial_\int \calD$})$.

	Since~$\calD'$ is planar, the FKG inequality holds for~$\mu_{\calD'}$. 
	Let~$\calA^-$ be the set of faces of~$\partial_\out \Int$ adjacent to~$\chi^1$ or~$\chi^2$. 
	Let~$\calA^+$ be all the other faces of~$\calD' \setminus \Int$ along with~$\partial_\int \Int$.
	Then, by the monotonicity of boundary conditions (Corollary~\ref{cor:monotonicity_bc} \textit{(i)}) and the consideration above,
	the restriction of ~$\mu_\calD^{\srp\srm}$ to~$\Int$ is dominated by that of 
	$\mu_{\calD'}(.\, | \, \sigma_r \equiv \rp \text{ on~$\calA^+$ and }\sigma_r \equiv \rm \text{ on~$\calA^-$})$. 
	Moreover, the latter is equal to the restriction of 
	$\mu(.\, |\, \sigma_r \equiv \rp \text{ on~$\Ext\setminus \calA^-$ and } \sigma_r \equiv \rm \text{ on~$\calA^-$})$ to~$\Int$.
	(Here~$\calA^-$ is also viewed as a subset of~$\Cyl$.) 
	In conclusion
	\begin{align*}
		&\mu\big[\gamma_L \xlra{\srp\srp \text{ in~$\Int$}} \gamma_R
			  \, \big|\, \sigma_r = \xi \text{ on~$\Ext$}\big]\\
		&\qquad \geq \mu\big[\gamma_L \xlra{\srp\srp \text{ in~$\Int$}} \gamma_R
			  \, \big|\, \sigma_r\equiv \rp \text{ on~$\Ext\setminus\calA^-$ and }\sigma_r \equiv \rm \text{ on~$\calA^-$}\big]\\
		&\qquad = \mu_{\calD'}\big[\gamma_L \xlra{\srp\srp \text{ in~$\Int$}} \gamma_R
			 \, \big|\,\sigma_r \equiv \rp \text{ on~$\calA^+$ and }\sigma_r \equiv \rm \text{ on~$\calA^-$}\big]\\
		&\qquad \geq  \mu_\calD^{\srp\srm}\big[\gamma_L \xlra{\srp\srp \text{ in~$\Int$}} \gamma_R\big] \\
		&\qquad\geq \mu_\calD^{\srp\srm}\big[ (AB)\xlra{\srp\srp \text{ in~$\calD$}}(CD)\big] 
		\geq \tfrac14.
	\end{align*}
	\smallskip
	
	\noindent{\bf Conclusion.} 
	Equations~\eqref{eq:simple_connect} and~\eqref{eq:double_connect} imply that 
	\begin{align*}
		\mu\big[ \gamma_L \xlra{\srp\srp \, \text{ in } \Int(\gamma_L, \gamma_R)} \gamma_R \big|\, 
		\sigma_r = \zeta \text{ on } \Ext(\gamma_L,\gamma_R)\big]
		\geq \tfrac1{36}, 
	\end{align*}
	which is the desired bound.
\end{proof}

We are finally ready to prove the main result of the section, namely Proposition~\ref{prop:RSW_Hugo}.

\begin{proof}[Proposition~\ref{prop:RSW_Hugo}]	
	Recall from \eqref{eq:fix_eps} that~$m$ and~$\eps$ are fixed.
	Let~$C_v = \mu[\calC_{\srp\srp}^v(\Rect_{2n,n})]$. 
	The bottom boundary of~$\Rect_{3n,n}$ may be partitioned into~$18/\eps$ segments of length~$\eps n /3$.
	At least one of these segments is connected inside~$\Strip_n$ 
	by a double-$\rp$ path to ~$\Linev(n)$ with probability at least~$C_v \eps /18$.  
	Since the measure~$\mu$ is translation invariant, 
	\begin{align}\label{eq:mid_to_top}
		\mu\big[ \Mid_0(\eps n/6) \xlra{\srp\srp \text{ in }\Strip_n } \Line(n) \big] \geq \frac{C_v \,\eps }{18}.
	\end{align}
	Let~$\Delta_\centre$ be given by Lemma~\ref{lem:centre} with~$C_{\centre} = \frac{C_v \eps}{36}$ and~$\eps/6$ instead of~$\eps$. 
	If~$\mu[\calC_{\srp\srp}^h(\Rect_{2n,n})] > \Delta_{\centre}$ the proof is complete. 
	We will therefore assume that~$\mu[\calH_\centre(m)] < C_{\centre}$, 
	which along with~\eqref{eq:mid_to_top} implies 
	\begin{align}\label{eq:mid_to_mid}
		\mu\big[ \Mid_0(\eps n/3) \xlra{\srp\srp \text{ in }\Strip_n } \Mid_n(\eps n/3)  \big] \geq \frac{C_v \, \eps}{36}.
	\end{align}		

	For~$j \in \bbZ$, write~$M_j^{b} = (j \frac{2}3\eps n, 0) + \Mid_0(\eps n/3)$ and~$M_j^{t} = (j \frac{2}3\eps n, 0) + \Mid_n(\eps n/3)$.
	Let~$\calJ$ be the event that~$M_j^{b}$ is connected to~$M_j^{t}$ by a double-$\rp$ path inside~$\Strip_n$ for both~$j = -1$ and~$j = 1$. 
	Using again the translation invariance of~$\mu$, the FKG inequality and~\eqref{eq:mid_to_mid}, we find 
	\begin{align}\label{eq:calJ}
		\mu [\calJ  ] \geq \Big(\tfrac{C_v \eps}{36}\Big)^2. 
	\end{align}
		
	Now let~$\Delta_\wb$ be the constant given by Lemma~\ref{lem:wb} with~$C_\wb = \frac12 \big(\tfrac{C_v \eps}{36}\big)^2$. 
	If~$\mu[\calC_{\srp\srp}^h(\Rect_{2n,n})] > \Delta_{\wb}$ we have obtained the result. 
	We may therefore assume the opposite, thus that 
	\begin{align*}
		\mu [ \calG_\wb(k) ] > 1 - C_\wb,
	\end{align*}
	for some~$k \geq {\rho_\int}/{\eps}$. By choice of~$C_\wb$ and using a union bound, we conclude that
	\begin{align}\label{eq:calJG}
		\mu [\calJ  \cap \calG_\wb(k) ] \geq \tfrac12 \Big(\tfrac{C_v \eps}{72}\Big)^2 = C_\wb. 
	\end{align}
	Applying now Lemma~\ref{lem:connect}, we find
	\begin{align*}
		\mu\big[ \{\Gamma_L \xlra{\srp\srp \text{ in~$\Int(\Gamma_L, \Gamma_R)$}} \Gamma_R\} \cap \calJ \cap \calG_\wb(k)\big]
		\geq C_\connect \cdot C_\wb.
	\end{align*}
	When~$\calJ$ occurs, the endpoints of~$\Gamma_L$ are contained in~$M_{-1}^b$ and~$M_{-1}^t$, respectively, 
	while those of~$\Gamma_R$ are in~$M_{1}^b$ and~$M_{1}^t$. 
	Thus, when all three events above occur simultaneously,~$M_{-1}^b$ and~$M_{1}^b$ are connected inside~$\Strip_n$ by a path of double-$\rp$. 
	We conclude that 
	\begin{align*}
		\mu\big[ M_{-1}^b \xlra{\srp\srp \text{ in~$\Strip_n$}} M_{1}^t\big]
		\geq C_\connect \cdot C_\wb.
	\end{align*}
	We conclude in the same way as in the proof of Lemma~\ref{lem:centre}:
	the lower bound above applies also to~$\{M_{j-1}^b \xlra{\srp\srp \text{ in~$\Strip_n$}} M_{j+1}^t\}$ for all~$-\tfrac3{2 \eps} \leq j \leq \tfrac3{2 \eps}$. 
	Using the FKG inequality, the intersection of all these translations occurs with probability at least~$(C_\connect \cdot C_\wb)^{3/\eps +1}$.
	When all the events above occur,~$\Rect_{2n,n}$ contains a double-$\rp$ horizontal crossing. Thus
	\begin{align*}
		\mu[\calC_{\srp\srp}^h(\Rect_{2n,n})] 
		\geq (C_\connect \cdot C_\wb)^{3/\eps +1}.
	\end{align*}
	Since~$\eps$ is a universal constant and~$C_\connect$ and~$C_\wb$ only depend on~$C_v$, the above provides the desired bound. 
\end{proof}

\subsection{Crossing rectangles in mixed boundary conditions}\label{sec:mixed_bc}

We give two statements that are crucial in the proof of Theorem~\ref{thm:dicho}.
They are crossing probability estimates similar to those of Proposition~\ref{prop:RSW_Vincent}.
What is essential here is that they are in finite domains with mixed boundary conditions.

\begin{prop}\label{prop:cross_cyl}
	For~$C \geq 3$ there exists~$\delta = \delta(C) > 0$ such that, for all~$n\geq 1$, 
	\begin{align*}
	\mu_{\Cyl_{Cn,5n}}^{\srp\srp/\srm\srm} \big[ \calC^h_{\srp\srp}([-2n,2n] \times [3n,4n])\big] \geq \delta 
	\quad\text{ or }\quad
	\mu_{\Cyl_{Cn,5n}}^{\srp\srp/\srm\srm} \big[ \calC^v_{\srp\srp}([-3n,3n] \times [3n,4n])\big] \geq \delta.	
	\end{align*}
\end{prop}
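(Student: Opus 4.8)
The statement is a dichotomy-type crossing estimate in the cylinder~$\Cyl_{Cn,5n}$ with mixed boundary conditions~$\srp\srp/\srm\srm$: either a wide rectangle sitting in the middle third of the cylinder is crossed horizontally by double-$\rp$ with probability bounded below, or a tall-ish rectangle in the same region is crossed vertically by double-$\rp$ with probability bounded below. The natural route is a proof by contradiction combined with the strong RSW of Proposition~\ref{prop:RSW_Hugo}. First I would fix~$\delta>0$ small (to be chosen at the end) and assume that \emph{both} crossing probabilities are~$<\delta$. The plan is to derive a contradiction by showing that if the vertical crossing probability is small, then by Lemma~\ref{lem:monochrom} (in its cylinder version, see Remark~\ref{rem:monochrom}) some \emph{blue} double-path must cross the relevant region in the complementary direction with probability close to~$1$, which one can then parlay into a statement about circuits and ultimately about one of the extreme boundary arcs being blocked off --- contradicting the structure imposed by the~$\srp\srp/\srm\srm$ boundary conditions. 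Concretely, the middle region~$[-3n,3n]\times[3n,4n]$ lies well away from both the top (at height~$5n$, red~$\rm$) and the bottom (height~$0$, red~$\rp$), so the Spatial Markov property is available: conditioning on double-$\rp$ circuits around the appropriate part of the cylinder lets us compare~$\mu_{\Cyl_{Cn,5n}}^{\srp\srp/\srm\srm}$ restricted to the middle band with a measure of the type~$\mu_\calD^{\srp\srm}$ or~$\mu_\calD^{\srp\srp}$, to which Corollary~\ref{cor:square_crossed}, Lemma~\ref{lem:square_crossed} and Proposition~\ref{prop:RSW_Hugo} apply.

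The key steps, in order, would be the following. (1) Show that the bottom of the cylinder, which carries~$\srp\srp$ boundary conditions, is connected by double-$\rp$ into the strip below height~$3n$ with good probability; more precisely, that there is a double-$\rp$ vertical crossing of~$[-Cn,Cn-\tfrac12]\times[0,3n]$ winding around the cylinder, or failing that a double-$\rp$ circuit around the cylinder at some height below~$3n$. This uses Lemma~\ref{lem:monochrom}/Remark~\ref{rem:monochrom}: if no such red structure exists, then~$\La$ (the bottom) is connected to height~$3n$ by a double-path of \emph{constant blue spin}, but the blue marginal of~$\mu_{\Cyl}^{\srp\srp/\srm\srm}$ is symmetric in~$\bp\leftrightarrow\bm$, so both blue crossings and their complements are controlled, and the FKG inequality for the blue marginal pins things down. (2) Having a double-$\rp$ circuit~$\Gamma$ around the cylinder at height~$h_1<3n$, condition on the outermost such~$\Gamma$; by the Spatial Markov property the measure above~$\Gamma$ in the sub-cylinder up to height~$5n$ is~$\mu_{\Cyl_{Cn,5n-h_1}}^{\srp\srp/\srm\srm}$ (with~$\srp$ on the bottom, coming from~$\Gamma$, and~$\srm$ on top). (3) Inside this sub-cylinder, apply Proposition~\ref{prop:RSW_Hugo}: if the vertical crossing probability of~$[-3n,3n]\times[3n,4n]$ is at least some threshold, RSW upgrades it to a horizontal crossing of~$[-2n,2n]\times[3n,4n]$ with probability at least~$\psi(\cdot)$, which is the first alternative of the proposition. (4) So we may assume the vertical crossing probability is below threshold. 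Then Lemma~\ref{lem:monochrom} forces: with probability close to~$1$, the band~$[-3n,3n]\times[3n,4n]$ (or a slightly larger copy) is crossed \emph{horizontally} by a double path of constant red spin OR \emph{vertically} by a double path of constant blue spin. The blue vertical crossing, together with the blue symmetry and the fact that blue crossings cannot coexist with red double-$\rp$ circuits around the cylinder at that height, will contradict the fact that at height~$4n$ to~$5n$ the boundary condition~$\srm$ should eventually close off the cylinder with a double-$\rm$ structure (again using Lemma~\ref{lem:monochrom} symmetrically). Chaining these forces one of the two red crossings, and since a horizontal red double-$\rp$ crossing of~$[-3n,3n]\times[3n,4n]$ is a horizontal double-$\rp$ crossing of~$[-2n,2n]\times[3n,4n]$ (restricting the rectangle), or else a red~$\rm$ crossing which is impossible given the surrounding~$\rp$ structure we installed, we get the first alternative.

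The main obstacle I anticipate is Step (1)--(2): installing the double-$\rp$ circuit around the cylinder below height~$3n$ with a bound that does not decay in~$n$. The difficulty is that Lemma~\ref{lem:monochrom} and its cylinder analogue only give us a \emph{dichotomy} between a red horizontal double-crossing and a blue vertical double-crossing of a given rectangle, not directly a circuit; to get a circuit around the cylinder one typically glues together~$O(C)$ translated horizontal crossings using FKG, which costs a constant power of the per-rectangle probability --- acceptable only because~$C$ is fixed. One must be careful that the boundary conditions on the bottom of the cylinder genuinely help the red double-$\rp$ crossings (monotonicity in boundary conditions, Corollary~\ref{cor:monotonicity_bc}), so that Corollary~\ref{cor:square_crossed}'s bound of~$\tfrac14$ per square transfers to the conditioned measure, and that the strong RSW of Proposition~\ref{prop:RSW_Hugo} --- which is stated for~$\mu_{\Cyl_{M,N}}^{\srm\srm/\srp\srp}$ --- is applied to exactly the right sub-cylinder after the Spatial Markov reduction. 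A secondary technical nuisance is keeping track of which of the two red signs (\,$\rp$ versus~$\rm$\,) is the ``favoured'' one at each height: the band~$[3n,4n]$ is closer to the~$\srm$ top than to the~$\srp$ bottom only after the circuit reduction flips the picture, so one should state the whole argument with the single reference configuration~$\srp\srp/\srm\srm$ and let monotonicity (Corollary~\ref{cor:monotonicity_bc}) and the blue symmetry do the bookkeeping. Modulo these gluing and bookkeeping issues, the rest is a routine assembly of Proposition~\ref{prop:RSW_Hugo}, Lemma~\ref{lem:monochrom}, the FKG inequality and the Spatial Markov property.
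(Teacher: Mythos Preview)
Your outline has a genuine gap at the step you yourself flag as the main obstacle: installing a double-$\rp$ circuit around the cylinder below height~$3n$ with probability bounded away from zero. Your proposed mechanism --- Lemma~\ref{lem:monochrom} plus blue symmetry plus FKG --- does not close. Applying Lemma~\ref{lem:monochrom} to a strip near the bottom tells you that either there is a vertical double-red crossing or a horizontal double-blue circuit; the~$\srp\srp$ boundary at height~$0$ rules out the~$\rm$ vertical crossing, but a~$\rp$ vertical crossing is not a circuit, and a blue circuit is not a red one. You cannot appeal to Corollary~\ref{cor:square_crossed} to get a per-square~$\tfrac14$ bound and then glue, because that bound is for~$\nu_\bbH^{\srp\srp}$, and the cylinder measure with~$\srm\srm$ on top is \emph{not} dominated by it. In fact, a uniform lower bound on double-$\rp$ crossings under mixed boundary conditions is essentially the content of the proposition you are trying to prove, so this step is circular as stated. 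The downstream parts of your sketch (ruling out the~$\rm$ horizontal crossing in Step~(4), or handling the blue vertical crossing via ``the~$\srm$ boundary should close things off'') inherit the same circularity.

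The paper's proof takes a different route that sidesteps this entirely. It proceeds by contradiction, assuming both crossing probabilities are below~$\delta$. From this assumption it deduces (via a union bound over~$O(C)$ translated rectangles and monotonicity) that vertical \emph{red} crossings of~$\Strip_2$ are rare, so by Lemma~\ref{lem:monochrom} a \emph{blue} circuit in~$\Strip_2$ has probability at least~$1/4$ (Claim~\ref{cl:rsw1}). It then conditions on the lowest such blue circuit~$\gamma$, reflects~$\gamma$ across the line~$\bbR\times\{7n/2\}$ to obtain a symmetric cylinder~$\widetilde\Cyl$ with~$\bp\bp$ on the bottom and~$\rp\rp$ on the top, and observes that this domain is invariant under reflection composed with red/blue colour swap. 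Lemma~\ref{lem:monochrom} applied to~$\R$ in~$\widetilde\Cyl$ then gives four terms which collapse to two red-$\rp$ terms via this symmetry plus stochastic domination (Claim~\ref{cl:rsw2}), yielding~$\mu[\calC^h_{\srp\srp}(\R)]+\mu[\calC^v_{\srp\srp}(\R)]\geq 1/8$ and hence the contradiction. Note that the paper explicitly avoids Proposition~\ref{prop:RSW_Hugo} here; the reflection-symmetry trick is what breaks the four-fold ambiguity that your argument never resolves.
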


\begin{cor}\label{cor:RSW_mixed_bc}
	For all~$C_h \geq 3$ and~$C_v \geq 1$ there exists~$\delta = \delta (C_h,C_v)> 0$ such that, for all~$n\geq 1$, 
	\begin{align*}
		\mu_{\Rect_{C_h n,C_v n}}^{\srp\srp/\srm\srm} \big[ \calC^h_{\srp\srp}(\Rect_{C_h n,n})\big] > \delta. 	
	\end{align*}
\end{cor}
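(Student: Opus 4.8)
The plan is to upgrade the ``one of two crossings'' dichotomy of Proposition~\ref{prop:cross_cyl} into an unconditional crossing estimate, and then transport it from the fixed cylinder to an arbitrary rectangle carrying the mixed boundary conditions $\srp\srp/\srm\srm$. Everything is carried out on the red-spin marginal, for which the FKG inequality is available (Theorem~\ref{thm:FKG}, Corollaries~\ref{cor:fkg} and~\ref{cor:monotonicity_bc}), and we use throughout that $\srp\srp/\srm\srm$ has only two arcs of constant sign, hence satisfies the Spatial Markov property.

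First I would fix $C_h\geq 3$ and apply Proposition~\ref{prop:cross_cyl} with $C=C_h$: this produces $\delta=\delta(C_h)>0$ such that, in $\mu_{\Cyl_{C_h n,5n}}^{\srp\srp/\srm\srm}$, at least one of the events $\calC^h_{\srp\srp}([-2n,2n]\times[3n,4n])$ and $\calC^v_{\srp\srp}([-3n,3n]\times[3n,4n])$ has probability at least $\delta$. If it is the vertical one, I would invoke the strong RSW, Proposition~\ref{prop:RSW_Hugo}, after the vertical reflection exchanging $\srp\srp/\srm\srm$ with $\srm\srm/\srp\srp$ (for the corresponding value of $k\leq 4n$ its hypotheses $n+k\leq 5n$ and $2n<C_h n$ hold since $C_h\geq 3$), to obtain that the horizontal crossing event still has probability at least $\psi(\delta)$. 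Either way one reaches $\mu_{\Cyl_{C_h n,5n}}^{\srp\srp/\srm\srm}[\calC^h_{\srp\srp}([-2n,2n]\times[3n,4n])]\geq\delta_1(C_h)>0$.

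Next I would widen this crossing. Since $\mu_{\Cyl_{C_h n,5n}}^{\srp\srp/\srm\srm}$ is invariant under rotations of the cylinder, the estimate of the previous step holds for every horizontal translate of the $4n\times n$ box; iterating the crossing-lengthening estimate of Corollary~\ref{cor:lengthen_crossings} (and its variants, which adapt to the cylinder) together with the FKG inequality yields a constant $\delta_2(C_h)>0$ bounding from below the probability of a horizontal $\srp\srp$-crossing of $[-C_h n,C_h n]\times[3n,4n]$ that avoids a prescribed vertical segment $\Linev$ of the cylinder. By Lemma~\ref{lem:cyl_to_rect}, $\mu_{\Rect_{C_h n,5n}}^{\srp\srp/\srm\srm}$ is $\mu_{\Cyl_{C_h n,5n}}^{\srp\srp/\srm\srm}$ conditioned on the increasing event that $\sigma_r$ is $\rp$ on all faces adjacent to $\Linev$, so FKG transfers the bound $\delta_2$ to $\mu_{\Rect_{C_h n,5n}}^{\srp\srp/\srm\srm}$. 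Finally I would reach the general height $C_v n$ and the target box $\Rect_{C_h n,n}$: this box abuts one long side of $\Rect_{C_h n,C_v n}$ and shares its two ($\rp$-forced) short sides, so changing $C_v$ merely moves the opposite long side, and with it one arc of the boundary condition, nearer or further; by the two-arc Spatial Markov property together with Corollary~\ref{cor:monotonicity_bc} this acts monotonically on the increasing crossing event. For $C_v$ below a fixed threshold the change is favourable and one concludes from the previous step; for large $C_v$ --- the limiting case being the half-infinite strip carrying $\rp$ on its two long sides --- one first produces a bulk $\srp\srp$-crossing of the correct aspect ratio as above and then brings it down to $\Rect_{C_h n,n}$ along the two favourable sides by one further box-crossing construction, again using only FKG and the two-arc Spatial Markov property. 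Combining these estimates gives $\delta=\delta(C_h,C_v)>0$, uniform in $n$.

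The main obstacle is precisely the boundary-condition bookkeeping of this last part. Since mixed $\srp\srm$-type boundary conditions are not monotone, one cannot compare arbitrary boundary assignments, so each manipulation --- cutting the cylinder, restricting to a sub-rectangle, changing the aspect ratio, passing to the half-infinite strip --- must be reduced to an application of the FKG inequality for the red marginal or to the Spatial Markov property, the latter being legitimate here only because $\srp\srp/\srm\srm$ has two arcs, and one must check each time that the manipulation moves the probability of the increasing crossing event in the right direction. Moreover, because $\mu^{\srp\srp/\srm\srm}$ is anisotropic, horizontal and vertical crossings are not interchangeable by a lattice symmetry, which is why the strong RSW of Proposition~\ref{prop:RSW_Hugo} --- rather than the cheaper sub-sequential statement of Proposition~\ref{prop:RSW_Vincent} --- is indispensable at the first step.
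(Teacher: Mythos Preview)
Your first four steps are essentially the paper's argument: Proposition~\ref{prop:cross_cyl}, then Proposition~\ref{prop:RSW_Hugo} to resolve the dichotomy, then Corollary~\ref{cor:lengthen_crossings} to widen, then Lemma~\ref{lem:cyl_to_rect} with FKG to pass to the rectangle. The parameter choice differs (you work in $\Cyl_{C_hn,5n}$, the paper works directly in $\Cyl_{C_hn,C_vn}$), but that is harmless.

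The gap is the last paragraph. After step~4 you have a horizontal double-$\rp$ crossing of a strip sitting at height of order $n$ \emph{below the $\rp$-side} of the rectangle; the target $\Rect_{C_hn,n}$ is the strip \emph{adjacent to the $\rm$-side}. Neither of the two mechanisms you suggest closes this gap. Monotonicity in $C_v$ goes the wrong way: enlarging $C_v$ pushes the favourable $\rp$-boundary further from $\Rect_{C_hn,n}$ and \emph{decreases} the measure stochastically (your own two-arc Spatial Markov argument shows exactly this), so a bound at one value of $C_v$ does not propagate to larger ones; and for smaller $C_v$ the strip $[3n,4n]$ where your crossing lives may not even lie in the domain. ``One further box-crossing construction along the two favourable sides'' cannot work either: combining your bulk crossing with vertical crossings near the $\rp$-sides via FKG produces a long path, but never a horizontal crossing contained in the bottom strip; that event is not implied by any union of crossings sitting higher up.

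What is actually needed --- and is the main content of the corollary --- is an iterative \emph{pushing} argument. One explores the topmost horizontal double-$\rp$ crossing $\Gamma$ of the current strip $S_j$; the two-arc Spatial Markov property then shows that, conditionally on $\Gamma=\gamma$, the red-spin marginal restricted to $\textsf{Under}(\gamma)$ dominates $\mu_{R_{j+1}}^{\srp\srp/\srm\srm}$ for a rectangle $R_{j+1}$ of geometrically smaller height (in the paper, height $(4/5)^{j+1}C_vn$). One then re-applies the cylinder estimate to $R_{j+1}$ to obtain a crossing of the next strip $S_{j+1}$, and iterates $J\asymp\log C_v$ times until $S_J\subset\Rect_{C_hn,n}$. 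Each step costs a constant depending on $C_h,C_v,j$ but not on $n$, and the product over $j\le J$ gives $\delta(C_h,C_v)$. This exploration-then-rescale step is the missing idea in your proposal.
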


Corollary~\ref{cor:RSW_mixed_bc} is referred to in \cite{DumRaoTas18} as the ``pushing'' lemma; 
it is an essential result in establishing the dichotomy of Corollary~\ref{cor:dicho}.

The results stated above mimic the structure of the original RSW theory: the first result serves as an input 
(such as self-duality in critical bond percolation on~$\bbZ^2$ 
or as Corollary~\ref{cor:square_crossed} for the weaker RSW statement of Proposition~\ref{prop:RSW_Vincent}), 
the second states that horizontal crossings may be extended to longer rectangles.
The latter follows from the former in a fairly standard way using Proposition~\ref{prop:RSW_Hugo}.
For clarity, we will avoid using Proposition~\ref{prop:RSW_Hugo} in the proof of Proposition~\ref{prop:cross_cyl}.
We start with the proof of the proposition; the proof of the corollary may be found at the end of the section.

\begin{proof}[Proposition~\ref{prop:cross_cyl}]
	Fix~$C\geq3$. We will proceed by contradiction and will assume that  
	\begin{align}\label{eq:assumption}
	    &\mu_{\Cyl_{Cn,5n}}^{\srp\srp/\srm\srm}\big[ \calC^h_{\srp\srp}([-2n,2n] \times [3n,4n])\big] < \delta 
  	 	\qquad \text{ and }\nonumber\\
    	&\mu_{\Cyl_{Cn,5n}}^{\srp\srp/\srm\srm} \big[ \calC^v_{\srp\srp}([-3n,3n] \times [3n,4n])\big] < \delta, 
	\end{align} 
	for some constant~$\delta > 0$ that we will choose later. It will be obvious that the choice of~$\delta$ only depends on~$C$.  

	Write~$\Cyl$ for~$\Cyl_{Cn,5n}$ and~$\mu = \mu_\Cyl^{\srp\srp/\srm\srm}$.
	The cylinder is split into five strips of height~$n$: 
	$\Strip_i = [-Cn - 2, Cn + 2] \times [(i-1)n,in]$ for~$i =1,\dots, 5$. 
	
	The proof of the proposition is based on two claims that we state and prove below. 
	The whole argument is summarised in Figure~\ref{fig:Cyl}.
	
	\begin{figure}
	\begin{center}
    	\includegraphics[width = 0.3\textwidth, page = 1]{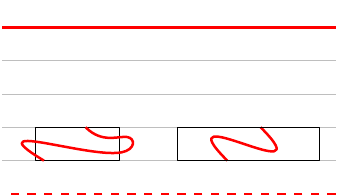}\quad
    	\includegraphics[width = 0.3\textwidth, page = 2]{Cyl2.pdf}\quad
    	\includegraphics[width = 0.3\textwidth, page = 3]{Cyl2.pdf}
	  	\caption{
       	{\em Left:} A vertical double-crossing of red plus of~$\Strip_2$
    		contains either a vertical crossing of a~$6n\times n$ rectangle or a horizontal one of a~$4n\times n$ rectangle. 
       	{\em Middle:} The absence of a vertical double-crossing of~$\Strip_2$ of constant red spin implies the existence 
        	of a double-circuit of constant blue spin. 
	 		Conditionally on it, we study the probability that~$\R$ contains a double crossing of red-plus spins. 
    	{\em Right:} The symmetric cylinder~$\widetilde\Cyl$.
		The probability that~$\R$ is crossed (horizontally or vertically) 
		by a double red-plus path is higher in the right image than in the middle. 
     	}
		\label{fig:Cyl}
	\end{center}
	\end{figure}
	
	\begin{claim}\label{cl:rsw1}
		Write~$\Circ_{\sbp\sbp}(\Strip_2)$ for the event that there exists a double-$\bp$ path 
		winding around~$\Cyl$ and contained in~$\Strip_2$. 
		Assuming~$\delta$ is small enough, 
		\begin{align*}
			\mu [\Circ_{\sbp\sbp}(\Strip_2)] \geq 1/4.
		\end{align*}
	\end{claim}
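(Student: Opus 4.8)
\textbf{Plan of proof of Claim~\ref{cl:rsw1}.}
The plan is to argue by contradiction using the (conditional) self-duality built into the double-spin representation via Lemma~\ref{lem:monochrom}. Suppose that $\mu[\Circ_{\sbp\sbp}(\Strip_2)] < 1/4$. By the symmetry between the blue spins $\bp$ and $\bm$ under $\mu = \mu_\Cyl^{\srp\srp/\srm\srm}$ (the boundary conditions only constrain $\sigma_r$, so the blue-spin marginal is invariant under the global blue flip), we also have $\mu[\Circ_{\sbm\sbm}(\Strip_2)] < 1/4$. Hence, with probability strictly greater than $1/2$, the strip $\Strip_2$ contains no winding double-$\bp$ circuit and no winding double-$\bm$ circuit. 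By the dual statement of Lemma~\ref{lem:monochrom} adapted to the cylinder (see Remark~\ref{rem:monochrom}): on the cylinder, the absence of a winding double circuit of constant blue spin in $\Strip_2$ forces the existence of a top-to-bottom double-$\rp$ \emph{or} double-$\rm$ crossing of $\Strip_2$. Thus
\begin{align*}
	\mu\big[\calC^v_{\srp\srp}(\Strip_2) \cup \calC^v_{\srm\srm}(\Strip_2)\big] > \tfrac12.
\end{align*}

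First I would upgrade this to a statement about $\rp$ only. Since the red-spin marginal of $\mu$ is \emph{not} symmetric (the boundary conditions are $\rp$ on top, $\rm$ on bottom), I cannot directly equate the two probabilities. Instead I would use the reflection of the cylinder across the horizontal mid-line $\Line(5n/2)$: this reflection maps $\mu_\Cyl^{\srp\srp/\srm\srm}$ to $\mu_\Cyl^{\srm\srm/\srp\srp}$, carrying $\calC^v_{\srm\srm}(\Strip_2)$ to $\calC^v_{\srp\srp}(\Strip_4)$. By the comparison of boundary conditions (Corollary~\ref{cor:monotonicity_bc}, using the FKG inequality of the previous lemma for the cylinder) combined with the vertical translation invariance built into the strips, the probability of a vertical double-$\rp$ crossing of $\Strip_2$ under $\mu$ dominates that of a vertical double-$\rp$ crossing of $\Strip_2$ under $\mu_\Cyl^{\srm\srm/\srp\srp}$; alternatively, one observes directly that $\mu[\calC^v_{\srm\srm}(\Strip_2)] \le \mu[\calC^v_{\srp\srp}(\Strip_4)]$ by symmetry and that by monotonicity the right side is at most $\mu[\calC^v_{\srp\srp}(\Strip_2)]$ since $\Strip_2$ is closer to the $\rm$ boundary. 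Either way, we deduce
\begin{align*}
	\mu\big[\calC^v_{\srp\srp}(\Strip_2)\big] > \tfrac14.
\end{align*}

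Next I would rule this out using \eqref{eq:assumption}. A vertical double-$\rp$ crossing of $\Strip_2 = [-Cn-2,Cn+2]\times[n,2n]$ is a double-$\rp$ path joining the two horizontal sides of a very wide rectangle of height $n$. Such a crossing, restricted to any sub-window $[a,a+6n]\times[n,2n]$ that it traverses, either stays inside and yields a vertical double-$\rp$ crossing of a $6n\times n$ rectangle, or it exits the sides and therefore produces a horizontal double-$\rp$ crossing of a $4n\times n$ sub-rectangle (a standard box-crossing dichotomy; see Figure~\ref{fig:Cyl}, left). Covering the cylinder by $O(C)$ such windows and translating via the horizontal-shift invariance of $\mu$, we obtain a constant $\delta_0 = \delta_0(C) > 0$ such that
\begin{align*}
	\tfrac14 < \mu\big[\calC^v_{\srp\srp}(\Strip_2)\big]
	\le O(C)\cdot\Big( \mu\big[\calC^v_{\srp\srp}([-3n,3n]\times[n,2n])\big] + \mu\big[\calC^h_{\srp\srp}([-2n,2n]\times[n,2n])\big]\Big).
\end{align*}
Finally, a vertical translation by $2n$ (again using strip-translation invariance, and monotonicity to pass from $\Strip_2$-windows to $\Strip_4$-windows, i.e. to the rectangles at height $[3n,4n]$ appearing in \eqref{eq:assumption} — here a short comparison-of-boundary-conditions argument is needed since moving the window toward the $\rp$ top boundary only increases $\rp$-crossing probabilities) converts these into the two probabilities bounded by $\delta$ in \eqref{eq:assumption}. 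Choosing $\delta$ small enough (depending only on $C$) makes the right side smaller than $1/4$, a contradiction. This proves $\mu[\Circ_{\sbp\sbp}(\Strip_2)] \ge 1/4$.

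\textbf{Main obstacle.} The delicate point is the asymmetry of the red marginal: unlike critical bond percolation on $\bbZ^2$, one cannot invoke a clean self-duality to equate $\rp$- and $\rm$-crossing probabilities on the cylinder. The argument therefore has to route through the reflection symmetry exchanging top and bottom boundary conditions together with the monotonicity in boundary conditions (Corollary~\ref{cor:monotonicity_bc}) and the vertical-strip translation invariance, to always compare a crossing event against the \emph{favourable} boundary condition. Keeping track of which strip is "closer" to the $\rp$-boundary (so that monotonicity points the right way) is the only genuinely non-routine bookkeeping; everything else is the standard box-crossing combinatorics of Figure~\ref{fig:Cyl}, left.
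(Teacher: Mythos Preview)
Your overall strategy matches the paper's: use (the cylinder version of) Lemma~\ref{lem:monochrom} to convert the absence of winding blue circuits into the presence of vertical red double-crossings of $\Strip_2$, then use the box--crossing dichotomy together with assumption~\eqref{eq:assumption} to bound those red crossings by $O(C)\delta$ and derive a contradiction. The blue spin-flip symmetry and the reflection-plus-red-flip symmetry you invoke are exactly the ones the paper uses.

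There is, however, a genuine error in your ``upgrade to $\rp$ only'' step, and it contradicts what you yourself (correctly) say later. You claim that ``by monotonicity the right side is at most $\mu[\calC^v_{\srp\srp}(\Strip_2)]$ since $\Strip_2$ is closer to the $\rm$ boundary.'' This is backwards: being closer to the $\rm$ boundary makes double-$\rp$ crossings \emph{less} likely, so the correct inequality is
\[
\mu\big[\calC^v_{\srp\srp}(\Strip_2)\big]\ \le\ \mu\big[\calC^v_{\srp\srp}(\Strip_4)\big]\ =\ \mu\big[\calC^v_{\srm\srm}(\Strip_2)\big],
\]
which is the opposite of what you need to conclude $\mu[\calC^v_{\srp\srp}(\Strip_2)]>\tfrac14$. (Your first alternative, comparing $\mu$ to $\mu_\Cyl^{\srm\srm/\srp\srp}$, fails for the same reason: these two boundary conditions are not comparable, and there is no ``vertical translation invariance'' of $\mu$.) You then use the correct direction of this same monotonicity in the final paragraph when you shift the $\Strip_2$-windows up to $\Strip_4$, so the proposal is internally inconsistent.

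The fix is to drop the ``upgrade'' step entirely, exactly as the paper does: from the four-term inequality you bound \emph{both} $\mu[\calC^v_{\srp\srp}(\Strip_2)]$ and $\mu[\calC^v_{\srm\srm}(\Strip_2)]$ separately by $4C\delta$. For the $\rp$ term, use the box-crossing covering and then the (correct) monotonicity $\Strip_2\to\Strip_4$ to land on \eqref{eq:assumption}; for the $\rm$ term, use reflection composed with red spin flip to identify it with the $\rp$ crossing of $\Strip_4$ and again invoke \eqref{eq:assumption}. Choosing $\delta\le 1/(16C)$ forces the two blue terms to sum to at least $1/2$, and blue symmetry finishes.
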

	
	\begin{proof}
		The same argument as in the proof of Lemma~\ref{lem:square_crossed} shows that
		either~$\Strip_2$ is crossed vertically by double-path of constant red spins, 
		or it contains a horizontal double-circuit (winding around the cylinder) of constant blue spins. 
		Thus 
		\begin{align}\label{eq:m1}
			\mu (\Circ_{\sbp\sbp}(\Strip_2)) + \mu (\Circ_{\sbm\sbm}(\Strip_2))
			+ \mu (\calC^{v}_{\srp\srp}(\Strip_2))+ \mu (\calC^{v}_{\srm\srm}(\Strip_2))
			\geq 1,
		\end{align}
		where~$\Circ_{\sbm\sbm}$ is defined similarly to~$\Circ_{\sbp\sbp}$ and~$\calC^{v}_{\srp\srp}(\Strip_2)$ 
		is the event that~$\Strip_2$ contains a path of double-$\rp$ with one endpoint on its bottom and one on its top. 

		If~$\calC^{v}_{\srp\srp}(\Strip_2)$ occurs, then at least one of the 
		rectangles~$[ kn,(k+6)n] \times[2n,3n]$ with~$-C \leq k < C$ is crossed vertically by a double-$\rp$ path, 
		or one of the rectangles~$[ kn,(k+4)n] \times[2n,3n]$ with~$-C \leq k < C$ is crossed horizontally by a double-$\rp$ path. 
		Due to our assumptions~\eqref{eq:assumption} and to the monotonicity with respect to boundary conditions, 
		all of the crossing events above occur with probabilities at most~$\delta$. Thus, 
		\begin{align*}
			\mu [\calC^{v}_{\srp\srp}(\Strip_2)] \leq 4C \delta.
		\end{align*}
		The same argument applies to double-$\rm$ crossings and we find
		\begin{align*}
			\mu [\calC^{v}_{\srm\srm}(\Strip_2)] \leq 4C \delta.
		\end{align*}
		For this second inequality, the monotonicity of boundary conditions was not used, 
		but rather the invariance of~$\mu$ under vertical reflection composed with red spin flip.  
		
		Assume now that~$\delta \leq \frac{1}{16 C}$.
		Then the first two terms of~\eqref{eq:m1} sum up to at least~$1/2$.
		Moreover,~$\mu$ is invariant under blue spin flip, hence these two terms are equal. In conclusion, each is larger than~$1/4$. 
	\end{proof}
	
	\begin{claim}\label{cl:rsw2}
		Let~$\R = [-2n,2n] \times [3n,4n]$. Then
		\begin{align*}
			\mu \big[\calC^h_{\srp\srp}(\R) \big|\, \Circ_{\sbp\sbp}(\Strip_2)\big] +
			\mu \big[\calC^v_{\srp\srp}(\R) \big|\, \Circ_{\sbp\sbp}(\Strip_2)\big] \geq \tfrac12.
		\end{align*}
	\end{claim}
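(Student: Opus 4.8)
The plan is to turn the event $\Circ_{\sbp\sbp}(\Strip_2)$ into a concrete double-$\bp$ circuit, exploit the fact that such a circuit isolates its interior under $\mu$ (this is where $x=1$ enters, exactly as in Theorem~\ref{thm:DMP}), and then run a self-duality argument modelled on Lemma~\ref{lem:square_crossed}. Concretely, I would condition on $\Circ_{\sbp\sbp}(\Strip_2)$ and reveal the \emph{lowest} double-$\bp$ circuit $\Gamma=\gamma$ around $\Cyl$ contained in $\Strip_2$, by an exploration started from the bottom of $\Strip_2$; then $\{\Gamma=\gamma\}$ is measurable with respect to the spins on the faces of $\Strip_2$ lying weakly below $\gamma$ together with the value $\bp$ of the faces adjacent to $\gamma$. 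Write $\calU_\gamma$ for the part of $\Cyl$ strictly above $\gamma$ — a cylinder‑shaped domain whose two boundary circuits are $\gamma$ and the top $\Line(5n)$ of $\Cyl$ — and note $\R\subset\calU_\gamma$. Since every edge of $\gamma$ separates two $\bp$-faces, coherence across $\gamma$ is automatic, so conditionally on $\{\Gamma=\gamma\}$ the configuration in $\calU_\gamma$ has the law of the uniform measure $\mu_\gamma$ on coherent pairs on $\calU_\gamma$ with $\sigma_b\equiv\bp$ on the faces adjacent to $\gamma$ and $\sigma_r\equiv\rp$ on the faces adjacent to $\Line(5n)$ (the latter coming from the $\srp\srp/\srm\srm$ boundary condition). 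Augmenting $\calU_\gamma$ by a single large face below $\gamma$ to make it simply connected — as in the FKG lemma for $\Cyl_{m,n}$ — and invoking Corollary~\ref{cor:fkg}(i),(iii) (both $\gamma$ and $\Line(5n)$ being connected), the red-spin marginal of $\mu_\gamma$ is positively associated, and by the symmetric statement so is its blue-spin marginal. It therefore suffices to prove $\mu_\gamma[\calC^h_{\srp\srp}(\R)]+\mu_\gamma[\calC^v_{\srp\srp}(\R)]\geq\tfrac12$.

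The next step is to apply Lemma~\ref{lem:monochrom} (together with Remark~\ref{rem:monochrom}) to $\R$ in both orientations: each application expresses $1$ as a sum of two crossing probabilities of $\R$, and summing the two orientations bounds $2$ by a sum of eight crossing probabilities of $\R$, four of which involve double-$\rp$ or double-$\rm$ crossings and four double-$\bp$ or double-$\bm$ crossings. Three reductions bring all of them down to double-$\rp$ crossings. First, $\mu_\gamma$ has $\bp$ on the connected circuit $\gamma$ and $\rp$ on the connected circuit $\Line(5n)$, so Corollary~\ref{cor:fkg} gives the FKG inequality for red; comparing the $\rp$ boundary on $\Line(5n)$ with the $\rm$ boundary via Corollary~\ref{cor:monotonicity_bc}(i) and using the global red-spin flip (which exchanges $\calC^{\cdot}_{\srp\srp}(\R)\leftrightarrow\calC^{\cdot}_{\srm\srm}(\R)$) yields $\mu_\gamma[\calC^{h}_{\srm\srm}(\R)]+\mu_\gamma[\calC^{v}_{\srm\srm}(\R)]\le\mu_\gamma[\calC^{h}_{\srp\srp}(\R)]+\mu_\gamma[\calC^{v}_{\srp\srp}(\R)]$. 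Second, forgetting the conditioning $\{\sigma_b\equiv\bp\text{ on }\gamma\}$ produces a measure that is blue-flip invariant (its only boundary condition is on red), while this conditioning is increasing for blue; FKG for the blue marginal then gives $\mu_\gamma[\calC^{h}_{\sbm\sbm}(\R)]+\mu_\gamma[\calC^{v}_{\sbm\sbm}(\R)]\le\mu_\gamma[\calC^{h}_{\sbp\sbp}(\R)]+\mu_\gamma[\calC^{v}_{\sbp\sbp}(\R)]$.

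The third reduction — dominating the remaining $\bp$-crossings of $\R$ by $\rp$-crossings of $\R$ — is the delicate one and is carried out via a symmetric cylinder. I would form $\widetilde{\Cyl}$ by reflecting $\calU_\gamma$ through the horizontal line $\Line(5n)$ and gluing the two copies along it; then $\Line(5n)$ becomes an interior double-$\rp$ circuit and a reflected copy $\gamma^{*}$ of $\gamma$ becomes the second boundary circuit. Writing $\widehat\mu$ for the uniform measure on coherent pairs on $\widetilde{\Cyl}$ with $\sigma_b\equiv\bp$ on the faces adjacent to $\gamma\cup\gamma^{*}$, conditioned on $\sigma_r\equiv\rp$ on the faces adjacent to $\Line(5n)$, the Spatial Markov property for the double-$\rp$ circuit $\Line(5n)$ shows that the restriction of $\widehat\mu$ to $\calU_\gamma$ is exactly $\mu_\gamma$; moreover $\widehat\mu$ is invariant under the reflection through $\Line(5n)$, and the red–blue symmetry of the underlying uniform measure can be combined with this reflection just as the diagonal reflection is used in the proof of Lemma~\ref{lem:square_crossed}. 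This converts the $\bp$-crossings of $\R$ into $\rp$-crossings of the same type, giving $\mu_\gamma[\calC^{h}_{\sbp\sbp}(\R)]+\mu_\gamma[\calC^{v}_{\sbp\sbp}(\R)]\le\mu_\gamma[\calC^{h}_{\srp\srp}(\R)]+\mu_\gamma[\calC^{v}_{\srp\srp}(\R)]$. Feeding the three reductions into the eight-term inequality leaves
\[
2\;\le\;4\bigl(\mu_\gamma[\calC^h_{\srp\srp}(\R)]+\mu_\gamma[\calC^v_{\srp\srp}(\R)]\bigr),
\]
which is the desired bound.

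The main obstacle is precisely this third reduction: the boundary conditions of $\mu_\gamma$ are a mixture of a $\rp$-circuit and a $\bp$-circuit, so — unlike the uniformly $\rp$ conditions of Lemma~\ref{lem:square_crossed} — neither a lattice reflection nor the red–blue swap is on its own a symmetry of $\mu_\gamma$, and one must check carefully that the symmetric-cylinder detour restores exactly the symmetry needed and that the restriction of $\widehat\mu$ to $\calU_\gamma$ really is $\mu_\gamma$ with no spurious multiplicative constant (again a consequence of $x=1$). I expect the rest to be routine: the exploration argument isolating $\calU_\gamma$, the FKG and monotonicity inputs, and the geometric inclusions (e.g. $\calC^v_{\srp\srp}(\R)\subset\calC^v_{\srp\srp}([-3n,3n]\times[3n,4n])$) needed to pass from Claims~\ref{cl:rsw1}–\ref{cl:rsw2} to Proposition~\ref{prop:cross_cyl}.
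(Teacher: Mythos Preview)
Your overall plan --- explore the lowest double-$\bp$ circuit $\Gamma=\gamma$, isolate the domain above it, and run a self-duality argument on a symmetric cylinder --- is the same as the paper's, and your first two reductions ($\rm\rm\le\rp\rp$ via red monotonicity, $\bm\bm\le\bp\bp$ via blue monotonicity) are fine. The gap is in your third reduction.

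Your symmetric cylinder $\widetilde\Cyl$ is obtained by reflecting $\calU_\gamma$ through $\Line(5n)$ and imposing $\sigma_b\equiv\bp$ on both ends $\gamma,\gamma^*$, then conditioning on $\sigma_r\equiv\rp$ along $\Line(5n)$. Two things go wrong. First, the reflection through $\Line(5n)$ does \emph{not} fix $\R=[-2n,2n]\times[3n,4n]$: it sends $\R$ to a rectangle at heights $[6n,7n]$. Second, the composition of this reflection with the red--blue swap is \emph{not} a symmetry of $\widehat\mu$: it turns the boundary condition $\bp\bp$ on $\gamma\cup\gamma^*$ into $\rp\rp$, and the interior conditioning $\rp\rp$ on $\Line(5n)$ into $\bp\bp$, producing a genuinely different measure. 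So neither ingredient of your ``combined symmetry'' delivers the identity $\widehat\mu[\calC^{\cdot}_{\sbp\sbp}(\R)]=\widehat\mu[\calC^{\cdot}_{\srp\srp}(\R)]$ you need.

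The paper repairs exactly this point by choosing the axis of reflection and the boundary colours differently. It reflects $\gamma$ through the horizontal line $\bbR\times\{7n/2\}$ --- the \emph{centre of $\R$}, so $\R$ is fixed --- to obtain $\overline\gamma$ (lying above $\Line(5n)$), and takes $\widetilde\Cyl$ to be the cylinder between $\gamma$ and $\overline\gamma$ with boundary conditions $\bp\bp$ on $\gamma$ and $\rp\rp$ on $\overline\gamma$. Now reflection through $\{7n/2\}$ composed with the red--blue swap \emph{is} a symmetry of $\mu_{\widetilde\Cyl}^{\srp\srp/\sbp\sbp}$ and maps $\calC^{v}_{\sbp\sbp}(\R)\leftrightarrow\calC^{v}_{\srp\srp}(\R)$, $\calC^{v}_{\sbm\sbm}(\R)\leftrightarrow\calC^{v}_{\srm\srm}(\R)$. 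One application of Lemma~\ref{lem:monochrom} (not two) then gives
\[
1\le \mu_{\widetilde\Cyl}^{\srp\srp/\sbp\sbp}[\calC^h_{\srp\srp}(\R)]
+\mu_{\widetilde\Cyl}^{\srp\srp/\sbp\sbp}[\calC^h_{\srm\srm}(\R)]
+\mu_{\widetilde\Cyl}^{\srp\srp/\sbp\sbp}[\calC^v_{\srp\srp}(\R)]
+\mu_{\widetilde\Cyl}^{\srp\srp/\sbp\sbp}[\calC^v_{\srm\srm}(\R)],
\]
and the $\rm\rm\le\rp\rp$ comparison (your first reduction, now inside $\widetilde\Cyl$) yields $\tfrac12\le\mu_{\widetilde\Cyl}^{\srp\srp/\sbp\sbp}[\calC^h_{\srp\srp}(\R)]+\mu_{\widetilde\Cyl}^{\srp\srp/\sbp\sbp}[\calC^v_{\srp\srp}(\R)]$. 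The link back to $\mu(\cdot\mid\Gamma=\gamma)$ is the identity
\[
\mu(\,\cdot\mid\Gamma=\gamma)=\mu_{\widetilde\Cyl}^{\srp\srp/\sbp\sbp}\bigl(\,\cdot\mid \textsf{Top}\equiv\rp\bigr)
\]
(where $\textsf{Top}=\Line(5n)$), together with FKG for red to drop this increasing conditioning in the right direction. In short: the axis must pass through the centre of $\R$, and the two boundary circuits must carry \emph{different} colours so that reflection composed with colour swap is a genuine symmetry; the price is that you then have to remove the $\Line(5n)\equiv\rp$ conditioning via FKG rather than keep it in the definition of the measure.
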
	
	\begin{proof}
		 If~$\Circ_{\sbp\sbp}(\Strip_2)$ occurs, let~$\Gamma$ be the lowest circuit as in its definition.
		 Let~$\gamma$ be a possible realisation for~$\Gamma$ and let~$\overline \gamma$ be the reflection of~$\gamma$ 
		 with respect to the horizontal line~$\bbR \times \{7n/2\}$. 
		 Then~$\overline \gamma$ lies entirely above~$\bbR \times \{5n\}$, hence above the top of~$\Cyl$. 
		 It will be useful to view~$\gamma$ and~$\overline\gamma$ as drawn on the infinite vertical cylinder 
		~$\Cyl_\infty := (\bbR / (2Cn+4)\bbZ) \times \bbR$, of whom~$\Cyl$ is a subset.  
		 
		 Let~$\widetilde\Cyl$ be the cylinder contained between~$\gamma$ and~$\overline \gamma$
		 and let~$\textsf{Top}$ be the top boundary of~$\Cyl$ (it may be seen as a horizontal circuit in~$\widetilde \Cyl$). 
		 Let~$\mu_{\widetilde\Cyl}^{\srp\srp/\sbp\sbp}$ be the measure on~$\widetilde \Cyl$ 
		 with boundary conditions~$\rp\rp$ on~$\overline \gamma$ and~$\bp\bp$ on~$\gamma$.
		 Precisely,~$\mu_{\widetilde\Cyl}^{\srp\srp/\sbp\sbp}$ is the uniform measure on pairs of coherent spin configuration 
		~$(\sigma_r, \sigma_b)$ on~$\widetilde{\Cyl}$ with the property that 
		 all faces adjacent to~$\gamma$ have blue spin~$\sigma_b = \bp$ and all faces adjacent to~$\overline\gamma$ have~$\sigma_r = \rp$. 
		 
		 Then, both the red-spin and blue-spin marginal of~$\mu_{\widetilde\Cyl}^{\srp\srp/\sbp\sbp}$ have the FKG property.
		 We sketch the proof of this fact next. 
		 Embed~$\widetilde\Cyl$ in the plane in the same way that~$\Cyl_{m,n}$ was embedded in Figure~\ref{fig:R_Cyl};
		 call~$\calD$ the planar graph thus obtained. 
		 The measure~$\mu_{\widetilde\Cyl}^{\srp\srp/\sbp\sbp}$ 
		 is equal to that on~$\calD$ with the faces adjacent to~$\gamma$ conditioned to have blue spin~$\bp$ 
		 and those adjacent to~$\overline\gamma$ to have red spin~$\rp$. 
		 By Corollary~\ref{cor:fkg}, this conditioned measure does satisfy the FKG inequality for both the blue and red-spin marginals. 
		
		 Moreover, the boundary conditions of~$\mu_{\widetilde\Cyl}^{\srp\srp/\sbp\sbp}$ 
		 satisfy the following Spatial Markov property for measures on~$\Cyl_\infty$.
		 Let~$A_r$ be the set of faces that are either 
		 below~$\gamma$, above~$\overline \gamma$, or below~$\overline \gamma$ but adjacent to it. 
		 Similarly, let~$A_b$ be the set of faces that are above~$\overline \gamma$, below~$\gamma$, or above~$\gamma$ but adjacent to it. 
		 Then, for any red spin configuration~$\xi_r$ with the property that all faces adjacent to~$\overline \gamma$ have spin~$\rp$
		 and any blue spin configuration~$\xi_b$ with the property that all faces adjacent to~$ \gamma$ have spin~$\bp$, we have 
		~$$ \mu_{\Cyl_\infty}(. \, | \,  \sigma_r = \xi_r \text{ on~$A_r$ and }\sigma_b = \xi_b \text{ on~$A_b$} ) = 
		 \mu_{\widetilde\Cyl}^{\srp\srp/\sbp\sbp},$$
		 where the equality refers only to the restriction on~$\widetilde\Cyl$. 
		 This fact may be proved exactly as Theorem~\ref{thm:DMP} and we do not give further details. 
		 
		 In addition, the Spatial Markov property for boundary conditions~$\rp\rp$ holds under~$\mu_{\widetilde\Cyl}^{\srp\srp/\sbp\sbp}$. 
		 Thus, using the FKG property for~$\mu_{\widetilde\Cyl}^{\srp\srp/\sbp\sbp}$, we find
		 \begin{align}\label{eq:Gamma}
		 	\mu \big(\calC^h_{\srp\srp}(\R) \,\big|\, \Gamma = \gamma \big)
			=\mu_{\widetilde\Cyl}^{\srp\srp/\sbp\sbp} \big(\calC^h_{\srp\srp}(\R) \,\big|\, \textsf{Top} \equiv \rp\big) 
			\geq \mu_{\widetilde\Cyl}^{\srp\srp/\sbp\sbp} \big(\calC^h_{\srp\srp}(\R)\big),
		 \end{align}
		 where~$\textsf{Top} \equiv \rp$ stands for the event that all faces adjacent to the top of~$\Cyl$ have red spin~$\rp$.
		The same holds for~$\calC^v_{\srp\srp}(\R)$. 
		
		Finally, let us mention that~$\mu_{\widetilde\Cyl}^{\srp\srp/\sbp\sbp}$ 
		is invariant under reflection with respect to the horizontal line~$\bbR \times \{7n/2\}$
		composed with colour inversion. 
		This transformation maps~$\calC^v_{\srp\srp}(\R)$ onto~$\calC^v_{\sbp\sbp}(\R)$
		and~$\calC^v_{\srm\srm}(\R)$ onto~$\calC^v_{\sbm\sbm}(\R)$.
		Now, due to Lemma~\ref{lem:square_crossed},~$\R$ contains either a horizontal double-crossing of constant red spin, 
		or a vertical one of constant blue spin. Thus,
		\begin{align}\label{eq:wtCyl}
    		1 &\leq \mu_{\widetilde\Cyl}^{ \srp\srp / \sbp\sbp} \big(\calC^h_{\srp\srp}(\R)\big)
    		+ \mu_{\widetilde\Cyl}^{ \srp\srp / \sbp\sbp} \big(\calC^h_{\srm\srm}(\R)\big)
    		+ \mu_{\widetilde\Cyl}^{ \srp\srp / \sbp\sbp} \big(\calC^v_{\sbp\sbp}(\R)\big)
    		+ \mu_{\widetilde\Cyl}^{ \srp\srp / \sbp\sbp} \big(\calC^v_{\sbm\sbm}(\R)\big)\\
    		&= \mu_{\widetilde\Cyl}^{ \srp\srp / \sbp\sbp} \big(\calC^h_{\srp\srp}(\R)\big)
    		+ \mu_{\widetilde\Cyl}^{ \srp\srp / \sbp\sbp} \big(\calC^h_{\srm\srm}(\R)\big)
    		+ \mu_{\widetilde\Cyl}^{ \srp\srp / \sbp\sbp} \big(\calC^v_{\srp\srp}(\R)\big)
    		+ \mu_{\widetilde\Cyl}^{ \srp\srp / \sbp\sbp} \big(\calC^v_{\srm\srm}(\R)\big).\nonumber
		\end{align}
		Define the boundary conditions~$\rm\rm /\bp\bp$ on~$\widetilde\Cyl$ in the same way as~$\rp\rp /\bp\bp$.
		Then~$\mu_{\widetilde\Cyl}^{ \srm\srm / \sbp\sbp}$ 
		is obtained from~$\mu_{\widetilde\Cyl}^{ \srp\srp / \sbp\sbp}$ by flipping the sign of all red spins. 
		Using the same argument as in Corollary~\ref{cor:monotonicity_bc} \textit{(ii)}, it may be shown that the red spin marginal
		of~$\mu_{\widetilde\Cyl}^{ \srp\srp / \sbp\sbp}$ dominates that of~$\mu_{\widetilde\Cyl}^{\srm\srm/\sbp\sbp}$.
		In particular,  
		\begin{align*}
    		\mu_{\widetilde\Cyl}^{ \srp\srp / \sbp\sbp} \big(\calC^h_{\srm\srm}(\R)\big) = 
    		\mu_{\widetilde\Cyl}^{ \srm\srm / \sbp\sbp} \big(\calC^h_{\srp\srp}(\R)\big) 
    		\leq \mu_{\widetilde\Cyl}^{ \srp\srp / \sbp\sbp} \big(\calC^h_{\srp\srp}(\R)\big).
		\end{align*}
		The same holds for vertical crossings. 
		Insert the above in~\eqref{eq:wtCyl} and use~\eqref{eq:Gamma}, to find 
		\begin{align*}
    		1
			\leq 2\mu_{\widetilde\Cyl}^{ \srp\srp / \sbp\sbp} \big(\calC^h_{\srp\srp}(\R)\big) + 2\mu_{\widetilde\Cyl}^{ \srp\srp / \sbp\sbp} \big(\calC^v_{\srp\srp}(\R)\big)
			\leq 
			2\mu \big(\calC^h_{\srp\srp}(\R) \big|\, \Gamma = \gamma\big)
    		+ 2\mu \big(\calC^v_{\srp\srp}(\R) \big|\, \Gamma = \gamma\big).
		\end{align*}
%
		In conclusion 
		\begin{align*}
    		&\mu \big[\calC^h_{\srp\srp}(\R) \big|\, \Circ_{\sbp\sbp}(\Strip_2)\big] +
    		\mu \big[\calC^v_{\srp\srp}(\R) \big|\, \Circ_{\sbp\sbp}(\Strip_2)\big] \\
    		 &= \sum_{\gamma} 
    		\left[\mu \big(\calC^h_{\srp\srp}(\R) \big|\, \Gamma = \gamma\big)
    		+\mu \big(\calC^v_{\srp\srp}(\R) \big|\, \Gamma = \gamma\big)\right]\cdot \mu \big[\Gamma = \gamma \big|\, \Circ_{\sbp\sbp}(\Strip_2)\big]
    		 \geq \tfrac12,
		\end{align*}
		where the sum is over all possible realisations~$\gamma$ of~$\Gamma$.
	\end{proof}
	
	Finally, let us finish the proof of Proposition~\ref{prop:cross_cyl}. 
	For~$\delta > 0$ small enough for Claim~\ref{cl:rsw1} to hold, using Claims~\ref{cl:rsw1} and~\ref{cl:rsw2}, we find
	\begin{align*}
		&\mu \big(\calC^h_{\srp\srp}(\R) \big) +
		\mu \big(\calC^v_{\srp\srp}(\R)\big) \\
		\geq& 
		\Big(\mu \big[\calC^h_{\srp\srp}(\R) \big|\, \Circ_{\sbp\sbp}(\Strip_3)\big] +
		\mu \big[\calC^v_{\srp\srp}(\R) \big|\, \Circ_{\sbp\sbp}(\Strip_3)\big]\Big)\cdot \mu \big[\Circ_{\sbp\sbp}(\Strip_3)\big]
		\geq \frac18.
	\end{align*}
	However, by our assumption~\eqref{eq:assumption}, the left-hand side is bounded above by~$2\delta$. 
	This leads to a contradiction if~$\delta$ is chosen smaller than~$1/16$. 
\end{proof}

\begin{proof}[Corollary~\ref{cor:RSW_mixed_bc}]
	Fix~$C_h,C_v$ and~$n$ as in the statement. 
	We may assume~$n$ larger than some constant depending on~$C_v$ and~$C_h$;
	the inequality for smaller values may be satisfied by altering the value of~$\delta(C_v,C_h)$. 
	
	Apply Proposition~\ref{prop:cross_cyl} to~$N = \frac{C_v}5 n$ and~$C = \frac{C_h}{5C_v}$ to obtain that
	\begin{align}
    	&\mu_{\Cyl_{C_h n ,C_v n}}^{\srp\srp/\srm\srm} \big[ \calC^h_{\srp\srp}([-2N,2N] \times [3N,4N])\big] \geq \delta 	
		\label{eq:first_ineq}
    	\qquad\text{ or }\\
    	&\mu_{\Cyl_{C_h n ,C_v n}}^{\srp\srp/\srm\srm} \big[ \calC^v_{\srp\srp}([-3N,3N] \times [3N,4N])\big] \geq \delta, 
	\end{align}
	for some~$\delta> 0$ depending only on~$C_h$ and~$C_v$. 
	If the second inequality occurs, then Proposition~\ref{prop:RSW_Hugo} implies that
	\begin{align*}
    	\mu_{\Cyl_{C_h n ,C_v n}}^{\srp\srp/\srm\srm} \big[ \calC^h_{\srp\srp}([-2N,2N] \times [3N,4N])\big] \geq \psi(\delta) > 0.
	\end{align*}
	Thus, up to replacing~$\delta$ by~$\psi(\delta)$, we may suppose that \eqref{eq:first_ineq} holds always. 
	Then, by repeated applications of Corollary~\ref{cor:lengthen_crossings} we deduce that
	\begin{align}\label{eq:cyl_long_cross}
    	\mu_{\Cyl_{C_h n ,C_v n}}^{\srp\srp/\srm\srm} \big[ \calC^h_{\srp\srp}([-C_h n,C_h n] \times [3N,4N])\big] \geq \delta_0,
    \end{align}
    for some~$\delta_0 > 0$ depending only on~$C_h$ and~$C_v$. 
	
	
	A consequence of Lemma~\ref{lem:cyl_to_rect} 
	and of the FKG property is that the red-spin marginal of~$\mu_{\Cyl_{C_h n ,C_v n}}^{\srp\srp/\srm\srm}$
	is dominated by that of ~$\mu_{\Rect_{C_h n ,C_v n}}^{\srp\srp/\srm\srm}$. Using this, and the fact that~$N = \frac{C_v}5 n$, we find
	\begin{align}\label{eq:rect_long_cross}
    	&\mu_{\Rect_{C_h n ,C_v n}}^{\srp\srp/\srm\srm}\big[ \calC^h_{\srp\srp}([-C_h n,C_h n] \times [\tfrac35 C_v n,\tfrac45 C_v n])\big]\nonumber\\
	    &\geq \mu_{\Cyl_{C_h n ,C_v n}}^{\srp\srp/\srm\srm} \big[ \calC^h_{\srp\srp}([-C_h n,C_h n] \times [3N,4N])\big] 
	    \geq \delta_0.
    \end{align}
    
    Define the rectangles~$R_j = \Rect_{C_h n, (4/5)^j C_v n}$ and 
   ~$S_j = [-C_h n,C_h n] \times [\frac34 \cdot (\frac45)^{j} C_v n, (\frac45)^{j+1} C_v n]$.
    Then~\eqref{eq:rect_long_cross} applies to any of the rectangles~$R_j$ with~$j \geq 0$, and we find 
    \begin{align}\label{eq:S_j}
	    &\mu_{R_j}^{\srp\srp/\srm\srm} \big[ \calC^h_{\srp\srp}(S_j)\big] \geq \delta_j,
    \end{align}
    for some~$\delta_j > 0$ that depends on~$C_h$,~$C_v$ and~$j$, but not on~$n$ 
    \footnote{We may actually restrict ourselves to~$n$ and~$j$ such that the rectangles~$R_j$ and~$S_j$ do not degenerate below the mesh size. Indeed,~\eqref{eq:S_j} will only be used with~$j \leq \log_{5/4} C_v$ and~$n$ may be assumed large enough.}.
 
	When~$\calC^h_{\srp\srp}(S_j)$ occurs for some~$j \geq 0$, 
	there exists a double-$\rp$ path contained in~$S_j \subset R_{j+1}$, 
    connecting the left and right side of~$R_0$. 
    Any such path separates the top of~$R_0$ from its bottom.
    Let~$\Gamma$ be the highest such path and~$\textsf{Under}(\Gamma)$ be the set of faces of 
   ~$R_0$ that are separated from the top of~$R_0$ by~$\Gamma$.  
    Then~$\Gamma$ is measurable with respect to the spins above and adjacent to~$\Gamma$.
    For any possible realisation~$\gamma$ of~$\Gamma$,
   	due to the Spatial Markov property, 
   	the red-spin marginal of~$\mu_{R_0}^{\srp\srp/\srm\srm}[.|\Gamma = \gamma]$ restricted to~$\textsf{Under}(\gamma)$ 
    stochastically dominates that of~$\mu_{R_{j+1}}^{\srp\srp/\srm\srm}$. 
    It follows that 
    \begin{align}\label{eq:R_j}
    	\mu_{R_0}^{\srp\srp/\srm\srm} \big[\calC^h_{\srp\srp}(S_{j+1})\,\big|\, \Gamma = \gamma\big] 
		\geq \mu_{R_{j+1}}^{\srp\srp/\srm\srm} \big[\calC^h_{\srp\srp}(S_{j+1})\big] \geq \delta_{j+1}.
    \end{align}
    This may appear surprising, as it is not always the case that~$S_{j+1} \subset \textsf{Under}(\gamma)$. 
    Let us explain briefly why \eqref{eq:R_j} is nevertheless true. 
	Couple the red-spin marginals of 
	$\mu_{R_{j+1}}^{\srp\srp/\srm\srm}$ and~$\mu_{R_0}^{\srp\srp/\srm\srm}[.|\Gamma = \gamma]$ in an increasing fashion 
	(this is possible do to the stochastic domination of the former by the latter). 
	Then, if~$(\sigma_r, \tilde \sigma_r)$ is a sample of this coupling, 
	$\tilde\sigma_r$ is equal to~$\rp$ for all faces adjacent to~$\gamma$ and is  
	greater of equal to~$\sigma_r$ for the faces of~$\textsf{Under}(\gamma)$.
	If~$\sigma_r$ is such that~$\calC^h_{\srp\srp}(S_{j+1})$ occurs,
	then~$\tilde\sigma_r \in \calC^h_{\srp\srp}(S_{j+1})$ as well. See Figure~\ref{fig:R_jS_j} for an illustration.  
	
		\begin{figure}
	\begin{center}
    	\includegraphics[width = 0.6\textwidth]{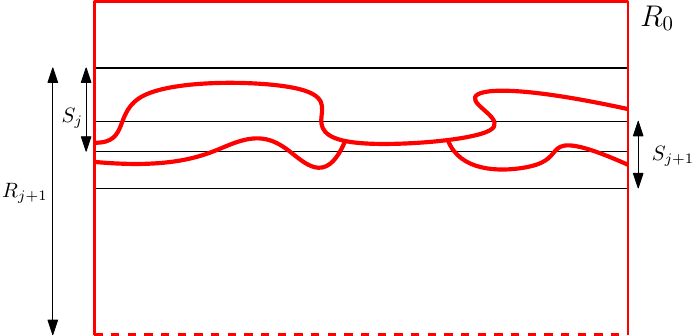}
	  	\caption{The rectangle~$R_0 = \Rect_{C_h n ,C_v n}$ with~$\frp\frp$ boundary conditions on the top and lateral sides 
		and~$\frm\frm$ on the bottom. 
		If~$\calC^h_{\sfrp\sfrp}(S_{j})$ occurs, the measure under~$\Gamma$ dominates 
		$\mu_{R_{j+1}}^{\sfrp\sfrp/\sfrm\sfrm}$ 
		and~$\calC^h_{\sfrp\sfrp}(S_{j+1})$ is more likely to occur than in~$R_{j+1}$.}
		\label{fig:R_jS_j}
	\end{center}
	\end{figure}

	Summing over all possible values of~$\gamma$, we find 
	\begin{align*}
    	\mu_{R_0}^{\srp\srp/\srm\srm} \big[\calC^h_{\srp\srp}(S_{j+1})\,\big|\, \calC^h_{\srp\srp}(S_{j})\big] 
		\geq \mu_{R_{j+1}}^{\srp\srp/\srm\srm} \big[\calC^h_{\srp\srp}(S_{j+1})\big] \geq \delta_{j+1}.
    \end{align*}
   	Iterating this for~$j < J := \lfloor \log_{5/4} C_v \rfloor$, we find
    \begin{align*}
    	\mu_{\Rect_{C_h n ,C_v n}}^{\srp\srp/\srm\srm} \big[ \bigcap_{j=0}^{J}\calC^h_{\srp\srp}(S_j)\big]
		&= \mu_{R_0}^{\srp\srp/\srm\srm} \big[\calC^h_{\srp\srp}(S_0)\big]
		\cdot \prod_{j=1}^{J} \mu_{R_0}^{\srp\srp/\srm\srm} \big[\calC^h_{\srp\srp}(S_{j})\big| \calC^h_{\srp\srp}(S_{j-1})\big] 
		\geq \prod_{j=0}^{J} \delta_j.
    \end{align*}
 	Notice that~$S_J$ is included in~$\Rect_{C_h n,n}$, hence the above implies that
	\begin{align*}
		\mu_{\Rect_{C_h n ,C_v n}}^{\srp\srp/\srm\srm} \big[ \calC^h_{\srp\srp}(\Rect_{C_h n,n})\big]	\geq \prod_{j=0}^{J} \delta_j.
	\end{align*}
	The right-hand side of the above is a positive constant depending only on~$C_h$ and~$C_v$, and the proof is complete. 
\end{proof}

\subsection{Proof of dichotomy theorem (Theorem~\ref{thm:dicho} and Corollary~\ref{cor:dicho})}

\begin{proof}[Theorem~\ref{thm:dicho}]
	\newcommand{\B}{{\sf B}}
	Fix~$n$ and let~$\rho$ be some large constant (we will see below how to choose it). 
	We will work in the domain~$\B := \La_{\rho(\rho+2) n}$, under the measure~$\mu_{\B}^{\srm\srm}$. 
	The steps of the proof are described in Figure~\ref{fig:dicho}.

    \begin{figure}
    \begin{center}
    	\includegraphics[width = 0.45\textwidth, page = 1]{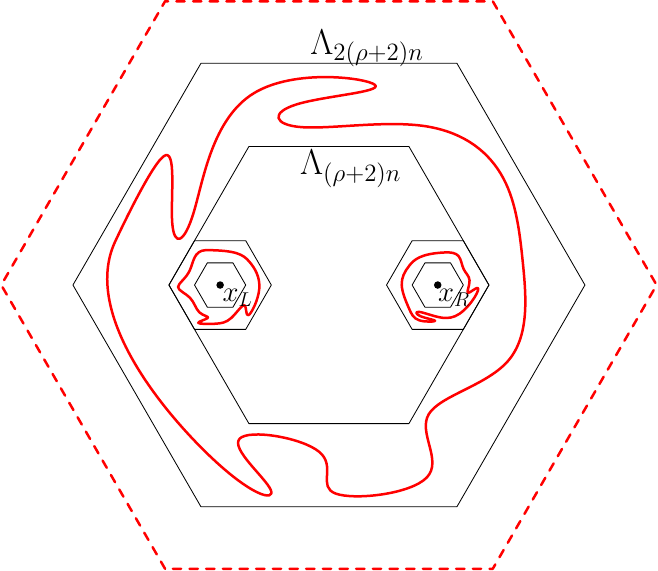}\quad
    	\includegraphics[width = 0.45\textwidth, page = 2]{dicho.pdf}\\
      	\caption{\textbf{Left:} Create~$\Circ_{\sfrp\sfrp}(x_L)$ and~$\Circ_{\sfrp\sfrp}(x_R)$ 
    	by first creating one double-$\frm$ circuit surrounding the whole of~$\La_{(\rho+2)n}$ 
		(at a cost~$\alpha_{(\rho+2)n}$), 
    	then creating two smaller circuits inside~$\La_{(\rho+2)n}$ which come at constant cost. 
    	\textbf{Right:} when~$\Circ_{\sfrp\sfrp}(x_L) \cap \Circ_{\sfrp\sfrp}(x_R)$ occurs, 
    	Corollary~\ref{cor:RSW_mixed_bc} allows us to create two long double-$\frm$ crossings 
    	in the strips above and below~$\La_{2n}(x_L)$.
    	}
    	\label{fig:dicho}
    \end{center}
    \end{figure}
	
	Let~$x_L = (-\rho n,0)$ and~$x_R = (\rho n,0)$. 
	Write~$\La_k(x_L)$ for the ball of radius~$k$ centred at~$x_L$, and use the same notation for~$x_R$.
	Let~$\Circ_{\srp\srp}(x_L)$ and~$\Circ_{\srp\srp}(x_R)$ 
	be the events that there exists a double-$\rp$ circuit in~$\La_{2n}(x_L) \setminus \La_{n}(x_L)$
	and~$\La_{2n}(x_R) \setminus \La_{n}(x_R)$, respectively. 
	Notice that both~$\Circ_{\srp\srp}(x_L)$ and~$\Circ_{\srp\srp}(x_R)$ depend only on the spins inside~$\La_{(\rho + 2)n}$. 
	Recall that~$\Circ_{\srp\srp}(k,\ell)$ is the event that there exists a double-$\rp$ circuit in~$\La_{\ell}$ that surrounds~$\La_{k}$.
	
	By the monotonicity of boundary conditions 
	\begin{align*}
    	&\mu_{\B}^{\srm\srm}\big[\Circ_{\srp\srp}(x_L)\cap\Circ_{\srp\srp}(x_R)\big]\\
    	&\quad\geq \mu_{\B}^{\srm\srm}\big[\Circ_{\srp\srp}(x_L)\cap\Circ_{\srp\srp}(x_R) \,\big|\, \Circ_{\srp\srp}((\rho+2)n,2(\rho+2)n)\big]
    	\cdot \mu_{\B}^{\srm\srm}\big[\Circ_{\srp\srp}((\rho+2)n, 2(\rho+2)n)\big]\\
    	&\quad\geq \mu_{\La_{2(\rho+2)n}}^{\srp\srp}\big[\Circ_{\srp\srp}(x_L)\cap\Circ_{\srp\srp}(x_R)\big] \, \alpha_{(\rho+2)n}\\
	    &\quad\geq \mu_{\La_{2(\rho+2)n}}^{\srp\srp}\big[\Circ_{\srp\srp}(x_L)\big]^2 \, \alpha_{(\rho+2)n}.
	\end{align*}
	In the second inequality we used the monotonicity of boundary conditions (Corollary~\ref{cor:monotonicity_bc}) 
	and the definition of~$\alpha_{(\rho+2)n}$;
	the last inequality is due to the positive association of~$\sigma_r$ under~$\mu_{\La_{2(\rho+2)n}}^{\srp\srp}$.
	It is a standard consequence of the comparison of boundary conditions and Corollary~\ref{cor:RSW_mixed_bc} that 
	$ \mu_{\La_{2(\rho+2)n}}^{\srp\srp}[\Circ_{\srp\srp}(x_L)] > {c_0}$ for some constant~$c_0>0$ that does not depend on~$n$. 
	In conclusion 
	\begin{align}\label{eq:2circ}
		\mu_{\B}^{\srm\srm}\big[\Circ_{\srp\srp}(x_L)\cap\Circ_{\srp\srp}(x_R)\big] \geq c_0^2\, \alpha_{2(\rho+2)n}.
	\end{align}
	
	We will now condition ~$\mu_B^{\srm\srm}$ on the event~$\Circ_{\srp\srp}(x_L)\cap\Circ_{\srp\srp}(x_R)$, 
	and will construct double-$\rm$ circuits around~$\La_{2n}(x_L)$ and~$\La_{2n}(x_R)$. 
	Using the Spatial Markov property, these will allow to bound the probability in~\eqref{eq:2circ} 
	as a product of two probabilities~$\alpha_n$. 
	
	When ~$\Circ_{\srp\srp}(x_L)$ occurs, write~$\Xi_L$ for the innermost double-$\rp$ circuit 
	as in the definition of~$\Circ_{\srp\srp}(x_L)$.
	Then~$\Xi_L$ is measurable in terms of the spins of the faces inside and adjacent to it. 
	Define~$\Xi_R$ in the same way. 
	Let~$\chi_L$ and~$\chi_R$ be two possible realisations of~$\Xi_L$ and~$\Xi_R$, respectively.
	A straightforward variant of the Spatial Markov property (Theorem~\ref{thm:DMP}) 
	states that the restriction of~$\mu_{\B}^{\srm\srm}[. \, |\,  \Xi_L = \chi_L \text{ and }\Xi_R = \chi_R ]$
	to the faces of~$\B$ outside of~$\chi_L$ and~$\chi_R$ is independent of the values of the spins strictly inside~$\chi_L$ and~$\chi_R$. 
	In particular, the restricted measure above is equal to 
	$\mu_{\B}^{\srm\srm}[. \, |\,  \chi_L\equiv \rp \text{ and }\chi_R\equiv\rp ]$, 
	and its red-spin marginal satisfies the FKG inequality (see Corollary~\ref{cor:fkg}).
	
	Consider the horizontal strip~$\Strip_T = \bbR \times [\sqrt3n,2\sqrt3n]$; 
	it sits above~$\La_{2n}(x_L)$ and~$\La_{2n}(x_R)$. 
	Write~$\calC^h_{\srm\srm}(\Strip_T\cap \B)$ for the event that
	$\Strip_T\cap \B$  is crossed horizontally by a double-$\rm$ path
	($\Strip_T\cap \B$ is not technically a rectangle, but we use the same notation).
	Then Corollary~\ref{cor:RSW_mixed_bc} (or rather its variant with~$\rp$ and~$\rm$ inverted) implies the existence of a constant~$c_1> 0$
	independent of~$n$,~$\chi_L$ and~$\chi_R$ such that 
	\begin{align}\label{eq:StripT}
		\mu_{\B}^{\srm\srm}\big[\calC^h_{\srm\srm}(\Strip_T\cap \B) \, \big|\, \Xi_L = \chi_L \text{ and }\Xi_R = \chi_R\big]
		\geq c_1. 
	\end{align}
	Indeed, the red-spin marginal of~$\mu_{\B}^{\srm\srm}\big[. \, \big|\, \Xi_L = \chi_L \text{ and }\Xi_R = \chi_R\big]$ 
	restricted to~$\Strip_T \cap \B$ is dominated by that in the rectangle~$[-\rho(\rho+2) n, \rho(\rho+2) n]\times [\sqrt3n,\rho(\rho+2) n]$
	with boundary conditions~$\rp\rp$ on the bottom and~$\rm\rm$ on all other sides. 
	
	The estimate \eqref{eq:StripT} also holds for~$\Strip_B$, the symmetric of~$\Strip_T$ with respect to the horizontal axis~$\bbR\times\{0\}$. 
	Thus, by the FKG inequality,
	\begin{align*}
		\mu_{\B}^{\srm\srm}\big[\calC^h_{\srm\srm}(\Strip_T\cap \B) \cap \calC^h_{\srm\srm}(\Strip_L\cap \B)  \, \big|\, \Xi_L = \chi_L \text{ and }\Xi_R = \chi_R\big]
		\geq c_1^2. 
	\end{align*}
	Summing over all possible values~$\chi_L$ and~$\chi_R$ of~$\Xi_L$ and~$\Xi_R$ and using~\eqref{eq:2circ}, 
	we find
	\begin{align*}
		\mu_{\B}^{\srm\srm}\big[\calC^h_{\srm\srm}(\Strip_T\cap \B)\cap \calC^h_{\srm\srm}(\Strip_B \cap \B) \cap\Circ_{\srp\srp}(x_L)\cap\Circ_{\srp\srp}(x_R)\big]
		\geq c_1^2 \,c_0\, \alpha_{(\rho+2)n} . 
	\end{align*}
	As a consequence 
	\begin{align*}
		\mu_{\B}^{\srm\srm}\big[\Circ_{\srp\srp}(x_L)\cap\Circ_{\srp\srp}(x_R)\,\big|\, \calC^h_{\srm\srm}(\Strip_T\cap \B)\cap \calC^h_{\srm\srm}(\Strip_B \cap \B) \big]
		\geq c_1^2 \,c_0^2\, \alpha_{(\rho+2)n}. 
	\end{align*}
	
	\newcommand{\D}{{\sf D}}
	Write~$\D$ for the domain that is the intersection of~$\B$ with the strip~$\bbR \times [-2\sqrt3 n,2\sqrt3 n]$. 
	Then, by conditioning on the highest and lowest double-$\rm$ crossings of~$\Strip_T$ and~$\Strip_B$, respectively, 
	using the spatial Markov property and the monotonicity of boundary conditions, we find
	\begin{align*}
    	&\mu_{\D}^{\srm\srm}\big[\Circ_{\srp\srp}(x_L)\cap\Circ_{\srp\srp}(x_R)\big]\\
    	&\quad \geq \mu_{\B}^{\srm\srm}\big[\Circ_{\srp\srp}(x_L)\cap\Circ_{\srp\srp}(x_R)
			\,\big|\, \calC^h_{\srm\srm}(\Strip_T\cap \B)\cap \calC^h_{\srm\srm}(\Strip_B \cap \B) \big]
		\geq c_1^2 \,c_0^2\, \alpha_{(\rho+2)n}. 
	\end{align*}
	
	\begin{figure}
	\begin{center}
    	\includegraphics[width = 0.9\textwidth]{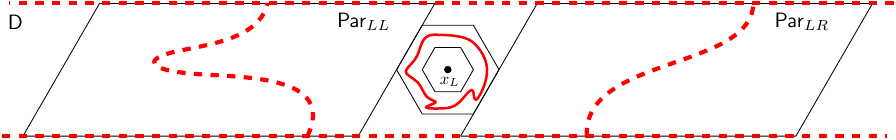}\quad
    	\caption{The rectangles~$\Par_{LL}$ and~$\Par_{LR}$ are to the left and right of~$\La_{2n}(x_L)$, respectively,
    	and their top and bottom is part of the boundary of~$\D$. Due to the~$\frm\frm$ boundary conditions on~$\D$, 
    	Lemma~\ref{lem:double_cross} applies to both these parallelograms. 
    	Thus, each is crossed vertically by a double-$\frm$ with uniform positive probability, 
    	independent of the configuration in the rest of~$\D$.}
    	\label{fig:dicho2}
	\end{center}
	\end{figure}
	
	Now consider the parallelogram~$\Par$ formed of the faces with centres 
	at~$k + \ell e^{i\pi/3}$ with~$0 \leq k \leq 24n$ and~$-4n \leq \ell \leq 4n$.
	Define its horizontal translates~$\Par_{LL} = \Par - ((\rho + 26)n, 0)$,~$\Par_{LR} = \Par - ((\rho -2)n, 0)$,
	$\Par_{RL} = \Par + ((\rho -26)n, 0)$ and~$\Par_{RR} = \Par + ((\rho +2)n, 0)$. 
	These are all contained in~$\D$, touch its top and bottom and are left of~$\La_{2n}(x_L)$, right of~$\La_{2n}(x_L)$, 
	left of~$\La_{2n}(x_R)$ and right of~$\La_{2n}(x_r)$, respectively. 
	Let us assume that~$\rho$ is large enough so that ~$\Par_{LL}$ and~$\Par_{LR}$ are included in~$\La_{\rho n}(x_L)$ 
	($\rho \geq 30$ suffices). 
	Then~$\Par_{RL}$ and~$\Par_{RR}$ are included in~$\La_{\rho n}(x_R)$ and, in particular, are disjoint from the first two parallelograms. 
	
	Now observe that, due to Lemma~\ref{lem:double_cross} (applied with~$\rm$ and~$\rp$ exchanged) 
	\begin{align*}
		&\mu_{\D}^{\srm\srm}\big[
			\calC_{\srm\srm}^{v}(\Par_{LL})\cap\calC_{\srm\srm}^{v}(\Par_{LR})
			\cap\calC_{\srm\srm}^{v}(\Par_{RL})\cap\calC_{\srm\srm}^{v}(\Par_{RR}) 
			\,\big|\, 
			\Circ_{\srp\srp}(x_L)\cap\Circ_{\srp\srp}(x_R)\big] 
		\geq c_2, 
	\end{align*}
	for some universal constant~$c_2 >0$.
	Then, using Bayes rule
	\begin{align*}
		&\mu_{\D}^{\srm\srm}\big[\Circ_{\srp\srp}(x_L)\cap\Circ_{\srp\srp}(x_R) \,\big|\, 
			\calC_{\srm\srm}^{v}(\Par_{LL})\cap\calC_{\srm\srm}^{v}(\Par_{LR})
			\cap\calC_{\srm\srm}^{v}(\Par_{RL})\cap\calC_{\srm\srm}^{v}(\Par_{RR})\big]\\
    	&\qquad \geq 
    	\mu_{\D}^{\srm\srm}\big[\calC_{\srm\srm}^{v}(\Par_{LL})\cap\calC_{\srm\srm}^{v}(\Par_{LR})
			\cap\calC_{\srm\srm}^{v}(\Par_{RL})\cap\calC_{\srm\srm}^{v}(\Par_{RR})  \,\big|\, 
				\Circ_{\srp\srp}(x_L)\cap\Circ_{\srp\srp}(x_R)\big] \\
		&\qquad  \qquad \times \mu_{\D}^{\srm\srm}\big[\Circ_{\srp\srp}(x_L)\cap\Circ_{\srp\srp}(x_R)\big] \\
		& \qquad \geq c_2\,c_1^2\,c_0^2\, \alpha_{(\rho+2)n}.
	\end{align*}
	Finally, by conditioning on the left-most vertical double-$\rm$ crossing of~$\Par_{LL}$ and the right-most of~$\Par_{LR}$, 
	and using the monotonicity of boundary conditions,
	it may be shown that the restriction of~$\mu_{\D}^{\srm\srm}[. \,|\, \calC_{\srm\srm}^{v}(\Par_{LL})\cap\calC_{\srm\srm}^{v}(\Par_{LR})]$ 
	to~$\La_{2n}(x_L)$ is dominated by that of~$\mu_{\La_{\rho n}(x_L)}^{\srm\srm}$. 
	Moreover, due to the Spatial Markov property, this is true even when conditioning on the spins to the right of~$\Par_{LR}$. 
	The same procedure may be applied to~$\mu_{\D}^{\srm\srm}[. \,|\,\calC_{\srm\srm}^{v}(\Par_{RL})\cap\calC_{\srm\srm}^{v}(\Par_{RR})]$  for the measure in~$\La_{2n}(x_R)$. 
	Notice that the areas that determine the restriction of~$\mu_{\D}^{\srm\srm}[. \,|\,\calC_{\srm\srm}^{v}(\Par_{LL})\cap\calC_{\srm\srm}^{v}(\Par_{LR})\cap\calC_{\srm\srm}^{v}(\Par_{RL})\cap\calC_{\srm\srm}^{v}(\Par_{RR})]$ to~$\La_{2n}(x_L)$ and~$\La_{2n}(x_R)$ are disjoint.
	Thus, the restriction of~$\mu_{\D}^{\srm\srm}[. \,|\,\calC_{\srm\srm}^{v}(\Par_{LL})\cap\calC_{\srm\srm}^{v}(\Par_{LR})\cap\calC_{\srm\srm}^{v}(\Par_{RL})\cap\calC_{\srm\srm}^{v}(\Par_{RR})]$ to~$\La_{2n}(x_L)\cap \La_{2n}(x_R)$ is dominated by the independent product of~$\mu_{\La_{\rho n}(x_L)}^{\srm\srm}$ and~$\mu_{\La_{\rho n}(x_R)}^{\srm\srm}$.
	In conclusion, 
	\begin{align*}
		&\mu_{\D}^{\srm\srm}\big[\Circ_{\srp\srp}(x_L)\cap\Circ_{\srp\srp}(x_R) \,\big|\, 
			\calC_{\srm\srm}^{v}(\Par_{LL})\cap\calC_{\srm\srm}^{v}(\Par_{LR})
			\cap\calC_{\srm\srm}^{v}(\Par_{RL})\cap\calC_{\srm\srm}^{v}(\Par_{RR})\big]\\
		&\leq \mu_{\La_{\rho n}(x_L)}^{\srm\srm}\big[\Circ_{\srp\srp}(x_L)\big] 
		\cdot \mu_{\La_{\rho n}(x_R)}^{\srm\srm}\big[\Circ_{\srp\srp}(x_R)\big] 
		= \alpha_n^2.
	\end{align*}
	The last two displayed equations yield the desired conclusion.
\end{proof}

\begin{proof}[Corollary~\ref{cor:dicho}]
	Let~$\rho,C$ be the constants of Theorem~\ref{thm:dicho}.  
	Suppose that~$\inf_n \alpha_n =0$. 
	Let~$n_0$ be such that~$\alpha_{n_0} \leq \frac{1}{2C}$. 
	Then a simple induction involving~\eqref{eq:recurrence} implies that~$\alpha_{(\rho +2)^k n_0} \leq \frac1C 2^{-2^k}$ for all~$k \geq 0$.
	This implies the stated inequality for~$c < \log2 / \log(\rho+2)$ and~$C$ chosen accordingly. 
\end{proof}

\section{Conclusions}\label{sec:macro}

In this section we prove Theorems~\ref{thm:loops} and~\ref{thm:Gibbs}. 
To this end, we first resolve the dichotomy stated in Corollary~\ref{cor:dicho} 
and then transfer the results from the spin representation to the loop~$O(2)$ model.

\subsection{Excluding stretched-exponential decay}\label{sec:no_exp_dec}

The goal of this section is to show that the case \textit{(ii)} of Corollary~\ref{cor:dicho} is incoherent with Theorem~\ref{thm:uniqueness_nu}. 
Once it is established that case \textit{(i)} holds, it is  fairly standard to deduce  Theorem~\ref{thm:loops}; 
this is done in Section~\ref{sec:proof_polynomial_decay}.

\begin{prop}\label{prop:dicho-resolve}
	Case \textit{(i)} of Corollary~\ref{cor:dicho} occurs. 
	That is,~$\inf_n \mu_{\La_{\rho n}}^{\srm\srm}\big[\Circ_{\srp\srp}(n,2n)\big] > 0$ for some fixed constant~$\rho > 2$.
\end{prop}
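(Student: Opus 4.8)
The idea is to argue by contradiction: suppose case \textit{(ii)} of Corollary~\ref{cor:dicho} holds, so that $\alpha_n = \mu_{\La_{\rho n}}^{\srm\srm}[\Circ_{\srp\srp}(n,2n)]$ decays stretch-exponentially along the subsequence $n = (\rho+2)^k n_0$. I would like to turn this into a statement that, with $\nu_\bbH^{\srp\srp}$-probability close to $1$, there is no double-$\rp$ circuit surrounding $\La_n$ inside $\La_{2n}$, and then contradict Corollary~\ref{cor:circ_dp_dp} (equivalently Theorem~\ref{thm:uniqueness_nu}), which asserts that under $\nu_\bbH^{\srp\srp}$ the origin is a.s.\ surrounded by infinitely many disjoint double-$\rp$ circuits. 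The subtlety is that $\alpha_n$ is a probability for the measure with $\srm\srm$ boundary conditions on $\La_{\rho n}$, not for the infinite-volume measure $\nu_\bbH^{\srp\srp}$; so the first task is to relate the two.

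First I would record the monotonicity bridge. By Corollary~\ref{cor:monotonicity_bc}, $\nu_\bbH^{\srp\srp}$ restricted to $\La_{\rho n}$ is dominated by $\nu_{\La_{\rho n}}^{\srp\srp}$ and dominates $\nu_{\La_{\rho n}}^{\srm\srm}$; in particular $\nu_\bbH^{\srp\srp}[\Circ_{\srp\srp}(n,2n)] \geq \nu_{\La_{\rho n}}^{\srm\srm}[\Circ_{\srp\srp}(n,2n)] = \alpha_n$, which is the wrong direction. So instead I would exploit the self-duality encoded in Lemma~\ref{lem:monochrom} and Remark~\ref{rem:monochrom}: if the annulus $\La_{2n}\setminus \La_n$ contains no double-$\rp$ and no double-$\rm$ circuit around $\La_n$, then $\La_n$ is connected to $\La_{2n}^c$ by a double-path of \emph{constant blue} spin. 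Under $\nu_{\La_{\rho n}}^{\srm\srm}$ — or rather under the joint measure $\mu_{\La_{\rho n}}^{\srm\srm}$ — the blue spins are obtained by colouring the clusters of $\theta(\sigma_r)$ uniformly, so a double-blue crossing of the annulus of a \emph{fixed} sign has probability at most one half of the probability of a crossing of \emph{either} blue sign, while a crossing of \emph{either} blue sign is, by the symmetry between $\srm\srm$ and the corresponding blue boundary conditions together with Lemma~\ref{lem:square_crossed} / Corollary~\ref{cor:RSW_mixed_bc}, comparable to the absence of both red circuits. Combining, $1 - \alpha_n - \alpha_n' \leq \mu_{\La_{\rho n}}^{\srm\srm}[\La_n \xlra{\text{double-blue}} \La_{2n}^c]$ where $\alpha_n'$ is the analogue for double-$\rm$ circuits; by the $\srm\srm$ boundary conditions $\alpha_n' \geq \alpha_n$ (it is in fact easier), so this is not immediately a contradiction either. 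The right move is to use the strong RSW input of Section~\ref{sec:RSW_Hugo} together with Corollary~\ref{cor:RSW_mixed_bc}: in a cylinder or annulus with $\srp\srp/\srm\srm$ (or pushed) boundary conditions, crossing probabilities of annuli are bounded below by a constant, which — via Lemma~\ref{lem:square_crossed}-type counting — forces one of the four monochromatic circuit events to have probability bounded below uniformly. The contradiction with stretch-exponential decay of $\alpha_n$ then comes from a renormalisation / quasi-multiplicativity argument: a double-$\rp$ circuit around $\La_n$ inside $\La_{2n}$ can be built by gluing $O(1)$ crossings of parallelograms of aspect ratio bounded away from $1$, each of which has probability bounded below by Corollary~\ref{cor:RSW_mixed_bc} and Corollary~\ref{cor:lengthen_crossings} applied with the $\srm\srm$ (equivalently pushed) boundary conditions on $\La_{\rho n}$, so $\alpha_n \geq c > 0$ for all $n$ in the subsequence, contradicting $\alpha_n \leq C e^{-n^c}$.

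Concretely, the steps I would carry out, in order, are: (1) fix $\rho$ as in Theorem~\ref{thm:dicho} and assume for contradiction that $\alpha_{n_k} \to 0$ along $n_k = (\rho+2)^k n_0$; (2) show, using Corollary~\ref{cor:RSW_mixed_bc} (the "pushing" lemma) in the annulus $\La_{2n}\setminus\La_n$ with $\srp\srp$ boundary conditions on $\La_{2n}$ and $\srm\srm$ outside $\La_{\rho n}$, that $\mu_{\La_{\rho n}}^{\srm\srm}$ assigns probability bounded below by a constant $c_0 > 0$ to the existence of, say, a double-$\rm$ horizontal crossing of each of a bounded number of parallelograms tiling the annulus; (3) glue these crossings via Corollary~\ref{cor:lengthen_crossings} and the FKG inequality to produce a double-$\rm$ circuit around $\La_n$ inside $\La_{2n}$ with probability bounded below — but observe that by the symmetry $\srm\srm \leftrightarrow \srp\srp$ under global red-spin flip combined with vertical reflection, this is the \emph{same} as a lower bound on $\mu_{\La_{\rho n}}^{\srp\srp}[\Circ_{\srm\srm}(n,2n)]$, and then transfer back to $\Circ_{\srp\srp}$ under $\mu^{\srm\srm}$ via the monochromatic-crossing inequality of Lemma~\ref{lem:monochrom}/Lemma~\ref{lem:square_crossed} exactly as in the proof of Proposition~\ref{prop:cross_cyl}; (4) conclude $\alpha_n \geq c > 0$ for all $n$, contradicting (1). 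The main obstacle — and the reason the self-duality step needs care — is that the boundary conditions $\srm\srm$ are not the "favourable" ones for $\Circ_{\srp\srp}$, so one cannot simply apply FKG to push crossings; the whole point of invoking Corollary~\ref{cor:RSW_mixed_bc} (which was proved precisely in mixed boundary conditions, and itself relies on the strong RSW of Proposition~\ref{prop:RSW_Hugo}) is to get crossing estimates that survive the unfavourable boundary, and then the colour-swap symmetry of Lemma~\ref{lem:square_crossed} converts an estimate for the easy colour into one for the hard colour. I would expect steps (2)–(3), keeping track of which boundary conditions dominate which and ensuring all the parallelograms used fit inside $\La_{\rho n}$ with room to spare (this is exactly why $\rho$ must be taken large, $\rho \geq 30$ as in the proof of Theorem~\ref{thm:dicho}), to be where the real work lies.
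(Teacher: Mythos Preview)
Your proposal has a genuine gap at the ``colour-swap'' step. In step~(3) you assert that a lower bound on $\mu_{\La_{\rho n}}^{\srm\srm}[\Circ_{\srm\srm}(n,2n)]$ ``is the same as a lower bound on $\mu_{\La_{\rho n}}^{\srp\srp}[\Circ_{\srm\srm}(n,2n)]$'' via global spin flip. This is false: the spin flip $\sigma_r\mapsto-\sigma_r$ sends $\mu^{\srm\srm}[\Circ_{\srm\srm}]$ to $\mu^{\srp\srp}[\Circ_{\srp\srp}]$, not to $\mu^{\srp\srp}[\Circ_{\srm\srm}]$. What you can bound from below is therefore $\mu^{\srp\srp}[\Circ_{\srp\srp}(n,2n)]$ --- the favourable-boundary statement, which is easy and says nothing about $\alpha_n = \mu^{\srm\srm}[\Circ_{\srp\srp}(n,2n)]$. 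Your subsequent appeal to the monochromatic-crossing argument of Proposition~\ref{prop:cross_cyl} fails for the same reason: that proof builds a domain symmetric under a map exchanging the $\srp\srp$ and $\sbp\sbp$ boundary arcs, whereas $\La_{\rho n}$ with pure $\srm\srm$ boundary has no transformation interchanging $\rp$ and $\rm$ while preserving the measure; Lemma~\ref{lem:monochrom} only guarantees that \emph{one} of four events occurs, and under $\mu^{\srm\srm}$ the favoured one is $\Circ_{\srm\srm}$. (Relatedly, your step~(2) invokes Corollary~\ref{cor:RSW_mixed_bc} ``with $\srp\srp$ boundary conditions on $\La_{2n}$'', but no such arc exists under $\mu_{\La_{\rho n}}^{\srm\srm}$ --- the pushing lemma needs a $\srp\srp$ side to push from.)

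The paper supplies exactly the idea you considered in your first paragraph but abandoned. Instead of bounding $\alpha_n$ from below directly, it uses the assumption~\eqref{assumption:exp_decay} constructively: Lemmas~\ref{lem:no_red_plus}--\ref{lem:yes_red_plus} show that~\eqref{assumption:exp_decay} forces $\mu_{\La_{\kappa n}}^{\srm\srm}[\Circ_{\srm\srm}(n,2n)]\to 1$ stretch-exponentially, and this transfers to $\mu_\bbH$ (now monotonicity goes the \emph{right} way, since $\Circ_{\srm\srm}$ is decreasing). The key step you are missing is then to invoke the spin-flip invariance of the \emph{infinite-volume} measure established in Theorem~\ref{thm:uniqueness_nu}: since $\mu_\bbH^{\srp\srp}=\mu_\bbH^{\srm\srm}$, the high $\mu_\bbH$-probability of nested $\srm\srm$ circuits yields the same for nested $\srp\srp$ circuits. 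Both then coexist with probability at least $1/2$; conditioning on an outer $\srm\srm$ circuit gives, by spatial Markov, a $\mu^{\srm\srm}$-measure under which a large double-$\rp$ cluster occurs with uniformly positive probability, contradicting Lemma~\ref{lem:no_red_plus}. The only available $\rp\leftrightarrow\rm$ symmetry that does not change the measure lives at infinite volume, and reaching it requires first extracting consequences of~\eqref{assumption:exp_decay} rather than trying to bound $\alpha_n$ directly.
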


The constant~$\rho$ and the ratio between the inner and outer radii of the annulus above may actually be chosen arbitrarily, as we prove below. 
Other variants referring to rectangle crossings may also be formulated. 

\begin{cor}\label{cor:RSW_strong}
	For any~$a > 1$, 
	\begin{align*}
		\inf\big\{ \mu_{\La_{a n}}^{\srm\srm}\big[\Circ_{\srp\srp}(n,a n)\big]\,:\, n \geq \tfrac{3}{a-1}\big\} > 0.
	\end{align*}
\end{cor}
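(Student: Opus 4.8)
The idea is that Proposition~\ref{prop:dicho-resolve} is already the statement we want for one particular choice of inner/outer radii (ratio~$2$) and one particular value of~$\rho$, and Corollary~\ref{cor:RSW_strong} follows from it by standard gluing and pushing arguments, using only the tools assembled in Section~\ref{sec:dichotomy}: the FKG inequality (Theorem~\ref{thm:FKG} and Corollary~\ref{cor:fkg}), the comparison of boundary conditions (Corollary~\ref{cor:monotonicity_bc}), the crossing-in-mixed-boundary-conditions estimate (Corollary~\ref{cor:RSW_mixed_bc}), and the lengthening lemma (Corollary~\ref{cor:lengthen_crossings}). First I would observe that it suffices to prove the corollary for all~$a$ in a fixed interval, say~$a \in (1,2]$, and separately for all large~$a$; intermediate values follow by monotonicity of~$\Circ_{\srp\srp}(n,an)$ in~$a$ (a larger annulus is easier to surround, once the radii are comparable) together with the observation that if~$\Circ_{\srp\srp}(n, a_1 n)$ holds with probability bounded below and~$a_1 < a_2$, then conditioning~$\mu^{\srm\srm}_{\La_{a_2 n}}$ to have a double-$\rm$ circuit near radius~$a_2 n$ and using the Spatial Markov property reduces to the smaller domain.

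\textbf{Key steps.} (1) \emph{From Proposition~\ref{prop:dicho-resolve} to a circuit in a thin annulus.} By Proposition~\ref{prop:dicho-resolve} there is~$c_0 > 0$ with~$\mu_{\La_{\rho n}}^{\srm\srm}[\Circ_{\srp\srp}(n,2n)] \geq c_0$ for all~$n$. Using the monotonicity of boundary conditions (Corollary~\ref{cor:monotonicity_bc}), for any domain~$\calD \supseteq \La_{2n}$ with~$\rm\rm$ boundary conditions imposed sufficiently far out (e.g. by first creating a double-$\rm$ circuit at radius~$\rho n$ inside a larger domain, at a cost bounded below by Corollary~\ref{cor:RSW_mixed_bc}), the same lower bound holds. (2) \emph{Building a circuit around a larger annulus from small ones.} For fixed~$a>1$, cover the annulus~$\La_{an}\setminus \La_n$ by a bounded number~$K = K(a)$ of overlapping copies of parallelograms (translates and rotates of~$\Par_{m,m}$ with~$m \asymp (a-1)n$) arranged around the annulus, plus short ``corner'' connectors. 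Each parallelogram is crossed in the long direction by a double-$\rp$ path with probability bounded below, uniformly in~$n$: this follows from Corollary~\ref{cor:lengthen_crossings} (lengthening) applied to the square-crossing input, which in turn comes from Proposition~\ref{prop:dicho-resolve} via step~(1) and the Spatial Markov property. By the FKG inequality the intersection of all these~$K(a)$ crossing events has probability at least~$c(a)^{K(a)} > 0$, and on this intersection the double-$\rp$ crossings concatenate into a double-$\rp$ circuit surrounding~$\La_n$ inside~$\La_{an}$. (3) \emph{Transferring to the measure~$\mu^{\srm\srm}_{\La_{an}}$.} The argument of step~(2) should be carried out directly under~$\mu^{\srm\srm}_{\La_{an}}$; the only subtlety is ensuring that the small-parallelogram crossing estimates apply inside this domain with its outer~$\rm\rm$ boundary condition. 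This is exactly the content of Corollary~\ref{cor:RSW_mixed_bc} (the ``pushing'' lemma) combined with the Spatial Markov property: conditioning on an outermost double-$\rm$ circuit near the boundary of~$\La_{an}$ and applying the FKG inequality and monotonicity reduces the estimate inside to a crossing estimate in a rectangle with mixed boundary conditions, which is Corollary~\ref{cor:RSW_mixed_bc}. The constraint~$n \geq 3/(a-1)$ guarantees that all the parallelograms and connectors used in step~(2) are non-degenerate (above the mesh size), so the bounded combinatorial construction makes sense.

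\textbf{Main obstacle.} The delicate point is the uniformity in~$a$: the construction in step~(2) uses~$K(a) \to \infty$ parallelograms as~$a \to 1^+$, so the bound~$c(a)^{K(a)}$ degenerates, but this is fine because the corollary only claims~$\inf_n$ for each fixed~$a$, not~$\inf_{a,n}$. The genuinely careful part is verifying that Corollary~\ref{cor:lengthen_crossings} and Lemma~\ref{lem:double_cross} apply verbatim (or with the ``slight adaptations left to the reader'' already flagged in Section~\ref{sec:dichotomy}) to parallelograms that are rotated and translated to tile an annulus, since these lemmas were stated for axis-aligned~$\Par_{m,n}$; one must check that the symmetric-domain construction underlying Lemma~\ref{lem:double_cross} is preserved under the lattice symmetries of~$\bbH$ used to place the tiles, and that the Spatial Markov property is invoked only for boundary conditions of the allowed type ($\rp\rp$, $\rm\rm$, or the two-arc conditions for which it holds). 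Once this bookkeeping is done, the proof is a routine box-crossing argument and I would present it as such.
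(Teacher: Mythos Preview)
Your overall strategy---cover the annulus by small overlapping pieces, bound each piece's crossing probability from below, and combine via FKG---is exactly what the paper does. However, your execution is considerably more elaborate than needed, and step~(3) in particular solves a problem that does not arise.

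The paper's argument is shorter and avoids Corollaries~\ref{cor:RSW_mixed_bc} and~\ref{cor:lengthen_crossings} entirely. Instead of parallelograms, it covers~$\La_{bn}\setminus\La_n$ (with~$b=(1+a)/2$) by~$K=K(a,\rho)$ translates~$\Ann_1,\dots,\Ann_K$ of the small \emph{annulus}~$\La_{2m}\setminus\La_m$, where~$m$ is chosen proportional to~$n$ and small enough that~$\rho m<(a-b)n$. That inequality ensures that the ball of radius~$\rho m$ around the centre~$x_j$ of each~$\Ann_j$ lies inside~$\La_{an}$. Then plain monotonicity in the domain (Corollary~\ref{cor:monotonicity_bc}: shrinking a domain with~$\rm\rm$ boundary conditions only lowers~$\rp$-increasing probabilities) gives
\[
\mu_{\La_{an}}^{\srm\srm}\big[\Circ_{\srp\srp}(\Ann_j)\big]\;\geq\;\mu_{\La_{\rho m}(x_j)}^{\srm\srm}\big[\Circ_{\srp\srp}(\Ann_j)\big]\;=\;\alpha_m\;\geq\;c>0,
\]
the last inequality being Proposition~\ref{prop:dicho-resolve}. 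FKG then yields the product bound~$c^K$.

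The point you miss is that Proposition~\ref{prop:dicho-resolve} already delivers a circuit estimate under the \emph{adverse} boundary conditions~$\rm\rm$ placed at distance~$\rho m$. Hence there is no need for the pushing lemma, no need to condition on any double-$\rm$ circuit (the boundary of~$\La_{an}$ is already~$\rm\rm$), and no detour through rectangle crossings or their lengthening. Using small annuli rather than parallelograms also sidesteps your ``main obstacle'' about rotated copies of~$\Par_{m,n}$: the annulus event~$\Circ_{\srp\srp}(\Ann_j)$ is rotation-free.
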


The lower bound on~$n$ in the infimum above is to ensure that the annulus is thick enough to allow the existence of a double-$\rp$ circuit. 
We start by proving the corollary, based on Proposition~\ref{prop:dicho-resolve}.
The remainder of the section is then dedicated to proving Proposition~\ref{prop:dicho-resolve}. 

\begin{proof}
	This is a standard application of Proposition~\ref{prop:dicho-resolve}, the FKG property and the monotonicity of boundary conditions. 
	
	Fix~$a > 1$ and let~$b = (1+ a)/2$.
	We may limit ourselves to values of~$n$ larger than some threshold depending on~$a$; 
	smaller values of~$n$ only add strictly positive numbers to the set whose infimum we are considering. 
	
	Recall that~$\rho$ is fixed by Theorem~\ref{thm:dicho}.
	Let~$m = \lfloor \min\{ \frac{a-b}{\rho};  \frac {b-1}4\}\cdot  n\rfloor$, 
	and suppose that~$n$ is large enough so that~$m \geq 2$. 
	Then there exists a number~$K = K(a,\rho)$, not depending on~$m$ or~$n$ 
	such that one may place~$K$ translates~$\Ann_1,\dots, \Ann_K$ 
	of the annulus~$\La_{2 m } \setminus \La_m$ 
	inside~$\La_{bn} \setminus \La_n$ in such a way that, if all of them contain a circuit of double-$\rp$, 
	then~$\Circ_{\srp\srp}(n,an)$ occurs. See Figure~\ref{fig:FKG_annuli} for an example. 
	
	\begin{figure}
	\begin{center}
	\includegraphics[width = 0.6\textwidth]{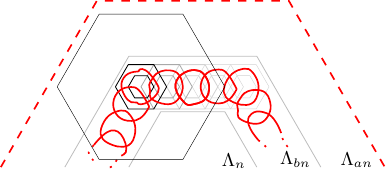}
	\caption{The small annuli~$\Ann_1,\dots,\Ann_K$ placed inside~$\La_{bn} \setminus \La_n$ 
	have inner radius~$m$ and outer radius~$2m$. 
	They are such that the balls of radius~$\rho m$ around each of their centres are contained in~$\La_{an}$. 
	If they all contain double-$\frp$ circuits, then these form a circuit around~$\La_n$, contained in~$\La_{bn} \subset \La_{an}$. }
	\label{fig:FKG_annuli}
	\end{center}
	\end{figure}
	
	Since~$m \rho < (a -b)n$, all faces at distance~$\rho m$ from each~$\Ann_j$ are contained in~$\La_{an}$. 
	It follows from the FKG inequality and the monotonicity of boundary conditions that 
	\begin{align*}
		\mu_{\La_{a n}}^{\srm\srm}\big[\Circ_{\srp\srp}(n,a n)\big] 
		\geq \prod_{j=1}^K\mu_{\La_{a n}}^{\srm\srm}\big[\Circ_{\srp\srp}(\Ann_j) \big] 
		\geq \Big(\mu_{\La_{\rho m}}^{\srm\srm}\big[\Circ_{\srp\srp}(m,2m) \big] \Big)^K 
		> c^K,
	\end{align*}
	where~$c= \inf_n \mu_{\La_{\rho n}}^{\srm\srm}[\Circ_{\srp\srp}(n,2n)]$ 
	is a strictly positive constant due to Proposition~\ref{prop:dicho-resolve}.
	Since the ultimate lower bound above does not depend on~$n$, the proof is complete. 
\end{proof}

We now turn to proving Proposition~\ref{prop:dicho-resolve}.
We will proceed by contradiction. Fix~$\rho > 2$ given by Theorem~\ref{thm:dicho} and 
recall that~$\alpha_n = \mu_{\La_{\rho n}}^{\srm\srm}\big(\Circ_{\srp\srp}(n,2n))$. 
Will assume that case \textit{(ii)} of Corollary~\ref{cor:dicho} occurs, namely that 
there exist constants~$c,C>0$ and~$n_0 \geq 1$ such that 
\begin{align}
	\alpha_n \leq C e^{-n^c} \qquad \text{ for all~$n = (\rho+2)^k n_0$ with~$k\in \bbN$.}
	\tag{ExpDec}\label{assumption:exp_decay}
\end{align}
We start by proving a series of results based on~\eqref{assumption:exp_decay}. 
All constants below depend implicitly on the values of~$n_0$,~$\rho$,~$c$ and~$C$ of~\eqref{assumption:exp_decay}. 

\begin{lem}\label{lem:no_red_plus}
	Under assumption~\eqref{assumption:exp_decay}, for any~$\kappa \geq 2$ there exists~$C_1 = C_1 (\kappa) > 0$ such that
	\begin{align}\label{eq:no_red_plus}
		\mu_{\La_{\kappa n}}^{\srm\srm}\big[\La_n\xlra{\srp\srp}\La_{2n}^c \big] < e^{-C_1 n^{c}} \qquad \forall n \geq 1.
	\end{align}
\end{lem}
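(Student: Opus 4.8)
The idea is to show that a double-$\rp$ crossing of the annulus $\La_{2n}\setminus\La_n$ -- that is, the event $\La_n\xlra{\srp\srp}\La_{2n}^c$ -- forces a double-$\rp$ circuit to occur at a smaller scale, and then invoke \eqref{assumption:exp_decay}. First I would fix $\kappa\geq 2$ and, by the monotonicity of boundary conditions (Corollary~\ref{cor:monotonicity_bc}), it suffices to prove the bound for $\mu_{\La_{\kappa n}}^{\srm\srm}$; in fact increasing $\kappa$ only decreases the probability, so we may as well take $\kappa$ as large as convenient, or conversely prove it for the worst (largest) $\kappa$ and deduce the rest. The key geometric observation is that if $\La_n$ is connected to $\La_{2n}^c$ by a double-$\rp$ path $\gamma$, then in each of the six (or some bounded number of) rotated/translated copies of a thin annular sector straddling this path, one gets a double-$\rp$ crossing of a parallelogram of bounded aspect ratio. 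More precisely: a double-$\rp$ path crossing an annulus of a fixed modulus can be used, together with the FKG inequality for $\nu^{\srm\srm}$ (valid by Corollary~\ref{cor:fkg}~\textit{(ii)}), to build -- at a cost that is at most a constant power -- a double-$\rp$ circuit around $\La_{n'}$ inside $\La_{2n'}$ for $n' \asymp n$.

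The cleanest route is a standard ``square-root trick'' / gluing argument. Suppose $\mu_{\La_{\kappa n}}^{\srm\srm}[\La_n\xlra{\srp\srp}\La_{2n}^c] = p_n$. Cover the annulus $\La_{2n}\setminus\La_n$ by a bounded number $K$ of parallelograms $\Par_1,\dots,\Par_K$ (rotated copies of some $\Par_{4m,m}$ with $m\asymp n$), arranged so that if each $\Par_j$ is crossed by a double-$\rp$ path in the ``long'' direction then these paths concatenate into a double-$\rp$ circuit around $\La_n$ contained in $\La_{2n}$, i.e.\ the event $\Circ_{\srp\srp}(n,2n)$ occurs. By symmetry (invariance of $\mu_{\La_{\kappa n}}^{\srm\srm}$ under rotations by $\pi/3$ and the near-symmetry of the annulus) each such crossing event has probability comparable to $p_n$ up to the bounded combinatorial factor $K$; then FKG gives
\begin{align*}
	\alpha_n = \mu_{\La_{\rho n}}^{\srm\srm}\big[\Circ_{\srp\srp}(n,2n)\big]
	\;\geq\; \mu_{\La_{\kappa n}}^{\srm\srm}\big[\Circ_{\srp\srp}(n,2n)\big]
	\;\geq\; \big(c_0\, p_n\big)^{K},
\end{align*}
where $c_0 > 0$ is universal (the comparison between the full crossing and a single parallelogram crossing uses FKG and the monotonicity of boundary conditions, exactly as in the proof of Corollary~\ref{cor:RSW_strong}). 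Combining this with \eqref{assumption:exp_decay} along the subsequence $n = (\rho+2)^k n_0$ and then extending to all $n$ by a further bounded gluing (any $n$ lies between two consecutive such scales, which differ by the fixed factor $\rho+2$) yields $p_n \leq C' e^{-C_1 n^c}$ for a suitable $C_1 = C_1(\kappa)>0$, which is \eqref{eq:no_red_plus}.

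The main obstacle is making the gluing step fully rigorous in the present setting, where one does not have a general spatial Markov property nor monotonicity in the $\rp\rm$-type boundary conditions. The safe way to handle this is to only ever glue crossings of the more restrictive \emph{double-}$\rp$ percolation, which do isolate inside from outside in the sense of \eqref{eq:domain-markov}: conditioning on the outermost (or innermost) double-$\rp$ crossing of one parallelogram leaves, inside, a measure dominating a free or $\rp\rp$ measure on the complementary region, to which FKG applies. Thus the argument is really a sequence of ``condition on the extremal crossing, then apply FKG to the remaining region'' steps, each losing only a universal constant; the number of steps $K$ is bounded in terms of $\kappa$ alone. One must also check, as in Corollary~\ref{cor:RSW_strong}, that all the auxiliary balls of radius $\rho m$ around the centres of the small annuli/parallelograms remain inside $\La_{\kappa n}$, which fixes how large $K$ (hence $C_1$) must be taken as a function of $\kappa$; this is a routine verification. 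Everything else -- the FKG inequality, the comparison of boundary conditions, and the existence of the small-scale circuit bound via \eqref{assumption:exp_decay} -- is already available.
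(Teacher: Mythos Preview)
Your proposal has a genuine gap at its core, plus a minor monotonicity error.

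\textbf{Minor error.} You write that ``increasing $\kappa$ only decreases the probability''. This is false for $\rm\rm$ boundary conditions: by Corollary~\ref{cor:monotonicity_bc} (applied with signs reversed) the red-spin marginal of $\mu_{\La_{\kappa n}}^{\srm\srm}$ is \emph{increasing} in $\kappa$, so the probability of the increasing event $\{\La_n\xlra{\srp\srp}\La_{2n}^c\}$ goes \emph{up} with $\kappa$. This is not fatal --- the bound simply has to be proved for each $\kappa$ separately --- but it invalidates your reduction ``prove it for one $\kappa$ and deduce the rest''.

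\textbf{Main gap: short vs.\ long crossings.} You cover the annulus $\La_{2n}\setminus\La_n$ by parallelograms $\Par_1,\dots,\Par_K$ (copies of $\Par_{4m,m}$) and claim that ``by symmetry \ldots\ each such [long-direction] crossing event has probability comparable to $p_n$''. But $p_n$ is the probability that the annulus is crossed radially by a double-$\rp$ path. Such a path only gives a crossing of one of the $\Par_j$ in the \emph{short} (radial) direction. What you need to build a circuit are crossings in the \emph{long} (tangential) direction. No rotational or reflection symmetry of $\mu_{\La_{\kappa n}}^{\srm\srm}$ exchanges the short and long directions of a parallelogram, and there is no a priori RSW statement here (we are precisely in the regime where we are \emph{assuming} it may fail). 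Thus the displayed inequality $\alpha_n \geq (c_0\,p_n)^K$ is unjustified: you have bounded a long-crossing probability from below by a short-crossing probability, which is the hard direction of RSW.

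The paper's proof fills exactly this gap by the ``most-likely-endpoints plus reflection'' trick: from a short crossing of $\Rect_{2n,n/2}$ one selects the boundary points $x_B,x_T$ most likely to be connected (at polynomial cost $1/n^2$), reflects across a horizontal axis and uses FKG to produce a crossing of a rectangle twice as tall. Iterating a bounded number of times yields a long-direction crossing of a thin rectangle, and $K$ of those are glued (again by FKG and monotonicity of $\rm\rm$ boundary conditions) into a circuit at a scale $N=(\rho+2)^k n_0$ where~\eqref{assumption:exp_decay} applies. The point is that the polynomial losses from the endpoint trick are absorbed by the stretch-exponential bound on $\alpha_N$. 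Your sketch contains no mechanism to convert short crossings into long ones, and without it the argument does not close.
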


\begin{proof}
	Fix~$\kappa \geq2$ and~$n$ arbitrary. 
	Let~$\R := \Rect_{2n, n/2}$.
	The annulus~$\La_{2n}\setminus \La_n$ may be covered by six translations and rotation~$R_1,\dots, R_6$ of~$\R$ 
	in such a way that, if~$\{\La_n\xlra{\srp\srp}\La_{2n}^c\}$ occurs, then at least one of~$R_1,\dots, R_6$
	is crossed in the short direction by a double-$\rp$ path (see Figure~\ref{fig:Rect_to_ann}).
	For~$1 \leq i \leq 6$, write~$\calC_{\srp\srp}^v(R_i)$ for the appropriate rotation and translation of~$\calC_{\srp\srp}^v(\R)$. 
	Then, using the union bound and the monotonicity of boundary conditions, we deduce that 
	\begin{align}\label{eq:circ_to_path}
		\mu_{\La_{\kappa n}}^{\srm\srm}\big[\La_n\xlra{\srp\srp}\La_{2n}^c \big] 
		\leq \mu_{\La_{\kappa n}}^{\srm\srm}\Big[\bigcup_{i=1}^6\calC_{\srp\srp}^v(R_i)\Big] 
		\leq 6 \mu_{\La_{(\kappa+2) n}}^{\srm\srm}\big[\calC_{\srp\srp}^v(\R) \big].
	\end{align}
	Henceforth we aim to prove a stretched-exponential upper bound for~$\mu_{\La_{(\kappa+2) n}}^{\srm\srm}[\calC_{\srp\srp}^v(\R)]$. 
	
	\begin{figure}
	\begin{center}
	\includegraphics[width=0.33\textwidth]{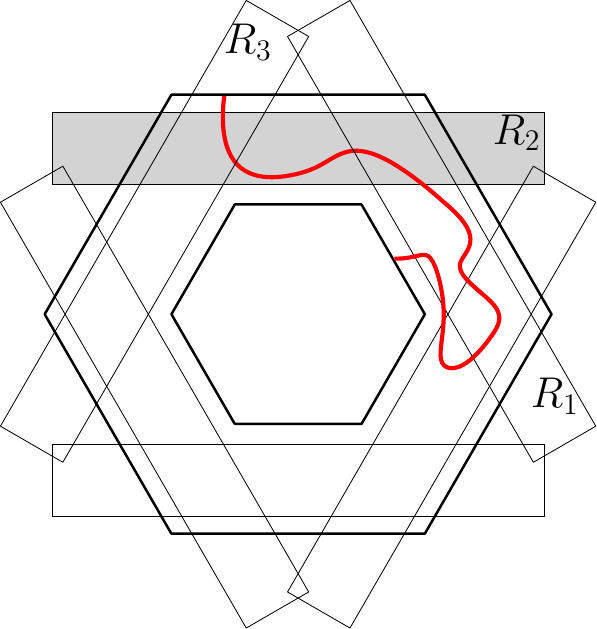}	\qquad
	\includegraphics[width=0.6\textwidth]{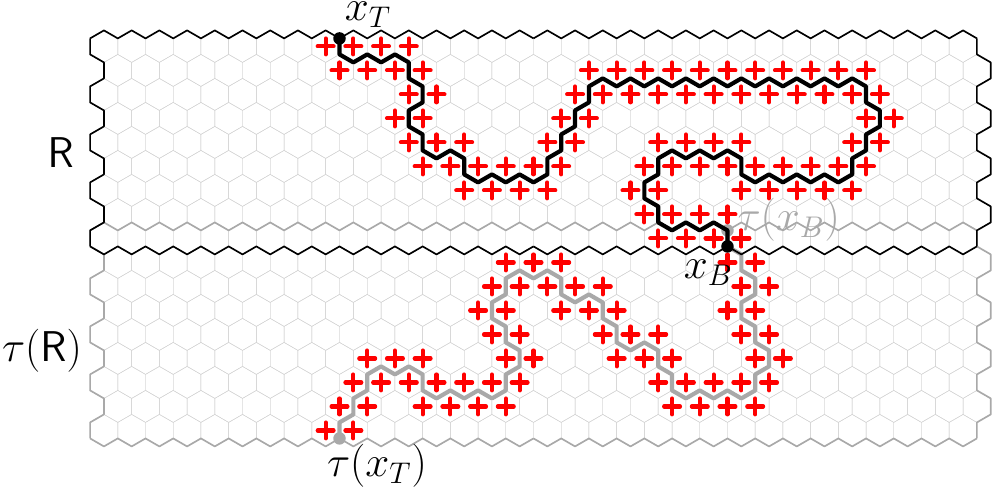}
	\caption{{\em Left:} One may place six copies of~$\R$ around~$\La_n$ so that, if~$\{\La_n\xlra{\sfrp\sfrp}\La_{2n}^c\}$ occurs, 
	then at least one of them is crossed in the short direction (here the crossed copy is shaded). 
	{\em Right:} When~$x_T$ is connected to~$x_B$ in~$\R$ and~$\tau(x_T)$ is connected to~$\tau(x_B)$ in~$\tau(\R)$, 
	then~$\tau(x_T)$ is connected to~$x_T$ in~$\tau(\R)\cup\R$. 
	}
	\label{fig:Rect_to_ann}
	\end{center}
	\end{figure}

	Let~$x_B$ and~$x_T$ to be the points of the bottom and top, respectively, of~$\R$
	that are most probable under~$\mu_{\La_{(\kappa + 2) n}}^{\srm\srm}$ to be connected by a double-$\rp$ path contained in~$\R$. 
	Then 
	\begin{align}\label{eq:2no_red_plus}
		\mu_{\La_{(\kappa + 2) n}}^{\srm\srm}\big(x_B\xlra{\srp\srp \text{ in }\R}x_T\big)
		\geq \tfrac{1}{16n^2}\mu_{\La_{(\kappa + 2) n}}^{\srm\srm}[\calC_{\srp\srp}^v(\R)],
	\end{align}
	since there are~$16n^2$ potential pairs of points~$(x_L,x_R)$. 
	Let~$\tau$ be the reflection with respect to the horizontal line~$\bbR \times \{0\}$. 
	Then we also have 
	\begin{align}\label{eq:3no_red_plus}
		\mu_{\La_{(\kappa + 2) n}}^{\srm\srm}\big[\tau(x_B)\xlra{\srp\srp \text{ in }\tau(\R)}\tau(x_T)\big]
		\geq \tfrac{1}{(4n)^2}\mu_{\La_{(\kappa + 2) n}}^{\srm\srm}[\calC_{\srp\srp}^v(\R)].
	\end{align}	
	If the events of~\eqref{eq:2no_red_plus} and~\eqref{eq:3no_red_plus} occur simultaneously,  then~$x_T$ and~$\tau(x_T)$ are connected 
	inside~$\R \cup \tau(\R) = [-2n,2n] \times [-n/2,n/2]$ (see Figure~\ref{fig:Rect_to_ann}). Thus, by the FKG inequality, 
	\begin{align*}
		\mu_{\La_{(\kappa + 2) n}}^{\srm\srm}\big[\tau(x_T) \xlra{\srp\srp \text{ in }\tau(\R) \cup \R}x_T\big]
		\geq \tfrac{1}{(4n)^4}\mu_{\La_{(\kappa + 2) n}}^{\srm\srm}[\calC_{\srp\srp}^v(\R)]^2.
	\end{align*}
	Using the above, the FKG inequality again and the monotonicity of boundary conditions, we find, 
	\begin{align}\label{eq:28}
		\mu_{\La_{(\kappa + 10) n}}^{\srm\srm}[\calC_{\srp\srp}^v([-2n,2n] \times [-8n,8n])]
		\geq \tfrac{1}{(4n)^{64}}\mu_{\La_{(\kappa + 2) n}}^{\srm\srm}[\calC_{\srp\srp}^v(\R)]^{32}.
	\end{align}
	Indeed, a vertical crossing of~$[-2n,2n] \times [-8n,8n]$ may be obtained by intersecting sixteen translates of the event 
	$\{\tau(x_T) \xlra{\srp\srp \text{ in }\tau(\R) \cup \R}x_T\}$. 
	The box has been increased to~$\La_{(\kappa + 10) n}$ 
	so that all of these events occur in rectangles with distance to the boundary greater than~$(\kappa + 2) n$.

	\begin{figure}
    	\begin{center}
    		\includegraphics[width=0.6\textwidth]{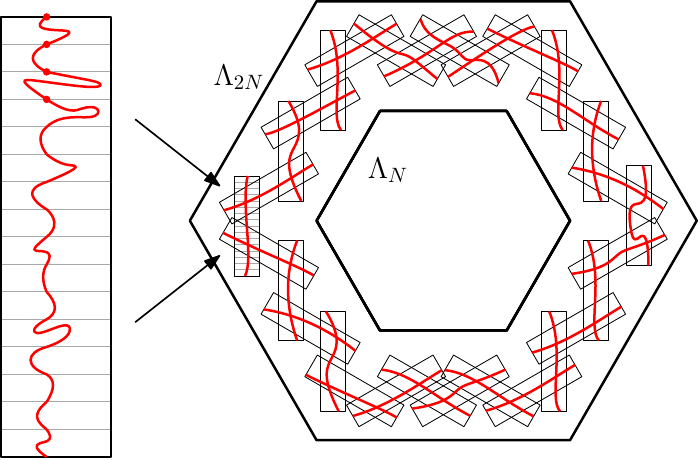}
        	\caption{Combining~$K$ vertical crossings of (rotations and translations of)~$[-2n,2n] \times [-8n,8n]$
        	produces a circuit in~$\La_{2N} \setminus \La_N$. 
        	Each such vertical crossing is constructed from vertical crossings between~$\tau(x_T)$ and~$x_T$ in 
        	sixteen copies of~$\R\cup\tau(\R)$.	}
        	\label{fig:Rect_to_ann2}
    	\end{center}
	\end{figure}

	Recall the fixed values~$\rho >2$ and~$n_0$ of \eqref{assumption:exp_decay}. 
	Let~$k$ be minimal such that, for~$N := (\rho+2)^k n_0$, one has
	\[
		N/n \geq \max \big\{16,  \tfrac{\kappa + 10}{\rho-2}\big\}.
	\]
	By the minimality of~$k$, we have~$N \leq c_0 n$ for some constant~$c_0$ depending on~$\rho$,~$n_0$ and~$\kappa$ only.
	Then, there exists a constant~$K = K(\rho, n_0, \kappa)$, that depends on~$\rho$,~$n_0$ and~$\kappa$ 
	but not on~$n$ or on the resulting value of~$N$,  
	such that one may construct a circuit in~$\La_{2N} \setminus \La_{N}$
	by combining at most~$K$ vertical crossings of translates of~$[-2n,2n] \times [-8n,8n]$ 
	and rotations by~$2\pi/3$ and~$4\pi/3$ of this rectange, 
	all contained in~$\La_{2N}$ (see Figure~\ref{fig:Rect_to_ann2}).
	Due to the choice of~$N$, the faces at distance~$(\kappa + 10)n$ from any of these rectangles are all contained in~$\La_{\rho N}$. 	
	Thus, by the monotonicity of boundary conditions and the FKG inequality, 
	\begin{align*}
		\alpha_N =  \mu_{\La_{\rho N}}^{\srm\srm}\big[\Circ_{\srp\srp}(2N,N)\big]
    	& \geq \mu_{\La_{(\kappa + 10) n}}^{\srm\srm}\big[\calC_{\srp\srp}^v([-2n,2n] \times [-8n,8n])\big]^{K}\\
    	& \geq \tfrac{1}{(4n)^{64 \cdot K}}\,\mu_{\La_{\kappa n}}^{\srm\srm}\big[\calC_{\srp\srp}^v(\R)\big]^{32\cdot K}.
	\end{align*}
	Due to~\eqref{assumption:exp_decay}, this implies 
	\begin{align*}
		\mu_{\La_{\kappa n}}^{\srm\srm}[\calC_{\srp\srp}^v(R)] 
		\leq (4n)^{2}\,\big( C e^{-N^c}\big)^{\frac1{32\cdot K}}
		\leq c_2 (4n)^{2} e^{-c_1 n^c}, 
	\end{align*}
	for constants~$c_1,c_2 > 0$ depending only on~$\kappa$,~$\rho$ and~$n_0$. 
	Finally, from \eqref{eq:circ_to_path} we deduce that
	\begin{align*}
		\mu_{\La_{\kappa n}}^{\srm\srm}\big[\La_n\xlra{\srp\srp}\La_{2n}^c \big] 
		\leq 6 c_2 (4n)^{2} e^{-c_1 n^c}.
	\end{align*}
	This implies~\eqref{eq:no_red_plus} with~$C_1$ chosen small enough to absorb the multiplicative factor. 
\end{proof}

\begin{lem}\label{lem:yes_red_minus}
	Under assumption~\eqref{assumption:exp_decay}, for any~$\kappa \geq 2$ there exists~$C_2 = C_2 (\kappa) > 0$ such that
	\begin{align}\label{eq:yes_red_minus}
		\mu_{\La_{\kappa n}}^{\srm\srm}\big[\La_n\nxlra{\srm\srm}\La_{2n}^c \big] < e^{-C_2 n^{c}} \qquad \forall n \geq 1.
	\end{align}
\end{lem}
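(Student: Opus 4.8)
The plan is to deduce Lemma~\ref{lem:yes_red_minus} from Lemma~\ref{lem:no_red_plus} using the self-duality-type statement of Remark~\ref{rem:monochrom}: if an annulus $\La_{2n}\setminus \La_n$ contains no double-$\rp$ circuit \emph{and} no double-$\rm$ circuit around $\La_n$, then $\La_n$ is connected to $\La_{2n}^c$ by a double-path of constant blue spin. Negating this, the event $\{\La_n \nxlra{\srm\srm} \La_{2n}^c\}$ can only occur if either there is a double-$\rp$ circuit in the annulus (equivalently $\La_n \xlra{\srp\srp} \La_{2n}^c$ fails in a complementary sense --- more precisely, the absence of a double-$\rm$ \emph{crossing} forces the existence of a double-$\rp$ circuit OR a double-$\bp/\bm$ crossing), so I need to track carefully what the absence of a double-$\rm$ crossing from $\La_n$ to $\La_{2n}^c$ implies. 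Let me set this up as follows: work in the annulus $\calA = \La_{2n}\setminus\La_n$ and split it into a constant number (six, say, as in Figure~\ref{fig:Rect_to_ann}) of rotated translates $R_1,\dots,R_6$ of $\Rect_{2n,n/2}$; if $\La_n \nxlra{\srm\srm}\La_{2n}^c$, then in particular none of the $R_i$ is crossed in the short direction by a double-$\rm$ path. By the dual statement of Lemma~\ref{lem:monochrom} (see Remark~\ref{rem:monochrom} and the discussion around Lemma~\ref{lem:square_crossed}), in each $R_i$ the absence of a short double-$\rm$ crossing forces a \emph{long} double crossing of constant spin that is either $\rp$, $\bp$, or $\bm$.

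Next I would distinguish cases according to which spin provides the long crossing, aiming to reduce everything to the already-established bound~\eqref{eq:no_red_plus}. The cleanest route: observe that $\{\La_n\nxlra{\srm\srm}\La_{2n}^c\}$ implies that one of two things happens: either there is a double-$\rp$ connection $\La_n \xlra{\srp\srp}\La_{2n}^c$ going the ``long way'' (a double-$\rp$ circuit in the annulus, which itself contains crossings of some $R_i$ --- actually a double-$\rp$ circuit around $\La_n$ inside $\calA$ yields in each $R_i$ a long double-$\rp$ crossing, hence in particular $\La_n\xlra{\srp\srp}\La_{2n}^c$ is NOT the relevant event --- I must be careful about orientation here), or there is a long double-$\bp$ or double-$\bm$ crossing of one of the $R_i$. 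In the first case one uses that a double-$\rp$ circuit in $\calA$ gives a double-$\rp$ path joining the inner and outer boundary of a slightly fattened annulus, contradicting~\eqref{eq:no_red_plus} up to adjusting $\kappa$. In the second case, the long double-$\bp$ (or $\bm$) crossing of $R_i$, by the colour-symmetry of $\mu$ (the blue marginal of $\mu_{\La_{\kappa n}}^{\srm\srm}$ under global blue flip) and the fact that such a crossing of a $2n\times\tfrac n2$ rectangle can be extended to a double-$\bp$ crossing of a comparable square, leads via Lemma~\ref{lem:square_crossed}-type estimates together with~\eqref{eq:no_red_plus} to a contradiction as well --- because a blue crossing forbids a red-$\rp$ connection crossing it, and we already know red-$\rp$ connections are exponentially unlikely, so blue crossings must be exponentially likely, which by duality forces red circuits... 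This is the step I expect to be the main obstacle: cleanly turning ``no double-$\rm$ crossing'' into ``stretch-exponentially unlikely'' without circular reasoning.

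The honest way to organise it, and what I would actually carry out, is: (1) By Lemma~\ref{lem:monochrom} applied in each $R_i$ (with a constant number of copies tiling $\calA$), $\{\La_n\nxlra{\srm\srm}\La_{2n}^c\} \subset \bigcup_i \big[\calC^{\mathrm{long}}_{\srp\srp}(R_i) \cup \calC^{\mathrm{long}}_{\sbp\sbp}(R_i)\cup\calC^{\mathrm{long}}_{\sbm\sbm}(R_i)\big]$. (2) The first family of events is bounded by $6\,\mu_{\La_{(\kappa+2)n}}^{\srm\srm}[\La_n\xlra{\srp\srp}\La_{2n}^c]\le 6 e^{-C_1 n^c}$ using~\eqref{eq:no_red_plus} (after fattening the box). (3) For the blue events: by colour symmetry, $\mu_{\La_{\kappa n}}^{\srm\srm}[\calC^{\mathrm{long}}_{\sbp\sbp}(R_i)] = \mu_{\La_{\kappa n}}^{\srm\srm}[\calC^{\mathrm{long}}_{\sbm\sbm}(R_i)]$; and a long double-$\bp$ crossing of $R_i$ prevents any double-$\rm$ \emph{or} double-$\rp$ short crossing of $R_i$ that would separate its two short sides --- in particular, on the event that such a blue crossing exists together with its symmetric copy one produces a double-$\bp$ circuit around $\La_{2n}$, which \emph{via Lemma~\ref{lem:monochrom}} rules out a double-$\rp$ circuit in an adjacent annulus, and then Corollary~\ref{cor:RSW_strong} / Proposition~\ref{prop:dicho-resolve}... no: under \eqref{assumption:exp_decay} we do not have RSW, so instead I directly estimate $\mu_{\La_{\kappa n}}^{\srm\srm}[\calC^{\mathrm{long}}_{\sbp\sbp}(R_i)]$. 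The key identity is that, conditionally on the red configuration, blue spins colour the clusters of $\theta(\sigma_r)$ uniformly (Proposition~\ref{prop:marginal-ditribution}): a long double-$\bp$ crossing of $R_i$ requires a corresponding structure in $\theta(\sigma_r)$ --- namely a path in $\theta(\sigma_r)$ spanning $R_i$ in the long direction all of whose adjacent faces lie in one cluster coloured $\bp$ --- which in turn requires $\sigma_r$ to NOT have a short double-$\rp$ or double-$\rm$ crossing of $R_i$; but the absence of a short double-$\rm$ crossing has already (by reusing step (1) at the scale of a single rectangle, inductively, or rather by the dual of Lemma~\ref{lem:monochrom}) been forced into a double-$\rp$ structure which contradicts~\eqref{eq:no_red_plus}. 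This last reduction --- showing that the blue long-crossing probability is itself stretch-exponentially small --- is the technical heart, and I would handle it by the same doubling/gluing trick as in the proof of Lemma~\ref{lem:no_red_plus}: if $\mu_{\La_{\kappa n}}^{\srm\srm}[\calC^{\mathrm{long}}_{\sbp\sbp}(R_i)]$ were not stretch-exponentially small, gluing a constant number of copies (using FKG for the blue marginal, Corollary~\ref{cor:fkg}) would build a double-$\bp$ circuit around $\La_N$ at scale $N=(\rho+2)^k n_0$ with non-negligible probability, hence (again by Lemma~\ref{lem:monochrom}, dual form) the \emph{absence} of a double-$\rp$ circuit in a neighbouring annulus at scale $N$ would be non-negligible --- but on the complementary event one has $\La_N\xlra{\srp\srp}\La_{2N}^c$ failing to be ruled out... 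I will instead phrase it affirmatively: a double-$\bp$ circuit around $\La_N$ forces, in the neighbouring annulus, every $R_j$-copy to have its short-direction blue-double-crossing blocked, so by Lemma~\ref{lem:monochrom} each has a long double-$\rp$ or double-$\rm$ crossing, and using colour symmetry in red ($\mu^{\srm\srm}$ is not red-symmetric, so here I use monotonicity: $\nu^{\srm\srm}\le_{\mathrm{st}}\nu^{\srp\srp}$ and the $\rp\rp$ measure's red-flip image is $\nu^{\srm\srm}$ in a larger box) I again land on $\La_N\xlra{\srp\srp}\La_{2N}^c$, contradicting~\eqref{eq:no_red_plus}. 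Cleaning up the constants and the box sizes so that all these events sit at distance $\ge \kappa n$ (resp.\ $\ge\rho N$) from the relevant boundary, exactly as in Lemma~\ref{lem:no_red_plus}, completes the argument and yields~\eqref{eq:yes_red_minus} with a suitable $C_2(\kappa)$.
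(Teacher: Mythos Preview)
Your starting point is right: combine Lemma~\ref{lem:no_red_plus} with the annular version of Lemma~\ref{lem:monochrom} (Remark~\ref{rem:monochrom}) to deduce that, on the event $\{\La_n\nxlra{\srm\srm}\La_{2n}^c\}\cap\{\La_n\nxlra{\srp\srp}\La_{2n}^c\}$, one of $\Circ_{\sbp\sbp}(n,2n)$ or $\Circ_{\sbm\sbm}(n,2n)$ must occur. But from there your argument has a genuine gap. You try to bound $\mu_{\La_{\kappa n}}^{\srm\srm}[\Circ_{\sbp\sbp}(n,2n)]$ directly from above, and the mechanisms you sketch do not work: a double-$\bp$ circuit around $\La_N$ does \emph{not} force, in any neighbouring annulus, the kind of red structure you claim (it blocks neither double-$\bp$ nor double-$\rp$ nor double-$\rm$ radial crossings there), so the step ``each $R_j$ has a long double-$\rp$ or double-$\rm$ crossing, \dots\ I again land on $\La_N\xlra{\srp\srp}\La_{2N}^c$'' is unjustified. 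More fundamentally, there is no a~priori stretch-exponential upper bound on blue circuits under $\mu^{\srm\srm}$ available at this stage, and your gluing argument never produces one.

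The paper's proof avoids this entirely by running the argument \emph{by contradiction} and using the Spatial Markov property to switch colours. Assume $\mu_{\La_{\kappa n}}^{\srm\srm}[\La_n\nxlra{\srm\srm}\La_{2n}^c]\ge e^{-C_2 n^c}$ for some small $C_2$. Together with~\eqref{eq:no_red_plus} and Remark~\ref{rem:monochrom}, this gives $\mu_{\calD}^{\srm\srm}[\Circ_{\sbm\sbm}(n,2n)]\ge\tfrac14 e^{-C_2 n^c}$ for any $\La_{\kappa n}\subset\calD\subset\La_{4\kappa n}$. Now glue $K=K(\kappa)$ translates (FKG for the blue marginal) to obtain a double-$\bm$ circuit $\Gamma$ in $\La_{(\kappa+2)n}\setminus\La_{\kappa n}$ with probability at least $(\tfrac14 e^{-C_2 n^c})^K$ under $\mu_{\La_{3\kappa n}}^{\srm\srm}$. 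The key step you are missing: \emph{condition} on $\Gamma=\gamma$; by the Spatial Markov property the measure inside becomes $\mu_{\Int(\gamma)}^{\sbm\sbm}$. Under these \emph{blue} boundary conditions, the colour-swapped version of the same argument (absence of double-$\bm$ and double-$\bp$ radial crossings forces a \emph{red} circuit) yields $\mu_{\Int(\gamma)}^{\sbm\sbm}[\Circ_{\srp\srp}(n,2n)]\ge\tfrac14 e^{-C_2 n^c}$. Multiplying, $\mu_{\La_{3\kappa n}}^{\srm\srm}[\Circ_{\srp\srp}(n,2n)]\ge(\tfrac14 e^{-C_2 n^c})^{K+1}$, which for $C_2<C_1(3\kappa)/(K+1)$ contradicts~\eqref{eq:no_red_plus}. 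The colour-switching via conditioning on a blue circuit is the idea your proposal lacks.
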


\begin{proof}
	Fix~$\kappa \geq 2$ and let~$C_2$ be some small constant to be fixed below 
	(it will be obvious that the bound on~$C_2$ depends only on~$\kappa$).
	It suffices to prove the statement for~$n$ large enough; small values may be incorporated by adjusting~$C_2$. 
	
	Suppose by contradiction that there exists~$n \geq1$ (large) such that  
	$\mu_{\La_{\kappa n}}^{\srm\srm}[\La_n\xlra{\srm\srm}\La_{2n}^c ] < 1 - e^{-C_2 n^{c}}.$
	From now on~$n$ is fixed and it is crucial that we use the assumption above only for this particular value of~$n$. 
	
	Due to the monotonicity of boundary conditions, we deduce that 
	\begin{align*}
		\mu_{\calD}^{\srm\srm}\big[\La_n\xlra{\srm\srm}\La_{2n}^c \big] <1- e^{-C_2 n^{c}}
	\end{align*}
	for any domain~$\calD$ containing~$\La_{\kappa n}$.
	Suppose now that~$C_2$ is chosen smaller than~$C_1(4\kappa)/2$, with~$C_1(4\kappa)$ given by Lemma~\ref{lem:no_red_plus}. 
	Then, assuming~$n$ is above some threshold (which we will do from now on), we have
	\begin{align}\label{eq:no_red_plus2}
		\mu_{\La_{4\kappa n}}^{\srm\srm}\big[\La_n\xlra{\srp\srp}\La_{2n}^c \big] < e^{-C_1(4\kappa) n^{c}} \leq \tfrac12 e^{-C_2 n^{c}}.
	\end{align}
	Due to the two displays above, and to the monotonicity of boundary conditions, 
	\begin{align*}
		\mu_{\calD}^{\srm\srm}\big[\{\La_n\nxlra{\srp\srp}\La_{2n}^c\} \cap \{\La_n\nxlra{\srm\srm}\La_{2n}^c\} \big] 
		\geq 1 - 		\mu_{\calD}^{\srm\srm}\big[\La_n\xlra{\srp\srp}\La_{2n}^c\big]
			- \mu_{\calD}^{\srm\srm}\big[\La_n\xlra{\srm\srm}\La_{2n}^c\big]  
		\geq \tfrac12 e^{-C_2 n^{c}},
	\end{align*}
	for any domain~$\calD$ with~$\La_{\kappa n} \subset \calD \subset \La_{4\kappa n}$. 
	
	As in Lemma~\ref{lem:monochrom} , the absence of a double-$\rp$ or double-$\rm$ connection between 
	$\La_n$ and~$\La_{2n}^c$ implies that at least one of~$\Circ_{\sbm\sbm}(n, 2n)$ and~$\Circ_{\sbp\sbp}(n, 2n)$ occurs (see also Remark~\ref{rem:monochrom}). 
	Under~$\mu_{\calD}^{\srm\srm}$, the blue spins are interchangeable, and we deduce that 
	\begin{align}\label{eq:yes_blue_circ}
		\mu_{\calD}^{\srm\srm}\big[\Circ_{\sbp\sbp}(n, 2n)\big] = 
		\mu_{\calD}^{\srm\srm}\big[\Circ_{\sbm\sbm}(n, 2n)\big]  \geq \tfrac14 e^{-C_2 n^{c}},
	\end{align}
	for any domain~$\calD$ with~$\La_{\kappa n} \subset \calD \subset \La_{4\kappa n}$. 
	
	Next we work in the domain~$\La_{3\kappa n}$.
	Place translations~$\Ann_1,\dots, \Ann_K$ of the annulus~$\La_{2n} \setminus \La_n$ 
	around the outside of~$\La_{\kappa n}$ as in Figure~\ref{fig:FKG_annuli} 
	so that, if all of them contain double-$\bm$ circuits, then there exists a double-$\bm$ circuit 
	in~$\La_{(\kappa+2)n}$ surrounding~$\La_{\kappa n}$.
	As discussed in the proof of Corollary~\ref{cor:RSW_strong}, this procedure employs a number~$K$ of translates 
	that only depends on~$\kappa$, not on~$n$. 
	Thus
	\begin{align}\label{eq:yes_blue_circ2}
		\mu_{\La_{3\kappa n}}^{\srm\srm}\big[\Circ_{\sbm\sbm}(\kappa n, (\kappa + 2)n)\big] 
		&\geq \mu_{\La_{3\kappa n}}^{\srm\srm}\Big[\bigcap_{k = 1}^K\Circ_{\sbm\sbm}(\Ann_k)\Big]\nonumber \\
		&\geq \prod_{k = 1}^K\mu_{\La_{3\kappa n}}^{\srm\srm}\big[\Circ_{\sbm\sbm}(\Ann_k)\big] 
		\geq \Big( \tfrac14 e^{-C_2 n^{c}} \Big)^K.
	\end{align}
	The second inequality is due to the FKG property of blue spins under~$\mu_{\La_{3\kappa n}}^{\srm\srm}$ 
	(see Remark~\ref{rem:FKG} \textit{(i)} with reversed colours).
	The last inequality is a consequence of \eqref{eq:yes_blue_circ}.
	Indeed, if~$\La_{3\kappa n}$ is translated by the translation that sends~$\Ann_k$ to~$\La_{2n} \setminus \La_n$, 
	then it contains~$\La_{\kappa n}$ and is contained in~$\La_{4\kappa n}$, hence \eqref{eq:yes_blue_circ} applies to it.

	When~$\Circ_{\sbm\sbm}(\kappa n, (\kappa + 2)n)$ occurs, write~$\Gamma$ for the exterior most double-$\bm$ circuit in 
	$\La_{(\kappa + 2)n}$ that surrounds~$\La_{\kappa n}$. 
	Let~$\gamma$ be a possible realisation of~$\Gamma$. 
	Due to the Spatial Markov property, 
	the measure~$\mu_{\La_{3\kappa n}}^{\srm\srm}[.\,|\, \Gamma = \gamma]$ restricted to the interior~$\Int(\gamma)$ 
	of~$\gamma$ is simply~$\mu_{\Int(\gamma)}^{\sbm\sbm}$. 
	Then \eqref{eq:yes_blue_circ} with inverted colours applies to the domain~$\Int(\gamma)$, 
	and we find 
	\begin{align*}
		\mu_{\La_{3\kappa n}}^{\srm\srm}\big[\Circ_{\srp\srp}(n, 2n)\,|\, \Gamma =\gamma\big] = 
		\mu_{\Int(\gamma)}^{\sbm\sbm}\big[\Circ_{\srp\srp}(n, 2n)\big] \geq \tfrac14 e^{-C_2 n^{c}}.
	\end{align*}
	Averaging the above over all possible values~$\gamma$ taken by~$\Gamma$ and using \eqref{eq:yes_blue_circ2},  we find
	\begin{align*}
		\mu_{\La_{3\kappa n}}^{\srm\srm}\big[\Circ_{\srp\srp}(n, 2n)\big] 
		&\geq 
		\mu_{\La_{3\kappa n}}^{\srm\srm}\big[\Circ_{\srp\srp}(n, 2n) \cap \Circ_{\sbm\sbm}(\kappa n, (\kappa + 2)n)\big]\\
		&\geq \sum_{\gamma}\mu_{\La_{3\kappa n}}^{\srm\srm}\big[\Circ_{\srp\srp}(n, 2n)\,|\, \Gamma =\gamma\big] 
		\cdot \mu_{\La_{3\kappa n}}^{\srm\srm}(\Gamma =\gamma)
		\geq \Big(\tfrac14 e^{-C_2 n^{c}}\Big)^{K+1}.
	\end{align*}
	This contradicts~\eqref{eq:no_red_plus} provided that~$C_2$ is small enough (any~$C_2 <C_1(3\kappa) / (K+1)$ suffices) 
	and~$n$ is large enough. 
\end{proof}

\begin{lem}\label{lem:yes_red_plus}
	Under assumption~\eqref{assumption:exp_decay}, for any~$\kappa \geq2$ there exists~$C_3 = C_3 (\kappa) > 0$ such that
	\begin{align}\label{eq:C3}
		\mu_{\La_{\kappa n}}^{\srm\srm}[\Circ_{\srm\srm}(n,2n)] > 1 - e^{-C_3 n^{c}}, \qquad \forall n \geq 1.
	\end{align}
	As a consequence, for any~$\epsilon > 0$, there exists~$n_1 \geq 1$ such that 
	\begin{align}\label{eq:C31}
		\mu_{\bbH}\Big[\bigcap_{j\geq0}\Circ_{\srm\srm}(2^{j}n_1,2^{j+1}n_1)\Big] > 1 - \eps.
	\end{align}
\end{lem}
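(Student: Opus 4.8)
\textbf{Proof plan for Lemma~\ref{lem:yes_red_plus}.}
The plan is to bootstrap from Lemmas~\ref{lem:no_red_plus} and~\ref{lem:yes_red_minus} to a circuit statement. First I would fix $\kappa \geq 2$ and work in $\La_{\kappa n}$ under $\mu^{\srm\srm}_{\La_{\kappa n}}$. By Lemma~\ref{lem:no_red_plus} (applied with a slightly larger $\kappa'$, say $\kappa' = 2\kappa$) the event $\{\La_n \xlra{\srp\srp} \La_{2n}^c\}$ has probability at most $e^{-C_1 n^c}$, so with probability at least $1 - e^{-C_1 n^c}$ the annulus $\La_{2n}\setminus \La_n$ contains no double-$\rp$ crossing from inside to outside. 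By planar duality between $\bbH$ and $\bbT$ (as in Lemma~\ref{lem:monochrom} and Remark~\ref{rem:monochrom}), the absence of such a double-$\rp$ crossing of the annulus means that $\La_n$ is separated from $\La_{2n}^c$ by a circuit of the \emph{complementary} percolation; more precisely, one gets that the annulus contains a circuit around $\La_n$ on which all faces carry spin $\rm$ or the faces adjacent to it are all $\rm$ on one side. I need the sharper statement that the annulus contains a double-$\rm$ circuit around $\La_n$, i.e. $\Circ_{\srm\srm}(n,2n)$ occurs. For this I would combine the two previous lemmas: with probability at least $1 - e^{-C_1 n^c}$ there is \emph{no} double-$\rp$ crossing, and by Lemma~\ref{lem:yes_red_minus} with high probability $\La_{n'} \xlra{\srm\srm} \La_{2n'}^c$ for suitable sub-annuli $n'$; tiling $\La_{2n}\setminus\La_n$ by $O(1)$ small annuli of aspect ratio $2$ and requiring each to be crossed by a double-$\rm$ path (in all directions) forces the union of these crossings to contain a double-$\rm$ circuit around $\La_n$. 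By the FKG inequality for $\sigma_r$ (Theorem~\ref{thm:FKG}, Corollary~\ref{cor:fkg}) and a union bound over the $O(1)$ failure events — each bounded by $e^{-C_2 (c_0 n)^c}$ via Lemma~\ref{lem:yes_red_minus} applied at scale $c_0 n$ — one obtains $\mu^{\srm\srm}_{\La_{\kappa n}}[\Circ_{\srm\srm}(n,2n)] \geq 1 - K e^{-c n^c}$, which gives \eqref{eq:C3} after absorbing the constant $K$ into a smaller $C_3$.

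For \eqref{eq:C31}, I would first pass to the infinite-volume measure: by Theorem~\ref{thm:uniqueness_nu}, $\mu^{\srm\srm}_{\La_{\kappa n}} \to \mu_\bbH$ as $\kappa \to \infty$, and since $\Circ_{\srm\srm}(n,2n)$ is a local event, \eqref{eq:C3} passes to the limit to give $\mu_\bbH[\Circ_{\srm\srm}(n,2n)] \geq 1 - e^{-C_3 n^c}$ for all $n$. Then, given $\epsilon > 0$, choose $n_1$ large enough that $\sum_{j\geq 0} e^{-C_3 (2^j n_1)^c} < \epsilon$; this is possible since $\sum_{j\geq 0} e^{-C_3 2^{jc} n_1^c}$ is dominated by a convergent geometric-type series whose value tends to $0$ as $n_1 \to \infty$ (the $j=0$ term already goes to $0$, and the tail is controlled by it). A union bound over $j$ then yields
\[
	\mu_\bbH\Big[\bigcap_{j\geq 0}\Circ_{\srm\srm}(2^j n_1, 2^{j+1} n_1)\Big] \geq 1 - \sum_{j\geq 0}\mu_\bbH\big[\Circ_{\srm\srm}(2^j n_1, 2^{j+1} n_1)^c\big] > 1 - \epsilon,
\]
which is \eqref{eq:C31}.

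The main obstacle I anticipate is the duality/tiling step that upgrades ``no double-$\rp$ crossing of the annulus'' plus ``many double-$\rm$ crossings of sub-rectangles'' into ``a genuine double-$\rm$ circuit around $\La_n$.'' The subtlety is that double-$\rp$ and double-$\rm$ percolations are not complementary to each other (only the simple-spin percolation is self-dual, cf. the discussion before Lemma~\ref{lem:monochrom}), so one cannot simply invoke duality once; instead one must carefully choose a covering of the annulus $\La_{2n}\setminus\La_n$ by $O(1)$ translated-and-rotated rectangles of aspect ratio $2$ — as in Figure~\ref{fig:Rect_to_ann} and the proof of Corollary~\ref{cor:RSW_strong} — whose simultaneous crossing by double-$\rm$ paths is both FKG-monotone and geometrically forces a circuit. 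One also has to be slightly careful that Lemma~\ref{lem:yes_red_minus} is stated for connections $\La_{n'} \nxlra{\srm\srm} \La_{2n'}^c$ rather than rectangle crossings, so an intermediate step converting annulus-crossing probabilities to rectangle-crossing probabilities (again by covering an annulus by $O(1)$ rectangles, monotonicity of boundary conditions, and a union bound) is needed, exactly paralleling \eqref{eq:circ_to_path} in the proof of Lemma~\ref{lem:no_red_plus}. Everything else is routine FKG, union bounds, and passing to the infinite-volume limit.
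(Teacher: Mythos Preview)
Your plan for \eqref{eq:C31} is fine (and essentially equivalent to the paper's, which conditions sequentially from outside in using monotonicity of boundary conditions rather than first passing to $\mu_\bbH$; both arguments give the same product bound).

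There is, however, a genuine gap in your plan for \eqref{eq:C3}. Lemma~\ref{lem:yes_red_minus} only tells you that annuli are crossed \emph{radially} by double-$\rm$ paths with high probability --- equivalently, that rectangles are crossed in the \emph{short} direction. To build a double-$\rm$ circuit in $\La_{2n}\setminus\La_n$ out of $O(1)$ pieces you need crossings of rectangles in the \emph{long} direction, and this does not follow from your tiling-by-annuli step. (The analogue of \eqref{eq:circ_to_path} you invoke goes the wrong way: it bounds an annulus-crossing probability by a sum of short-direction rectangle-crossing probabilities, not the reverse.) Nor does the duality observation help directly: the complement of a double-$\rp$ radial crossing is a \emph{simple}-$\rm$ circuit, not a double-$\rm$ one, as you yourself note.

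The missing idea is to run the reflection/amplification machinery of Lemma~\ref{lem:no_red_plus} on the \emph{dual} configuration $\dbm(\sigma_r)^*$ (edges of $\bbT$ with at least one endpoint of spin $\rp$). This dual is increasing in $\sigma_r$, hence FKG, and a horizontal $\dbm^*$-crossing of a rectangle is exactly the failure of a vertical double-$\rm$ crossing. Applying the point-picking/reflection argument to $\dbm^*$ turns failures of long-direction double-$\rm$ crossings into failures of short-direction double-$\rm$ crossings of wide rectangles, and the latter are controlled by Lemma~\ref{lem:yes_red_minus}. This yields $\mu_{\La_{(\kappa+2)n}}^{\srm\srm}[\calC^v_{\srm\srm}(\Rect_{n/16,n/2})]>1-e^{-C_4 n^c}$, i.e.\ long thin rectangles are crossed in the long direction by double-$\rm$ with stretched-exponentially high probability. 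From there your tiling-plus-FKG step does work and gives \eqref{eq:C3}.
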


\begin{proof}
	Fix some~$\kappa \geq 2$. 
	Let us first prove that 
	\begin{align}\label{eq:C4}
		\mu_{\La_{(\kappa + 2) n}}^{\srm\srm}[\calC^v_{\srm\srm}(\Rect_{n/16, n/2})] > 1 - e^{-C_4 n^{c}}, 
	\end{align}
	for all~$n$ and some fixed constant~$C_4 > 0$. 
	Notice that we are aiming to show that a thin rectangle is crossed in the long (vertical) direction with very high probability.
	Heuristically, Lemma~\ref{lem:yes_red_minus} 
	says that such rectangles are crossed with high probability in the short (i.e. horizontal) direction. 
	To pass from crossing in the short direction to crossings in the long direction,
	we will use the same argument as in the proof of \eqref{eq:28}. 
	However, since this argument applies to small probabilities rather than large ones, 
	we will use it for the model dual to double-$\rm$ connections.

	Recall the notation~$\dbm(\sigma_r)$ for the set of edges of~$\bbH$ with spin~$\rm$ on either side. 
	Let~$\dbm(\sigma_r)^*$ be the dual of ~$\dbm(\sigma_r)$; 
	it is a percolation configuration on~$\bbT$, with edges open if at least one of their endpoints is a face of spin~$\rp$.
	Then~$\calC^v_{\srm\srm}(\Rect_{n/16, n/2})$ fails if and only if~$\Rect_{n/16, n/2}$ is crossed horizontally by a path in~$\dbm(\sigma_r)^*$.
	The same holds with~$\Rect_{2n, n/2}$ instead of~$\Rect_{n/16, n/2}$.
	Moreover,~$\dbm(\sigma_r)^*$ is increasing in~$\sigma_r$, hence satisfies the FKG inequality under~$\mu_{\La_{(\kappa + 2) n}}^{\srm\srm}$. 

	The same strategy as in the proof of Lemma~\ref{lem:no_red_plus} applies here, 
	namely choosing the points on the left and right sides of~$\Rect_{n/16, n/2}$ that are most likely to be connected in~$\dbm(\sigma_r)^*$, 
	using horizontal reflection, the FKG inequality and monotonicity of boundary conditions, we find that
	\begin{align*}
		1 - \mu_{\La_{(\kappa + 4) n}}^{\srm\srm}[\calC^v_{\srm\srm}(\Rect_{2n, n/2})] 
		\geq \Big[\tfrac{4}{n^2}\big(1 - \mu_{\La_{(\kappa + 2)  n}}^{\srm\srm}[\calC^v_{\srm\srm}(\Rect_{n/16, n/2})] \big)\Big]^{32}. 
	\end{align*}
	Consider now the copies~$R_1,\dots, R_6$ of~$\Rect_{2n, n/2}$ placed around~$\La_n$ as in Figure~\ref{fig:Rect_to_ann}. 
	Then, due to the FKG inequality and the comparison between boundary conditions
	\begin{align*}
		\mu_{\La_{(\kappa + 6) n}}^{\srm\srm}\big[\La_n\nxlra{\srm\srm}\La_{2n}^c \big] 
		\geq \mu_{\La_{(\kappa + 6) n}}^{\srm\srm}\Big[\bigcap_{i=1}^6\calC_{\srm\srm}^v(R_i)^c \Big] 
		\geq \Big(1 - \mu_{\La_{(\kappa + 4) n}}^{\srm\srm}[\calC^v_{\srm\srm}(\Rect_{2n, n/2})] \Big)^6.
	\end{align*}
	Using the upper bound~\eqref{eq:yes_red_minus} for the LHS, we obtain~\eqref{eq:C4} with an adjusted value~$C_4 > 0$.%
	\smallskip 
	
	Let us now prove~\eqref{eq:C3}. 
	There exists some fixed constant~$K$ such that one may place~$K$ translations and rotations by~$2\pi/3$ and~$4\pi/3$ 
	of~$\Rect_{n/16, n/2}$ around~$0$ in such a way that, 
	if they are all crossed in the long direction by a double-$\rm$ path, then~$\Circ_{\srm\srm}(n,2n)$ occurs 
	(look at Figure~\ref{fig:Rect_to_ann2} for inspiration). 
	Using again the FKG inequality and the monotonicity of boundary conditions, we find
	\begin{align*}
		\mu_{\La_{\kappa n}}^{\srm\srm}[\Circ_{\srm\srm}(n,2n)] \geq
		\mu_{\La_{(\kappa +2)  n}}^{\srm\srm}[\calC^v_{\srm\srm}(\Rect_{n/16, n/2})]^K.
	\end{align*}
	Inserting ~\eqref{eq:C4} in the above proves~\eqref{eq:C3}.
	\medskip 
	
	We move on to proving~\eqref{eq:C31}.
	For~$n_1 \geq 1$, using the monotonicity of boundary conditions 
	and the fact that~$\mu_\bbH = \lim_{n\to\infty} \mu_{\La_n}^{\srm\srm}$, we find 
	\begin{align*}
		\mu_{\bbH}\big[ \bigcap_{j \geq 0}\Circ_{\srm\srm}(2^j n_1,2^{j+1}n_1)\big]
		&= \prod_{j\geq 0} \mu_{\bbH}\big[ \Circ_{\srm\srm}(2^jn_1,2^{j+1}n_1) \,\big|\,\bigcap_{\ell > j }\Circ_{\srm\srm}(2^\ell n_1,2^{\ell+1} n_1) \big]\\
		&\geq \prod_{j\geq 0} \mu_{\La_{2^{j+2}n_1}}^{\srm\srm}\big[ \Circ_{\srm\srm}(2^j n_1,2^{j+1} n_1) \big]\\
		&\geq \prod_{j\geq 0} (1 - e^{-C_3 (2^{j }n_1)^{c}}),
	\end{align*}
	where~$C_3$ is given by~\eqref{eq:C3} for~$\kappa = 4$. 
	The right-hand side may be rendered as close to one as desired by taking~$n_1$ sufficiently large. 
\end{proof}

We are ready now to prove Proposition~\ref{prop:dicho-resolve} and thus resolve the dichotomy stated in Corollary~\ref{cor:dicho}. 
Below we show that assumption~\eqref{assumption:exp_decay} contradicts~\eqref{eq:uniqueness_nu}, hence it is false. 


\begin{proof}[Proposition~\ref{prop:dicho-resolve}.]
	Suppose~\eqref{assumption:exp_decay} occurs. Fix~$n_1$ large enough so that 
	\begin{align*}
		\mu_{\bbH}\Big[\bigcap_{j\geq0}\Circ_{\srm\srm}(2^{j}n_1,2^{j+1}n_1)\Big] \geq \tfrac34.
	\end{align*}
	Recall from Theorem~\ref{thm:limit_joint} that~$\mu_{\bbH}$ is invariant under red spin flip, whence
	\begin{align*}
		\mu_{\bbH}\Big[\bigcap_{j\geq0}\Circ_{\srp\srp}(2^{j}n_1,2^{j+1}n_1)\Big] \geq \tfrac34.
	\end{align*}
	Then, the intersection of the two events above occurs with probability at least~$1/2$.
	In particular, for any~$j \geq 0$, 
	\begin{align*}
		\mu_{\Lambda_{2^{j+2}n_1}}^{\srm\srm}\big[\Circ_{\srp\srp}(2^{j}n_1,2^{j+1}n_1) \big] 
		\geq \mu_{\bbH}\big[\Circ_{\srp\srp}(2^{j}n_1,2^{j+1}n_1) \cap \Circ_{\srm\srm}(2^{j+1}n_1,2^{j+2}n_1)\big] 
		\geq \tfrac12.
	\end{align*}
	Notice that $\Circ_{\srp\srp}(2^{j}n_1,2^{j+1}n_1)$ implies the occurrence of the the translate of 
	$\{\La_{2^{j}n_1}\xlra{\srp\srp}\La_{2^{j+1}n_1}^c\}$ by $(2^{j}n_1,0)$.
	By the monotonicity of boundary conditions we deduce that 
	\begin{align*}
		\mu_{\Lambda_{2^{j+3}n_1}}^{\srm\srm}\big[\La_{2^{j}n_1}\xlra{\srp\srp}\La_{2^{j+1}n_1}^c] \geq
		\mu_{\Lambda_{2^{j+2}n_1}}^{\srm\srm}\big[\Circ_{\srp\srp}(2^{j}n_1,2^{j+1}n_1) \big] 
		\geq \tfrac12.
	\end{align*}	
	This contradicts~\eqref{eq:no_red_plus} for~$j$ large enough, and~\eqref{assumption:exp_decay} fails.
	In other words, case~\textit{(i)} of Corollary~\ref{cor:dicho} holds. 
\end{proof}

\subsection{Proof of Theorem~\ref{thm:loops}}\label{sec:proof_polynomial_decay}

In this section we show how the statements about the spin representation proven in previous sections imply Theorem~\ref{thm:loops}.

\begin{proof}[Theorem~\ref{thm:loops}.]
\newcommand{\Loop}{\mathsf{Loop}}	
\noindent\textbf{\textit{(i)}}  
	Recall from Proposition~\ref{prop:loop-to-spin} that for any domain~$\calD$, 
	the measure~$\bbP_{\calD}$ is obtained from~$\mu_\calD^{\srp\srm}$ by considering the edges separating faces of different blue or red spin.
	
	If~$(\calD_n)_{n\geq 1}$  is a sequence of increasing domains with~$\bigcup_{n\geq 1} \calD_n = \bbH$, 
	Theorem~\ref{thm:uniqueness_nu} states that~$\mu_{\calD_n}^{\srp\srm}$ converges to~$\mu_\bbH$.
	As a consequence~$\bbP_{\calD_n}$ converges to the measure~$\bbP_\bbH$ obtained from~$\mu_\bbH$ 
	by the same procedure that produces~$\bbP_\calD$ from~$\mu_\calD^{\srp\srm}$.
	\medskip 

\noindent\textbf{\textit{(ii)}}   
	By Corollary~\ref{cor:circ_dp_dp}, there exists~$\bbP_\bbH$-a.s. no infinite path in~$\omega_r$. 
	Indeed, the existence of such a path is contradictory with the existence of infinitely many~$\rp$-circuits surrounding~$0$;
	the latter event was shown to occur~$\mu_\bbH$-a.s.
	The statement extends to blue paths by symmetry. 

	Alternatively, one may see the proof of Theorem~\ref{thm:uniqueness_nu}, where the absence of infinite paths was proved. 
	\medskip 
	
\noindent\textbf{\textit{(iii)}}  
	The ergodicity and rotation invariance of~$\bbP_\bbH$ follow from the corresponding properties of~$\mu_\bbH$, 
	which were obtained in Theorem~\ref{thm:limit_joint}.
	\medskip

\noindent\textbf{\textit{(iv)}} 
	We start with the lower bound. 
	Fix~$\calD$ a finite domain containing~$\La_{n}$ for some~$n$, or simply~$\calD = \bbH$. 
	The procedure that generates~$\bbP_{\calD}$ from~$\mu_\calD^{\srp\srm}$ is such that 
	\begin{align*}
		\bbP_{\calD}(\text{exists loop in~$\La_{n}$ surrounding~$\La_{n/2}$})
		&\geq \mu_\calD^{\srp\srm} \big[\Circ_{\srp}(n/2,n)\cap \Circ_{\srm}(n/2,n) \big] \\
		&\geq \mu_\calD^{\srp\srp} \big[\Circ_{\srp\srp}(\tfrac12n,\tfrac34n)\cap \Circ_{\srm\srm}(\tfrac34,n) \big]\\
		&\geq \mu_{\La_{3n/4}}^{\srm\srm} \big[\Circ_{\srp\srp}(\tfrac12n,\tfrac34n)\big] 
		 \cdot \mu_{\La_{n}}^{\srp\srp} \big[\Circ_{\srm\srm}(\tfrac34 n,n) \big].
	\end{align*}
	The second and third inequalities are due to the monotonicity of boundary conditions. 
	The last term is bounded uniformly away from~$0$ by Corollary~\ref{cor:RSW_strong}.
	\smallskip 
	
	Let us now prove the upper bound. Fix a finite domain~$\calD$ and set~$n = \textrm{dist}(0,\calD^c)$.  
	Fix some~$\rho <1$ close enough to~$1$; we will see below how~$\rho$ needs to be chosen and that it does not depend on~$n$ or~$\calD$.
	Let~$\Loop^2$ be the event that there exist at least two loops in~$\calD$ that surround~$\La_{\rho n}$.
	Let~$\Loop^{2}(\text{red,blue})$ be the event that~$\Loop^2$ occurs 
	and that the outermost loop surrounding~$\La_{\rho n}$ is blue, while the second outermost is red. 
	Then~$\bbP_{\calD}[\Loop^{2}(\text{red,blue})] =\frac14 \bbP_\calD(\Loop^{2})$. 
	
	Let us consider for a moment spin configurations~$(\sigma_r,\sigma_b)$ chosen according to~$\mu_\calD^{\srp\srm}$  
	that correspond to loop configurations in~$\Loop^{2}(\text{red,blue})$. 
	The outermost blue loop corresponds to a double-$\rp$ circuit in~$\calD \setminus \La_{\rho n}$. 
	Indeed, as any blue loop, it is either a double-$\rp$ circuit or a double-$\rm$ circuit. 
	Since there is no red loop separating it from~$\partial_E\calD$, its spin is the same as that of~$\partial_\int\calD$, namely~$\rp$. 
	The second outermost loop, the red one, induces a simple-$\rm$ circuit that surrounds~$\La_{\rho n}$ 
	and is contained inside the double-$\rp$ circuit above. 
	
	Coming back to general configurations~$(\sigma_r,\sigma_b)$ on~$\calD$, 
	let~$\Xi$ be the outermost double-$\rp$ circuit surrounding~$\La_{\rho n}$;
	if no such circuit exists, set~$\Xi =\emptyset$. 
	Let~$\Int(\Xi)$ be the domain delimited by~$\Xi$. 
	Due to the Spatial Markov property and a standard exploration argument, the measure inside~$\Xi$ is~$\mu_{\Int(\Xi)}^{\srp\srp}$. 
	
	By the discussion above, if~$\Loop^{2}(\text{red,blue})$ occurs, 
	then~$\Xi \neq \emptyset$ and there exists a simple-$\rm$ circuit contained in~$\Int(\Xi)$ and surrounding~$\La_{\rho n}$
	(this is not an equality of events; generally the latter event contains strictly the former). 
	Thus
	\begin{align}\label{eq:calC2}
		\tfrac14 \bbP_\calD(\Loop^{2})
		\leq 
		\sum_{\chi\neq \emptyset} \, \mu_{\calD}^{\srp\srm}\big[ \Circ_{\srm}(\rho n)\cap\{ \Xi = \chi\}\big]
		= \sum_{\chi\neq \emptyset} \, 
		\mu_{\Int(\chi)}^{\srp\srp}\big[\Circ_{\srm}(\rho n)\big]\cdot \mu_{\calD}^{\srp\srm}(\Xi = \chi),
	\end{align}
	where the sum is over all realisations~$\chi \neq \emptyset$ of~$\Xi$.
	The event~$\Circ_{\srm}(\rho n)$
	above refers to the existence of simple-$\rm$ circuit contained in~$\Int(\chi)$ and surrounding~$\La_{\rho n}$. 

	Due to the monotonicity of boundary conditions 
	$\mu_{\Int(\chi)}^{\srp\srp}[\Circ_{\srm}(\rho n)]\leq 	\mu_{\calD}^{\srp\srp}[\Circ_{\srm}(\rho n)]$
	for any~$\chi$ in the sum above. 
	In conclusion 
	\begin{align}\label{eq:v1}
		 \bbP_\calD(\Loop^{2})
		\leq 4 \,
		\mu_{\calD}^{\srp\srp}[\Circ_{\srm}(\rho n)]\cdot \sum_{\chi} \mu_{\calD}^{\srp\srm}(\Xi = \chi )
		\leq 4 \,
		\mu_{\calD}^{\srp\srp}[\Circ_{\srm}(\rho n)].
	\end{align}
	
	Let~$u$ be a point where~$\partial_E \La_{n}$ intersects~$\partial_E \calD$ (such a point exists due to the choice of~$n$). 
	By considering the intersection of the annulus~$u + \La_n \setminus \La_{(1-\rho)n}$ centred at~$u$ with~$\calD$, 
	and using the monotonicity of boundary conditions, we find that 
	\begin{align}\label{eq:v2}
		\mu_{\calD}^{\srp\srp}[\Circ_{\srm}(\rho n)]
		\leq 
		1 - \mu_{\bbH}^{\srp\srp}[\Circ_{\srp}((1-\rho) n, n)]. 
	\end{align}
	Indeed, the trace on~$\calD$ of any configuration~$\sigma_r$ on~$\bbH$ 
	that contains a simple-$\rp$ circuit in the annulus~$u + \La_n \setminus \La_{(1-\rho)n}$ 
	does not belong to~$\Circ_{\srm}(\rho n)$. 
	The above conclusion follows by the domination~$\mu_{\calD}^{\srp\srp}\geq_{\text{st}}\mu_{\bbH}^{\srp\srp}$. 
	
	It is a standard consequence of Corollary~\ref{cor:RSW_strong} that~$\rho <1$ may be chosen so that 
	$\mu_{\bbH}^{\srp\srp}[\Circ_{\srp}((1-\rho)n, n)]  \geq \frac78$ for all~$n$ larger than some fixed threshold. 
	Then, by~\eqref{eq:v1} and~\eqref{eq:v2},~$\bbP_\calD(\Loop^{2}) \leq \frac12$ for all~$n$ above this threshold, as required. 
	Smaller values of~$n$ may be incorporated by altering the constant~$c$. 
	\medskip
    
    \noindent\textbf{\textit{(v)}}
    Fix a domain~$\calD$; we will prove the results for~$\bbE_\calD$; the same proof applied for~$\bbE_\bbH$. 
    First we show the lower bound. 
    Set~$K = \lfloor \log_2 \mathrm{dist}(0,\calD^c)\rfloor$ 
    and for~$k = 1,\dots, K$, let~$\Circ^2(k)$ be the event that there exists a double-$\rp$ circuit 
    and a double-$\rm$ circuit in~$\La_{2^{k+1}}$ surrounding~$\La_{2^k}$.
    By the same reasoning as that used to prove \textit{(iv)} above, 
    \begin{align}\label{eq:circcirc}
    	\bbP_{\calD}(\Circ^2(k) \,|\, \omega_r \text{ on~$\La_{2^{k+1}}^c$})
    	&\geq \mu_{\La_{2^{k+1}}}^{\srp\srp} \big[\Circ_{\srm\srm}(\tfrac32 2^k,2^{k+1})\big] 
    		  \mu_{\La_{\frac32 2^{k}}}^{\srm\srm} \big[\Circ_{\srp\srp}(2^k,\tfrac32  2^{k})\big] 
    		 \geq c,
    \end{align}
    for some constant~$c>0$ independent of~$k$. 
    As a consequence 
    \begin{align*}
    	 \bbE_{\calD}(\#\{ 1\leq k < K-1\, :\, \Circ^2(k) \text{ occurs}\}) \geq c\, (K-1) \geq c \,\big(\log_2 \mathrm{dist}(0,\calD^c) -2\big).
    \end{align*}
    Now observe that each event~$\Circ^2(k)$ that occurs induces a loop in~$\calD$ that surrounds~$0$. 
    This provides the desired lower bound, after alteration of the constant~$c$. 
    
    We turn to the upper bound. 
    Let~$\Gamma_1,\dots, \Gamma_{N_\calD}$ be the loops of~$\omega$ surrounding~$0$, 
    ordered from outermost to innermost, when~$\omega$ is chosen according to~$\bbP_\calD$. 
    Set~$d_j = \textrm{dist}(0,\Gamma_j)$ for~$j =1,\dots, N_\calD$. 
    
    The loop measure inside~$\Gamma_j$, conditionally on~$\Gamma_1,\dots, \Gamma_j$, 
    and more generally on the whole configuration outside~$\Gamma_j$, is simply~$\bbP_{\Int(\Gamma_{j})}$.
    Applying~\eqref{eq:RSW_for_loops_ub} we find that 
    \begin{align}\label{eq:geometric_dom}
	    \bbP_\calD( d_{j+2} < \rho\, d_j \,|\, \Gamma_j \text{ and~$\omega$ outside~$\Gamma_j$}) > c.
    \end{align}
    From the above, it is standard to conclude that~$\bbE_\calD(N_\calD) \leq C \log n$ for some constant~$C$ depending on~$c$ and~$\rho$ only. 
    We sketch this below. 
    
    Let~$T_k = \min\{ j\leq N_\calD :\, d_j  < \rho^k\cdot  \mathrm{dist}(0,\calD^c)\}$ for~$k =1,\dots,K$ where~$K = \lceil \log_{1/\rho}\mathrm{dist}(0,\calD^c)\rceil$.
    Formally~$T_K = N_\calD$ and~$T_0 = 0$. 
    Then~\eqref{eq:geometric_dom} implies that each~$T_{j+1} - T_j$ may be bounded by a random variable~$2 G_j$, 
    where~$G_j$ has a geometric distribution of parameter~$c > 0$. 
    Then~$\bbE_\calD(N_\calD) \leq \sum_{j=1}^K 2\bbE (G_j) = \frac{2 K }c$, as required. 
    
    Finally, \eqref{eq:circcirc} also applies to~$\bbP_\bbH$ instead of~$\bbP_\calD$, and directly implies that 
    \begin{align*}
		\liminf_{K \to \infty} \frac{\#\{ 1\leq k < K \, :\, \Circ^2(k) \text{ occurs}\}}{\log K} > 0.
    \end{align*}
    Thus, there are indeed infinitely many loops surrounding the origin~$\bbP_\bbH$-a.s..
\end{proof}

\subsection{Proof of Theorem~\ref{thm:Gibbs}}\label{sec:Gibbs_proof}

Finally we prove Theorem~\ref{thm:Gibbs}. It may be worth mentioning that the proof below may be adapted to circumvent the use of the results of Section~\ref{sec:dichotomy}. 
Indeed, the non-quantitative delocalisation result of Theorem~\ref{thm:uniqueness_nu} suffices. 

\begin{proof}[Theorem~\ref{thm:Gibbs}]
	From the construction of~$\bbP_\bbH$ as limit of finite-volume measures, it is immediate that it is a Gibbs measure. 
	The rest of the proof is dedicated to showing it is the only one. 

	Let~$\eta$ be a Gibbs measure.
	For any configuration~$\omega$ chosen according to~$\eta$, 
	colour each loop of~$\omega$ independently in red or blue; colour each infinite path in red. 
	Write~$\omega_r$ and~$\omega_b$ for the obtained red and blue configurations, respectively.  
	Extend~$\eta$ to incorporate this additional randomness. 
	
	Additionally, associate to~$(\omega_r,\omega_b)$ a pair of spin configurations~$(\sigma_r,\sigma_b)$ obtained by choosing the spins at~$0$
	$(\sigma_r(0),\sigma_b(0)) \in \{\rm,\rp\} \times \{\bm,\bp\}$ uniformly, 
	then assign spins to all other faces with the constraint that two faces have distinct red spin (and blue spin, respectively) 
	if and only if they are separated by an edge of~$\omega_r$, and~$\omega_b$, respectively.
	Thus~$\eta$ is both a law on pairs of red and blue loop configurations, as well as a law on pairs of red and blue spin configurations. 
	We call the latter the double-spin representation of~$\eta$. 
	
	Let us show that the red-spin marginal of~$\eta$ is equal to~$\nu_\bbH$. 
	Notice that~\eqref{eq:DLR} implies that the double-spin representation of~$\eta$ has the spatial Markov property in that, 
	for any domain~$\calD$, 
	the restriction of~$\eta$ to~$\Int(\calD)$ conditionally on the double-spin configuration outside~$\Int(\calD)$ 
	is measurable in terms of the double-spin configuration on~$\partial_\int\calD$.
	
	Fix~$\eps > 0$ and~$n\geq 1$. 
	Let~$N > n$ be chosen so that 
	\begin{align}\label{eq:eps1}
		\nu^{\srp\srp}_{\La_N}(A) \leq \nu_{\bbH}(A) + \eps,
	\end{align}
	for any event~$A$ that depends only on the spins in~$\La_n$.

	Write $\omega\setminus \Lambda_N$ for the subgraph containing only the edges of $\omega$ that lie outside of $\Lambda_N$. Then, the connected components of $\omega\setminus \Lambda_N$ may be loops, bi-infinite paths, as well as semi-infinite or finite paths with endpoints on $\partial \Lambda_N$. 
	For~$M > N$, let $B(N,M)$ be the event that there exists a finite connected component of $\omega\setminus \Lambda_N$ 
	intersecting both~$\partial \La_N$ and~$\La_M^c$. Choose then $M$ so that 
	\begin{align}\label{eq:eps2}
		\eta[B(N,M)] \leq \eps. 
	\end{align}
	Since $B(N,M)$ is defined in terms of finite connected components, it is always possible to find such a value of $M$. 

	Write $\tilde \omega$ for the union of all connected components of $\omega \setminus \Lambda_N$ that intersect $\Lambda_M^c$. 
	When $B(N,M)$ fails, write $\calD(\omega)$ for the connected component of $\Lambda_N$ in $\bbH \setminus \tilde \omega$
	and $X_1,\dots, X_{2k}$ for the points of degree one in $\tilde \omega$ 
	-- they all lie on~$\partial \Lambda_N$ and are endpoints of infinite paths of $\tilde \omega$. 
	When $B(N,M)$ occurs, set $\calD(\omega) = \emptyset$. 

	Note that both $B(N,M)$ and $(\calD(\omega),X_1,\dots, X_{2k})$ are measurable in terms of $\tilde\omega$, 
	which lies entirely outside of $\calD$. 
	Thus, even tough $\calD(\omega)$ is not formally a domain\footnote{The boundary of $\calD$ is not necessarily formed of a simple loop, due to the potential infinite paths with endpoints on $\partial \Lambda_N$. 
	To deduce \eqref{eq:DLR_non-domain}, first apply \eqref{eq:DLR} in $\Lambda_M$ to identify the restriction of $\eta$ to $\Lambda_M$ conditionally on the configuration outside, then apply the Spatial Markov property of this restriction to deduce its further restriction to $\calD$.}, the DLR property applies in a straightforward way.
	Indeed, for~$D,x_1,\dots, x_{2k}$ a possible realisation of~$\calD(\omega),X_1,\dots, X_{2k}$ with $D \neq \emptyset$, 
	the restriction	of~$\eta[.\,|\, \calD(\omega) = D \text{ and } X_1,\dots,X_{2k} = x_1,\dots, x_{2k}]$ to~$D$ is given by 
	\begin{align}\label{eq:DLR_non-domain}
		\bbP_{D}^{x_1,\dots,x_{2k}}(\xi) 
		=\tfrac{1}{Z} \,2^{\#\ell(\xi)}\ind_{\{\text{the odd vertices of~$\xi$ are~$x_1,\dots,x_{2k}$}\}}, 
		\qquad \text{for all~$\xi \in \{0,1\}^{E(\calD)}$},
	\end{align}
	where~$\ell(\xi)$ is the number of loops of~$\xi$ entirely contained in~$D$.

	Let us now describe the conditional measure above for spin configurations. 
	Since no finite loop intersects~$\partial_E D$, the blue spins on~$\partial_\int D$ are all identical, either~$\bp$ or~$\bm$. 
	The red spins along~$\partial_\int D$ switch from~$\rp$ to~$\rm$ and vice-versa at every point~$x_i$ due to the infinite (red) paths. 
	Write~$\mu_D^{x_1,\dots,x_{2k}, \srp\sbp}$ for the spin measure on~$D$ with boundary conditions~$\bp$ on~$\partial_\int D$, 
	$\rp$ on the segments of~$\partial_\int D$ between~$x_i$ and~$x_{i+1}$ with~$i$ odd, and~$\rm$ on all other parts of~$\partial_\int D$. 
	To be precise, this is the uniform measure on coherent configurations 
	$(\sigma_r,\sigma_b) \in \{\rm,\rp\}^{F(\calD)}\times \{\bm,\bp\}^{F(\calD)}$ that have the values above on~$\partial_\int D$. 
	Define~$\mu_D^{x_1,\dots,x_{2k}, \srm\sbp}$,~$\mu_D^{x_1,\dots,x_{2k}, \srp\sbm}$ and~$\mu_D^{x_1,\dots,x_{2k}, \srm\sbm}$, similarly;
	these are the push-forward of~$\mu_D^{x_1,\dots,x_{2k}, \srp\sbp}$ via 
	$(\sigma_r,\sigma_b) \mapsto (-\sigma_r,\sigma_b)$, 	$(\sigma_r,\sigma_b) \mapsto (\sigma_r,-\sigma_b)$ and 	$(\sigma_r,\sigma_b) \mapsto (-\sigma_r,-\sigma_b)$, respectively.
	Then, due to the uniform choice of the spins at~$0$, 
	\begin{align*}
		\bbP_{D}^{x_1,\dots,x_{2k}}
		=
		\tfrac14 \big[ 
		\mu_D^{x_1,\dots,x_{2k}, \srp\sbp}
		+ \mu_D^{x_1,\dots,x_{2k}, \srm\sbp}
		+ \mu_D^{x_1,\dots,x_{2k}, \srp\sbm}
		+ \mu_D^{x_1,\dots,x_{2k}, \srm\sbm}\big].
	\end{align*}
	
	Due to Corollary~\ref{cor:fkg}~\textit{(iii)}, 
	the red spin marginals of the mesures 
	$\mu_D^{x_1,\dots,x_{2k}, \srp\sbp}$,~$\mu_D^{x_1,\dots,x_{2k}, \srm\sbp}$, 
	$\mu_D^{x_1,\dots,x_{2k}, \srp\sbm}$ and~$\mu_D^{x_1,\dots,x_{2k}, \srm\sbm}$ 
	all satisfy the FKG inequality. 
	In particular, since~$\La_N \subset D$, their restrictions to~$\La_N$ are all dominated by~$\nu_{\La_N}^{\srp\srp}$. 
	In conclusion we find that, for any increasing event~$A$ depending only on the red spins inside~$\La_n$, 
	\begin{align*}
    	\eta(A) 
    	& \leq \eta(\calD(\omega) = \emptyset) + \sum_{\substack{D;x_1,\dots, x_{2k}\\ D\neq\emptyset}} 
				\bbP_{D}^{x_1,\dots,x_{2k}}(A)\cdot\eta(\calD(\omega) = D;\, X_1 = x_1,\dots, X_{2k} = x_{2k})\\
		& \leq \eta(B(N,M)) + \sum_{D \neq \emptyset} \nu_{\La_N}^{\srp\srp}(A) \cdot\eta(\calD(\omega) = D) \\
		& \leq \nu_{\bbH}(A) + 2\eps.
	\end{align*}
	The last inequality is due to \eqref{eq:eps1} and \eqref{eq:eps2}.
	Recall that the choice of~$\eps$ is arbitrary, hence~$\eta(A) \leq \mu_{\bbH}(A)$ for all events~$A$ as above. 
	
	The same argument may be performed with $\rp$ replaced by $\rm$, and yields that for any decreasing event~$B$ depending only on the red spins inside~$\La_n$
	(that is the complement of an increasing event), $\eta(B) \leq \mu_{\bbH}(B)$.
	Thus,~$\eta(A) = \mu_{\bbH}(A)$ for all increasing (and decreasing) events that only depend on the red spins in a finite region. 
	The monotone class theorem allows to conclude that the red spin marginals of~$\eta$ and~$\mu_\bbH$ are equal. 

	
	Now, given the red spin marginal of~$\mu_\bbH$, the blue spins are obtained 
	by awarding uniform blue spins to the  clusters of~$\theta(\sigma_r)$. 
	The same holds for~$\eta$, since~$0$ is surrounded~$\eta$-a.s. by infinitely many disjoint clusters of~$\theta(\sigma_r)$.  
	As a consequence~$\eta = \mu_\bbH$. 
\end{proof}

\subsection{RSW theorem for height functions}\label{sec:RSW_heights}

We finish the paper with a RSW result for the uniform Lipschitz functions model. Recall the notation of Section~\ref{sec:height_functions}.
When considering Lipschitz functions on a domain containing $\La_{2n}$, write $\Circ_{\geq k}(n)$ for the event that 
there exists a closed face-path contained in $\La_{2n}$, surrounding $\La_n$ and formed entirely of faces for which the function is larger than $k$. 

\begin{thm}\label{thm:RSW_heights}
	For any $k \geq 1 $ there exists $c(k) > 0$ such that for all $n$ large enough and 
	any domain $\calD$ containing $\La_{2n}$. 
	\begin{align*}
	\pi_\calD(\Circ_{\geq k}(n)) \geq c(k).
	\end{align*}
\end{thm}

\begin{proof}
	Fix $k \geq 1$. Let $n \geq 1$ be a large integer and $\calD$ a domain  containing $\La_{2n}$. 
	Recall from Propositions~\ref{prop:lip-to-loop} and~\ref{prop:loop-to-spin} 
	that the loop representation of a height function chosen according to $\pi_\calD$
	has law $\bbP_\calD$, and its spin representation has law $\mu^{\srp\srm}_\calD$. 
	
	Write $H$ for the event that there exist $k+1$ closed edge-paths $\gamma_1,\dots, \gamma_{k+1}$ in $\La_{2n}$
	that surround $\La_n$, that are numbered from outer-most to inner-most, and such that $\gamma_j$ is a double-$\rp$ path if $j$ is odd
	and a double-$\rm$ path if $j$ is even. 
	By repeated applications of Corollary \ref{cor:RSW_strong}, 
	there exists a constant $c(k) > 0$ independent of $n$ or $\calD$ such that $\mu^{\srp\srm}_\calD(H) \geq c(k)$.

	When $H$ occurs, there exist at least $k$ loops in the loop representation that are contained in $\La_{2n} \setminus \La_n$ 
	and surround $\La_n$. 
	Write $\tilde H$ for set of loop configurations which contain at least $k$ such loops, and denote by $\Ga_1,\dots, \Ga_k$ the outermost $k$ loops as above. 
	Recall that, in order to obtain the height function $\Phi$ from the loop configuration $\omega$ chosen according to $\bbP_\calD$, 
	loops need to be oriented uniformly, and that the orientation of each loop dictates whether the height inside the loop is larger or smaller than the one outside. 
	By symmetry, conditionally on any loop configuration $\omega \in \tilde H$, 
	with probability at least $1/2$ the height of the faces outside and adjacent to $\Gamma_1$ is at least $0$. 
	Moreover, independently of the above, all paths $\Gamma_1,\dots, \Gamma_k$ are oriented clockwise with probability $2^{-k}$.  
	When both of the above occur, the height of the faces inside and adjacent to $\Gamma_k$ is at least $k$. Thus
	\begin{align*}
		\pi_\calD(\Circ_{\geq k}(n)) \geq
		2^{-(k+1)} \bbP_\calD(\tilde H)
		\geq 	2^{-(k+1)} \mu^{\srp\srm}_\calD(H) 
		\geq 	2^{-(k+1)} c(k),
	\end{align*}
	which is the desired conclusion.
\end{proof}

\bibliographystyle{abbrv}
\bibliography{biblicomplete}

\end{document}